 \title{Relative order and spectrum \\in free and related  groups}
\author[1]{Jordi Delgado\thanks{\url{jordi.delgado.r@gmail.com
}}}
\author[2]{Enric Ventura\thanks{\url{enric.ventura@upc.edu}}}
\author[3]{Alexander Zakharov\thanks{\url{alexander.zakharov@uwr.edu.pl }}}
\affil[1]{Department of Mathematics, University of the Basque Country (EHU)}
\affil[2]{Departament de Matem\`atiques, Universitat Polit\`ecnica de Catalunya, and Institut de Matem\`atiques de la UPC-BarcelonaTech, Catalonia}
\affil[3]{Instytut Matematyczny, Uniwersytet Wroclawski, pl. Grunwaldzki 2/4,
50-384 Wroclaw, Poland}
\begin{document}

\maketitle

\begin{abstract}
In this paper, we consider a natural generalization of the concept of order of an element in a group: an element $g \in G$ is said to have order $k$ in a subgroup $H$ of $G$ (\resp \wrt a coset $Hu$) if $k$ is the first strictly positive integer such that $g^k \in H$ (\resp $g^k \in Hu$). We study this notion and its algorithmic properties in the realm of free groups and some related families.

\medskip
    
Both positive and negative (algorithmic) results emerge in this setting. On the positive side, among other results, we prove that the order of elements, the set of orders (called spectrum), and the set of preorders (\ie the set of elements of a given order) \wrt finitely generated subgroups are always computable in free and free times free-abelian groups. On the negative side, we provide examples of groups and subgroups having essentially any subset of natural numbers as relative spectrum; in particular, non-recursive and even non-recursively enumerable sets of natural numbers. Also, we take advantage of Mikhailova's construction to see that the spectrum membership problem is unsolvable for direct products of nonabelian free groups.
\end{abstract}

\medskip

\begin{flushleft}
\textbf{Key words and phrases:}
\textsc{order, root, spectrum, subgroup, free group, Stallings automata, algorithmic problem, decision problem, undecidable problem}
\end{flushleft}

\section{Introduction}

For a finite group $G$, the notion of \emph{order} of an element is crucial to understand and study the algebraic structure of $G$: the order of $g\in G$, denoted by $\Ord(g)$, is the minimum positive integer $k\geqslant 1$ such that $g^k= \trivial$ in $G$ (with a previous standard argument proving that such $k$ always exists). The classical Lagrange Theorem, saying that the order of every $g\in G$ must divide the cardinal of the group, is fundamental in the study of the structure of finite groups (among many other classical results involving orders of elements, Gauss Theorem, Sylow Theorems, etc.).

When we move to infinite groups there usually show up \emph{non-torsion} elements, \ie elements $g\in G$ such that $g^k\neq 1$ for every $k\geqslant 1$. Of course, in this infinite setting we lose Lagrange Theorem and the concept of order of an element is relegated only to the torsion part of $G$ (which is usually not well behaved: in general, it is not even a subgroup of $G$). 
On one extreme we have torsion-free groups, where it is totally vacuous, and on the other extreme, we have the so-called \emph{periodic} groups (including, of course, all finite groups), \ie those where every element is a torsion element. In this area of group theory (\ie infinite torsion groups) Burnside problems have special relevance. 

Let $G$ be an arbitrary group. In this paper, we are going to relativize the notion of order of an element. Although our main focus is with respect to finitely generated subgroups, many results can be extended to cosets as well. We define the \emph{order of $g$ in a subgroup $H\leqslant G$}, denoted $\Ord_H (g)$, as the minimum $k\geqslant 1$ such that $g^k\in H$, and we say that the order is zero (instead of infinite) when there is no such $k$. Clearly, this relative notion extends the classical one to the infinite group setting, and offers more potential to investigate the structure of the group; for example, it would allow to refine the classification of nonisomorphic groups with a given spectrum; see~\cite{shi_groups_1997}. Observe that, for $H=\{1\}$, we recover the classical notion of order: the order of $g\in G$ in $H=\{1\}$ is precisely $\Ord_1 (g)=\Ord (g)$. Also, when $H\normaleq G$ is normal, the order of an element $g\in G$ in $H$ is nothing else but the classical order of its equivalence class $gH$ in the quotient group $G/H$, \ie $\Ord_H (g)=\Ord (gH)$.

Some aspects of this notion have already been considered in the literature. For example, given $g\in G$ and $H\leqslant G$, we have $\Ord_H(g)=1$ if and only if $g\in H$; hence the classical membership problem $\MP(G)$ (given $g\in G$ and a finitely generated $H\leqslant G$, decide whether $g\in H$ or not) consists, precisely, on checking whether $\Ord_H(g)=1$ or not. A natural generalization of $\MP(G)$ is then the computability of $\Ord_H(g)$, for given $g$ and $H$. In this paper we investigate this problem as well as other related algorithmic problems, providing examples of groups $G$ where it is computable and others where it not. 

Another connection with notions already present in the literature is purity, or isolation: a subgroup $H\leqslant G$ is called \emph{pure} by some authors (see, for example, \cite{birget_pspace-complete_2000,birget_two-letter_2008,miasnikov_algebraic_2007}), and \emph{isolated} by others (see, for example, \cite{kapovich_stallings_2002}) if, for every $g\in G$ and $k\in \mathbb{Z}\setminus \{0\}$, $g^k\in H$ imply $g\in H$; in our language, this happens precisely when the set of orders in $H$ is included in $\{0,1\}$ . A related concept is that of the \emph{pure closure} of a subgroup $H$, which is defined  as the intersection of all pure subgroups of $G$ containing $H$. These notions appear recurrently in the literature, but usually as technical tools with some other purposes (for example, an amalgamated product $G_1 *_{A_1=A_2} G_2$, or an HNN-extension $G*_A$ has special properties when the corresponding subgroups $A_i\leqslant G_i$ and $A\leqslant G$ are pure). In the present paper, we start a systematic study of these concepts, and some generalizations, by themselves and not just as intermediate technical devices.

We start the paper defining the relative versions of the notions of root and order and their first properties, to then consider some natural algorithmic problems involving them: computation of the order of a given element, computation of the set of elements of a given order, computation of the spectrum (\ie the set of orders), etc. Of course, for a general group $G$, these problems look undecidable, and we provide a collection of specific negative results in this sense: groups having subgroups with non-computably-enumerable spectrum, groups with subgroups having spectrum with unsolvable membership, etc. To show this negative behavior, we use classical constructions like free and amalgamated products; in one of our arguments Bass--Serre theory plays a central role to understand orders of elements in certain subgroups. Also, we use Higman's Embedding Theorem to gain finite presentability in some special situations where this is possible (thanks to a later observation by M. Chiodo that classical Higman's embeddings preserve the orders of elements). As one may suspect, in this part of the paper, direct products of free groups like $\Free[2]\times \Free[2]$ pay their contribution via Mikhailova's construction; see~\cite{mikhailova_occurrence_1958}.

Then, we turn to positive results and focus our attention, first, on free groups. We show that, in this context, all the considered questions are computable: finitely generated subgroups have finite and uniformly computable spectrum; there is a compact, explicit, and algorithmic-friendly description of the set of elements of a given order in a given finitely generated subgroup, and even in a given coset of it (as a certain disjoint union of finitely many appropriate cosets conveniently closed by conjugation); one can compute the so-called $S$-pure closure (a generalization of the notion of pure closure) of a given finitely generated subgroup, which happens to be finitely generated again, etc. In this part of the paper, the graphical representation of subgroups of a free group via Stallings automata will be central, introducing a nice and fruitful geometric component into the discussion.

In the last section we study the case of free times free-abelian groups, \ie groups of the form $\FTA$. This is an apparently simple extension of free groups whose behaviour, however, is known to differ remarkably from that of its factors in certain respects. This family of groups has been source of interest in the last years; see, for example, \cite{delgado_algorithmic_2013,sahattchieve_convex_2015,roy_degrees_2021,carvalho_dynamics_2020}. Although the analysis is more complicated here, with arguments typically combining techniques both from the free and the free-abelian parts, we are able to prove parallel results, conveniently adapted to the new framework. In particular, we see that the spectrum is bounded and computable, and we obtain a finitary description for the sets of elements of a given order. The main tools used in this part of the paper are the techniques developed by the first two authors for studying this family of groups. This includes a vectorized version of Stallings theory, allowing to understand the lattice of (finitely generated) subgroups of $\FTA$ with a combination of graphical techniques (for the free part) and linear algebra over the integers (for the free-abelian part).

The structure of the paper is the following. In \Cref{sec: orders & spectrum} we give the definitions of relative root, order, and spectrum and analyze their first basic properties. In \Cref{sec: algorithmic} we define several algorithmic problems involving these notions, and observe obvious connections,  among them and with classical algorithmic problems like the Word Problem or the Membership Problem. In \Cref{sec: realizing} we prove the existence of groups $G$ and subgroups $H\leqslant G$ with (essentially) arbitrary spectrum, and analyze whether we can assume $G$ finitely generated or finitely presented, and $H$ finitely generated. In \Cref{sec: free times free} we study the case of direct products of free groups, proving for these groups the non existence of an algorithm to decide whether a given integer shows up as an order in a given finitely generated subgroup. Finally, in \Cref{sec: free,sec: free-abelian} we focus on free and free times free-abelian groups, respectively. 

\subsection{Notation and terminology}

The set of natural numbers, denoted by $\NN$, is assumed to contain zero, and we specify conditions on this set using subscripts; for example, we denote by $\NN_{\geq 1}$ the set of strictly positive integers. To avoid degenerate situations, we adopt the convention that no natural number divides $0$, \ie $n\ndivides 0$, for every $n\in \NN$. The number of elements of a set $S$ is denoted by $\card S$.

We use lowercase italic font ($u,v,w,\ldots$) to denote elements of the free group $\Fn$, and lowercase boldface font ($\vect{a},\vect{b}, \vect{c},\ldots$) to denote vectors, \ie elements of the free-abelian group $\ZZ^m$. Uppercase boldface font ($\matr{A},\matr{B},\matr{C},\ldots $) is used to denote matrices, which --- as it happens with homomorphisms in general --- are assumed to act on the right. That is, we denote by $(x)\varphi$ (or simply $x \varphi$) the image of the element $x$ by the homomorphism $\varphi$, and we denote by  $\phi \psi$ the composition \smash{$A \xto{\phi} B \xto{\psi} C$}. Accordingly, we denote by $g^{h} = h^{-1} gh$, the conjugation of an element $g$ by an element~$h$.

In the free times free-abelian context, capitalized calligraphic font is used to distinguish subgroups ($\HH,\mathcal{K},\mathcal{L},\ldots$) and subsets ($\mathcal{S},\mathcal{R}, \mathcal{T}, \ldots$) of $\FTA$, from their counterparts in $\Fn$ and $\ZZ^m$, denoted by $H,K,L,\ldots$ and $R,S,\ldots$ respectively. 

Throughout the paper we write $H \leqslant\fg G$, $H \leqslant\ff G$, $H \leqslant\fin G$, $H \leqslant\alg G$ to denote that the subgroup $H$ is finitely generated, a free factor, of finite index, and algebraic in $G$, respectively. 

Whenever possible, we try to extend the existing terminology to our expanded context, making the original meaning correspond to the trivial or default instance in the general setting. This is the case for the definitions of order, spectrum, periodic group, the order problem, or the torsion problem, for example.

Finally, regarding computability, the terms computable (or decidable) and computably enumerable are preferred to the also very common recursive (or solvable) and recursively enumerable, respectively.

\section{Relative roots, orders, and spectra} \label{sec: orders & spectrum}

\begin{defn} \label{def: k-root}
Let $G$ be a group, let $g \in G$, let $S \subseteq G$, and let $k \in \ZZ $.
 If $g^k \in S$ we say that $g$ is a \defin{$k$-th root} (\defin{$k$-root}, for short) \defin{of $S$}, and that $k$ is a \defin{logarithm of $S$ in base $g$}.
We denote by $_{\scriptscriptstyle{G}}\hspace{-3pt}
\sqrt[k]{S}$ (or simply by $\sqrt[k]{S}$ if the ambient group $G$ is clear) the set of $k$-roots of $S$ in $G$, and by $\Log{g}{S}$ the set of logarithms of $S$ in base $g$. That is, 
 \begin{equation}
g^k \in S \Biimp g\in \sqrt[k]{S} \Biimp k\in \Log{g}{S}.
 \end{equation}
\end{defn}

\begin{defn} \label{defn: rel order}
Let $G$ be a group, let $g \in G$, and let $S \subseteq G$. The \defin{(relative) order of~$g$ in~$S$}, denoted by~$\Ord_{S}(g)$, is the smallest strictly positive logarithm of $S$ in base $g$, if it exists, and zero otherwise; that is,
 \begin{equation}
\Ord_{S}(g) \,=\, \max \left\{0 \,,\, \min \,\set{k \geq 1 \st g^k \in H}\right\} . 
 \end{equation}
For every $k\in \NN$, the set of elements from $G$ of order $k$ in $H$ is denoted by~$\ofo{G}{k}{H}$, \ie
 \begin{equation*}
\ofo{G}{k}{H} \,=\, \Set{g \in G \st \Ord_{H}(g) = k}.
 \end{equation*}
We extend the notation for roots and orders to subsets $I \subseteq \NN$ in the natural way; namely, 
$\sqrt[I]{H} = \bigcup\nolimits_{i \in I} \sqrt[i]{H}$, and $\ofo{G}{I}{H} = \bigsqcup\nolimits_{i \in I} \ofo{G}{i}{H}$.
\end{defn}
Note that:
\begin{itemize}
\item $\Ord_{S}(g)=k\geq 1$ $\Biimp$ $g^1,\ldots ,g^{k-1}\notin S$ and $g^{k} \in S$.
\item $\Ord_{S}(g)=0$ $\Biimp$ $g^i \notin S$, for every $i\geq 1$.
\end{itemize}

\begin{rem}
Our notion of \emph{element of order zero} corresponds to the usual terminology of \emph{element of infinite order} (when referred to the trivial subgroup). This interpretation will be specially meaningful when dealing with orbits and their lengths in \Cref{sec: free}. Also, we will omit the adjective `relative' since the context will be always clear from the notation (if no subset is mentioned, we will assume the trivial subgroup).
\end{rem}

\begin{defn}
Let $S$ be a subset of a group $G$. The set of orders \wrt $S$ of the elements in~$G$, denoted by $\Ord_{S}(G)$, is called the \defin{spectrum} of~$G$ \wrt $S$ (or the \defin{$S$-spectrum} of $G$, for short). That is,
 \begin{align}
\Iset{S}{G} \,=\, \set{\Ord_{S}(g) \st g\in G} \,=\, \set{ k \in \NN \st \ofo{G}{k}{S} \neq \varnothing } \, .
 \end{align}
\end{defn}

We abbreviate $\Iset{\Trivial}{G}=\Iset{}{G}$ for the set of standard orders of the elements in~$G$, which is called the (standard) spectrum of~$G$.
Also, in concordance with the notation for roots, we write $\Ord^{*}_{S}(G)=\Ord_{S}(G) \cap \NN_{\geq 2}$ for the set of proper orders of $G$ in $S$.

\medskip

Throughout the paper we will focus on subsets with algebraic structure, specifically subgroups and cosets. For the former ones the previous (and related) concepts have specially nice properties.

\begin{lem}
Let $H$ be a subgroup of $G$ and let $g\in G$. Then, the set $\Log{g}{H}$ (of logarithms of $H$ in base~$g$) is an ideal of $\ZZ$. Equivalently, for all $k,l \in \ZZ$, $\sqrt[k]{H} \subseteq \sqrt[kl]{H}$ and $\sqrt[k]{H} \cap \sqrt[l]{H} \subseteq \sqrt[k+l]{H}$.\qed
\end{lem}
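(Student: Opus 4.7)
The plan is to verify directly that $\Log{g}{H}$ satisfies the defining properties of an ideal of $\ZZ$: it contains $0$, is closed under addition, and absorbs multiplication by arbitrary integers. Each of these comes immediately from the fact that $H$ is a subgroup of $G$. First, $g^0 = \trivial \in H$, so $0\in \Log{g}{H}$. Next, if $k,l\in \Log{g}{H}$, then $g^k,g^l\in H$, so $g^{k+l}=g^k g^l\in H$, giving $k+l\in \Log{g}{H}$; closure under additive inverses follows similarly using $g^{-k}=(g^k)^{-1}\in H$. Finally, for absorption, if $k\in \Log{g}{H}$ and $l\in \ZZ$ is arbitrary, then $g^{kl}=(g^k)^l\in H$ (using the previous observation about inverses when $l<0$), so $kl\in \Log{g}{H}$.

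For the equivalent formulation in terms of roots, I would simply translate: the absorption property $k\in \Log{g}{H}\Rightarrow kl\in \Log{g}{H}$ rephrases, via \Cref{def: k-root}, as $\sqrt[k]{H}\subseteq \sqrt[kl]{H}$; and closure under addition rephrases as $\sqrt[k]{H}\cap \sqrt[l]{H}\subseteq \sqrt[k+l]{H}$. Note that these two conditions on roots together actually imply all the ideal axioms: taking $l=0$ in the first inclusion yields $\sqrt[k]{H}\subseteq \sqrt[0]{H}=G$ (trivially) while also $0$ is a logarithm; taking $l=-1$ in the first gives closure under negation; and the second gives additive closure. So the two conditions stated in the lemma are indeed equivalent to $\Log{g}{H}$ being an ideal.

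There is no real obstacle here; the only point requiring a hint of care is to make sure the absorption argument handles negative exponents correctly, but this is immediate once one has verified closure under inverses. The statement is essentially the observation that the natural map $\ZZ\to G$, $k\mapsto g^k$, is a group homomorphism, so the preimage of any subgroup is a subgroup of $\ZZ$, and every subgroup of $\ZZ$ is automatically an ideal. I would mention this perspective briefly as motivation, then give the direct verification above as the formal proof.
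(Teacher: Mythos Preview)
Your proof is correct and is exactly the natural verification; the paper itself omits the proof entirely (the lemma is stated with an immediate \qed), treating the result as obvious for precisely the reasons you give. Your closing remark---that $k\mapsto g^k$ is a homomorphism $\ZZ\to G$, so the preimage of a subgroup is a subgroup of $\ZZ$, hence an ideal---is the cleanest one-line justification and matches the spirit in which the paper leaves this unproved.
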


\begin{lem} \label{lem: order in asubgroup}
The order of an element~$g \in G$ in a subgroup~$H\leqslant G$ is the (unique) nonnegative generator of the ideal $\Log{g}{H}$. \qed
\end{lem}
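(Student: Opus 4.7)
The plan is to combine the previous lemma (which tells us $\Log{g}{H}$ is an ideal of $\ZZ$) with the elementary fact that $\ZZ$ is a principal ideal domain, in which every ideal admits a unique nonnegative generator. So let $n \geq 0$ be the unique nonnegative generator of $\Log{g}{H}$; the goal is then to show $n = \Ord_H(g)$ by splitting into the cases $n = 0$ and $n \geq 1$.

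If $n \geq 1$, I would argue as follows. By definition of generator, $\Log{g}{H} = n\ZZ$, so the strictly positive elements of $\Log{g}{H}$ are exactly the positive integer multiples of $n$, of which $n$ itself is the smallest. Hence
\[
\min\{ k \geq 1 \st g^k \in H \} \,=\, \min\{ k \geq 1 \st k \in \Log{g}{H}\} \,=\, n,
\]
and since $n \geq 1$ this minimum agrees with $\Ord_H(g)$ as given in \Cref{defn: rel order}.

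If $n = 0$, then $\Log{g}{H} = \{0\}$, so no strictly positive integer $k$ satisfies $g^k \in H$; by the convention in \Cref{defn: rel order} this gives $\Ord_H(g) = 0 = n$, completing the proof.

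The only subtle point is the justification that $n = 0$ really corresponds to the ``infinite order'' case. This is immediate once one observes that $g^0 = \trivial \in H$ automatically puts $0$ in $\Log{g}{H}$, so the vanishing of the ideal's nonnegative generator is precisely the absence of any positive logarithm — that is, the precise content of $\Ord_H(g)=0$ in the definition. No real obstacle arises; the lemma is essentially a repackaging of the PID structure of $\ZZ$ through the order notation.
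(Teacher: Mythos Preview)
Your proof is correct and is precisely the natural argument the paper has in mind: the lemma is stated with a terminal \qed\ and no written proof, so it is left to the reader as an immediate consequence of the preceding lemma together with the PID structure of $\ZZ$, which is exactly what you spell out.
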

That is, $\Log{g}{H} = \gen{\Ord_{H}(g)}$, $\gen{g} \cap H = \gen{g^{\Ord_{H}(g)}}$,
and we have the order function, namely $\Ord \colon G \times \SGP{\Gpi} \to \NN$, given by:
 \begin{align} \label{eq: order function}
\Ord_{H}(g) &\,=\, \left\{\! \begin{array}{ll} \min \,\set{k>0 \st g^k \in H} & \text{if } \gen{g} \cap H \neq \Trivial \text{ or } g=\trivial , \\ 0  & \text{otherwise.} \end{array} \right.
 \end{align}

\begin{rem}
Recall that, for every $g \in G$ and every $H \leqslant G$,
 \begin{itemize}
\item[(i)] $\Ord_{H}(g)=0 \Biimp \forall k\geq 1, \ g^k \notin H$;
\item[(ii)] $\Ord_{H}(g)=1 \Biimp g\in H$\quad(in particular, $\Ord_{H}(\trivial_{G}) = 1$);
\item[(iii)] $\Ord_{H}(g)=k \geq 2 \Biimp g^1, g^2,\ldots,g^{k-1} \notin H$ and $g^{k} \in H$.
 \end{itemize}
\end{rem}

Note also that if $H$ is a subgroup of $G$, then for every $k \geq 1$,
 \begin{equation} \label{eq: roots <- orders}
\sqrt[k]{H} \,=\, \bigsqcup\nolimits_{i \divides k} \ofo{G}{i}{H},
 \end{equation}
and 
\begin{equation} \label{eq: roots -> orders}
\ofo{G}{k}{H} \,=\, \sqrt[k]{H} \setmin \bigsqcup \nolimits_{k \smallneq i \divides k} \sqrt[i]{H}\,.
 \end{equation}

\begin{rem}
For each $g \in G$ the map $H \mapsto \Ord_H(g)$ is anti-monotone \wrt inclusion in~$\SGP{G}$ and divisibility in~$\NN$; that is, if $H \leqslant K$ then $\Ord_{K}(g) \divides \Ord_{H}(g)$. 
\end{rem}

\begin{rem}
    If $g \in G$ has non-zero order in $H$, then $\Ord_{Hu}(g) \leq \Ord_{H}(g)$, for any $u \in G$. Indeed, if $g,g^2,  \ldots ,g^{n-1} \notin Hu$, and $g^n \in Hu$, then it follows immediately that $g,g^2, \ldots g^{n-1} \notin H$.
\end{rem}

Note that, by definition, $\sqrt[0]{H} = G$ and $\sqrt[1]{H} = H$. The terminology below is meant to exclude these degenerate types of roots.

\begin{defn} \label{def: roots and periodicity}
Let $G$ be an arbitrary group, let $g \in G$, and let $H$ be a subgroup of~$G$. If $\Ord_{H}(g) \geq 1$, then we say that $g$ is a \defin{nontrivial root} of $H$, or that $g$ is \defin{periodic \wrt $H$}. 
The group $G$ is said to be \defin{periodic \wrt $H$} if every element in $G$ is periodic \wrt $H$ (that is, if $0\notin \Ord_{H}(G)$) and \defin{bounded-periodic \wrt $H$} if $\Ord_{H}(G) \subseteq [1,k]$, for some $k \in \NN$. In a similar vein, $g$ is said to be a \defin{proper root} of $H$ if $\Ord_{H}(g) \geq 2$; that is, the proper roots of $H$ are the periodic elements (\wrt $H$) not in $H$.
We denote the set of non-trivial roots of $H$ by $\sqrt{H}
% = \!\!\sqrt[[1,\infty)]{H}
%= \smash{\ofo{G}{1,\infty}{H}}
$, and the set of proper roots of $H$ in $G$ by $\sqrt[*]{H}
% = \!\!\sqrt[[2,\infty)]{H}
%= \smash{\ofo{G}{2,\infty}{H}}
$. Note that $G=\ofo{G}{0}{H} \sqcup \sqrt{H}$ and $\sqrt{H}=H\sqcup \sqrt[*]{H}$.
\end{defn}

\begin{rem}
If $H_1 \leqslant H_2 \leqslant G$, then $\ofo{G}{0}{H_2} \subseteq \ofo{G}{0}{H_1}$, and $\sqrt[k]{H_1} \subseteq \sqrt[k]{H_2}$ for every $k \geq 1$. Observe also that $\sqrt[k]{H_1}$ is not in general a subgroup of $G$ (the product of two $k$-roots of $H_1$ is not in general a $k$-root of $H_1$), unless, for example, when $G$ is abelian.  
\end{rem}

\begin{rem} \label{rem: order wrt normal}
The standard notions of order, periodicity, etc., correspond to our respective notions \wrt the trivial subgroup $H=\Trivial$, in which case we delete the reference to $H$ from the notation. Accordingly, $\Ord(g)$ is the standard order of the element $g$, except for the case of elements of infinite order for which we put $\Ord(g)=0$; also, $\ofo{G}{k}{\phantom{H}}$ is the set of torsion elements in $G$ of (standard) order $k$. More generally, if $H$ is a normal subgroup of $G$, then the order of an element $g \in G$ in $H$ coincides with the standard order of $gH$ in the quotient group $G/H$, \ie $\Ord^G_H(g) \,=\, \Ord^{G/H}(gH)$.
Hence, 
 \begin{equation}
\ofo{G}{k}{H} \,=\, \bigsqcup \ofo{(G/H)}{k}{}.
 \end{equation}
\end{rem}

It is clear from the definition that certain properties of the standard order extend naturally to the order in a subgroup.

\begin{lem}
Let $G$ be a group, let $H$ be a subgroup of $G$, let $g \in G$, and let $d\geq 1$. Then,
 \begin{equation} \label{eq: H-order of a power}
\Ord_H(g^d) \,=\, \frac{\Ord_H(g)}{\gcd(\Ord_H(g),d)}.
 \end{equation}
In particular, if $d\divides \Ord_H(g)$ then $\Ord_H(g^d)=\Ord_H(g)/d$.\qed
\end{lem}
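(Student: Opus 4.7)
The plan is to reduce the identity to an elementary fact about ideals of $\ZZ$, via Lemma~\ref{lem: order in asubgroup}. Set $n = \Ord_H(g)$, so that $\Log{g}{H} = \gen{n}$. From the definitions of logarithm and order,
\[
\Log{g^d}{H} \,=\, \set{k \in \ZZ \st g^{dk} \in H} \,=\, \set{k \in \ZZ \st dk \in \gen{n}}.
\]
It therefore suffices to show that this last set equals $\gen{n/\gcd(n,d)}$, for then a second application of Lemma~\ref{lem: order in asubgroup}, now to $g^d$, yields $\Ord_H(g^d) = n/\gcd(n,d)$.

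Assume first that $n \geq 1$, let $e = \gcd(n,d)$, and write $n = en'$, $d = ed'$ with $\gcd(n', d') = 1$. Then $n \divides dk$ is equivalent to $n' \divides d'k$, which by coprimality of $n'$ with $d'$ forces $n' \divides k$; conversely any $k = jn' \in \gen{n'}$ satisfies $dk = jd'n \in \gen{n}$. Hence the preimage ideal is $\gen{n'} = \gen{n/\gcd(n,d)}$, as desired. The `in particular' clause is then immediate: if $d \divides n$ then $\gcd(n,d) = d$, and the formula collapses to $\Ord_H(g^d) = n/d$.

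If $n = 0$, the ideal $\Log{g}{H}$ is trivial, and $d \geq 1$ forces $k = 0$ to be the only integer solution of $dk = 0$; thus $\Ord_H(g^d) = 0$, matching the right-hand side of \eqref{eq: H-order of a power} read as $0/d = 0$. I do not anticipate any genuine obstacle: the whole argument is a mechanical unpacking of the order-as-generator principle combined with standard integer arithmetic. The only mildly delicate point is a notational one, namely reconciling the statement with the paper's convention that $n \ndivides 0$, which renders the `in particular' clause vacuous when $\Ord_H(g) = 0$.
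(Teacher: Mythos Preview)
Your proof is correct. The paper itself omits the argument entirely (the statement is closed with a bare \qed), so there is nothing to compare against; your route via Lemma~\ref{lem: order in asubgroup}, reducing the question to computing the preimage ideal $\{k\in\ZZ : dk\in\gen{n}\}$, is exactly the natural one given the paper's setup and is presumably what the authors had in mind.
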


Below we recall that the order of an element in a subgroup is indeed an algebraic invariant.

\begin{lem} \label{lem: order modulo autos}
Let $H \leqslant G$, $u,v \in G$, and $\varphi \in \Aut(G)$. Then $\Ord_{H}(u) \,=\, \Ord_{H\varphi}(u\varphi)$. In particular, $\Ord_{H}(u) = \Ord_{H^{v}}(u^{v})$, and the set $\ofo{G}{k}{H}$ (of $G$-elements of order $k$ in $H$) is closed under conjugation by elements from $H$. \qed 
\end{lem}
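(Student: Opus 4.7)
The plan is a direct unfolding of the definition of relative order, exploiting that any automorphism commutes with taking powers and preserves subgroup membership. First I would observe that, for $\varphi \in \Aut(G)$ and any integer $k$, one has $(u\varphi)^k = (u^k)\varphi$, since $\varphi$ is a homomorphism; and since $\varphi$ restricts to a bijection $H \to H\varphi$, this gives the equivalence $u^k \in H \Biimp (u\varphi)^k \in H\varphi$. Thus the sets of positive integers $k$ satisfying $u^k \in H$ and satisfying $(u\varphi)^k \in H\varphi$ coincide; they share the same minimum (or are jointly empty), so by \Cref{defn: rel order} we obtain $\Ord_H(u) = \Ord_{H\varphi}(u\varphi)$.

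For the particular case, I would take $\varphi$ to be the inner automorphism $\gamma_v\colon x \mapsto v^{-1}xv = x^{v}$; then $H\varphi = H^{v}$ and $u\varphi = u^{v}$, so the previous identity specialises to $\Ord_H(u) = \Ord_{H^{v}}(u^{v})$.

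For the closure statement, let $h \in H$. Since $H$ is a subgroup, $H^{h} = h^{-1}Hh = H$, and the conjugation identity yields $\Ord_H(u^{h}) = \Ord_{H^{h}}(u^{h}) = \Ord_H(u)$ for every $u \in G$; hence $u \in \ofo{G}{k}{H}$ if and only if $u^{h} \in \ofo{G}{k}{H}$, as required. There is no real obstacle in this proof: everything reduces to the fact that automorphisms commute with raising to powers and preserve subgroup membership, together with the elementary observation that subgroups are invariant under conjugation by their own elements.
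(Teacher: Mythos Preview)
Your proof is correct and is precisely the direct unfolding the paper has in mind: the paper itself omits the argument entirely (the lemma is marked with \qed\ and no proof), so your expansion simply makes explicit the trivial verification from the definition.
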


From \Cref{rem: order wrt normal} it is clear that if $H$ is normal in $G$, then $\Iset{H}{G}$ is the set of standard orders of the elements in the quotient group $G/H$, \ie $\Iset{H}{G}=\Iset{}{G/H}$; hence $G$ is periodic \wrt a normal subgroup $H$ if and only if $G/H$ is periodic.

The properties below follow easily from the definitions.

\begin{lem}
Let $G$ be a group, and let $H,K$ be subgroups of $G$. Then,
 \begin{enumerate}[ind]
\item $\Iset{H}{G}$ is closed by taking divisors;
\item $1\in \Iset{H}{G}$;
\item if $K\leqslant H \leqslant G$ then $\Iset{K}{H} \subseteq \Iset{K}{G}$;
\item if $K\leqslant H$ then, for every $g\in G$, $\Ord_{H}(g) \divides \Ord_{K}(g) \leq \ind{K}{H} \cdot \ord_{H}(g)$;
\item if $H\leqslant G$ and $\varphi \in \Aut(G)$, then $\Iset{H}{G}=\Iset{H \varphi}{G}$;
\item if $G$ is torsion-free, then $0 \in \Iset{H}{G}$ if and only if $H$ admits a nontrivial $\cap$-complement (see~\cite{delgado_lattice_2020});
\item if $H\normaleq G$ then $\Iset{H}{G}=\Iset{}{G/H}$.\qed
 \end{enumerate}
\end{lem}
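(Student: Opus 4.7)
My plan is to dispatch the seven items in sequence; each follows from the definitions together with either the power formula~\eqref{eq: H-order of a power}, one of the earlier remarks/lemmas, or (for~(iv)) a short pigeonhole argument. I expect (iv) to be the only one requiring genuine work.

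For (i), I would fix $k \geq 1$ in $\Iset{H}{G}$ with a witness $g$ of order $k$ in $H$; for every positive divisor $d$ of $k$, applying~\eqref{eq: H-order of a power} to $g^{k/d}$ gives $\Ord_H(g^{k/d}) = k/\gcd(k, k/d) = d$, so $d \in \Iset{H}{G}$. Closure of $\Iset{H}{G}$ under divisors of $0$ is vacuous under the convention $n \ndivides 0$ for all $n \in \NN$. Item (ii) is immediate from $\Ord_H(\trivial) = 1$. Item (iii) holds because $\Ord_K(h)$ depends only on $h$ and $K$, so any witness $h \in H$ for $k \in \Iset{K}{H}$ also witnesses $k \in \Iset{K}{G}$. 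Item (v) is a consequence of \Cref{lem: order modulo autos}: the automorphism $\varphi$ bijects $G$ and transports $\Ord_H$ to $\Ord_{H\varphi}$, so the two spectra coincide.

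Item (iv) is the main (and only substantive) point. The divisibility $\Ord_H(g) \divides \Ord_K(g)$ is the anti-monotonicity already noted (since $K \leqslant H$). For the upper bound, which has content only when $\ind{K}{H}$ is finite, I would set $n = \Ord_H(g)$ and argue as follows: if $n \geq 1$, the $\ind{K}{H}+1$ powers $g^n, g^{2n}, \ldots, g^{(\ind{K}{H}+1)n}$ all lie in $H$, so pigeonhole on the right cosets of $K$ in $H$ produces indices $1 \leq i < j \leq \ind{K}{H}+1$ with $g^{(j-i)n} \in K$ and $1 \leq j - i \leq \ind{K}{H}$; hence $\Ord_K(g) \leq (j-i)\, n \leq \ind{K}{H} \cdot n$. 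If $n = 0$ then $g^i \notin H \supseteq K$ for every $i \geq 1$, so $\Ord_K(g) = 0$ and the inequality degenerates.

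For (vi), assume $G$ torsion-free. If $0 \in \Iset{H}{G}$, pick $g$ with $g^i \notin H$ for all $i \geq 1$; then $g$ has infinite order and $\gen{g} \cap H = \Trivial$, so $\gen{g}$ is a nontrivial $\cap$-complement of $H$. Conversely, given any nontrivial $\cap$-complement $K$ of $H$, any nontrivial $g \in K$ has infinite order (by torsion-freeness) and $\gen{g} \cap H \subseteq K \cap H = \Trivial$, forcing $\Ord_H(g) = 0$. Finally, (vii) is a direct restatement of \Cref{rem: order wrt normal}: since $\Ord_H^G(g) = \Ord^{G/H}(gH)$ and the quotient map $G \to G/H$ is surjective, the two spectra coincide.
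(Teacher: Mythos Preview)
Your proposal is correct and matches the paper's intent: the paper does not spell out a proof at all, recording the lemma as ``follows easily from the definitions'' and ending with a \qed, so your item-by-item verification (including the pigeonhole argument for~(iv) and the appeal to~\eqref{eq: H-order of a power} for~(i)) simply supplies the details the authors chose to omit.
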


Particularly appealing is the relation between subgroup-relative order  and index, which mimics and extends some classical topics in group theory.

\begin{cor} \label{cor: spectrum and index}
Let $G$ be a group, let $H$ be a subgroup of $G$, and let $1 \leq k < \infty$. Then,
 \begin{enumerate}[ind]
\item\label{item: index k => order < k+1} $\ind{H}{G}=k$ implies $\Iset{H}{G} \subseteq [1,k]$; in particular,
\item\label{item: order 0 => infinte index} if $0\in \Iset{H}{G}$ then $\ind{H}{G}=\infty$. \qed
 \end{enumerate}
\end{cor}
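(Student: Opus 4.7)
The plan is to prove (i) by a standard pigeonhole argument on cosets, and then to deduce (ii) as an immediate contrapositive. Intuitively, (i) is just the relative-order reformulation of the classical fact that every element of $G$ has a power landing in any finite-index subgroup; the definitions developed earlier in the section (particularly \Cref{lem: order in asubgroup} and equation~\eqref{eq: order function}) let us translate this directly into a bound on~$\Iset{H}{G}$.

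For~(i), fix an arbitrary $g\in G$, and consider the $k+1$ right cosets $H, Hg, Hg^{2},\ldots, Hg^{k}$. Since $\ind{H}{G}=k$, by pigeonhole two of them must coincide, say $Hg^{i}=Hg^{j}$ with $0\leq i<j\leq k$. Then $g^{j-i}\in H$ with $1\leq j-i\leq k$, so the set $\{n\geq 1 : g^{n}\in H\}$ is nonempty, and hence, by \eqref{eq: order function}, $\Ord_{H}(g)$ equals its minimum and satisfies $1\leq \Ord_{H}(g)\leq j-i\leq k$. Since $g\in G$ was arbitrary, this shows $\Iset{H}{G}\subseteq [1,k]$.

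For~(ii), we simply take the contrapositive of (i): if $\ind{H}{G}$ were finite, equal to some $k\geq 1$, then by~(i) we would have $\Iset{H}{G}\subseteq [1,k]$, and in particular $0\notin \Iset{H}{G}$. Hence $0\in \Iset{H}{G}$ forces $\ind{H}{G}=\infty$. The argument has no real obstacle; the only point worth flagging is the convention in \eqref{eq: order function} that $\Ord_{H}(g)=0$ precisely when $g^{n}\notin H$ for all $n\geq 1$, which is exactly what the pigeonhole step rules out in the finite-index case.
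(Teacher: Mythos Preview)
Your argument is correct. The paper does not give an explicit proof of this corollary at all: it is marked with a \qed\ symbol in the statement and left to the reader as an immediate consequence of the preceding definitions and lemma. Your pigeonhole argument on cosets is exactly the standard justification the authors have in mind, and your derivation of (ii) from (i) by contraposition is the intended one.
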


\begin{rem}
Observe that, when applied to normal subgroups, \Cref{cor: spectrum and index}\ref{item: index k => order < k+1} resembles the classical \emph{Lagrange's Theorem}; note, however, that orders do not divide the index, in general. On the other hand, the question of whether (or when) the converse to statement \ref{item: order 0 => infinte index} is true has a clear \emph{Burnside Problem} flavour. Concretely, (the contrapositive of) \ref{item: order 0 => infinte index} is saying that every group is periodic \wrt its finite index subgroups. Whether a finitely generated group $G$ is periodic \emph{only} \wrt its finite index subgroups constitutes a natural generalization of the Burnside question. In this paper we see that this converse is true for free (\Cref{prop: order 0 Fn}) and free times free-abelian groups (\Cref{lem: order 0 FTA}).
\end{rem}

\begin{defn}
A group $G$ is said to have \defin{bounded subgroup spectra} if its spectrum \wrt every finitely generated subgroup is bounded. Analogous notions can be defined relative to cosets.
\end{defn}

A well-known notion closely related to the previous discussion is that of \emph{pure subgroup}, sometimes called \emph{isolated subgroup} in the literature; see~\cite{birget_pspace-complete_2000, birget_two-letter_2008, kapovich_stallings_2002} and \cite{miasnikov_algebraic_2007}. We extend this notion a bit further. 

\begin{defn}
Let $H$ be a subgroup of a group $G$ and let $S\subseteq \NN_{\geqslant 1}$. Then, $H$ is said to be \defin{$S$-pure} (in $G$) if 
$\sqrt[S]{H}=H$ (or, equivalently, if~$\Ord_{H}(G) \cap \Divs{S}=\set{1}$). A subgroup $H$ is said to be \defin{pure} (in $G$) if it has no proper roots in $G$; that is, if $\sqrt{H}=H$ or, equivalently, $\Iset{H}{G} \subseteq \set{0,1}$.
\end{defn}

\begin{rem}
Note that nontrivial free factors are always pure, whereas the trivial subgroup is pure in a group $G$ if and only if $G$ is torsion-free.
\end{rem}

\begin{rem}
For every $S \subseteq \NN_{\geq 1}$, the whole group $G$ is obviously $S$-pure, and the intersection of $S$-pure subgroups is again $S$-pure. 
\end{rem}

\begin{defn}
Let $H$ be a subgroup of a group $G$ and $S\subseteq \NN_{\geq 1}$. The \defin{$S$-pure closure} of $H$, denoted $\pcl[S]{H}$, is the smallest $S$-pure subgroup of $G$ containing $H$, \ie 
 \begin{equation}
\pcl[S]{H} \,=\, \bigcap \Set{K \st H\leqslant K\leqslant G \text{ and $K$ is  $S$-pure in $G$}}.
 \end{equation}
In particular, the \defin{pure closure} of $H$ is $\pcl{H}= \pcl[\NN_{\geq 1}]{H}$, namely the intersection of all pure subgroups of $G$ containing $H$.
\end{defn}

\section{Algorithmic considerations} \label{sec: algorithmic}

One of the main goals of the present paper is to understand the algorithmic behavior of the concepts introduced. In particular,
the order and spectrum functions
constitute  natural sources for algorithmic questions, starting by their own computability, and going through the algorithmic nature of some of the involved subsets. 

We recall that a total function $f \colon \NN \to \NN$ is said to be \defin{computable} if there exists an algorithm (a Turing machine) such that, on every input $n \in \NN$, outputs the image of $n$ by~$f$; otherwise, it is said to be \defin{incomputable}. Similarly, a subset $S \subseteq \NN$ is said to be  \defin{computable} (\aka \defin{decidable} or \defin{recursive}) if its indicator function is computable, \ie if there exists an algorithm for deciding whether any given $n\in \NN$ belongs to~$S$ or not; $S$ is called \defin{incomputable} or \defin{undecidable} otherwise. Finally, a subset $S \subseteq \NN$ is said to be \defin{computably enumerable} (c.e.) (\aka \defin{semicomputable}, \defin{semidecidable} or \defin{recursively enumerable}) if there exists an algorithm able to list the elements in $S$. It is easy to see that $S$ is computable if and only if both $S$ and its complement $\NN \setmin S$ are computably enumerable. We extend these notions to any functions or sets able to be appropriately encoded into $\NN$. For details, we refer the reader to general references on computability; see, for example, \cite{davis_computability_1985,bridges_computability_2013,soare_turing_2018}.

In this section, we state some natural algorithmic questions related to roots, orders, and spectra (see~\Cref{tab: problems}). Some of them --- such as the \emph{purity problem}
or the classical subgroup membership problem --- have been considered before. In this paper, we aim to provide a general framework for this kind of questions and extend their scope from free groups to some related families.

A problem is \defin{computable} (or \defin{decidable} if it is a decision problem --- \ie it has only two possible outputs) if there exists an algorithm (formally, a Turing machine) able to solve every instance of it. We shall abuse language and refer directly to the algebraic objects involved in the problems instead of their codification (as an input or output of the corresponding Turing machine). For example, we shall refer to elements $g \in G$ instead of words in the generators $X^{\pm}$, and to finitely generated subgroups $H \leqslant \fg G$ instead of tuples of words in $X^{\pm}$ representing some generators of the subgroup. It is not difficult to see that this does not suppose any loss of generality; see, \eg \cite{miller_iii_combinatorial_2004}.

The area of algorithmic group theory dates back to the beginning of the last century with the seminal 1911 paper \cite{dehn_uber_1911}, where \citeauthor{dehn_uber_1911} identifies the \defin{word problem} (\WP) (together with the \defin{conjugacy problem} (\CP), and the \defin{isomorphism probem (\IP)}) as one of the fundamental decision problems in group theory:

\begin{named}[Word Problem, $\WP(G)$] \label{defn: WP}
Decide, given an element $g\in G$ (as a word in $X^{\pm}$), whether~$g =_{G} \trivial$ or not.
\end{named}

Of course here (as well as along the present paper) we will assume that $G$ is a group given by a computable presentation~${\pres{X}{R}}$, \ie a presentation with recursively enumerable (possibly finite) sets of generators $X$, and relations $R$. This implies that we can computably enumerate the elements in $G$, and the elements in the normal closure $\normalcl{R} \subseteq \Free[X]$; this last property is sometimes referred to as the \defin{positive part of the word problem} in $G$, denoted by~$\WP^{+}(G)$, being solvable (in the sense that it provides an enumeration of \emph{all} the words on the generators representing the trivial element from $G$). One first easy consequence in this context, is that the set of nontrivial roots of any finitely generated subgroup is computably enumerable as well.

\begin{lem} \label{lem: ce -> root ce}
If $G$ is a computably presented group, then there exists an algorithm which, on input a finitely generated subgroup $H$ of $G$, enumerates the elements in the set~$\sqrt{H}$ (of nontrivial roots of $H$ in $G$). 
\end{lem}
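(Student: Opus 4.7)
The plan is to exploit the fact that membership of $g$ in $\sqrt{H}$ is a purely positive condition: $g \in \sqrt{H}$ if and only if there exist $k \geq 1$ and an expression of $g^k$ as a product of generators of $H$ and their inverses. Since the ambient group $G$ is computably presented, both $G$ and the finitely generated subgroup $H$ are computably enumerable, and the above existential witness can be checked by semi-decision. A straightforward dovetailing then enumerates $\sqrt{H}$.

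First, I would fix a computable presentation $\pres{X}{R}$ of $G$ (so that $\WP^{+}(G)$ is solvable, as remarked in the text) and let $Y = \{h_1, \ldots, h_n\} \subseteq \Free[X]$ be a finite set of words representing generators of $H$, supplied as input. Then both the set $W_G$ of words on $X^{\pm}$ (representatives of elements of $G$) and the set $W_H$ of words on $Y^{\pm}$, rewritten as words on $X^{\pm}$ (representatives of elements of $H$), are computably enumerable.

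Next, I would observe that, for any three words $u, v, w$ on $X^{\pm}$ and any integer $k \geq 1$, the equality $u^k =_G v$ is semi-decidable: it holds if and only if the word $u^k v^{-1}$ lies in the normal closure $\normalcl{R}$, which can be detected by running the enumeration of trivial words provided by $\WP^{+}(G)$ and waiting until $u^k v^{-1}$ appears. Dovetailing this semi-decision over all triples $(u, k, v) \in W_G \times \NN_{\geq 1} \times W_H$, and outputting $u$ whenever the equality $u^k =_{G} v$ is confirmed, produces precisely the elements $g \in G$ such that $g^k \in H$ for some $k \geq 1$; that is, the set $\sqrt{H}$.

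There is no substantial obstacle here; the content is really the remark that $\sqrt{H}$ is defined by a $\Sigma_1^0$ predicate (existence of an exponent $k$ and of a word on $Y^{\pm}$ equal to $g^k$ in $G$), and that each ingredient of this predicate is computably enumerable when $G$ is computably presented. The only care required is the standard bookkeeping of the dovetailing, to ensure that every triple $(u, k, v)$ is eventually considered and that the enumeration of $\normalcl{R}$ is advanced sufficiently for every candidate equality.
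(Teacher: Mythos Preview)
Your proof is correct and follows essentially the same approach as the paper: enumerate triples consisting of an element of $G$, a positive integer $k$, and an element of $H$, and use $\WP^{+}(G)$ to semi-decide $g^k =_G h$ via dovetailing. The only difference is cosmetic (you phrase it in terms of a $\Sigma_1^0$ predicate, while the paper spells out the diagonal procedure explicitly); there is a harmless typo where you mention ``three words $u, v, w$'' but only use $u$ and $v$.
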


\begin{proof}
Let $H$ be a finitely generated subgroup of $G$. From the hypotheses, we can assume computable enumerations $(g_i)_{i\geq 0}$ and $(h_j)_{j \geq 0}$ of the elements in $G$ and in $H$, respectively (with possible repetitions). Hence, we can use a standard diagonal argument to produce an algorithm enumerating the set  $G\times H\times \NN_{\geq 1}=\set{(g_i, h_j,k) \st i\geq 0,\, j\geq 0,\, k\geq 1}$. Now, it is enough to start this last enumeration and at every step, say corresponding to $(i,j,k)$, start checking whether the element $g_i^{k}h_j^{-1}$ appears in the enumeration provided by $\WP^+(G)$ (meaning, when this is the case, that $g_i^{k} =_{G} h_j$): meanwhile this does not happen, keep the enumeration of $\WP^+(G)$ running (and simultaneously keep starting new instances); and, when one of the started instances $g_i^{k}h_j^{-1}$ shows up in the enumeration of $\WP^+(G)$ (if so), add $g_i$ to the (initially empty) list $L$ of the already found nontrivial roots of $H$ (and, optionally, stop the part of the process involving $g_i$). Since this procedure goes eventually through \emph{every} non-zero power of \emph{every} element in $G$, and \emph{every} element in $H$, it will eventually identify (and list in $L$) \emph{all} the elements in the set $\sqrt{H}$, whereas it will keep running indefinitely over the instances containing elements from $G$ of order zero in $H$, if any. 
\end{proof}

In \Cref{lem: ce -> root ce} we proved that if $G$ is computably enumerable then every set $\sqrt{H}$ (of nontrivial roots of a finitely generated subgroup $H\leqslant \fg G$) is computably enumerable. Note that for $\sqrt{H}$ to be computable it is still not enough to have computable $\WP(G)$, since we need a way to guarantee that, for any given $i$ and $k$, no further matches $g_i^{k} =_{G} h_j$ involving new $h_j$'s will appear after some point. Let us state this last problem formally.

\begin{named}[Torsion Problem,  $\TP(G)$]
Decide, given an element $g\in G$ and a finitely generated subgroup ${H\leqslant\fg G}$, whether $g$ is periodic \wrt $H$ (\ie whether $\Ord_{H}(g) \neq 0$).
\end{named}

By definition, if $\TP(G)$ is computable, then every set of the form $\sqrt{H}=G\setmin \ofo{G}{0}{H}$ (of nontrivial roots of a finitely generated  subgroup $H\leqslant \fg G$) is computable as well. Hence, if $\TP(G)$ is computable, then, given an input $g \in G$, we can decide whether it has order zero in $H$, and if not we can enumerate powers of $g$ in parallel with elements in $H$ and use $\WP^{+}(G)$ to reach a guaranteed match. Note, however, that we can still not compute the exact order $\Ord_{H}(g)$ since a match $g^k =h_j$ could (unless $k=1$), in principle, be improved by a later one $g^{k\!'} = h_{j\!'}$, where $k'<k$.
Conversely, it is worth mentioning here that the computability of $\WP(G)$ does not imply that of the torsion problem \emph{even for the standard order} (\ie for $H=\Trivial$); see~\cite{mccool_unsolvable_1970}.
It turns out that, if we replace $0$ by $1$ in the condition defining the \TP, we recover a well-known and important generalization of the word problem that precisely does the missing work to compute the order. 

\begin{named}[(Subgroup) Membership Problem, $\MP(G)$]
Decide, given an element $g\in G$ and a finitely generated subgroup ${H\leqslant\fg G}$, whether $g\in H$ (\ie whether $\Ord_{H}(g) =1$).
\end{named}

That is, $\MP(G)$ is computable if all the finitely generated subgroups of $G$ are uniformly computable. Moreover, the computability of $\MP(G)$ allows us to refine the argument in \Cref{lem: ce -> root ce} to computably enumerate the set of nonzero orders \wrt $H$, and the set of elements in $G$ of a given nonzero order in $H$.

\begin{lem}
If $G$ is a computably presented group and $\MP(G)$ is computable then, for every finitely generated subgroup $H \leqslant G$, the set $\Ord_{H}^{_{\geq 1}}(G)$ (of nonzero orders \wrt $H$), and  every set $\ofo{G}{k}{H}$ (of elements in $G$ of order $k\ge1$ \wrt $H$) are computably enumerable.
\end{lem}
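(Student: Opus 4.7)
The crucial first observation is that, under the hypotheses, the predicate ``$\Ord_{H}(g)=k$'' is \emph{decidable} for any given pair $(g,k) \in G \times \NN_{\geq 1}$. Indeed, by the characterization in the remark after \Cref{lem: order in asubgroup}, this amounts to checking that $g^{k} \in H$ and $g^{i} \notin H$ for every $i=1,\ldots,k-1$; that is, to running $\MP(G)$ on finitely many inputs of the form $(g^{i}, H)$. Since each of these queries terminates (by the computability of $\MP(G)$), the predicate is decidable. This will be the only algorithmic tool needed.

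For the sets of elements of a given order, fix $k \geq 1$ and let $g_{0}, g_{1}, \ldots$ be a computable enumeration of $G$ (available from the computable presentation). The algorithm successively considers each $g_{i}$, decides whether $\Ord_{H}(g_{i}) = k$ using the decision procedure above, and lists $g_{i}$ in the output whenever the answer is positive. This enumerates $\ofo{G}{k}{H}$ exactly.

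For the set of nonzero orders $\Ord_{H}^{\geq 1}(G)$, the natural attempt (for each $g_{i}$, test $g_{i}^{1}, g_{i}^{2}, \ldots$ sequentially until a power lands in $H$ and then output the resulting order) fails as a single deterministic procedure, because elements of order zero would trap the computation forever. This is overcome by \emph{dovetailing}: spawn, for each $g_{i}$ from the enumeration of $G$, a process $P_{i}$ that tests $g_{i} \in H$, $g_{i}^{2} \in H$, $g_{i}^{3} \in H$, \ldots\ in order (each test decided by $\MP(G)$), and outputs the first witnessing exponent (if any). Run all $P_{i}$ in parallel in the standard diagonal fashion. Whenever some $P_{i}$ halts with output $k$, append $k$ to the growing list. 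Processes corresponding to elements of order zero never halt, but they do not block the dovetail, so every nonzero order $\Ord_{H}(g_{i})$ is eventually produced.

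The only conceptual pitfall is distinguishing decidability from enumerability: one must not try to decide $\Ord_{H}(g)=k$ by \emph{searching} for the minimal exponent from scratch (which would require handling the order-zero case), but instead exploit the fact that, for a given candidate $k$, minimality is a finite, decidable check. Once this is noted, both statements reduce to routine enumeration and dovetailing, with no further obstacle.
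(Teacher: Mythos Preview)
Your proposal is correct and follows essentially the same approach as the paper: both use $\MP(G)$ to decide membership of powers $g^{i}\in H$ and a diagonal/dovetailing enumeration over $G\times\NN_{\geq 1}$ to avoid being trapped by elements of order zero. Your presentation is slightly more modular in that you first isolate the decidability of the predicate ``$\Ord_{H}(g)=k$'' for fixed $k\geq 1$ (so that enumerating $\ofo{G}{k}{H}$ needs no dovetailing at all), whereas the paper runs a single diagonal procedure that simultaneously builds both the list of nonzero orders and the lists $L_{k}$; but this is a cosmetic difference, not a different idea.
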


\begin{proof}
Since $G$ is computably enumerable, we can enumerate the set $G\times \NN_{\geq 1} =\set{(g_i,k) \st i\geq 0, k\geq 1 }$ in such a way that, for each $i\geq 0$, the elements $(g_i, k)$ appear in increasing order of the second coordinate (this can be easily done by using a standard diagonal argument). Then, at every step of the enumeration --- say corresponding to $(i,k)$ --- we use the $\MP(G)$ to check whether $g_i^{k} \in H$. If the answer is \nop, we do nothing; otherwise we add $k$ to an (initially empty) list $O$, we add $g_i$ to an (initially empty) list $L_k$, and we kill the part of the process involving $g_i$. It is clear from construction that, this process will enumerate the set of nonzero orders \wrt $H$ in the list $O$, and, for every $k \geq 1$, the set of elements of order $k$ \wrt $H$ in the list $L_k$.
\end{proof}

Note also that, if $\MP(G)$ is decidable, then it is straightforward to compute the order $\Ord_{H}(g)$ of any periodic element $g \in G$ (see the proof of \Cref{lem: OP}).
Therefore, only the ability to recognize elements of order zero is missing in order to achieve the computability of the full order function  $(H,g) \mapsto \Ord_{H}(g)$ defined in \eqref{eq: order function}. The correspondig algorithmic problem is called the \defin{order problem} in accordance with the usual terminology for the case~${H = \Trivial}$.

\begin{named}[Order Problem, $\OP(G)$]
Output, given an element $g\in G$ and a finitely generated subgroup ${H \leqslant\fg G}$, the order of $g$ \wrt $H$.
\end{named}

Of course, if $\OP(G)$  is computable, then, for every $H \leqslant \fg G$ and every $k \geq 0$, the set $\ofo{G}{k}{H}$ (of elements of order $k$ \wrt $H$) is  computable as well. It is also clear that the computability of $\MP(G)$ together with that of $\TP(G)$ immediately provides the computability of $\OP(G)$. Finally, we add one last algorithmic property that combined with \MP\ allows us to detect elements of order zero, and hence compute the order problem. 

\begin{named}[Order Boundability Problem,  $\OBP(G)$]
Output, given a finitely generated subgroup ${H\leqslant G}$ and an element $g\in G$, a finite upper bound for $\Ord_{H}(g)$.
\end{named}

\begin{lem} \label{lem: OP}
Let $G$ be a finitely presented group. Then, the following statements are equivalent:
\begin{enumerate}[dep]
\item \label{item: OP} $\OP(G)$ is computable;
\item \label{item: MP+TP} $\MP(G)$ and $\TP(G)$ are computable;
\item \label{item: MP+OBP} $\MP(G)$ and $\OBP(G)$ are computable.
\end{enumerate}
\end{lem}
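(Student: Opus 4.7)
The plan is to close the cycle $\ref{item: OP} \Rightarrow \ref{item: MP+TP} \Rightarrow \ref{item: OP}$, and separately to establish $\ref{item: OP} \Rightarrow \ref{item: MP+OBP} \Rightarrow \ref{item: OP}$. None of the implications is individually deep; the content of the lemma is really a clean bookkeeping of which pieces of information about $\Ord_H(g)$ are enough to reconstruct the full order function. The hypothesis of finite presentability is not explicitly used in the reductions themselves, but it is the standard setting under which $\MP$, $\TP$, and $\OBP$ are stated and under which $\WP^{+}(G)$ is available.

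For the two implications $\ref{item: OP} \Rightarrow \ref{item: MP+TP}$ and $\ref{item: OP} \Rightarrow \ref{item: MP+OBP}$, I would, on input $(g,H)$, compute $k := \Ord_H(g)$ using the hypothesized algorithm for $\OP(G)$, and then read off the remaining answers: $g \in H$ iff $k = 1$ (which settles $\MP$); $g$ is periodic \wrt $H$ iff $k \neq 0$ (which settles $\TP$); and $\max(k,1)$ is a finite upper bound on $\Ord_H(g)$ (which settles $\OBP$). All three reductions are immediate from \Cref{defn: rel order} and \Cref{def: roots and periodicity}.

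For $\ref{item: MP+TP} \Rightarrow \ref{item: OP}$, given $(g,H)$ I would first invoke $\TP$; if $\Ord_H(g) = 0$, return $0$. Otherwise $\gen{g} \cap H \neq \Trivial$, so enumerating $g, g^2, g^3, \ldots$ and testing each with $\MP$ must eventually succeed, and the first exponent $k \geq 1$ with $g^k \in H$ is by definition $\Ord_H(g)$. For $\ref{item: MP+OBP} \Rightarrow \ref{item: OP}$, I would call $\OBP$ to obtain a concrete natural number $N \geq \Ord_H(g)$, then test $g^{1}, g^{2}, \ldots, g^{N}$ via $\MP$; if some $g^k$ with $k \in [1,N]$ lies in $H$, the least such $k$ is the answer; otherwise $\Ord_H(g)$ cannot be a positive integer (it would lie in $[1,N]$ and hence be detected), so it must equal $0$.

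The only subtlety, and essentially the only place where one has to be careful, is in the interpretation of $\OBP$: the algorithm must always halt and produce a concrete natural number, \emph{including} the case $\Ord_H(g) = 0$, in which case ``upper bound'' is vacuously satisfied by any natural number it chooses to output. Granted this reading (which is already implicit in the statement of $\OBP$), the finite search range $[1,N]$ used in the reduction $\ref{item: MP+OBP} \Rightarrow \ref{item: OP}$ is always well-defined, and the argument goes through without further work.
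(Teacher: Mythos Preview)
Your proof is correct and follows essentially the same route as the paper: both derive $\MP$, $\TP$, $\OBP$ immediately from $\OP$ by reading off properties of the computed order, and both reverse implications proceed by first ruling out or bounding the search (via $\TP$ or $\OBP$) and then scanning powers with $\MP$. Your remark that finite presentability is not actually invoked in the reductions is accurate and worth noting.
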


\begin{proof} \label{proof: TFAE}
It is clear from the definitions that the computability of $\OP(G)$ implies that of $\MP(G)$, $\TP(G)$, and $\OBP(G)$. Conversely, if both $\TP(G)$ and $\MP(G)$ are computable, then, given $g \in G$ and $H\leqslant\fg G$, we can use $\TP(G)$ to decide whether $\Ord_{H}(g) = 0$ and, if not, use $\MP(G)$ to find the first integer $k\geq 1$ such that $g^k \in H$ (which is guaranteed to exist by hypothesis). Finally, if both $\MP(G)$ and $\OBP(G)$ are computable then, given $g\in G$ and $H\leqslant\fg G$, it is enough to find  the first positive power of $g$ belonging to $H$ (using \MP(G)) up to the bound provided by $\OBP(G)$. If such a power exists, it is the searched order $\Ord_{H}(g)$, otherwise the order of $g$ \wrt $H$ must be zero. This concludes the proof.
\end{proof}

Let us now move to algorithmic questions related to the spectrum function $H \mapsto \Ord_{H}(G)$, where $H$ is a finitely generated subgroup of $G$. 

Note that the purity problem (already studied in~\cite{birget_pspace-complete_2000, kapovich_stallings_2002, miasnikov_algebraic_2007} for free groups) admits a  natural restatement in terms of the subgroup spectrum. 

\begin{named}[Purity Problem,  $\PP(G)$]
Decide, given a finitely generated subgroup ${H \leqslant\fg G}$,  whether~$H$ is pure or not, \ie whether $\Iset{H}{G} \subseteq \set{0,1}$.
\end{named}

In the same vein, we present other natural algorithmic questions involving subgroup spectra, which are interesting to us. 

\begin{named}[Torsion Group Problem,  $\TGP(G)$]
Decide, given a finitely generated subgroup ${H \leqslant\fg G}$, whether $G$ is periodic \wrt $H$, \ie whether $0\notin \Iset{H}{G}$.
\end{named}

% \begin{named}[Spectrum Finiteness Problem,  $\SFP(G)$]
% Decide, given a
% % finite set $S$ of words in $X$,
% finitely generated subgroup ${H \leqslant\fg G}$,
% whether
% the spectrum $\Iset{H}{G}$ is finite.
% \end{named}

% \R{
% \begin{named}[Spectrum Supremum  Problem,  $\ISP(G)$]
% Output, given a
% %finite set $S$ of words in~$X^{\pm}$,
% finitely generated subgroup ${H \leqslant\fg G}$,
% the supremum of the spectrum $\Iset{H}{G}$.
% \end{named}}

\begin{named}[Spectrum Membership Problem,  $\SMP(G)$]
Decide, given $m \in \NN$ and a finitely generated subgroup ${H\leqslant\fg G}$, whether $G$ has elements of order $m$ in $H$, \ie whether $m\in \Iset{H}{G}$.
\end{named}

Hence, $\SMP(G)$ being computable means that the spectra of $G$ \wrt finitely generated subgroups $H\leqslant \fg G$ are \emph{uniformly} computable. Note that this is a sort of computability of the spectrum function. However, we shall be interested in producing the explicit spectrum whenever it is finite; this is called the \defin{finite spectrum problem}.

\begin{named}[Finite Spectrum Problem,  $\FSP(G)$]
Decide, given a finitely generated subgroup ${H \leqslant\fg G}$, whether the spectrum $\Iset{H}{G}$ is finite and, if so, output it.
\end{named}

Note that the computability of $\MP(G)$ only allows to computably enumerate the elements in the spectrum $\Ord_{H}(G)$. In order to guarantee the computability of $\FSP(G)$ it is still necessary to algorithmically detect when the enumeration has finished. This is precisely the property that defines the algorithmic problem below.

\begin{named}[Spectrum Boundability Problem,  $\SBP(G)$]
Output, given a finitely generated subgroup ${H \leqslant\fg G}$, a finite upper bound for $\Iset{H}{G}$ if it is finite, and $\infty$ otherwise.
\end{named}

\begin{table}[H] 
\centering
\begin{tabular}{llll} 
    \toprule
     Problem name & & Input & Output \\ \midrule
     Order problem & $\OP(G)$  & ${H \leqslant\fg G} $, $g\in G$ & $\Ord_{H}(g)$ \\[3pt]
     \quad Torsion problem& $\TP(G)$  & ${H \leqslant\fg G} $, $g\in G$ & $\Ord_{H}(g) \geq 1$? \\[3pt]
     \quad Subgroup membership & $\MP(G)$ & ${H \leqslant\fg G} $, $g\in G$  & $g \in H$?  \\[3pt]
     \quad\quad Word Problem & $\WP(G)$ & $g\in G$  & $g = \trivial$?  \\[3pt]
     \quad Order boundability & $\OBP(G)$ & ${H \leqslant\fg G} $, $g\in G$ & $k : \Ord_{H}(g) \leq k$\\[3pt]
     Finite spectrum & $\FSP(G)$ & ${H \leqslant\fg G}$ & %$\Iset{H}{G}$ if finite, and $\infty$ otherwise
     \!\!\!\Big\{\!\!\!
     $\begin{smallmatrix}
     \Iset{H}{G} \text{, if finite}\\
     \infty \text{, otherwise}\hfill
     \end{smallmatrix}
     $ \\[4pt]
    \quad Purity problem & $\PP(G)$ & ${H \leqslant\fg G}$ & $\Iset{H}{G} \subseteq \set{0,1}$?  \\[3pt]
    Spectrum membership & $\SMP(G)$ & ${H \leqslant\fg G}$, $k \in \NN$ & $ k \in \Iset{H}{G}?$ \\[3pt]
    \quad Torsion-group problem & $\TGP(G)$ & ${H \leqslant\fg G}$ & $0 \in \Iset{H}{G}$?  \\[3pt]
    Spectrum boundability & $\SBP(G)$ & ${H \leqslant\fg G}$ &
     \!\!\!\Big\{\!\!\!
     $\begin{smallmatrix}
     k\,\geq\, \sup \Iset{H}{G} \text{, if finite}\\
    %  \card{\Iset{H}{G}}< \infty\\
     \infty \text{, otherwise}\hfill
     \end{smallmatrix}
     $ \\[4pt]
     \bottomrule
\end{tabular}
\caption{Algorithmic problems related to order and spectrum}
\label{tab: problems}
\end{table}

As can be seen in the table, we are mainly interested in the uniform versions (where the subgroup $H$ is part of the input) of the problems. If needed, the corresponding non-uniform versions (for a fixed subgroup $H$) shall be denoted by adding a subscript $H$ to the problem name, e.g. $\MP_{H}(G)$ consists of deciding, on input $g \in G$, whether
$g \in H$; and $\OP_{\trivial}(G)$ is the standard order problem (\wrt the trivial subgroup) in $G$.

%\begin{note}
Throughout the paper, we shall consider some of the coset counterparts of the problems in \Cref{tab: problems}.
In order not to overload the terminology, we will state these properties explicitly, and we shall use problem names only for the cases involving subgroups.
%\end{note}

\section{Realizing sets as subgroup spectra}\label{sec: realizing}

A very natural question in our context is which subsets of $\NN$ are indeed realizable as spectra of groups \wrt (finitely generated) subgroups. In this section we show that for every subset ${\mathcal O}\subseteq \NN$ closed under divisors and containing~$0$, there exists a finitely generated torsion-free group $G$, and a finitely generated subgroup $H\leqslant\fg G$, such that $\Ord_{H}(G) = {\mathcal O}$. Note that the same assertion is not true if we impose $G$ to be  computably (or finitely) presented, just by cardinality reasons: there are uncountably many such sets ${\mathcal O}$ (just take ${\mathcal O}=\{0,1\}\cup P$, for any set of primes $P$, for example), and only countably many pairs $(G,H)$ with $G$ computably presented and $H\leq G$ finitely (or computably) generated. 

Recall that a subgroup $H$ of a group $G$ is called \defin{malnormal} if $H^g \cap H=\Trivial$ for every $g\in G \setmin H$. Observe that any malnormal subgroup of a torsion-free group is pure, since if $g^k \in H$ for some $k\geq 1$ and some $g\in G \setmin H$, then $g^k \in H^g \cap H=\Trivial$, in contradiction with $G$ being torsion-free.

\begin{prop}\label{prop: amalgam1} 
Let $H$, $K$ be torsion-free groups, let $H_0$ be a proper malnormal subgroup of $H$, and let $K_0$ be a proper subgroup of $K$ isomorphic to $H_0$. Then, for any amalgamated product $G $ of $H$ and $K$ with amalgamated subgroups $H_0$ and $K_0$ (and amalgamation taking place along any isomorphism between them), $\Iset{H}{G}=  \Iset{K_0}{K} \cup \{ 0 \}$.
\end{prop}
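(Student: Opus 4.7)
The plan is to exploit the Bass--Serre tree $T$ of the splitting $G = H *_{H_0} K$, on which $G$ acts with vertex stabilizers conjugate to $H$ or $K$, and edge stabilizers conjugate to $H_0$. Taking the base vertex $v_0 = H$ (so its stabilizer is $H$), an element $g \in G$ has nonzero order in $H$ if and only if some positive power of $g$ fixes $v_0$. Since any nonzero power of a hyperbolic element is again hyperbolic and thus fixes no vertex, such a $g$ must be elliptic. Recall also that $G$ is torsion-free, being an amalgam of two torsion-free groups over a common subgroup.

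A key preliminary observation is that, under our hypotheses, $H_0$ is pure in $H$: if $h \in H$ and $h^n \in H_0$ for some $n \geq 1$, then $h^n = h \cdot h^n \cdot h^{-1} \in H_0 \cap h H_0 h^{-1}$, so either $h \in H_0$ or (by malnormality) $h^n = 1$, whence torsion-freeness of $H$ forces $h = 1 \in H_0$.

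Now let $g \in G$ with $\Ord_H(g) = n \geq 1$ and $g \notin H$. Being elliptic and outside the stabilizer of $v_0$, $g$ fixes some vertex different from $v_0$; pick $v_1$ in the fixed-point set of $g$ closest to $v_0$, at distance $d \geq 1$. I aim to show $d = 1$ with $v_1$ of type $K$, so that $g = h_0 c h_0^{-1}$ with $h_0 \in H$ and $c \in K$. If $d \geq 3$, the geodesic from $v_0$ to $v_1$ contains an interior vertex of type $H$; the two consecutive edges of the geodesic at this vertex have stabilizers that are distinct conjugates of $H_0$ inside the stabilizer of that vertex, and the malnormality of $H_0$ in $H$ forces their intersection to be trivial. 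Since $g^n$ fixes the whole geodesic, we get $g^n = 1$, hence $g = 1 \in H$ by torsion-freeness, a contradiction. If $d = 2$, write the geodesic as $v_0, h_0 K, h_0 k_0 H$ with $h_0 \in H$ and $k_0 \in K \setminus K_0$, and $g = h_0 k_0 h k_0^{-1} h_0^{-1}$ for some $h \in H$; the condition $g^n \in H$ reduces (via amalgam normal form) to $h^n \in H_0$, and the purity step then gives $h \in H_0$. But then $c := k_0 h k_0^{-1} \in K$ and $g = h_0 c h_0^{-1}$ actually fixes the intermediate vertex $h_0 K$, contradicting the minimality of $d$. Hence $d = 1$, $v_1 = h_0 K$ for some $h_0 \in H$, and $g = h_0 c h_0^{-1}$ with $c \in K$, as desired.

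The order computation then follows: $g^n = h_0 c^n h_0^{-1} \in H$ if and only if $c^n \in H \cap K = K_0$, so $\Ord_H(g) = \Ord_{K_0}(c) \in \Iset{K_0}{K}$, giving $\Iset{H}{G} \setminus \{0\} \subseteq \Iset{K_0}{K}$. The reverse inclusion follows by taking $h_0 = 1$: for any $c \in K \leqslant G$ one gets $\Ord_H(c) = \Ord_{K_0}(c)$. Finally, $0 \in \Iset{H}{G}$ is witnessed by any product $hk$ with $h \in H \setminus H_0$ and $k \in K \setminus K_0$ (both sets are nonempty since $H_0$ and $K_0$ are proper): this element is cyclically reduced of length $2$, hence hyperbolic, so no positive power lies in $H$. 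The main technical hurdle is the $d = 2$ case, where amalgam reduction must be combined with the purity consequence of malnormality to exhibit a fixed vertex strictly closer to $v_0$ than $v_1$.
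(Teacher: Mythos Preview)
Your proof is correct and follows essentially the same Bass--Serre tree approach as the paper: both rely on the key fact that malnormality of $H_0$ in $H$ forces any length-$2$ segment through an $H$-vertex to have trivial stabilizer, together with the purity of $H_0$ in $H$. The only difference is organizational --- you streamline the argument by taking $v_1$ to be the fixed vertex of $g$ closest to $v_0$ and ruling out $d\geq 2$, whereas the paper splits directly into the two cases $w\in gHg^{-1}$ and $w\in gKg^{-1}$ and analyzes the geodesic from $v_0$ to $g^{-1}H$ in each; both routes reduce to the same edge-stabilizer computation.
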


(Note that in the above proposition none of the groups nor subgroups is required to be finitely generated.)

\begin{proof}
We will use some basic Bass--Serre theory in the proof (and this is the only place in the paper where we use it). For an introduction to Bass--Serre theory see, for example, \cite{bogopolski_introduction_2008}.  For a formal definition of a walk and reduced walk see~\Cref{ssec: automata}.

Let $T$ be the Bass--Serre tree corresponding to the amalgamated product $G$. Recall that $T$ has the following structure. There are two types of vertices in $T$ -- those corresponding to left cosets of $H$ in $G$ (we call them $H$-vertices) and those corresponding to left cosets of $K$ in $G$ (we call them $K$-vertices). The edges of $T$ correspond to the left cosets of the amalgamated subgroup $H_0=K_0$ in $G$, and the incidence relations are the natural ones: for any $g \in G$ the edge $gH_0 =gK_0$ begins at the $H$-vertex $gH$ and ends at the $K$-vertex $gK$. The action of $G$ on $T$ is by left multiplication. So, the stabilizer of an $H$-vertex $gH$ is the subgroup $gHg^{-1}$, a conjugate of $H$; similarly, stabilizers of $K$-vertices and of edges are conjugates of $K$ and of $H_0=K_0$, respectively. This action is transitive on edges and has two orbits of vertices ($H$-type and $K$-type). For the proof that $T$ is indeed a tree, see \cite{bogopolski_introduction_2008}. It follows from the above that an $H$-vertex $gH$ is incident only to the edges of the form $ghH_0$, where $h$ runs through any set of left coset representatives of $H$ modulo $H_0$, and all these edges are different to each other. Similarly, a $K$-vertex $gK$ is incident only to the edges of the form $gkK_0$, where $k$ runs through any set of left coset representatives of $K$ modulo $K_0$ (and all these edges are different from each other as well). In particular, $T$ is locally finite if and only if $H_0$ and $K_0$ are of finite index in $H$ and $K$, respectively.

Recall also the standard classification of elements of $G$ by their action on $T$. There are two kinds of elements of the amalgamated product $G$: elliptic and hyperbolic. Elliptic elements are those which fix some vertex of $T$; these are, precisely, the conjugates of elements in $H$ or $K$. On the other hand, a hyperbolic element $g\in G$ does not fix any vertex of $T$, and it determines an axis (\ie a bi-infinite reduced walk on $T$) along which $g$ acts by translation by some positive integer.

Let us look more closely to the action of $G$ on $T$. By the stabilizer of a walk $\walki$ we mean the subgroup of $G$ consisting of the elements which fix every edge and every vertex in~$\walki$. We claim that any reduced walk of length 2 beginning and ending at $K$-vertices, say $\walki$, has trivial stabilizer (this is not, in general, the case for those walks of length 2 beginning and ending at $H$-vertices). Indeed, the middle vertex of $\walki$ must be an $H$-vertex, say $gH$, and the first and last vertices of $\walki$ are of the form $gh_1K$ and $gh_2K$, for some $h_1,h_2\in H$ such that $h_1H_0 \neq h_2H_0$, \ie $h_1^{-1}h_2\not\in H_0$; then, the stabilizers of the two edges are $gh_1H_0h_1^{-1}g^{-1}$ and $gh_2H_0h_2^{-1}g^{-1}$, respectively, and their intersection is trivial,
 \begin{align}
\Stab(\walki) &\,=\,gh_1H_0h_1^{-1}g^{-1} \cap gh_2H_0h_2^{-1}g^{-1} \\ &\,=\, gh_1 \Big( H_0 \cap (h_1^{-1}h_2)H_0(h_2^{-1}h_1) \Big) h_1^{-1}g^{-1} \,=\, \Trivial,   
 \end{align}
since $h_1^{-1}h_2 \notin H_0$ and $H_0$ is malnormal in $H$. Therefore, every reduced walk of length at least 3 in $T$ has trivial stabilizer (because it always contains a subwalk of length 2 of the above form). 

Let us now prove the claimed result. The inclusion $\Iset{K_0}{K} \cup \{ 0 \} \subseteq \Iset{H}{G}$ is easy: if $m\in \Iset{K_0}{K}$, there exists $k\in K$ of order $m$ in $K_0$; but then $k$, considered as an element of $G$, also has order $m$ in $H$, because $H\cap K=H_0=K_0$ in $G$; therefore, $m\in \Iset{H}{G}$, as required. To show that $0\in \Iset{H}{G}$, take any hyperbolic element $g\in G$ (such an element exists, since $H_0$ and $K_0$ are proper subgroups of $H$ and $K$, respectively); since powers of hyperbolic elements are again hyperbolic, $g^n$ is hyperbolic for every $n\geq 1$ and so, $g^n \notin H$; therefore, $0\in \Iset{H}{G}$, as required.

In order to show the other inclusion, $\Iset{H}{G}\subseteq \Iset{K_0}{K} \cup \{ 0\}$, let $w\in G$ be an element of order $n\geq 1$ in $H$, and let us see that $n\in \Iset{K_0}{K}$. Since $w^n\in H$ is elliptic, and powers of hyperbolic elements are again hyperbolic, $w$ must be elliptic as well, \ie $w$ is a conjugate of an element in $H$ or in $K$; let us consider these two cases separately.

{\it Case 1:} $w=ghg^{-1}$, for some $h\in H$ and some $g\in G$. In this case, $h=g^{-1}wg$ fixes the vertex $H$ in $T$, and $n$ is the minimum positive integer such that $h^n=g^{-1}w^ng$ fixes the vertex $g^{-1}H$ in $T$. Note that if $g^{-1}H=H$, then $g\in H$, $w\in H$, $n=1\in \Iset{K_0}{K}$, and we are done. So, let us assume that $H$ and $g^{-1}H$ are different vertices in $T$, and let $\walki$ be the (unique) reduced walk in $T$ from $H$ to $g^{-1}H$. Since $h^n$ fixes both ends of $\walki$, it also fixes $\walki$. Note that, by the bipartite structure of $T$, $\walki$ cannot have length 1 (both $H$ and $g^{-1}H$ are $H$-vertices different from each other). On the other hand, if $\walki$ has length 3 or more, then it follows from the above claim that $h^n=1$; hence, $h=1$ (since $H$ is torsion-free), $w=1$, $n=1\in \Iset{K_0}{K}$, and we are done. So, we are reduced to the case that $\walki$ has length 2.

In this case, the first vertex of $\walki$ is $H$, the second one is of the form $h'K$ for some $h'\in H$, and the last one is of the form $h'k'H=g^{-1}H$, for some $k'\in K$; moreover, the first edge of $\walki$ is $h'H_0$, and the second one $h'k'H_0$. Since $h^n$ fixes $\walki$, it fixes its first edge, so $h^n \in h'H_0(h')^{-1}$. But $h \in H$, and $H_0$ is pure (since it is malnormal) in $H$, so also the conjugate subgroup $h'H_0(h')^{-1}$ is pure in $H$. Therefore, we get that $h \in h'H_0(h')^{-1}$, \ie $h$ fixes the first edge in~$\walki$. Hence, $h$ also fixes the second vertex in $\walki$, \ie $h=h'k(h')^{-1}$, for some $k\in K$. It follows then that $n$ is the minimum positive integer such that $h^n$ fixes the second edge in $\walki$, \ie such that $h^n \in h'k'H_0(k')^{-1}(h')^{-1}$. In other words, $n$ is the minimum positive integer such that  $k^n \in k'H_0(k')^{-1} = k'K_0(k')^{-1}$. Therefore, $n \in \Iset{k'K_0(k')^{-1}}{K}=\Iset{K_0}{K}$, as required. 

{\it Case 2:} $w=gkg^{-1}$, for some $k\in K$ and some $g\in G$. In this case,  $k=g^{-1}wg$ fixes the vertex $K$ in~$T$, and $n$ is the minimum positive integer such that $k^n=g^{-1}w^ng$ fixes the vertex $g^{-1}H$ in $T$ (recall that $n$ was the minimum positive integer such that $w^n \in H$). Let $\walki$ be the (unique) reduced walk in $T$ connecting $K$ to $g^{-1}H$; then $k^n$ fixes $\walki$. From the bipartite structure of $T$, it follows that $\walki$ does not have length neither 0 nor 2. On the other hand, if $\walki$ has length 3 or more, then it follows from the above claim that $k^n=1$; hence, $k=1$ (since $K$ is torsion-free), $w=1$, $n=1\in \Iset{K_0}{K}$, and we are done. So, we are reduced to the case where $\walki$ has length $1$, \ie it consists on a single edge, say $k'K_0=k'H_0$, for some $k'\in K$ such that $k'H=g^{-1}H$. Then $k\in K$ and $n$ is the minimum positive integer such that $k^n$ stabilizes this edge, \ie $k^n \in k'K_0(k')^{-1}$. It follows that $n \in \Iset{k'K_0(k')^{-1}}{K}=\Iset{K_0}{K}$, as required, and the proof is complete. 
\end{proof}

\begin{prop}\label{prop: amalgam2}
Let $\Free[2]=\pres{a,b}{-}$ be a free group of rank $2$, let $\mathcal{O}$ be a set of non-negative integers closed under taking divisors and containing $0$, and let $K_{\mathcal{O}}=\gen{{b^{-n}a^nb^n \st n \in \mathcal{O}}} \leqslant \Free[2]$. Then $\Iset{K_{\mathcal{O}}}{\Free[2]}=\mathcal{O}$, and the rank of $K_{\mathcal{O}}$ is equal to the cardinality of $\mathcal{O} \setmin \{ 0 \}$.
\end{prop}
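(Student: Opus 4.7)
My plan is to construct the Stallings automaton $\Gamma$ of $K_\mathcal{O}$ (rooted at the identity vertex $v_0$) and extract both statements from it. Unfolding the generators $b^{-n}a^nb^n$ as petals and performing the obvious folds, I expect to arrive at the following picture: a ``backbone'' path $v_0 \overset{b^{-1}}{\longrightarrow} v_{-1} \overset{b^{-1}}{\longrightarrow} v_{-2} \overset{b^{-1}}{\longrightarrow} \cdots$ containing one vertex $v_{-i}$ for each $0\le i\le \sup\mathcal{O}$, together with, for every $n\in \mathcal{O}\setmin\{0\}$, an $a$-labelled cycle of length $n$ attached at $v_{-n}$. The only delicate point here is verifying that the diagram is already folded, which boils down to checking that the final $b^n$ of the generator $b^{-n}a^nb^n$ retraces the initial $b^{-n}$ on the way out, so that every loop closes at $v_0$ without further identifications.

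Given this picture, the rank follows from a direct Euler characteristic computation: the $a$-cycle at $v_{-n}$ adds $n-1$ vertices and $n$ edges, hence contributes exactly $1$ to $1-\chi(\Gamma)$, giving $\card(\mathcal{O}\setmin\{0\})$ overall and confirming that the defining generators form a basis of $K_\mathcal{O}$.

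For the inclusion $\mathcal{O}\subseteq \Iset{K_\mathcal{O}}{\Free[2]}$, I realize $0$ via $b$ itself (since $v_0$ has no outgoing $b$-edge, no positive power of $b$ labels a loop at $v_0$) and, for each $n\in \mathcal{O}\setmin\{0\}$, realize $n$ via $g_n := b^{-n}ab^n$: the word $g_n^k = b^{-n}a^kb^n$ reaches $v_{-n}$, spins $k$ times around the $a$-cycle of length $n$, and returns to $v_0$ precisely when $n\divides k$, so $\Ord_{K_\mathcal{O}}(g_n)=n$.

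The reverse inclusion is the substantial step and the main obstacle. Given $g$ of positive order $m$ in $K_\mathcal{O}$, I would write $g=uvu^{-1}$ with $v$ cyclically reduced and trace $v^m$ in $\Gamma$ starting from $x := v_0 \cdot u$. Since every $b^{\pm 1}$-read moves along the backbone by $\mp 1$ while $a^{\pm 1}$-reads preserve backbone position, $v^m$ being a loop at $x$ forces the $b$-exponent sum of $v$ to vanish. The argument then splits on the block structure of $v$: when $v=a^k$ is a pure $a$-power, $x$ must lie on some $n$-cycle with $n\in \mathcal{O}\setmin\{0\}$, and the minimal period is $n/\gcd(n,k)$, a divisor of $n$ and hence in $\mathcal{O}$ by closure under divisors. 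The heart of the proof is the mixed case, when $v$ contains both $a$- and $b$-letters: here every $b$-block of $v$ can only be read from a base vertex $v_{-n}$, which forces alignment conditions of the form $k_i \equiv 0 \pmod{n_{i-1}}$ at every intermediate $a$-block, and comparing these conditions at $x$ versus at $\phi_v(x)$ (where $\phi_v$ denotes the partial ``read $v$'' map) forces $\phi_v(x)=x$ whenever $\phi_v^2(x)$ is defined, so the only positive order achievable in this case is $m=1$. Combining both cases yields $\Iset{K_\mathcal{O}}{\Free[2]} \subseteq \mathcal{O}$, completing the proof.
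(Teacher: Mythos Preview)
Your proposal is correct and follows essentially the same route as the paper: construct the Stallings automaton, read off the rank, and for the reverse inclusion pass to the cyclically reduced core $\widetilde w$ read from a branch vertex. The paper short-circuits your ``mixed case'' by observing that, since the order is at least $2$, the word $\widetilde w$ must also be readable from the degree-$2$ vertex $q_i$ on the $j$-cycle reached after one iteration, which immediately forces $\widetilde w$ to be a pure $a$-power and replaces your block-by-block alignment argument with a one-line observation.
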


\begin{proof}
Let $\Ati$ denote the Stallings graph of $K_{\mathcal{O}}$ with respect to the free basis $\{a, b\}$ (see~\Cref{ssec: automata} for a brief survey on Stallings automata). It is clear that $\Ati$ has the following structure: there is a ray
of length $\sup (\mathcal{O})$
of $b$-labelled arcs pointing to and ending at the basepoint.
%\R{infinite if $\card{\mathcal{O}}=\infty$, and of length equal to the maximum number in $\mathcal{O}$ otherwise}.
Denoting by $\verti_i$ the vertex where we arrive after reading $b^{-i}$ from the basepoint, there are also directed (closed) $\verti_n$-walks reading $a^n$ for every $n\in \mathcal{O} \setmin \{ 0 \}$; there are no other vertices nor arcs in $\Ati$, see \Cref{fig: Stallings KO}. In view of $\Ati$, it is clear that $K_{\mathcal{O}}$ has rank equal to~$\card{(\mathcal{O} \setmin \{ 0 \})}$.

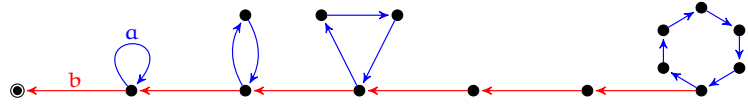
\begin{figure}[H]
    \centering
    \begin{tikzpicture}[shorten >=1pt, node distance= 1 cm and 1.5cm, on grid,auto,>=stealth']
    
    \newcommand{\dx}{1.5}
    \newcommand{\dy}{1}
    \node[state, accepting] (0) {};
    \node[state, right = of 0] (1) {};

    \node[state, right = of 1] (2) {};
    \node[state, above = of 2] (21) {};
    
    \node[state, right = of 2] (3) {};
    \node[state, above left = 1 and 0.5 of 3] (31) {};
    \node[state, above right = 1 and 0.5 of 3] (32) {};
    
    \node[state, right = of 3] (4) {};
    \node[state, right = of 4] (5) {};
    \node[state, right = of 5] (6) {};
    
    \node[state, above left = 0.3 and 0.5 of 6] (61) {};
    \node[state, above = 0.5  of 61] (62) {};
    \node[state, above right = 0.3 and 0.5 of 62] (63) {};
    \node[state, below right = 0.3 and 0.5 of 63] (64) {};
    \node[state, below = 0.5 of 64] (65) {};

    \path[->]
        (1) edge[red]
                node[pos=0.5,above=-.3mm] {$b$}
            (0);
            
    \path[->]
        (2) edge[red]
    %            node[pos=0.5,above=-.3mm] {$a$}
            (1);
            
    \path[->]
        (3) edge[red]
    %            node[pos=0.5,above=-.3mm] {$a$}
            (2);
            
    \path[->]
        (4) edge[red]
    %            node[pos=0.5,above=-.3mm] {$a$}
            (3);
            
    \path[->]
        (5) edge[red]
    %            node[pos=0.5,above=-.3mm] {$a$}
            (4);
            
    \path[->]
        (6) edge[red]
    %            node[pos=0.5,above=-.3mm] {$a$}
            (5);

    \path[->]
        (1) edge[blue, loop, in=50,out=130, looseness=25]
                node[pos=0.5,above=-.1mm] {$a$}
            (1);
            
    \path[->]
        (2) edge[blue, bend left]
    %            node[pos=0.5,above=-.3mm] {$a$}
            (21);
            
    \path[->]
        (21) edge[blue, bend left]
    %            node[pos=0.5,above=-.3mm] {$a$}
            (2);
            
    \path[->]
        (3) edge[blue]
    %            node[pos=0.5,above=-.3mm] {$a$}
            (31);
            
    \path[->]
        (31) edge[blue]
    %            node[pos=0.5,above=-.3mm] {$a$}
            (32);
            
    \path[->]
        (32) edge[blue]
    %            node[pos=0.5,above=-.3mm] {$a$}
            (3);
            
    \path[->]
        (6) edge[blue]
    %            node[pos=0.5,above=-.3mm] {$a$}
            (61);
            
    \path[->]
        (61) edge[blue]
    %            node[pos=0.5,above=-.3mm] {$a$}
            (62);
            
    \path[->]
        (62) edge[blue]
    %            node[pos=0.5,above=-.3mm] {$a$}
            (63);
            
    \path[->]
        (63) edge[blue]
    %            node[pos=0.5,above=-.3mm] {$a$}
            (64);
            
    \path[->]
        (64) edge[blue]
    %            node[pos=0.5,above=-.3mm] {$a$}
            (65);
            
    \path[->]
        (65) edge[blue]
    %            node[pos=0.5,above=-.3mm] {$a$}
            (6);

    \end{tikzpicture}
    \vspace{-5pt}
    \caption{Stallings automaton $\Gamma$ of $K_{\mathcal{O}}$, for $\mathcal{O}=\set{0,1,2,3,6}$.
    }
    \label{fig: Stallings KO}
\end{figure}

The inclusion $\mathcal{O}\subseteq \Iset{K_{\mathcal{O}}}{\Free[2]}$ is immediate: for every $n\in \mathcal{O}$, the element $b^{-n}ab^n\in \Free[2]$ clearly has order $n$ in $K_{\mathcal{O}}$.

For the other inclusion, let $0,1\neq n\in \Iset{K_{\mathcal{O}}}{\Free[2] }$, and let $w\in \Free[2]$ be an element of order $n$ in~$K_{\mathcal{O}}$. Write $w=u^{-1}\widetilde{w}u$ with no cancellations, and where $\widetilde{w}$ is a cyclically reduced word; then, $w^n=u^{-1}\widetilde{w}^{n}u$ is reduced as written, and can be read as the label of some $\bp$-walk in~$\Ati$. Note that, since $\widetilde{w}$ is cyclically reduced, the vertex where we arrive after reading $u^{-1}$ from the basepoint~$\bp$ cannot have degree $2$ and so, it must be $\verti_j$, for some $j\in \mathcal{O}\setmin \{0\}$; further, replacing $w$ with $w^{-1}$ if necessary, we can assume that the first letter in $\widetilde{w}$ is $a$. Let us now denote by $\vertii_0=\verti_j, \vertii_1, \ldots, \vertii_{j-1}$ the vertices along the cycle at $\verti_j$ (with label~$a^j$). Since $n\geq 2$, the vertex where we arrive after reading $\widetilde{w}$ from $\verti_j$ is incident to an $a$-arc. But, on the other hand, from the structure of $K_{\mathcal{O}}$, it is clear that the total $b$-exponent of $w^n\in K_{\mathcal{O}}$, and so of $w$ and of $\widetilde{w}$, must be $0$; so, the vertex we arrive after reading $\widetilde{w}$ from $\verti_j=q_0$ must be some $\vertii_i$, where $1\leq i\leq j-1$ (again, because $n\geq 2$). Now, since $\widetilde{w}$ can be read both starting from the vertices $\vertii_0$ and $\vertii_i$, it cannot contain any letter $b$; that is $\widetilde{w} = a^{kj +i}$, for some $k\in \NN$. Finally, from $n$ being the order of $w$ in $K_{\mathcal{O}}$, we see that it is also the smallest positive integer such that $\widetilde{w}^{n}$ is readable as a closed walk at $\verti_j$; therefore, $n$ is the smallest positive integer such that $ni$ is multiple of $j$, \ie $n=j/\gcd(i,j)\in \mathcal{O}$, as required. 
\end{proof} 

Finally, combining \Cref{prop: amalgam1,prop: amalgam2}, we deduce the desired example.

\begin{thm}\label{amalgam3} 
Let $\mathcal{O}$ be a set of non-negative integers closed under taking divisors, and containing~$0$. Then, there exists a finitely generated torsion-free group $G$ with a finitely generated subgroup $H$ such that the spectrum $\Iset{H}{G}=\mathcal{O}$.
\end{thm}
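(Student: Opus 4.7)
The plan is to combine the two preceding propositions. I take $K = \Free[2]$ and $K_0 = K_\mathcal{O}$; by \Cref{prop: amalgam2}, $\Iset{K_0}{K} = \mathcal{O}$ and $K_0$ is free of rank $r = \card{(\mathcal{O} \setmin \{0\})}$. The aim is to construct a finitely generated torsion-free group $H$ containing a proper malnormal subgroup $H_0$ isomorphic to $K_0$, and then set $G = H *_{H_0 = K_0} K$ with respect to a chosen isomorphism $H_0 \cong K_0$. \Cref{prop: amalgam1} will then give
\[
\Iset{H}{G} \,=\, \Iset{K_0}{K} \cup \{0\} \,=\, \mathcal{O} \cup \{0\} \,=\, \mathcal{O},
\]
while $G$ is finitely generated (both amalgamated factors being so) and torsion-free (as an amalgamated product of torsion-free groups).

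The heart of the proof is thus the construction of the pair $(H,H_0)$, where $H_0$ is a proper malnormal subgroup of $H$ isomorphic to a free group of rank $r$. For finite $r$, I would appeal to Marshall Hall's theorem: every finitely generated subgroup of a free group sits as a free factor of some finite-index subgroup, and free factors are always malnormal. Applying this to a disjoint copy of $K_\mathcal{O} \leq \Free[2]$ produces a finite-index subgroup $L$ in which that copy of $K_\mathcal{O}$ sits as a free factor; set $H = L$ and $H_0$ equal to this free factor. Then $H$ is free of finite rank (hence finitely generated and torsion-free), $H_0$ is malnormal in $H$, and $H_0$ is proper in $H$ because $K_\mathcal{O}$ has infinite index in $\Free[2]$, as is immediate from its Stallings graph being incomplete at the basepoint.

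The genuine obstacle is the case of countably infinite $r$, in which $K_\mathcal{O}$ is itself infinitely generated and Marshall Hall's theorem no longer applies. Here I would invoke a small-cancellation construction in $\Free[2]$: pick a sequence $(w_n)_{n \geq 1}$ of cyclically reduced words of rapidly growing length satisfying a sufficiently strong metric small-cancellation condition (say $C'(1/6)$). Standard small-cancellation theory then guarantees that $H_0 = \gen{w_n : n \geq 1}$ is freely generated by the $w_n$, proper in $\Free[2]$, and malnormal. Setting $H = \Free[2]$ (finitely generated and torsion-free) and applying \Cref{prop: amalgam1} as above completes the proof. Verifying the malnormality of $H_0$ via small cancellation is the main technical step, but is standard in geometric group theory.
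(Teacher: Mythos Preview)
Your proof is correct and follows the same strategy as the paper: combine \Cref{prop: amalgam1} and \Cref{prop: amalgam2} via an amalgamated product $G = H *_{H_0=K_0} K$, with $K=\Free[2]$, $K_0=K_{\mathcal{O}}$, and $(H,H_0)$ a finitely generated torsion-free group with a proper malnormal subgroup of the right rank. Two minor differences are worth noting. First, in the finite case the paper takes a shortcut you overlook: when $\mathcal{O}$ is finite, $K_{\mathcal{O}}$ is already finitely generated, so one may simply set $G=\Free[2]$ and $H=K_{\mathcal{O}}$ directly, bypassing Marshall Hall and the amalgamation entirely. Second, in the infinite case your small-cancellation construction of an infinite-rank malnormal subgroup of $\Free[2]$ is exactly what the paper does, except that the paper just cites \cite{das_controlled_2015} for the existence of such a subgroup rather than sketching the argument. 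Your version is thus slightly over-engineered for finite $\mathcal{O}$ and slightly more self-contained for infinite $\mathcal{O}$, but substantively identical.
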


\begin{proof}
If $\mathcal{O}$ is finite, then \Cref{prop: amalgam2} immediately provides the required example. Otherwise, consider a free group of rank 2, say $H$, and a subgroup $H_0\leq H$ being malnormal and having infinite rank (it is not hard to construct such a subgroup explicitly, see for example~\cite{das_controlled_2015}); also, consider another free group of rank 2, say $K$, and let $K_0 =K_{\mathcal{O}}\leq K$ defined as in \Cref{prop: amalgam2}. Now fix an isomorphism from $H_0$ to $K_0$ (both are free groups of countably infinite rank), and let $G$ be the corresponding amalgamated product. Since $0\in \Iset{K_0}{K}$, it follows from \Cref{prop: amalgam1,prop: amalgam2} that $\Iset{H}{G}=\Iset{K_0}{K}=\mathcal{O}$. Note also that both $G$ and $H$ are finitely generated (while $G$ is not finitely presented, in general). 
\end{proof}

\begin{cor}
There exists a finitely generated torsion-free group $G$, and a finitely generated subgroup $H\leq G$, such that the spectrum $\Iset{H}{G}$ is non-computably enumerable (\resp computably enumerable but non-computable). 
\end{cor}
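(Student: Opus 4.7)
The plan is to exhibit a set $\mathcal{O} \subseteq \NN$ closed under taking divisors and containing $0$ with the desired computability-theoretic properties, and then apply \Cref{amalgam3} directly to obtain $(G,H)$ realizing $\Iset{H}{G} = \mathcal{O}$. The key observation that makes the construction flexible is that any subset $S$ of the primes, together with $\{0,1\}$, is automatically closed under taking divisors: the only (positive) divisors of a prime $p$ are $1$ and $p$, and by convention no positive integer divides $0$.

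First I would encode an arbitrary subset of $\NN$ into a set of primes. Let $p_n$ denote the $n$-th prime; since the map $n \mapsto p_n$ is a computable bijection $\NN \to \{\text{primes}\}$ with computable inverse, for any $A \subseteq \NN$ the set $S_A = \set{p_n \st n \in A}$ has exactly the same computability status as $A$ (c.e., computable, or neither). Setting $\mathcal{O}_A = \set{0,1} \cup S_A$, the set $\mathcal{O}_A$ is closed under divisors, contains $0$, and differs from $S_A$ in only finitely many elements, so it is c.e.\ (\resp computable) if and only if $S_A$ is.

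To conclude, I would instantiate this construction twice. For the first claim, fix a non-computably-enumerable set $A \subseteq \NN$ (for instance, the complement of the halting set), so that $\mathcal{O}_A$ is non-c.e.; for the second, fix a c.e.\ but non-computable set $A$ (say, the halting set itself), so that $\mathcal{O}_A$ is c.e.\ but non-computable. In either case, \Cref{amalgam3} applied to $\mathcal{O}_A$ yields a finitely generated torsion-free group $G$ together with a finitely generated subgroup $H \leqslant \fg G$ such that $\Iset{H}{G} = \mathcal{O}_A$, which has the prescribed computability behavior.

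There is no real obstacle here: the entire combinatorial and group-theoretic work has been carried out in \Cref{prop: amalgam1,prop: amalgam2,amalgam3}, and the only remaining ingredient is the purely recursion-theoretic observation that divisor-closed subsets of $\NN$ containing $0$ can have arbitrary computability complexity, achieved by the prime-encoding trick above.
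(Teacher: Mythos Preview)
Your proposal is correct and follows essentially the same approach as the paper: the paper's proof simply takes $\mathcal{O}=\{0,1\}\cup S$ for $S$ a set of primes with the desired computability status and applies \Cref{amalgam3}. Your version is slightly more explicit in justifying, via the prime-encoding bijection $n\mapsto p_n$, that such sets of primes actually exist, but the underlying idea is identical.
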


\begin{proof}
Take ${\mathcal O}=\{0,1\}\cup S$ in Theorem~\ref{amalgam3}, where $S$ is any non-computably enumerable (resp., computably enumerable but non-computable) set of prime numbers.  
\end{proof}

In fact, for computably enumerable sets, the following stronger statement holds.

\begin{thm}\label{thm: hyp}
Let $\mathcal{O}$ be any computably enumerable set of non-negative integers closed under taking divisors, and containing $0$. Then there exists a torsion-free finitely presented group $G$ and a finitely generated subgroup $H\leq G$ with $\Iset{H}{G}=\mathcal{O}$. Furthermore, the group $G$ can be chosen to be word hyperbolic and residually finite.
\end{thm}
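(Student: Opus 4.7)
The plan is to start from the group $G_0 = H *_{H_0 = K_{\mathcal{O}}} K$ produced in the proof of \Cref{amalgam3}, which is a finitely generated torsion-free amalgamated product of two rank-$2$ free groups realizing $\Iset{H}{G_0}=\mathcal{O}$. When $\mathcal{O}$ is computably enumerable, the generating set $\{b^{-n}a^{n}b^{n} : n\in \mathcal{O}\}$ of $K_{\mathcal{O}}$ is c.e., so $G_0$ admits a recursive presentation with finitely many generators and a c.e.\ set of defining relators; moreover $\WP(G_0)$ is computable, inherited from the two free factors via Stallings automata for the amalgamated subgroup. The task is then to embed $G_0$ into a finitely presented, torsion-free, word-hyperbolic, residually finite group $\widetilde G$ in a way that preserves the spectrum with respect to $H$.

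The conceptually clean requirement is that the embedding place $G_0$ as a \emph{malnormal} subgroup of $\widetilde G$. Suppose indeed that $G_0$ is malnormal in $\widetilde G$, and let $w \in \widetilde G$ with $w^{k}\in H \leqslant G_0$ for some $k\geq 1$. If $w\notin G_0$, then $\gen{w^{k}}\leqslant G_0 \cap G_0^{w}=\Trivial$, which combined with torsion-freeness of $\widetilde G$ forces $w=\trivial$, a contradiction. Hence $w\in G_0$, and so every element of $\widetilde G$ with nonzero order in $H$ already lies in $G_0$. Therefore $\Iset{H}{\widetilde G}=\Iset{H}{G_0}=\mathcal{O}$, and the previously recorded fact that $H$ and $G_0$ remain finitely generated is preserved by the embedding.

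To realize such an embedding I would combine three ingredients: (i) an Olshanskii/Belegradek--Osin-type embedding of the c.e.\ presented torsion-free group $G_0$ into a finitely presented word-hyperbolic group, arranged so that $G_0$ itself becomes quasiconvex and malnormal in the target (using, e.g., graphical small-cancellation relators that attach to letters outside $G_0$'s generating set and satisfy an appropriate $C'(\lambda)$ condition guaranteeing both hyperbolicity of the quotient and malnormality of $G_0$); (ii) an application of Chiodo's refinement of Higman's embedding theorem to guarantee the construction can be carried out without introducing new torsion; and (iii) Wise's theory of (virtually) special cube complexes together with Agol's theorem to arrange that $\widetilde G$ is virtually compact special, hence residually finite, while keeping $G_0$ quasiconvex and malnormal (via the Malnormal Special Quotient Theorem of Wise).

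The main obstacle is the simultaneous preservation of hyperbolicity, residual finiteness, and malnormality of $G_0$ under the passage to a finitely presented quotient. Standard Higman-style embeddings only preserve the absolute spectrum (Chiodo), which for our purposes is too weak: the relative spectrum $\Iset{H}{\,\cdot\,}$ can be altered by new elements having spurious powers in $H$. Controlling this requires that the embedding respect the amalgamated structure of $G_0$; the most delicate technical step will be verifying that, in the small-cancellation/Dehn-filling quotient used to obtain hyperbolicity and separability, no new $k$-th root of $H$ is created. This verification reduces, via a normal-form argument entirely analogous to the Bass--Serre analysis carried out in the proof of \Cref{prop: amalgam1}, to checking that the stabilizers of reduced paths of length $\geq 2$ in the relevant Bass--Serre tree of $\widetilde G$ remain trivial after the quotient.
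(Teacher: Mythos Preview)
Your proposal is not a proof but a strategy with explicitly unfilled gaps: you acknowledge that the simultaneous preservation of hyperbolicity, residual finiteness, torsion-freeness, and malnormality of $G_0$ under passage to a finitely presented group is ``the main obstacle'', and you do not resolve it. No theorem in the literature provides the embedding you describe. Worse, there is a concrete obstruction to the version you sketch: you ask that $G_0$ become quasiconvex in a hyperbolic group $\widetilde G$, but quasiconvex subgroups of hyperbolic groups are themselves hyperbolic, whereas $G_0$ is an amalgam of two rank-$2$ free groups over an \emph{infinite-rank} subgroup $K_{\mathcal{O}}$ which is not quasiconvex in $\Free_2$, so $G_0$ is typically not hyperbolic. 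Your malnormality reduction (malnormal $+$ torsion-free implies the relative spectrum is preserved) is correct, but you never produce the required $\widetilde G$.

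The paper's argument is entirely different and far simpler, and the key idea you are missing is to aim for \emph{normality} rather than malnormality. If $H\normaleq G$ then $\Iset{H}{G}=\Ord(G/H)$, so it suffices to realize $\mathcal{O}$ as the \emph{absolute} torsion spectrum of a finitely presented group $Q$ and then take $(G,H)$ from any construction with $G/H\cong Q$. Concretely: form $K=\ast_{n\in\mathcal{O}}C_n$, a recursively presented group with $\Ord(K)=\mathcal{O}$; apply Higman's embedding (in Chiodo's torsion-preserving form) to get a finitely presented $Q$ with $\Ord(Q)=\mathcal{O}$; then apply Wise's $C'(1/6)$ version of the Rips construction to $Q$ to obtain a torsion-free, word-hyperbolic, residually finite $G$ with a finitely generated normal subgroup $H$ and $G/H\cong Q$. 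This bypasses all the embedding difficulties you raise: Chiodo's theorem is used only for the absolute spectrum of $Q$, and the Rips construction supplies hyperbolicity and residual finiteness directly, with $H$ normal rather than malnormal.
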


\begin{proof}
Consider the free product of cyclic groups of orders in $\mathcal{O}$, say $K=\ast_{n\in \mathcal{O}} C_n$; by the hypotheses on $\mathcal{O}$, $\Ord(K)=\mathcal{O}$. Since $\mathcal{O}$ is computably enumerable, $K$ is recursively presented and we can use Higman's embedding theorem to embed it into a finitely presented group, $K\hookrightarrow Q$. Furthermore, by \cite[Theorem 2.2]{chiodo_torsion_2014} this embedding adds no new torsion and so $\Ord(Q)=\Ord(K)=\mathcal{O}$.

Now apply Wise's version of Rips construction to $Q$ (see \cite{wise_residually_2003}) to produce a word hyperbolic (in fact, $C'(1/6)$ small cancellation), torsion-free, residually finite group $G$ and a finitely generated normal subgroup $H\normaleq G$ such that $G/H$ is isomorphic to $Q$. Hence, ${\Iset{H}{G}=\mathcal{O}}$.
\end{proof}

\begin{cor}
There exists a word hyperbolic, torsion-free, residually finite group $G$, and a finitely generated subgroup $H\leq G$ with $\Iset{H}{G}$ being non-computable. 
\end{cor}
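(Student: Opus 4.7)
The plan is to obtain the statement as a direct instantiation of \Cref{thm: hyp}, following the same template used in the corollary just after \Cref{amalgam3}. All that needs to be exhibited is a set $\mathcal{O}$ of non-negative integers that is closed under taking divisors, contains $0$, is computably enumerable, and is not computable.

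The natural choice is $\mathcal{O} = \{0,1\} \cup S$, where $S$ is a computably enumerable but non-computable set of prime numbers. To produce such an $S$, I would start from any computably enumerable, non-computable subset $A \subseteq \NN$ and set $S = \set{p_n \st n \in A}$, where $p_n$ denotes the $n$-th prime. Since $n \mapsto p_n$ is computable and injective, $S$ is computably enumerable; and since $A$ can be recovered from $S$ via the computable inverse of the indexing, $S$ inherits non-computability from $A$.

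Next I would verify that $\mathcal{O}$ meets the hypotheses of \Cref{thm: hyp}. It contains $0$ by construction, is computably enumerable as the union of a finite set and a c.e.\ set, and is closed under taking divisors because every element of $\mathcal{O} \setminus \{0,1\}$ is prime, and therefore its only positive divisors are $1$ and itself, both already in $\mathcal{O}$. Applying \Cref{thm: hyp} then yields a word hyperbolic, torsion-free, residually finite (and finitely presented) group $G$ together with a finitely generated subgroup $H \leq G$ such that $\Iset{H}{G} = \mathcal{O}$.

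Finally, I would conclude that $\Iset{H}{G}$ is not computable: otherwise, intersecting it with the (computable) set of primes would give a computable description of $S$, contradicting the choice of $S$. I do not expect any serious obstacle in this argument; the only points requiring any care are the standard construction of a c.e.\ non-computable set of primes and the observation that divisor-closure comes for free from the fact that the non-trivial part of $\mathcal{O}$ consists exclusively of primes.
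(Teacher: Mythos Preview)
Your proposal is correct and follows essentially the same approach as the paper: the paper's own proof simply says to apply \Cref{thm: hyp} to any computably enumerable, non-computable set $\mathcal{O}$ closed under taking divisors and containing $0$, and you have spelled out the standard construction of such a set (along the same lines as the corollary following \Cref{amalgam3}).
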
 

\begin{proof}
Apply \Cref{thm: hyp} to any computably enumerable, non-computable set $\mathcal{O}$ of non-negative integers, closed under taking divisors, and containing 0.
\end{proof}

The question about realizing spectra not containing $0$ seems much more tricky, and related to periodic groups. Note that $0\not\in \Iset{H}{G}$ means that \emph{every} element in $G$ must have a strictly positive power inside $H$.

\section{Free times free groups}\label{sec: free times free}

The direct product of nonabelian free groups (typically $\Free[2] \times \Free[2]$) is a classic source of examples with bad algorithmic behaviour. A seminal example is the undecidability of its subgroup membership problem, proved by \citeauthor{mikhailova_occurrence_1958} in \cite{mikhailova_occurrence_1958}. Below, we take advantage of this result to expand undecidability to our context.

\begin{prop}\label{prop: freetimesfree}
Fix $n\geqslant 2$, and a prime number $p$. There is no algorithm such that, on input $H\leqslant \fg \Fn\times \Fn$, decides whether $p\in \Ord_{H}(\Fn\times \Fn)$ or not; in particular, the spectrum membership problem $\SMP(\Fn\times \Fn)$ is undecidable.
\end{prop}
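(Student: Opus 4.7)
The plan is to reduce the word problem $\WP(Q)$ of a suitably chosen finitely presented group $Q$ to the decision problem in the statement, via an enrichment of Mikhailova's construction that encodes $p$-torsion controlled by the input. For each word $w$ in the generators of $Q$, I associate a finitely generated subgroup $H_w \leqslant\fg F_n \times F_n$, computable from $w$, with the property that $p \in \Iset{H_w}{F_n \times F_n}$ if and only if $w =_Q 1$.

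Fix a torsion-free finitely presented group $Q = \langle X \mid R\rangle$ with unsolvable word problem (such $Q$ exist classically). Given $w$ a word in $X^{\pm}$, form the auxiliary group
\[
\widetilde Q_w \,=\, \langle X,\, y \mid R,\, y^{p}w^{-1}\rangle
\]
by adjoining a new generator $y$ and the single relation $y^{p}=w$. The crucial observation is that $\widetilde Q_w$ has an element of order $p$ if and only if $w =_Q 1$. Indeed, when $w =_Q 1$ one has $\widetilde Q_w \cong Q \ast \langle y\mid y^{p}\rangle = Q \ast \ZZ/p\ZZ$, in which $\bar y$ has order $p$; while when $w \neq_Q 1$, the torsion-freeness of $Q$ forces $\bar w$ to have infinite order, and $\widetilde Q_w \cong Q \ast_{\langle w\rangle\cong\langle y^{p}\rangle}\langle y\rangle$ is an amalgamated free product of two torsion-free groups along an infinite cyclic subgroup, hence itself torsion-free.

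The Mikhailova subgroup attached to $\widetilde Q_w$ is then
\[
H_w \,=\, \big\langle (x,x)_{x\in X},\,(y,y),\,(r,1)_{r\in R},\,(y^{p}w^{-1},1)\big\rangle ,
\]
a finitely generated subgroup of $F_{|X|+1}\times F_{|X|+1}$ satisfying $(u,v)\in H_w \iff u=_{\widetilde Q_w}v$; in particular, $\Ord_{H_w}(u,1)$ coincides with the order of $\bar u$ in $\widetilde Q_w$. For $n\geq 2$, I realize $H_w$ inside $F_n\times F_n$ by absorbing $X\cup\{y\}$ into a free factor of $F_n$ after a suitable application of Higman's embedding theorem (whose torsion-preservation via Chiodo, already invoked in~\Cref{thm: hyp}, keeps the key order-$p$ observation intact). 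The forward direction of the reduction is then immediate: when $w=_Q 1$, lifting the order-$p$ element of $\widetilde Q_w$ to $u\in F_n$ produces $(u,1)$ of relative order $p$ in $H_w$, witnessing $p\in\Iset{H_w}{F_n\times F_n}$.

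The main obstacle is the converse: when $w\neq_Q 1$, one must exclude every $(u,v)\in F_n\times F_n$ of relative order $p$ in $H_w$, equivalently show that the (torsion-free) group $\widetilde Q_w$ satisfies the unique positive root property $\bar u^{k}=\bar v^{k}\Rightarrow \bar u=\bar v$ for $k\geq 1$. This is the place where one must work: the argument is Bass--Serre-theoretic, parallel to that of~\Cref{prop: amalgam1}. Provided $\langle w\rangle$ is malnormal in $Q$ (which can be arranged by an appropriate choice of $Q$, for example as a torsion-free small-cancellation group with unsolvable word problem), a coincidence $\bar u^{p}=\bar v^{p}$ forces $\bar u$ and $\bar v$ to fix a common vertex in the Bass--Serre tree of the amalgamation, reducing the question to unique positive roots inside a vertex stabilizer---a conjugate of $Q$ or of $\langle y\rangle\cong\ZZ$---both of which possess that property. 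Granting this, the computable assignment $w\mapsto H_w$ provides a reduction from $\WP(Q)$ to deciding $p\in\Iset{H_w}{F_n\times F_n}$, which is therefore undecidable; in particular, $\SMP(F_n\times F_n)$ is undecidable.
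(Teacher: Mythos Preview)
Your reduction breaks in the converse direction. You need that whenever $w\neq_Q 1$, the amalgam $\widetilde Q_w = Q *_{\langle w\rangle=\langle y^{p}\rangle}\langle y\rangle$ has unique $p$-th roots. But take any generator $x\in X$ and the input word $w=x^{p}$: then $w\neq_Q 1$ since $Q$ is torsion-free, yet in $\widetilde Q_w$ one has $y^{p}=w=x^{p}$ with $y\neq x$ (indeed $y\notin\langle y^{p}\rangle=Q\cap\langle y\rangle$ inside the amalgam, while $x\in Q$). Hence $\Ord_{H_w}(y,x)=p$, so $p$ lies in the spectrum with respect to $H_w$ even though $w\neq_Q 1$, and $w\mapsto H_w$ is not a reduction from $\WP(Q)$.

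Your proposed repair---assuming $\langle w\rangle$ malnormal in $Q$---cannot be arranged uniformly over all input words: in any torsion-free group containing a nontrivial element $z$, the subgroup $\langle z^{p}\rangle$ is centralized by $z\notin\langle z^{p}\rangle$ and is therefore never malnormal. So no choice of $Q$ makes the hypothesis hold for every $w$. (Two side remarks: $C'(1/6)$ small-cancellation groups are hyperbolic and hence have \emph{solvable} word problem, so that suggestion fails independently; and Higman's embedding theorem is not what moves $H_w$ into $\Fn\times\Fn$---a pure embedding of free groups, as in the opening paragraph of the paper's proof, is all that is needed.)

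The paper avoids this difficulty by not fixing $Q$: it observes that for the plain Mikhailova subgroup $M(Q)$ one has $p\in\Iset{M(Q)}{\Fn\times\Fn}$ exactly when $Q$ fails the unique $p$-root property, and then invokes the Adian--Rabin theorem to conclude that this Markov property is unrecognizable (even among three-generator presentations). No control over roots in any particular group is required.
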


\begin{proof}
Observe first that, for $r,s\geqslant 2$, there is a pure embedding of $\Free[r]$ into $\Free[s]$, \ie a pure subgroup  of $\Free[s]$ isomorphic to $\Free[r]$, say $\Free[r]\simeq H\leqslant \Free[s]$. For $r\leqslant s$ we can take, for example, $H$ to be a free factor of $\Free[s]$; and, for the general case, we just observe that the commutator~$\Comm{\Free[s]} \normaleq \Free[s]$ is pure in $\Free[s]$ (since it is normal and the corresponding quotient is torsion-free) and not finitely generated; hence, it contains a rank $r$ free factor $\Free[r]\simeq H\leqslant\ff [\Free[s] : \Free[s]]\leq \Free[s]$, which will be pure in $\Free[s]$ as well (recall that purity is transitive). Repeating this operation on each coordinate, we see that $\Free[r]\times \Free[r]$ embeds purely in $\Free[s]\times \Free[s]$, for every $r,s\geqslant 2$. Hence, proving the statement for each fixed $n\geqslant 2$ reduces to proving it for a single fixed value $n=n_0$, $n_0\geqslant 2$. For later convenience, let us do it for $n_0=3$, \ie let us work in $\Free[3]\times \Free[3]$.

We say that a group $G$ has \emph{unique $p$-roots} if, for every pair of elements $g_1,g_2 \in G$, $g_1^p=_G g_2^p$ implies $g_1=_G g_2$. For an arbitrary finite presentation on three generators, $Q=\pres{a_1, a_2, a_3}{r_1,\ldots ,r_m}$, consider the corresponding Mikhailova subgroup $M(Q)=\{ (u,v)\in \Free[3]\times \Free[3] \mid u=_{Q} v\}\leqslant \Free[3]\times \Free[3]$. It is straightforward to see that $M(Q)$ is (finitely) generated by the elements $\{ (a_1,a_1), (a_2, a_2), (a_3,a_3), (1,r_1),\ldots ,(1, r_m)\}$. Moreover, given two words $u,v$ on $a_1, a_2, a_3$, we have $u\neq _Q v$ but $u^p=_Q v^p$ if and only if $(u,v)\not\in M(Q)$ but $(u,v)^p=(u^p,v^p)\in M(Q)$, and (by the primality of $p$) if and only if $(u,v), (u^2,v^2),\ldots ,(u^{p-1},v^{p-1})\not\in M(Q)$ but  $(u,v)^p= (u^p,v^p)\in M(Q)$; that is, if and only if $\Ord_{M(Q)}(u,v)=p$. Hence, $Q$ fails to satisfy the unique $p$-root property if and only if $p\in \Iset{M(Q)}{\Free[3]\times \Free[3]}$. Therefore, for our purpose, it suffices to see that there is no algorithm which, on input a presentation $Q$ on three generators, decides whether $Q$ satisfies the $p$-root property, or not. 

It is already known that such an algorithm does not exist if we enlarge the set of inputs to \emph{all} finite presentations; see next proposition. We revisit one of the classical proofs for this fact, in order to add an easy observation at the end implying the desired undecidability, even when restricted to presentations with three generators. This concludes the proof. 
\end{proof}

\begin{prop}[Adian--Rabin]
For every fixed prime $p$, there is no algorithm which takes as an input a finite presentation with three generators $Q=\pres{a,b,c}{r_1,\ldots ,r_m}$, and decides whether $Q$ satisfies the $p$-root property, or not.  
\end{prop}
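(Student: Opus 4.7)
The strategy is the classical Adian--Rabin reduction, together with a Neumann-style trimming of the generator count at the end.

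First I would verify that \emph{unique $p$-roots} is a Markov property. Invariance under isomorphism is immediate; the trivial group (or any torsion-free group) is a positive witness; and $\ZZ / p\ZZ$ is a hereditary negative witness, since any f.p.\ group $G$ containing a nontrivial element $t$ with $t^p=1$ has $t\neq 1$ but $t^p=1=1^p$, hence fails unique $p$-roots. By the general Adian--Rabin theorem, the set of finite presentations defining groups with unique $p$-roots is therefore not recursive.

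Next I would run the Rabin construction explicitly to expose its generator count. Fix a Boone--Novikov finitely presented group $Q_0=\pres{a_1,\ldots ,a_k}{r_1,\ldots ,r_\ell}$ with unsolvable word problem. Given a word $w$ in the $a_i^{\pm}$, form a finite presentation $K_w$ by amalgamating $Q_0$ with $\pres{t}{t^p}$ and adjoining a bounded number of HNN stable letters whose defining relations (depending on $w$) force the subgroup $\gen{t}$ to collapse precisely when $w=_{Q_0}1$. Thus $K_w$ is torsion-free (and in particular has unique $p$-roots) when $w=_{Q_0}1$, while $t$ retains order $p$ in $K_w$ otherwise, so $K_w$ fails the property. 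By construction the number $n_0$ of generators of the presentation of $K_w$ depends only on $Q_0$ and on the Rabin machine, not on $w$, and the map $w\mapsto K_w$ is uniformly computable.

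Finally I would trim from $n_0$ down to three generators. Apply a uniform Higman--Neumann--Neumann embedding $K_w\hookrightarrow K'_w$ into a finitely presented group on two (hence at most three) generators, choosing the HNN-extensions used in the embedding to be along torsion-free associated subgroups so that, by Britton's lemma, no new torsion is introduced. Such an embedding preserves orders of elements, so $K'_w$ contains an element of order $p$ iff $K_w$ does, iff $w\neq_{Q_0}1$; and on the positive side $K'_w$ remains torsion-free, hence still satisfies unique $p$-roots. Consequently $K'_w$ has unique $p$-roots iff $w=_{Q_0}1$, and any algorithm deciding the $p$-root property on 3-generator finite presentations would decide the word problem of $Q_0$ --- contradiction.

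The main technical obstacle is the very last step: ensuring that the Neumann embedding can be carried out without introducing spurious $p$-torsion in the positive case. For bare Adian--Rabin only the survival of the negative witness in extensions is needed, which is automatic; here one additionally needs the positive side to survive the two-generator reduction. This is handled by picking the standard Neumann embedding in its HNN-over-torsion-free-subgroup form, for which preservation of torsion-freeness is a routine Britton's lemma check.
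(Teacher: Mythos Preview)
There is a genuine gap: you repeatedly use the implication ``torsion-free $\Rightarrow$ unique $p$-roots'', and this is false. For $p=2$ the Klein bottle group $\langle a,b \mid bab^{-1}=a^{-1}\rangle$ is torsion-free but $(ab)^2=abab=a(a^{-1}b)b=b^2$ with $ab\neq b$; for arbitrary $p$, take $\ZZ^{p}\rtimes \ZZ$ with the generator of $\ZZ$ acting by cyclic permutation of coordinates: all elements $(e_i,1)$ have the same $p$-th power $((1,\ldots,1),p)$. So your positive branch breaks at two places: the claim that $K_w$ ``is torsion-free (and in particular has unique $p$-roots)'' when $w=_{Q_0}1$, and the claim that after the Neumann embedding ``$K'_w$ remains torsion-free, hence still satisfies unique $p$-roots''. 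Controlling torsion through the embedding via Britton's lemma is not enough; you would need to control \emph{unique $p$-roots}, which is a much more delicate property and is certainly not preserved by arbitrary HNN extensions over torsion-free associated subgroups (the Klein bottle group is itself such an HNN extension of $\ZZ$).

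The paper sidesteps this entirely by arranging the generator count \emph{inside} the Rabin machine rather than post hoc. It quotes Miller's Main Technical Lemma, which from $(K,w)$ produces a two-generator presentation $L_w$ with $L_w$ \emph{trivial} (not merely torsion-free) when $w=_K 1$, and $K\hookrightarrow L_w$ otherwise. One then takes $L_w * G_+$ with $G_+=\ZZ$: this has exactly three generators, equals $\ZZ$ (hence has unique $p$-roots) in the positive case, and contains $G_-=\ZZ/p\ZZ$ in the negative case. No Neumann embedding is needed, and the positive witness is literally $\ZZ$ rather than ``some torsion-free group'', which is what makes the argument go through.
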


\begin{proof}
The proof is a slight variation of the argument given by Miller in \cite[pag.~13]{miller_iii_combinatorial_2004}, which was itself a variation of Gordon's proof for the original Adian--Rabin's theorem about non computability of Markov properties. We remind the context, briefly describe Miller's argument, and then add our contribution at the very last paragraph. 

An abstract property $\mathcal{M}$ for finitely presented groups is said to be \emph{Markov} if there exist finitely presented groups $G_+$ and $G_-$ such that $G_+$ satisfies $\mathcal{M}$, and every group $G$ admitting an embedding $G_-\hookrightarrow G$ does not satisfy $\mathcal{M}$ (for example, ``being finite" is a Markov property with $G_+=\{1\}$ and $G_-=\mathbb{Z}$, whereas ``being infinite" is not). The classical Adian--Rabin's Theorem states that Markov properties $\mathcal{M}$ are not computably recognizable, \ie there is no algorithm which, on input a finite presentation, decides whether the presented group satisfies the property $\mathcal{M}$, or not. 

Next, we will focus on the unique $p$-root property. Clearly, it is a Markov property: take, for example, $G_+=\mathbb{Z}$ and $G_-=\mathbb{Z}/ p\mathbb{Z}$. So, by Adian--Rabin's Theorem, there is no algorithm to decide whether a given finite presentation has unique $p$-roots. We state below a small observation showing that such an algorithm does not exist either, even restricting the input to finite presentations with exactly 3 generators. 

The classical proof goes as follows: the so-called Main Technical Lemma (see~\cite[Lem\-ma~6.13]{miller_iii_combinatorial_2004}) takes a finite presentation (of a group $K$) and a word $w$ on its generators, and it explicitly constructs a new finite presentation (of a group $L_w$) with just two generators and the following properties: (i) if $w\neq_K \trivial$ then $K$ embeds in $L_w$; (ii) if $w=_K \trivial$ then $L_w$ is the trivial group. Now, let $\mathcal{M}$ be a Markov property (with corresponding witnesses~$G_+$ and $G_-$), and let $U$ be a finitely presented group with unsolvable word problem. If we apply the Main Technical Lemma to $K=U*G_-$ and to a word $w$ in the generators of $U\leqslant K$, then the finitely presented group $L_w*G_+$ satisfies the following: if $w=_K 1$ then $L_w=1$ and hence $L_w*G_+=G_+$ satisfies property $\mathcal{M}$; otherwise (if $w\neq_K 1$), $G_-\hookrightarrow U*G_{-} = K \hookrightarrow L_w\hookrightarrow L_w*G_+$ and hence $L_w*G_+$ does not satisfy property~$\mathcal{M}$. 
That is, $L_w*G_+$ satisfies $\mathcal{M}$ if and only if $w =_{K} \trivial$.
Therefore, there exists no algorithm that, on input a finite presentation, recognizes whether the presented group satisfies $\mathcal{M}$ or not (since such an algorithm
would decide the word problem in~$U$ as well). 

Observe that, particularizing the above argument to the (Markov) property of having unique $p$-roots, the group $L_w$ in the construction above has two generators, and $G_+=\mathbb{Z}$ has one generator so, the group $L_w*G_+$ is 3-generated. Hence, the unsolvability of the word problem for $U$ implies also the unrecognizability of the unique $p$-root property, even when restricted to 3-generated presentations as inputs. 
\end{proof}

\begin{prop}
For $n \geq 2$, the group $\Fn \times \Fn$ does not have subgroup bounded spectra. More precisely, for every $n \geq 2$, there exists a finitely generated subgroup $H \leq \Fn \times \Fn$ such that $\Iset{H}{\Fn \times \Fn} = \mathbb{N}$.
\end{prop}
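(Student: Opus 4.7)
The plan is to recycle the Mikhailova-based strategy of \Cref{prop: freetimesfree}, but now feeding it a finitely presented group whose \emph{standard} spectrum is all of~$\NN$, rather than merely a single prime. As in the opening of that previous proof, it suffices to produce a single such example in some $\Free[m] \times \Free[m]$ with $m \geq 2$; one then transfers it to $\Free[n] \times \Free[n]$ for any $n \geq 2$ via the pure embedding $\Free[m] \times \Free[m] \hookrightarrow \Free[n] \times \Free[n]$ already invoked there.

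The key observation is that, for any finitely presented group $Q$ on $m$ generators, the Mikhailova subgroup $M(Q) \leqslant\fg \Free[m] \times \Free[m]$ satisfies $\Ord(Q) \subseteq \Iset{M(Q)}{\Free[m] \times \Free[m]}$. Indeed, given $g \in Q$ of standard order $k \in \NN$, take a preimage $u \in \Free[m]$ and consider the element $(u, \trivial) \in \Free[m] \times \Free[m]$; since $(u,\trivial)^j = (u^j,\trivial) \in M(Q)$ if and only if $u^j =_Q \trivial$, one immediately gets $\Ord_{M(Q)}(u,\trivial) = k$ (with $k=0$ corresponding to $g$ having infinite order in $Q$). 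Hence the whole argument reduces to exhibiting a finitely presented group $Q$ with $\Ord(Q) = \NN$.

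To construct such a $Q$, I would start from the recursively presented group $G_0 = \ZZ \ast C_2 \ast C_3 \ast C_4 \ast \cdots$, whose standard spectrum is obviously $\NN$ (the free factor $\ZZ$ contributes order $0$, and each cyclic factor $C_k$ contributes order $k$). By Higman's embedding theorem, $G_0$ embeds into some finitely presented group $Q$; by the order-preserving refinement of Higman's embedding due to Chiodo (already invoked in the introduction of this paper), such an embedding adds no new torsion and preserves orders, so $\NN = \Ord(G_0) \subseteq \Ord(Q) \subseteq \NN$, and hence $\Ord(Q) = \NN$. Letting $m$ be the number of generators in some finite presentation of $Q$ (necessarily $m \geq 2$, since $Q$ is not cyclic), the previous paragraph yields $\Iset{M(Q)}{\Free[m] \times \Free[m]} = \NN$, and transferring via the pure embedding to $\Free[n] \times \Free[n]$ produces the required finitely generated subgroup $H$ with $\Iset{H}{\Free[n] \times \Free[n]} = \NN$.

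The only genuinely delicate point is the appeal to Chiodo's order-preserving version of Higman's theorem: without that refinement, the orders in $Q$ coming from the cyclic factors $C_k$ could a priori collapse to proper divisors of $k$, and one would have no control on whether the resulting spectrum still covered all of $\NN$. Everything else is either the routine Mikhailova computation or the pure embedding trick already established in \Cref{prop: freetimesfree}.
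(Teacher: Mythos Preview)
Your proof is correct and follows essentially the same route as the paper's: build a recursively presented group with full spectrum, Higman-embed it into a finitely presented $Q$, take the Mikhailova subgroup, and transfer via a pure embedding. One simplification, though: your appeal to Chiodo's refinement is unnecessary, and your stated worry (that orders might ``collapse to proper divisors of $k$'') is misplaced --- any group embedding preserves element orders exactly, so $\Ord(G_0) \subseteq \Ord(Q)$ is automatic, and since $\Ord(Q) \subseteq \NN$ trivially, you get $\Ord(Q) = \NN$ without Chiodo. The paper's version does exactly this, not invoking Chiodo at all here.
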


\begin{proof}
Let $K$ be the free product of finite cyclic groups of all finite orders, $K=\Fprod_{n\in \mathbb{N}} C_n$. Then $K$ is a computably presented group and so, by Higman embedding theorem, $K$ can be embedded into a finitely presented group $G$. It follows that $G$ is a finitely presented group containing elements of any possible order, but not a torsion group. Suppose $G$ is generated by $m$ elements and consider the Mikhailova subgroup $M(G)$ in $\Free[m] \times \Free[m]$, as in the proof of Proposition \ref{prop: freetimesfree}. Then it follows that $\Iset{M(G)}{\Free[m] \times \Free[m]} = \mathbb{N}$: namely, for a word $u$ in $\Free[m]$ representing an element of order $k\geq 1$ in $G$ (respectively, of infinite order) the element $(u,1)$ of $\Free[m] \times \Free[m]$ contributes $k$ (respectively, 0) to $\Iset{M(G)}{\Free[m] \times \Free[m]}$. Since $\Free[m] \times \Free[m]$ embeds purely into $\Fn \times \Fn$ for every $n \geq 2$, the claim follows.
\end{proof}

\section{Free groups} \label{sec: free}

Let $\Free=\Fn = \Free[X]$ denote the free group with basis $X= \set{x_1,\ldots,x_n}$, where $n \geq 2$. In this section, we provide an algorithmic description of the set $\ofo{\Free}{k}{Hu}$ (of $\Free$-elements of order $k$ in the coset $Hu$), for $k\geq 0$, and $H$ a finitely generated subgroup of $\Free$. As a consequence, we obtain an algorithmic description of $\sqrt[k]{H}$, and the computability of the order problem ($\OP(\Free)$), the spectrum membership problem ($\SMP(\Free)$), and the finite spectrum problem ($\FSP(\Free)$) within free groups. Finally, we prove the computability of (partial) pure closures within this family.

Note that the set of $k$-roots of $H$, $\sqrt[k]{H}$, is the set of solutions of the equation $X^k=Y$ subject to the rational constraint $Y\in H$. Therefore, it can be theoretically described using general approaches (see the original paper by Razborov~\cite{razborov_systems_1984}, and the version with rational constraints in~\cite{diekert_finding_2016}). However, we aim to obtain a more synthetic and intuitive description. For that purpose we will use the well known Stallings interpretation of subgroups as automata (see~\cite{stallings_topology_1983,kapovich_stallings_2002}) which we briefly summarize below.

\subsection{Automata and subgroups of free groups} \label{ssec: automata}

Recall that a \defin{directed graph} (a \defin{digraph}, for short) is a tuple $\Ati =(\Verts, \Edgs, \init,\term)$, where $\Verts$ and $\Edgs$ are disjoint sets (called the \defin{set of vertices of $\Ati$} and the \defin{set of arcs of $\Ati$}, respectively), and $\init,\term \colon \Edgs \to \Verts$ are maps assigning to each arc in $\Ati$ its initial and terminal vertex, respectively.

A \defin{walk} in a digraph $\Ati$ is a finite alternating sequence $\walki = \verti_0 \edgi_1 \verti_1 \ldots \edgi_{l} \verti_{l}$ of successively incident vertices and arcs (\ie such that $\init{\edgi_i} = \verti_{i-1}$ and $\term{\edgi_i}=\verti_{i}$ for $i = 1,\dots,l$). Then, $\verti_0$ and $\verti_l$ are called the \defin{initial} and \defin{final} vertices of $\walki$, we say that $\walki$ is a walk from $\verti_0$ to $\verti_l$ (a \defin{$(\verti_0,\verti_l)$-walk} for short), and we write $\walki \colon \smash{\verti_0 \xwalk{\ } \verti_l}$. We write $\verti \xwalk{\ } \vertii$ if there exists a walk from $\verti$ to $\vertii$. If the first and last vertices of $\walki$ coincide then we say that $\walki$ is a \defin{closed walk}. A closed walk from $\verti$ to $\verti$ is called a \defin{$\verti$-walk}. The \defin{length of a walk} is the number of arcs in the sequence, counting possible repetitions, namely $l$. The walks of length $0$ are called \defin{trivial walks}, and correspond precisely to the vertices in $\Ati$.

\begin{defn}
An \defin{$\Alfi$-labelled directed graph} (an \defin{$\Alfi$-digraph}, for short) is a pair $(\Ati,\lab)$, where $\Ati = (\Verts, \Edgs, \init,\term)$ is a digraph, and $\lab \colon \Edgs \to \Alfi$ is a map assigning to every arc in~$\Ati$ a label from some set $\Alfi$ called \defin{alphabet}, whose elements are called \defin{symbols} (or \defin{letters}).
\end{defn}

If $\edgi$ is an arc from $\verti$ to $\vertii$ with label $\lab(\edgi) = \alfi$, then we write $\verti \xarc{\alfi\,} \vertii$ and we say that $\edgi$ is an \defin{$\alfi$-arc} of $\Ati$. An $\Alfi$-labelling on arcs extends naturally to an $\Alfi^{*}$-labelling on walks by concatenating the corresponding arc labels; \ie if $\walki = \verti_0 \edgi_1 \verti_1 \cdots \edgi_{l} \verti_{l}$ is a walk on an \mbox{$\Alfi$-digraph}, then $\lab(\walki) = \lab(\edgi_1) \cdots \lab(\edgi_{l})\in \Alfi^{*}$, and the label of any trivial walk is the empty word~$\emptyword$. Then, we write $\walki\colon \verti_0 \xwalk{_{\scriptstyle{w}}} \verti_l$, where $w=\lab(\walki)$.

\begin{defn}
Let $\Ati$ be an $\Alfi$-digraph and let $P,Q$ be subsets of vertices in $\Ati$. Then, the set of words read by walks from vertices in $P$ to vertices in $Q$ is called the \defin{language from $P$ to $Q$ (in $\Ati$)}, and is denoted by $\Lang_{PQ}(\Ati)$. For simplicity, if $\verti,\vertii \in \Verts\Ati$, then the languages from $\{ \verti \}$ to $\{ \vertii \}$, and from $\{ \verti \}$ to $\{ \verti \}$, are denoted by $\Lang_{\verti \vertii}(\Ati)$ and $\Lang_{\verti}(\Ati)$, respectively.  
\end{defn}

\begin{defn}
An \defin{$\Alfi$-automaton} is a tern $\Ati_{\!PQ} = (\Ati,P,Q)$, where $\Ati$ is an $\Alfi$-digraph, and $P$ and $Q$ are distinguished nonempty sets of vertices of $\Ati$, called the sets of \defin{initial} and \defin{terminal} vertices of $\Ati_{\!PQ}$, respectively. A walk in $\Ati_{\!PQ}$ is said to be \defin{successful} if it starts at a vertex in $P$ and ends at a vertex in $Q$. Similarly, a word $w\in X^*$ is said to be \defin{successful} (in $\Ati_{\!PQ}$) if it is the label of some successful walk. The \defin{language recognized by  $\Ati_{\!PQ}$} is $\Lang(\Ati_{\!PQ})=\Lang_{PQ}(\Ati)$; \ie the set of successful words in $\Ati_{\!PQ}$. If an automaton $\Ati_{\!PQ}$ has a unique initial vertex, this vertex is called the basepoint of $\Ati$, usually denoted by $\bp$ (\ie $P=\set{\bp}$). An automaton $\Ati_{\!PQ}$ is said to be \defin{pointed} if it has a unique common initial and terminal vertex (\ie if $P=Q=\set{\bp}$); in this case, we write $\Ati_{\!PQ}=\Ati_{\!\bp}$, or even $\Ati_{\!\bp}= \Ati$ if the basepoint is clear.
\end{defn}
 
\begin{defn}
An $\Alfi$ automaton is said to be \defin{saturated} (or \defin{complete}) if for every vertex $\verti$ in $\Ati$ and every letter $\alfi \in \Alfi$, there is an $x$-arc leaving $\verti$.
\end{defn}

\begin{defn}
An $\Alfi$-automaton $\Ati$ is called \defin{deterministic} if no two different arcs with the same label leave the same vertex. That is, if for every vertex $\verti$, and every pair of arcs $\edgi,\edgi'$ leaving $\verti$, $\lab(\edgi) = \lab(\edgi')$ implies $\edgi = \edgi'$. 
\end{defn}

If $\Ati$ is deterministic, then for every vertex $\verti$ in $\Ati$ and every word $w\in \Alfi^*$ there is at most one walk in $\Ati$ reading $w$ from $\verti$; we denote by $\verti w$ its final vertex in case it exists (otherwise, $\verti w$ is undefined). Note that if $\Ati$ is also saturated then $\verti w$ is defined for every vertex $\verti$ and every word $w$.

Recall that, for an alphabet $\Alfi$, we write $\Alfi^{-1} = \set {\alfi^{-1} \st \alfi \in \Alfi}$ the \defin{set of formal inverses} of~$\Alfi$, and $\Alfi^{\pm} = \Alfi \sqcup \Alfi^{-1}$ the \defin{involutive closure} of $\Alfi$. Understanding $(x^{-1})^{-1}=x$, an alphabet $\Alfi$ is said to be \defin{involutive} if~$\Alfi^{\pm} =\Alfi$.

\begin{defn} \label{def: involutive automaton}
An \defin{involutive $\Alfi$-automaton} is an $\Alfi^{\pm}$-automaton
with a labelled involution
${\edgi \to \edgi^{-1}}$
on its arcs; \ie to every arc
$\smash{\edgi \equiv \verti \xarc{\,\alfi\ }\vertii}$ we associate a unique arc $\smash{\edgi^{-1} \equiv \verti \xcra{\,\alfi^{\text{-}1}\!} \vertii}$
(called the \defin{inverse} of $\edgi$) such that $(\edgi^{-1})^{-1} = \edgi$.
That is, in an involutive automaton $\Ati$, arcs appear by (mutually inverse) pairs. 
\end{defn}

\begin{rem} \label{rem: automata representation}
We usually represent involutive $X$-automata through their positive (\ie $X$-labelled) part, with the convention that and arc $\verti \xarc{x} \vertii$ reads the inverse label $x^{-1}$ when crossed backwards (\ie from $\vertii$ to $\verti$).
\end{rem}

If we ignore the labelling and identify all the mutually inverse pairs in an involutive automaton $\Ati$, we obtain an undirected graph called the \defin{underlying graph} of $\Ati$. Involutive automata inherit terminology from its underlying graph; for example, the diameter $\diam(\Ati)$ of $\Ati$ is defined to be the diameter of its underlying graph, and we say that an involutive automaton is `connected', `a tree', etc. if its underlying graph is so. Similarly, the degree of a vertex \emph{in an involutive digraph} refers to its degree, \ie the number of edges incident to it, in the underlying undirected graph.

A walk in an involutive automaton is said to present \defin{backtracking} if it has two successively inverse labelled arcs. A walk without backtracking is said to be \defin{reduced}.

If $\Ati$ is a pointed and involutive $\Alfi$-automaton, then it is easy to see that $\red{\Lang}(\Ati)$ (the free reduction of the language recognized by $\Ati$) describes a subgroup of the free group ${\Free =\pres{\Alfi}{-}}$. We call it the \defin{subgroup recognized} by $\Ati$, and we denote it by~$\gen{\Ati}$; that is, $\gen{\Ati}=\red{\Lang}(\Ati)\leqslant \Free_n$. It is well known that every subgroup of $\Fn$ admits such a description, which can be made unique after adding natural conditions on the involved automata.

The following lemma is straightforward to prove and will be important for us.

\begin{lem} \label{lem: translated subgroup}
Let $\Ati$ be an involutive and deterministic $\Alfi$-digraph, let $\verti,\, \vertii$ be two vertices of $\Ati$, and let $u$ be the label of a walk from $\verti$ to $\vertii$. Then, $\verti u=\vertii$, $\red{\Lang}_{\verti \vertii}(\Ati)=\gen{\Ati_{\! \verti}}u=u\gen{\Ati_{\! \vertii}}$, and  $\gen{\Ati_{\!\vertii}} = \gen{\Ati_{\!\verti}}^{u}$. \qed
\end{lem}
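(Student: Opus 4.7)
The plan is to exploit the fact that in a deterministic involutive automaton, concatenation of walks corresponds exactly to concatenation of their labels, and hence (after free reduction) to multiplication in $\Free$.

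First I would dispatch $\verti u = \vertii$: by hypothesis $u$ labels \emph{some} walk from $\verti$ to $\vertii$, and by determinism there is at most one walk from $\verti$ reading $u$, so the endpoint of that walk must be $\vertii$; hence $\verti u = \vertii$ (and in particular $\vertii u^{-1} = \verti$, using that $\Ati$ is involutive).

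Next I would prove $\red{\Lang}_{\verti \vertii}(\Ati) = \gen{\Ati_{\!\verti}}\,u$ by double inclusion. For $(\subseteq)$: take $w \in \red{\Lang}_{\verti\vertii}(\Ati)$, so $w = \red{w'}$ for some label $w'$ of a $(\verti,\vertii)$-walk $\walki$. Concatenating $\walki$ with the $(\vertii,\verti)$-walk labelled $u^{-1}$ yields a closed $\verti$-walk labelled $w'u^{-1}$, whose free reduction $\red{w'u^{-1}} = wu^{-1}$ lies in $\gen{\Ati_{\!\verti}}$; therefore $w \in \gen{\Ati_{\!\verti}}u$. For $(\supseteq)$: any element of $\gen{\Ati_{\!\verti}}u$ is of the form $\red{v}\, u$ with $v$ the label of a closed $\verti$-walk; concatenating with the $(\verti,\vertii)$-walk labelled $u$ gives a $(\verti,\vertii)$-walk labelled $vu$, whose reduction $\red{vu} = \red{v}u$ (note possible cancellation between $\red{v}$ and $u$ is handled correctly by free reduction) lies in $\red{\Lang}_{\verti\vertii}(\Ati)$. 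The symmetric argument, reading $u^{-1}w'$ instead of $w'u^{-1}$ and using $\vertii u^{-1} = \verti$, gives $\red{\Lang}_{\verti \vertii}(\Ati) = u\,\gen{\Ati_{\!\vertii}}$.

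Finally, from $\gen{\Ati_{\!\verti}}u = u\gen{\Ati_{\!\vertii}}$ I would left-multiply by $u^{-1}$ to conclude $\gen{\Ati_{\!\vertii}} = u^{-1}\gen{\Ati_{\!\verti}}u = \gen{\Ati_{\!\verti}}^{u}$.

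No real obstacle here: the only subtlety is keeping track of free reduction when translating a concatenation of walk labels into a product in $\Free$, which is automatic since $\red{vu^{\pm 1}}$ always equals the reduction of a concatenated label and the involutive structure of $\Ati$ guarantees that every formal inverse $u^{-1}$ actually labels a genuine walk. Everything else is bookkeeping on cosets.
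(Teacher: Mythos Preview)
Your proof is correct. The paper does not actually give a proof of this lemma: it states it as ``straightforward to prove'' and closes with a \qed, so your argument is precisely the kind of natural concatenation-of-walks verification the authors had in mind and chose to omit.
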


\begin{defn}
If a graph (or automaton) $\Ati$ can be obtained by identifying a vertex $\verti$ of a tree $\Treei$ with a vertex of some graph $\Grii$ disjoint with $\Treei$, then we say that $\Treei$ is a \defin{hanging tree} of~$\Ati$, and that $\Atii$ is obtained from $\Ati$ after \defin{removing the hanging tree}~$\Treei$.
\end{defn}

\begin{defn}
An involutive automaton $\Ati$ is said to be \defin{core} if every vertex
in $\Ati$ appears in some reduced successful walk. The core of an involutive automaton $\Ati$, denoted by $\core(\Ati)$ is the maximal core subautomaton of $\Ati$ containing all the initial and terminal vertices of $\Ati$.

Note that an involutive automaton $\Ati$ is core if and only if it is connected and
all its hanging trees contain a terminal vertex different from the identified one.
The \defin{core} of a pointed involutive automaton $\Ati$, denoted by $\core(\Ati)$, is the (pointed and involutive) automaton obtained after taking the connected component of $\Ati$ containing the basepoint, and removing from it all the possible non-trivial hanging trees not containing the basepoint, and also all those containing the basepoint as the identified vertex. It is clear that $\gen{\core (\Ati)} = \gen{\Ati}$. Finally, a pointed and involutive automaton is called \defin{reduced} if it is both deterministic and core.
\end{defn}

As suggested by the previous characterization, $\core(\Ati)$ can still have some hanging trees, but mandatorily containing terminal points at all their leaves (\ie vertices of degree one).

\begin{defn}
The \defin{restricted core} of an automaton $\Ati$, denoted by $\core^*(\Ati)$, is the labelled digraph obtained after ignoring all the initial an terminal vertices and removing all the hanging trees from $\Ati$. It is clear that $\core^*(\Ati) \subseteq \core ({\Ati})$.
\end{defn}

If not stated otherwise, from now on, the automata appearing in this section will be involutive, and deterministic; we will refer to them simply as `automata'. More precisely, Schreier and Stallings automata, defined below, will have special prominence throughout the article.

\begin{defn}
Let $\Free$ be a free group with basis $X$. The  \defin{(right) Schreier automaton} of $H$ \wrt $X$, denoted by $\schreier{H,X}$, is the automaton having $H\backslash \Free$ (the set of right cosets of $\Free$ modulo $H$) as vertex set, an arc $Hw \xarc{x\,} Hwx$ for every coset $Hw \in H \backslash \Free$ and every element $x\in X^{\pm 1}$, and the coset $H$ as the only initial and terminal point. At some point of the discussion we shall consider a variation of the Schreier automaton with $H$ as initial vertex  and any of the vertices, say $Hu$, as the (only) terminal one; we denote it by $\operatorname{Sch}_{\bp,Hu}(H,X)$. Note that $\operatorname{Sch}_{\bp,H}(H,X)=\schreier{H,X}$. 
\end{defn}

Note also that Schreier automata are involutive, deterministic, connected, and saturated, but not necessarily core.
The core of $\schreier{H,X}$ is a reduced (involutive and pointed) \mbox{$X$-automaton} called the \defin{Stallings automaton} of~$H$ (\wrt $X$), and denoted by $\Stallings{H,S}$; that is, $\stallings{H,X} = \core(\schreier{H,X})$. Note that $\gen{\schreier{H,X}}=\gen{\Stallings{H,X}}=H$ and that all vertices in $\stallings{H,X}$ have degree bigger than one except maybe the basepoint. 

Similarly, the core of $\operatorname{Sch}_{\bp,Hu}(H,X)$ is denoted by $\stallings{Hu,X}$, we have $\gen{\operatorname{Sch}_{\bp,Hu}(H,X)}=\gen{\Stallings{Hu,X}}=Hu$, and all vertices in $\Stallings{Hu,X}$ have degree bigger than one, except maybe the basepoint or $Hu$. 

Finally, the restricted core of $\stallings{H,X}$ is called the \defin{restricted Stallings automaton of $H$} \wrt $X$, and denoted by $\rstallings{H,X}=\core^*(\stallings{H,X})=\core^*(\schreier{H,X})$. 

If the chosen basis $X$ is clear from the context, we usually drop the reference to it and just write $\schreier{H}$, $\stallings{H}$, $\rstallings{H}$, etc. 

In the seminal paper \cite{stallings_topology_1983}, \citeauthor{stallings_topology_1983} proved that if we restrict to deterministic and core automata, then the description of subgroups of $\Free$ by automata is essentially unique.

\begin{thm}[\citenry{stallings_topology_1983}]\label{thm: Stallings bijection}
Let $\Free$ be a free group with basis $X$. Then, 
 \begin{equation}\label{eq: Stallings bijection}
\begin{array}{rcl} \operatorname{St}\colon \set{\,\text{subgroups of } \Free \,} & \leftrightarrow & \set{\,\text{(isomorphic classes of) reduced $X$-automata}\,} \\ H & \mapsto & \stallings{H,X} \coloneqq \core (\schreier{H,X}) \\ \gen{\Ati} & \mapsfrom & \Ati \end{array}
 \end{equation}
is a bijection. Furthermore, finitely generated subgroups correspond precisely to finite automata and, in this case, the bijection is algorithmic.
\end{thm}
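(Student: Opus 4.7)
The plan is to verify that the two assignments in~\eqref{eq: Stallings bijection} are mutually inverse, and then to address the finite/algorithmic part separately. First I would check well-definedness of $H \mapsto \stallings{H,X}$: by construction $\schreier{H,X}$ is involutive, deterministic, saturated and connected, so taking its core preserves determinism and connectedness while leaving at most the basepoint of degree~$\leq 1$; hence $\stallings{H,X}$ is reduced. For the identity $\gen{\stallings{H,X}} = H$, one observes that $\red{\Lang}(\schreier{H,X})$ is precisely the set of freely reduced words~$w$ with $Hw = H$, \ie of freely reduced elements of~$H$; passing to the core only discards vertices in hanging trees not containing the basepoint, which cannot be traversed by any freely reduced closed walk at~$\bp$.

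The main obstacle is the other direction, \ie showing that every reduced~$\Ati$ satisfies $\Ati \cong \stallings{\gen{\Ati}, X}$. Setting $K = \gen{\Ati}$, my plan is to define $\phi\colon \Verts \Ati \to K \backslash \Free$ by $\verti \mapsto K u_{\verti}$, where $u_{\verti}$ is the label of any reduced walk from $\bp$ to~$\verti$ (such a walk exists since $\Ati$ is connected and core). Well-definedness will follow because two reduced $(\bp,\verti)$-walks differ by a reduced closed walk at $\bp$, whose label lies in $\red{\Lang}_{\bp}(\Ati) = K$; for injectivity, if $K u_{\verti} = K u_{\vertii}$ then by \Cref{lem: translated subgroup} the element $u_{\verti}u_{\vertii}^{-1}$ is readable in $\Ati$ as a closed walk at $\bp$, and determinism of~$\Ati$ forces $\verti = \vertii$. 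Since $\phi$ respects the $X$-labelled arc structure, it embeds $\Ati$ into $\schreier{K,X}$ as an involutive subautomaton containing the basepoint; and because $\Ati$ is core, its image is exactly $\stallings{K,X}$, yielding the required isomorphism.

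For the finite/algorithmic part, I would argue as follows. If $\stallings{H,X}$ is finite, then a spanning tree argument (one free generator per positive arc outside the tree) produces a finite basis of~$H$, of rank equal to the cyclomatic number of the underlying graph; in particular $H$ is finitely generated. Conversely, given $H = \gen{h_1,\ldots,h_k}$, I would form the bouquet of $k$ closed loops at $\bp$ labelled by the freely reduced forms of the~$h_i$, and perform successive \emph{foldings}, each of which identifies a pair of equally-labelled arcs sharing their initial vertex. A standard Tietze-style check shows that foldings preserve the recognized subgroup; since each folding strictly decreases the number of vertices, the procedure terminates in a deterministic automaton whose core is~$\stallings{H,X}$. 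Both folding and core-extraction are effective, which secures the algorithmic claim. The hardest conceptual point is the injectivity argument for~$\phi$, which is precisely what upgrades the recognized-subgroup correspondence from a surjection to a genuine bijection.
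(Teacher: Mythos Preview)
The paper does not actually prove this theorem: it is stated as a known result attributed to Stallings, and the paragraph following the statement only gives an informal description of the algorithmic content (flower automaton plus foldings in one direction; spanning tree and off-tree edges in the other), referring the reader to \cite{kapovich_stallings_2002, miasnikov_algebraic_2007, stallings_topology_1983} ``for details and proofs''. So there is no in-paper proof to compare against beyond that sketch.

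Your outline is the standard one and matches the paper's sketch on the algorithmic side. The map $\phi\colon \Verts\Ati\to K\backslash\Free$ into the Schreier graph, together with the determinism argument for injectivity and the core argument for surjectivity onto $\stallings{K,X}$, is exactly how one usually establishes the bijection; the flower-plus-folding procedure and the spanning-tree basis extraction are precisely what the paper alludes to. One small point worth making explicit in your injectivity step: you are implicitly using that in a deterministic involutive automaton, reading a word and reading its free reduction end at the same vertex (backtracking cancels), so that $u_{\verti}u_{\vertii}^{-1}\in K=\red{\Lang}_{\bp}(\Ati)$ really does force the walk reading $u_{\vertii}$ from $\bp$ to terminate at $\verti$. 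Similarly, your claim that the image of $\phi$ is exactly $\core(\schreier{K,X})$ needs both inclusions spelled out; the nontrivial one is that every vertex of the core is hit, which again comes down to the fact that every reduced element of $K$ is the label of a reduced $\bp$-walk in $\Ati$.
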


The bijection \eqref{eq: Stallings bijection} being algorithmic means that there is an algorithm which, given a finite family of words $W\subseteq \Free$, produces $\stallings{\gen{W},X}$; and, conversely, given a finite reduced $X$-automaton $\Ati$, it computes a free basis for $\gen{\Ati}$. Roughly speaking, the first algorithm consists of drawing the flower automaton with a basepoint and a petal spelling each $w\in W$, and then folding; and the second one consists on choosing a spanning tree $\Treei$ in $\Ati$, and computing the free basis of $\gen{\Ati}$ given by $\set{\, \lab(\bp \xwalk{_{\scriptscriptstyle{T}}} \! \bullet \! \arc{\edgi} \!\bullet\! \xwalk{_{\scriptscriptstyle{T}}} \bp) \st \edgi \in \Edgi^{+}\Ati \setmin \Edgi \Treei \,}$, where $\verti \xwalk{_{\scriptscriptstyle{T}}} \vertii$ denotes the unique reduced walk from $\verti$ to $\vertii$ using only arcs in $\Treei$, and $\Edgi^{+} \Ati$ denotes the set of arcs in $\Ati$ labelled by elements in $X$; see~\cite{kapovich_stallings_2002, miasnikov_algebraic_2007, stallings_topology_1983} for details and proofs.

The interpretation of subgroups as automata (and their computability in the finitely generated case) given by \Cref{thm: Stallings bijection} has proved to be extremely fruitful and has become one of the main tools for studying finitely generated subgroups of a free group; see for example~\cite{stallings_topology_1983,
kapovich_stallings_2002,
miasnikov_algebraic_2007,
silva_algorithm_2008,
puder_primitive_2014,
delgado_lattice_2020,
margolis_closed_2001,
ventura_fixed_1997,
roig_complexity_2007}. For example, it follows from the definitions that the membership problem for free groups $\MP(\Free)$ is decidable by just checking whether (the freely reduced form of) the candidate element is the label of some $\bp$-walk in (the finite automaton) $\Stallings{H,X}$. It is also easy to see that a finitely generated subgroup $H \leqslant \Free$ has finite index if and only if $\stallings{H,X}$ is saturated, and hence the \defin{finite index problem} $\FIP(\Free)$ is computable. Other more elaborated applications include the study of algebraic extensions (see~\cite{miasnikov_algebraic_2007}) and of intersections. In this last case, the key fact is that the Stallings automaton of the intersection of two subgroups of $\Free$ is essentially the tensor (or categorical) product of the Stallings automata of the intersecting subgroups (see~\Cref{exm: computing preorder} below). From this fact one can immediately derive the Howson property for free groups: the intersection of any two finitely generated subgroups of $\Free$ is again finitely generated.  

Recall that we have defined different variations of Stallings automata: if $\Ati_{\!H}$ denotes the standard Stallings automaton of a subgroup $H \leqslant \Free$ and $u \in \Free$, then we denote by~$\Ati^*_{\!H}$ and~$\Ati_{\!Hu}$ the restricted and coset versions, respectively.

\begin{exm}
Let ${H = \gen{b a ^3 b^{-1}, b a b^{-1} a b^{-1}} \leqslant \Free[\set{a,b}]}$. Then:
\vspace{-10pt}
\begin{figure}[H]
    \centering
    \begin{tikzpicture}[baseline=(0.base),shorten >=1pt, node distance= 0.5 cm and 1cm, on grid,auto,>=stealth']
    
    \newcommand{\dx}{1.2}
    \newcommand{\dy}{1.2}
    % \node[state, accepting] (0) {};
    \node[state, accepting] (1) {};

    \node[state, right = \dx of 1] (2) {};    
    \node[state, above right = \dy/2 and \dx of 2] (21) {};
    \node[state, below right = \dy/2 and \dx of 2] (22) {};
    \node[below = 1*\dy of 2] (st) {$\stallings{H,\set{a,b}}$};
    
    % \path[->]
    %     (0) edge[red]
    %             node[pos=0.5,above=-.3mm] {$b$}
    %         (1);
            
    \path[->]
        (1) edge[red]
                node[pos=0.5,above=-.3mm] {$b$}
            (2);
            
    \path[->]
        (2) edge[blue]
                node[pos=0.5,above left=-0.05] {$a$}
            (21);
            
    \path[->]
        (21) edge[blue, bend right]
    %            node[pos=0.5,above=-.3mm] {$a$}
            (22);
            
    \path[->]
        (22) edge[red, bend right]
    %            node[pos=0.5,above=-.3mm] {$a$}
            (21);
            
    \path[->]
        (22) edge[blue]
    %            node[pos=0.5,above=-.3mm] {$a$}
            (2);
                
    \end{tikzpicture}
        \qquad
    \begin{tikzpicture}[baseline=(0.base),shorten >=1pt, node distance= 0.5 cm and 1cm, on grid,auto,>=stealth']
    
    \newcommand{\dx}{1.2}
    \newcommand{\dy}{1.2}

    \node[state] (2) {};
    \node[below right = 1*\dy and 0.5*\dx of 2] (st) {$\rstallings{H,\set{a,b}}$};
    \node[state, above right = \dy/2 and \dx of 2] (21) {};
    \node[state, below right = \dy/2 and \dx of 2] (22) {};
            
    \path[->]
        (2) edge[blue]
    %            node[pos=0.5,above=-.3mm] {$a$}
            (21);
            
    \path[->]
        (21) edge[blue, bend right]
    %            node[pos=0.5,above=-.3mm] {$a$}
            (22);
            
    \path[->]
        (22) edge[red, bend right]
    %            node[pos=0.5,above=-.3mm] {$a$}
            (21);
            
    \path[->]
        (22) edge[blue]
    %            node[pos=0.5,above=-.3mm] {$a$}
            (2);
            
    \end{tikzpicture}
    \hspace{20pt}
    \begin{tikzpicture}[baseline=(0.base),shorten >=1pt, node distance= 0.5 cm and 1cm, on grid,auto,>=stealth']
    
    \newcommand{\dx}{1.2}
    \newcommand{\dy}{1.2}
    % \node[state, accepting] (0) {};
    \node[state, accepting] (1) {};

    \node[state, right = \dx of 1] (2) {};    
    \node[state, above right = \dy/2 and \dx of 2] (21) {};
    \node[state, above right = \dy/3 and \dx/1.5 of 21] (211) {};
    \node[state, below right = \dy/2 and \dx of 2] (22) {};
    \node[below right = \dy and \dx/3 of 2] (st) {$\stallings{H\, bab,\set{a,b}}$};

    \path[->]
        (1) edge[red]
                node[pos=0.5,above=-.3mm] {$b$}
            (2);
            
    \path[->]
        (2) edge[blue]
                node[pos=0.5,above left=-0.05] {$a$}
            (21);
            
    \path[->]
        (21) edge[red]
                node[pos=0.5,above=-.3mm] {}
            (211);
            
    \path[->]
        (21) edge[blue, bend right]
    %            node[pos=0.5,above=-.3mm] {$a$}
            (22);
            
    \path[->]
        (22) edge[red, bend right]
    %            node[pos=0.5,above=-.3mm] {$a$}
            (21);
            
    \path[->]
        (22) edge[blue]
    %            node[pos=0.5,above=-.3mm] {$a$}
            (2);
                
    \end{tikzpicture}
    \label{fig: Stallings variants}
\end{figure}
\end{exm}

\begin{rem}
It follows from the definitions that a subgroup $H \leqslant \Free$ is finitely generated if and only if (any of) the automata $\stallings{H}$, $\rstallings{H}$, and $\stallings{Hu}$ (for all $u \in \Free$) are finite; in this case, all of them are clearly computable.
\end{rem}

\subsection{Relative spectra in free groups}

In the present subsection, we use Stallings description to study spectra of subgroups and cosets of free groups, and related questions. Below, we consider some geometric counterparts of the algebraic notions introduced in \Cref{sec: orders & spectrum}.

\begin{defn}
Let $\Ati$ be an automaton. An \defin{open trail} in $\Ati$ is a (finite or infinite) sequence $\vverti = (\verti_i)_{i \geq 0}$ of \emph{pairwise different} vertices in $\Ati$. More formally, an open trail is an injective map, $\vverti \colon I\to \Vertexs \Ati$, $i\mapsto \verti_i$, from an initial subsequence $I$ of $\NN$ to the set of vertices of $\Ati$.  If an open trail is finite, \eg $\vverti = (\verti_0, \verti_1,\ldots, \verti_{k})$, we define the \defin{closure} of $\vverti$ , denoted by $\overline{\vverti}$, as the sequence obtained after appending the initial vertex at the end of $\vverti$, \ie $\overline{\vverti} = (\verti_0, \verti_1,\ldots, \verti_{k}, \verti_{0})$ (now injective except for the first and last positions). Closures of finite open trails are called \defin{closed trails}. If we want to highlight the first vertex $\verti_0$ in a trail $\vverti$, we say that $\vverti$ is a \defin{$\verti_0$-trail}, and if $\vverti = (\verti_0,\ldots,\verti_k)$ is a finite trail, then we say that $\vverti$ is a trail from $\verti_0$ to $\verti_k$, or a $(\verti_0,\verti_k\!)$-trail, for short.
Sometimes, we abuse language and identify a trail $\vverti$ with their image; for example we may write $\vverti \subseteq \Vertexs \Ati$; or, given a vertex $\vertii$, we write $\vertii \in \vverti$ (\resp $\vertii \notin \vverti$) to express that $\vertii$ appears (\resp does not appear) as an element in $\vverti$.
\end{defn}

\begin{defn}
The \defin{length} of a (open or closed) trail $\vverti$, denoted by $\ell(\vverti)$, is the cardinal of its domain minus one, \ie $\ell(\vverti) = \card{I} - 1$. A trail is said to be \defin{finite} or \defin{infinite} accordingly.
\end{defn}

The following are examples of trails which will play a central role in our arguments. Note that if $\Ati$ is a deterministic and complete $X$-automaton, then the sequence $(\verti w^{i})_{i\in \NN}$ is well defined for every vertex $\verti$ in $\Ati$ and every word $w\in \Free[X]$. Moreover, in case of having repeated vertices, the first repeated vertex in $(\verti w^{i})_{i\in \NN}$ must be the initial vertex $\verti$.

\begin{defn} \label{def: orbit}
Let $\Ati$ be a %n involutive, 
deterministic and complete $X$-automaton, let $\verti$ be a vertex in~$\Ati$, and let $w\in \Free$. The \defin{(full) $\verti$-orbit} of $w$ in $\Ati$, denoted by $\orb_{\Ati}[\verti](w)$, is 
either the full sequence $(\verti w^{i})_{i\in \NN}$ if it has no repeated vertices, or the closure of the largest initial subsequence of $(\verti w^i)_{i\geq 0}$ with no repeated vertices, otherwise. That is, $\orb_{\Ati}[\verti](w)$ is the subsequence of $(\verti w^{i})_{i\in \NN}$ obtained after stopping at the first repeated vertex (if any).
Moreover, if $\vertii$ is any vertex in $\orb_{\Ati}[\verti](w)$ different from $\verti$, we define the \defin{orbit of $w$ between $\verti$ and $\vertii$}, denoted by~$\orb_{\Ati}[\verti,\vertii](w)$, to be the subsequence of $\orb_{\Ati}(\verti,w)$ starting at $\verti$ and ending at $\vertii$.
Note that orbits (of any kind) are always trails, and that
for every $i \in \NN$, we have $\verti w^i \xwalk{_{\scriptstyle{w}}} \verti w^{i+1}$. Basepoint-orbits are simply called \defin{orbits} and we write $\orb_{\Ati}[\bp](w)=\orb_{\Ati}(w)$. Also, for a subgroup $H\leqslant \Free_X$, we abuse language and write $\orb_{H}$ instead of $\orb_{\schreier{H,X}}$, and even omit the reference to the subgroup (or the automaton) if it is clear from the context.
\end{defn}

\begin{defn} \label{def: preorbit}
Let $\Ati$ be a %n involutive, 
deterministic and complete $X$-automaton, and let $\vverti = (\verti_{i})_{i\in{I}}$ be a (open or closed) trail in~$\Ati$. Then, the set of elements in $\Free$ readable between any pair of successive vertices in $\vverti$ is called the \defin{preorbit} of~$\vverti$ in $\Ati$, and is denoted by $\ofo{\Free}{\vverti}{\Ati}$ (or just $\ofo{\Free}{\vverti}{H}$ if $H\leqslant \Free$ and $\Ati=\schreier{H,X}$). That is,
\begin{equation}
 \ofo{\Free}{\vverti}{H}
  \,=\,
 \big\{\, w\in \Free \st \verti_{i-1} \xwalk{_{\scriptstyle{w}}}
 \verti_{i}\,,\,
 \forall i\in I\setmin\set{0} \,\big\}  \,.
\end{equation}
\end{defn}

Of course, preorbits may be empty. When they are not, an algebraic description of them follows easily from~\Cref{lem: translated subgroup}.

\begin{lem} \label{lem: preorbit}
Let $\Ati$ be a deterministic, complete, and connected $X$-automaton, and let $\vverti =(\verti_i)_{i\in I}$ be a trail in~$\Ati$. Then, the preorbit of $\vverti$ in $\Ati$ is either empty or a coset of the form
 \begin{equation} \label{eq: preorbit}
\ofo{\Free}{\vverti}{H} \,=\, \Big( \bigcap\nolimits_{i\in I\setmin \set{0}} H^{w^{i-1}} \Big) \, w \, ,
 \end{equation}
where $H=\gen{\Ati_{p_0}}\leq \Free_X$, and $w\in \Free_X$.
\end{lem}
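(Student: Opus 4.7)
The plan is to show that, if the preorbit is nonempty, picking any witness $w\in \ofo{\Free}{\vverti}{H}$ forces a rigid structure on the trail, and then to read off the formula from \Cref{lem: translated subgroup} applied coordinate by coordinate.

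\textbf{Step 1 (the witness determines the trail).} Assume $\ofo{\Free}{\vverti}{H}\neq \varnothing$ and fix $w\in \ofo{\Free}{\vverti}{H}$. Since $\Ati$ is deterministic and complete, the expression $\verti x$ is defined for every vertex $\verti$ and every $x\in \Free$; moreover, $\verti_{i-1}\xwalk{w} \verti_{i}$ for all $i\in I\setmin\{0\}$ translates to $\verti_{i-1} w = \verti_i$. An easy induction on $i$ then yields $\verti_i = \verti_0 w^{i}$ for every $i\in I$. In particular, $w^{i-1}$ is the label of a walk from $\verti_0$ to~$\verti_{i-1}$.

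\textbf{Step 2 (coset description at each coordinate).} By \Cref{lem: translated subgroup} applied to the walk $\verti_0\xwalk{w^{i-1}}\verti_{i-1}$, we have
\begin{equation*}
\gen{\Ati_{\verti_{i-1}}} \,=\, \gen{\Ati_{\verti_0}}^{w^{i-1}} \,=\, H^{w^{i-1}}.
\end{equation*}
Applying the same lemma to the walk $\verti_{i-1}\xwalk{w}\verti_{i}$, the set of elements of $\Free$ labelling a walk from $\verti_{i-1}$ to $\verti_i$ equals $\gen{\Ati_{\verti_{i-1}}}\,w = H^{w^{i-1}}\,w$.

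\textbf{Step 3 (intersection).} By \Cref{def: preorbit}, the preorbit is precisely the intersection over $i\in I\setmin\{0\}$ of the coordinate sets described in Step~2:
\begin{equation*}
\ofo{\Free}{\vverti}{H} \,=\, \bigcap\nolimits_{i\in I\setmin\{0\}}\bigl( H^{w^{i-1}}\,w\bigr) \,=\, \Big(\bigcap\nolimits_{i\in I\setmin\{0\}} H^{w^{i-1}}\Big)\,w,
\end{equation*}
as claimed. The equality of the two expressions is immediate because all the right-translates share the common representative $w$, which belongs to every factor.

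There is essentially no obstacle here: the only subtle point is Step~1, which crucially uses determinism plus completeness to ensure that a single witness $w$ traces the entire trail in a prescribed way, so that the labels $w^{i-1}$ of Step~2 can be used uniformly as the walk labels needed to invoke \Cref{lem: translated subgroup}. The ``either empty'' disjunction simply records the possibility that no common $w$ exists.
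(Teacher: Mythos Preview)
Your proof is correct and follows essentially the same route as the paper's: both express the preorbit as the intersection $\bigcap_{i} \red{\Lang}_{\verti_{i-1},\verti_i}(\Ati)$, invoke \Cref{lem: translated subgroup} to identify each factor as a coset of a conjugate of $H$, and use a common representative $w$ to pull the intersection inside. The only cosmetic difference is that the paper first writes the intersection with arbitrary walk labels $u_{\verti_{i-1},\verti_i}$ (using connectedness to guarantee they exist) and then specializes to a witness $w$, whereas you pick the witness up front and never need connectedness explicitly; this is a harmless reordering.
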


\begin{proof}
According to \Cref{def: preorbit}, an element $w\in \Free$ belongs to the preorbit of a trail $\vverti $ if and only if $w$ labels a walk between any two successive vertices in $\vverti$. That is, using \Cref{lem: translated subgroup},  
 \begin{equation} \label{eq: preorbit = int cosets}
\ofo{\Free}{\vverti}{H} \,=\, \bigcap\nolimits_{i\in I\setmin \set{0}} \red{\Lang}_{\verti_{i-1} ,\verti_{i}} (\Ati) \,=\, \bigcap\nolimits_{i\in I\setmin \set{0}} \gen{\Ati_{\!\verti_{i-1}}} \, u_{\verti_{i-1} ,\verti_{i}} ,
 \end{equation}
where $u_{\verti, \vertii}\in \Free$ denotes an arbitrary element labelling a walk from $\verti$ to $\vertii$ (which always exists by connectedness of $\Ati$). Therefore, the preorbit \smash{$\ofo{\Free}{\vverti}{H}$} is an intersection of certain cosets of subgroups of $\Free$. It is well known that such an intersection is either empty, or a coset of the intersection of the corresponding subgroups. In this second case, if $w$ is any coset representative of the intersection~\eqref{eq: preorbit = int cosets}, then $\verti_i = \verti_0 w^i$ for all $i \in I$ and hence, again by~\Cref{lem: translated subgroup}, 
 \begin{equation*}
\ofo{\Free}{\vverti}{H} \,=\, \Big( \bigcap\nolimits_{i\in I\setmin \set{0}} \gen{\Ati_{\!\verti_0 w^{i-1}}} \Big) w \,=\, \Big( \bigcap\nolimits_{i\in I \setmin \set{0}} \gen{\Ati_{\!\verti_0}}^{ w^{i-1}} \Big) w\,. \qedhere\tag*{\qed}
 \end{equation*}
\end{proof}

Note that expression \eqref{eq: preorbit} is not computable in general since the subgroup $H$ may not be finitely-generated, and the trail $\vverti$ may have infinite length. Below, we recall two well-known algorithmic properties of $\Free$ to prove that these are the only obstructions for the computability of preorbits in the free group.

\begin{lem} \label{lem: preorbit computable}
Let $H$ be a finitely generated subgroup of $\Free$, and let $\vverti$ be a finite trail of length $k\geq 1$ in \smash{$\schreier{H}$}. Then, the preorbit $\ofo{\FF}{\vverti}{H}$ is computable. More precisely, given a finite generating set for $H$, and coset representatives $(u_0,\ldots ,u_k)$ for the vertices in $\vverti = (\verti_0,\ldots, \verti_k)$, it is decidable whether $\ofo{\FF}{\vverti}{H}$ is empty or not and, in the negative case, \ie when
 \begin{equation} \label{eq:  preorbit computable}
\ofo{\Free}{\vverti}{H} =\,\big(H^{u_0}\cap H^{u_0w} \cap \cdots \cap H^{u_0w^{k-1}} \big) w
 \end{equation}
for some $w\in \Free$, then we can compute both such an element $w$, and a free basis for the intersection
$H^{u_0}\cap H^{u_0w} \cap \cdots \cap H^{u_0w^{k-1}}$.
\end{lem}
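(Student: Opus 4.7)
The natural plan is to combine the algebraic description already furnished by the preceding lemma with standard Stallings-automata algorithmics. First I would invoke \Cref{lem: preorbit} in the particular case $\Ati = \schreier{H,X}$: picking the word $v_i = u_{i-1}^{-1}u_i$ (which clearly labels a walk from $\verti_{i-1}$ to $\verti_i$ in $\schreier{H}$), \Cref{lem: translated subgroup} gives $\red{\Lang}_{\verti_{i-1},\verti_i}(\schreier{H}) = H^{u_{i-1}} v_i$, and hence
\begin{equation*}
\ofo{\FF}{\vverti}{H} \,=\, \bigcap_{i=1}^{k} H^{u_{i-1}}\,(u_{i-1}^{-1}u_i).
\end{equation*}
This reduces the lemma to two effectivity statements: (a) each conjugate $H^{u_{i-1}}$ is a finitely generated subgroup whose Stallings automaton is computable from that of $H$ and from $u_{i-1}$ (a routine ``attach two $u_{i-1}^{\pm 1}$-petals and fold'' procedure); and (b) an intersection of finitely many cosets of finitely generated subgroups of $\Free$ can be effectively decided and, if nonempty, presented as a single coset of a computable finitely generated subgroup.

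For (b) I would use the product-of-coset-automata technique. Let $K_i = H^{u_{i-1}}$ and consider the coset Stallings automata $\Stallings{K_i v_i}$, each with a basepoint $\bp_i$ and a terminal vertex $t_i$; by step (a) these are finite and computable. Form the fibered product $\Ati = \Stallings{K_1 v_1} \times \cdots \times \Stallings{K_k v_k}$, labelled coordinatewise, with initial vertex $\bp=(\bp_1,\dots,\bp_k)$ and terminal vertex $t=(t_1,\dots,t_k)$. By determinism and completeness of the factors (or by standard folding arguments), the (reduced) language from $\bp$ to $t$ is precisely $\bigcap_{i=1}^{k} K_i v_i$. Reachability of $t$ from $\bp$ in the finite graph $\Ati$ is decidable; if $t$ is unreachable, the preorbit is empty, and if reachable, the label $w$ of any $(\bp,t)$-walk is an explicit representative.

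It remains to compute a free basis for the underlying subgroup $\bigcap_i K_i$. Once $w$ has been found, the coset $\bigcap_i K_i v_i$ equals $\bigl(\bigcap_i K_i\bigr) w$, so it suffices to read off $\bigcap_i K_i$ from the product automaton. Concretely, the connected component of $\Ati$ containing $\bp$, pointed at $\bp$ and with the coordinatewise involution, is a pointed involutive $X$-automaton whose core recognizes exactly $\bigcap_i K_i$; applying the standard spanning-tree algorithm (\Cref{thm: Stallings bijection} and the remarks following it) outputs the desired basis. Howson's property guarantees, in particular, that this subgroup is finitely generated so this output is finite.

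The only mildly delicate point is making sure the product automaton genuinely recognizes the claimed intersection of cosets (as opposed to just the intersection of underlying subgroups): this is where having chosen \emph{coset} Stallings automata, with distinguished basepoint \emph{and} terminal vertex, together with determinism of each factor, is essential. Apart from that, every ingredient—conjugation of a Stallings graph, fibered product, reachability, spanning-tree basis extraction—is classical and uniform in the input, so the whole construction is effective in both $(H, u_0,\dots,u_k)$, yielding the stated computability.
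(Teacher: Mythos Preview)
Your proposal is correct and follows essentially the same route as the paper: apply \Cref{lem: preorbit} to $\schreier{H}$ to express the preorbit as a finite intersection of cosets $\bigcap_i H^{u_{i-1}}(u_{i-1}^{-1}u_i)$, then solve this coset-intersection instance and compute the underlying subgroup intersection via products of Stallings automata. The only difference is granularity: the paper simply invokes the solvability of the coset intersection problem $\CIP(\Free)$ with a reference to \cite{kapovich_stallings_2002}, whereas you unpack that black box and spell out the fibered-product-of-coset-automata construction (reachability for emptiness, a $(\bp,t)$-walk for the witness $w$, spanning tree for the basis).
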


\begin{proof}
Applying~\Cref{lem: preorbit} to $\Ati=\schreier{H}$ we reach~\eqref{eq: preorbit = int cosets}, where the intersection is now finite (it has $k$ terms) and involves only a finite portion of $\schreier{H}$. Since $\gen{\Ati_{\!\verti_i}} = H^{u_i}$, for every $i=0,\ldots,k$, the emptiness of the preorbit $\ofo{\Free}{\vverti}{H}$ (and the quest for a witness $w$ in case it exists) is just an instance of the coset intersection problem~$\CIP(\Free)$, which is well known to be solvable; see~\cite{kapovich_stallings_2002}. And, in the case this intersection is nonempty, we have $\ofo{\Free}{\vverti}{H} =\big(H^{u_0}\cap H^{u_0w} \cap \cdots \cap H^{u_0w^{k-1}} \big) w$, where $w$ is the already computed witness, and a free basis for the intersection subgroup (of finitely many finitely generated subgroups of $\Free$) is well known to be computable using products of Stallings automata; see~\cite{kapovich_stallings_2002}.
\end{proof}

\begin{lem} \label{lem: order = orbit}
Let $H\leqslant \Free$, and let $u,w\in \Free$. Then, $\Ord_{Hu}(w)>0$ if and only if $Hu$ appears in the sequence $(\bp w^i )_{i\geq 1}$ and, in this case, $\Ord_{Hu}(w)= \ell(\orb_H[H,Hu](w))$.
\end{lem}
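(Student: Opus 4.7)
The strategy is to translate the statement into the language of the Schreier automaton $\Ati = \schreier{H,X}$, which is deterministic and saturated; hence the vertex $\bp w^i$ is well-defined and coincides with the coset $Hw^i$ for every $i \geq 0$. Under this dictionary, the appearance of $Hu$ in $(\bp w^i)_{i \geq 1}$ is exactly the existence of some $k \geq 1$ with $w^k \in Hu$, which is the defining condition for $\Ord_{Hu}(w)$ being positive. This disposes of the first equivalence with essentially no work.

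For the length formula, I set $k = \Ord_{Hu}(w) \geq 1$ and aim to prove that the vertices $\bp w^0, \bp w^1, \ldots, \bp w^{k}$ are all pairwise distinct (with $\bp w^k = Hu$), except in the degenerate case $u \in H$, where the last one returns to the first. Minimality of $k$ immediately gives $Hw^i \neq Hu$ for $1 \leq i < k$. The remaining distinctness $Hw^i \neq Hw^j$ for $0 \leq i < j < k$ is the only step that needs an argument: such a coincidence would force $w^{j - i} \in H$, hence $Hw^{k - (j - i)} = Hw^k = Hu$, contradicting the minimality of $k$ since $1 \leq k - (j - i) < k$. It then follows that the orbit $\orb_H[\bp](w)$ visits no vertex twice before position $k$, and $\orb_H[\bp, Hu](w) = (\bp, \bp w, \ldots, \bp w^k)$ has length exactly $k$, as claimed.

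The only subtlety worth flagging is the boundary case $u \in H$, where $Hu = \bp$ and the literal wording of \Cref{def: orbit} requires the endpoint $\vertii$ to be different from $\verti$. In this case the sequence $(\bp w^i)_{i \geq 0}$ first returns to $\bp$ precisely at step $k$, so the orbit closes up into a closed trail of length $k$, which is the natural interpretation of $\orb_H[\bp, Hu](w)$ in this degenerate instance. Beyond this notational caveat, the proof amounts to routine cycle detection in a deterministic automaton, and no genuine difficulty is expected.
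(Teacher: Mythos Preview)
Your proposal is correct and follows essentially the same approach as the paper: both translate into the Schreier automaton and identify $\Ord_{Hu}(w)=k$ with the orbit $(\bp,\bp w,\ldots,\bp w^k)$ reaching $Hu$ for the first time at step $k$. You are in fact more careful than the paper on one point: you explicitly verify that $\bp,\bp w,\ldots,\bp w^{k-1}$ are pairwise distinct (via the contradiction $w^{j-i}\in H \Rightarrow w^{k-(j-i)}\in Hu$), whereas the paper leaves this implicit, relying on the earlier remark that the first repeated vertex in $(\bp w^i)_{i\geq 0}$ must be $\bp$ itself; your flag of the degenerate case $u\in H$ is likewise a useful clarification that the paper defers to the subsequent corollary.
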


\begin{proof}
Since $\schreier{H}$ is deterministic and complete, all the vertices $\bp w^i \ (i\in \NN)$ are well defined. Now, $\Ord_{Hu}(w)=k\geq 1$ if and only if $w,\ldots ,w^{k-1} \not\in Hu$ and $w^k\in Hu$, \ie if and only if $\orb_{H}[H,Hu](w)=(\bp,\bp w, \ldots ,\bp w^{k})$; in this case, $\ell(\orb_{H}[H,Hu](w))=k ={\Ord_{H}(w)}$. And otherwise, $\Ord_{Hu}(w)=0$, which means that $w^i\not\in Hu$ for every $i\geqslant 1$; that is, the coset $Hu$ does not appear in the infinite sequence $(\bp w^i)_{i\geq 1}$.
\end{proof}

The particular case of subgroups (\ie when $u\in H$) can be stated as follows. 

\begin{cor}
Let $H\leqslant \Free$, and let $w\in \Free$. Then, $\Ord_{H}(w)>0$ if and only if $\ell(\orb_{H}(w))<\infty$; and, in this case, $\Ord_{H}(w)= \ell(\orb_{H}(w))$. \qed
\end{cor}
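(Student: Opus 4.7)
The plan is to simply specialize the preceding lemma to the case $u = \trivial_{\Free}$, so that the coset $Hu$ coincides with $H$ itself, and reconcile the two definitions of orbit.

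First I would note that $\Ord_H(w) = \Ord_{Hu}(w)$ when $u = \trivial$. Applying \Cref{lem: order = orbit} with this choice, the condition $\Ord_H(w) > 0$ becomes equivalent to $H$ appearing in the sequence $(\bp w^i)_{i\geq 1}$, i.e. to the existence of some $i\geq 1$ with $\bp w^i = \bp$. By the observation made just before \Cref{def: orbit} (that in a deterministic, complete automaton the first repeated vertex in $(\verti w^i)_{i \in \NN}$ must be the initial one), this existence is precisely the condition that $(\bp w^i)_{i \in \NN}$ has a repeated vertex; and by \Cref{def: orbit}, this is exactly what it means for $\orb_H(w)$ to be a closed (hence finite) trail. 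Conversely, if no such $i$ exists then $(\bp w^i)_{i \in \NN}$ has pairwise distinct vertices, so $\orb_H(w)$ is, by definition, the entire infinite sequence and $\ell(\orb_H(w))=\infty$.

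For the length statement, assume $\Ord_H(w) > 0$. The lemma gives $\Ord_H(w) = \ell(\orb_H[H,H](w))$, where $\orb_H[H,H](w)$ is the subsequence of the full $\bp$-orbit starting at $\bp$ and ending at the first reappearance of $\bp$. Since that first repeated vertex is $\bp$ itself, this finite subsequence, closed up by appending $\bp$, is by \Cref{def: orbit} nothing but $\orb_H(w)$. Hence $\ell(\orb_H[H,H](w)) = \ell(\orb_H(w))$, and the equality $\Ord_H(w) = \ell(\orb_H(w))$ follows.

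I do not foresee any real obstacle: the statement is a direct specialization of \Cref{lem: order = orbit}, the only subtlety being the bookkeeping between the ``open trail from $\bp$ to $\bp$'' viewpoint (used for general cosets $Hu$) and the ``closed $\bp$-trail'' viewpoint (used in \Cref{def: orbit} when the initial and first-repeated vertices coincide), which have the same length by construction.
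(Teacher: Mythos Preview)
Your proposal is correct and matches the paper's approach exactly: the paper states this corollary with a bare \qed, introducing it as ``the particular case of subgroups (\ie when $u\in H$)'' of \Cref{lem: order = orbit}, and your argument simply spells out that specialization together with the bookkeeping between $\orb_H[H,H](w)$ and the full orbit $\orb_H(w)$.
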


From \Cref{lem: order = orbit} it is clear that preorders are nothing more than disjoint unions of certain preorbits.

\begin{cor} \label{cor: coset preorder = infinite union}
Let $H\leqslant \Free$, $u\in \Free$, and $k\geq 1$. Then, the set of elements of order $k$ in~$Hu$ is
 \begin{equation} \label{eq: coset preorder = infinite union}
\ofo{\Free}{k}{Hu} \,=\, \bigsqcup\nolimits_{\vverti} \ofo{\Free}{\vverti}{H} \, ,  
 \end{equation}
where the union goes over all the $(\bp, Hu)$-trails $\vverti$ of length $k$ in $\Schreier{H}$.
And for $k=0$, we have that $\ofo{\Free}{0}{Hu} \,=\, \bigsqcup_{\vverti} \ofo{\Free}{\vverti}{H}$, where now the union goes over all the infinite, and all the finite closed $\bp$-trails in $\Schreier{H}$, not containing the coset~$Hu$ apart from the starting vertex. \qed
\end{cor}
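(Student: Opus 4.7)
The plan is to invoke~\Cref{lem: order = orbit} to translate the condition $\Ord_{Hu}(w)=k$ into a statement about the orbit of $w$ in the Schreier automaton $\schreier{H}$, and then to classify elements of~$\Free$ according to which trail arises as such an orbit. Disjointness will come for free: by determinism of $\schreier{H}$, each $w$ uniquely determines its orbit, hence also the unique trail $\vverti$ whose preorbit can contain~$w$.

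For $k\geq 1$, \Cref{lem: order = orbit} gives that $\Ord_{Hu}(w)=k$ is equivalent to the orbit $\orb_H[H,Hu](w)$ being a $(\bp,Hu)$-trail of length exactly~$k$. Now suppose $\vverti=(\verti_0,\dots,\verti_k)$ is a $(\bp,Hu)$-trail of length~$k$, and $w\in\ofo{\Free}{\vverti}{H}$: by determinism, $\bp w^i=\verti_i$ for every $i=0,\dots,k$, and since the $\verti_i$ are pairwise distinct with only $\verti_k=Hu$, a second application of~\Cref{lem: order = orbit} yields $\Ord_{Hu}(w)=k$. Conversely, every $w\in\ofo{\Free}{k}{Hu}$ lies by construction in the preorbit of its own orbit $\orb_H[H,Hu](w)$, which is a trail of the required form; this completes the proof of~\eqref{eq: coset preorder = infinite union} in this case.

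For $k=0$, \Cref{lem: order = orbit} says that $\Ord_{Hu}(w)=0$ means exactly that $Hu\notin\{\bp w^i:i\geq 1\}$. By determinism of $\schreier{H}$ together with the observation preceding~\Cref{def: orbit}---namely, that any first repetition in $(\bp w^i)_{i\geq 0}$ must occur at the basepoint---the full orbit $\orb_H(w)$ is either an infinite open $\bp$-trail or a finite closed $\bp$-trail; in either situation the condition ``$Hu$ does not occur for $i\geq 1$'' is precisely ``$\vverti$ does not contain $Hu$ apart from its starting vertex''. The rest of the argument then runs as in the case $k\geq 1$: a preorbit $\ofo{\Free}{\vverti}{H}$ collects exactly the $w$'s whose orbit equals $\vverti$, and each such $w$ has order zero in~$Hu$ by~\Cref{lem: order = orbit}.

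The only mildly subtle point I anticipate is the degenerate situation $u\in H$ (so that $Hu=\bp$), in which only infinite orbits should contribute, since any finite closed $\bp$-trail records $\Ord_H(w)\geq 1$, not $0$. The statement handles this correctly because in a finite closed trail the reappearance of $\bp$ sits at the final \emph{position}, not at the starting one, so such trails fail the ``apart from the starting vertex'' condition and are indeed excluded---in full agreement with~\Cref{lem: order = orbit}.
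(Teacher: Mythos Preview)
Your proposal is correct and follows precisely the route the paper intends: the corollary is marked with \qed\ immediately after \Cref{lem: order = orbit}, and your argument is exactly the expected unpacking---identify each $w$ with its (full or partial) orbit in $\schreier{H}$, observe that determinism forces $\verti_i=\bp w^i$ so that $w$ lies in the preorbit of at most one trail of the prescribed shape, and read off the order via \Cref{lem: order = orbit}. Your final paragraph on the degenerate case $u\in H$ is a nice sanity check that the paper leaves implicit.
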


In general, the union in~\Cref{eq: coset preorder = infinite union} is infinite (it is finite if and only if $H$ has finite index in $\Free$), with some preorbits $\ofo{\Free}{\vverti}{H}$ possibly empty. A first obstacle for the computability of $\ofo{\Free}{k}{Hu}$ is that, even when $H\leqslant \Free$ is finitely generated and $k\geqslant 1$, these unions may very well contain infinitely many non-empty terms, as seen in the following example.

\begin{exm}\label{exm: surt fora} 
Let $H=\gen{a^2, b} \leqslant \Free[\set{a,b}]$ and let $k=2$. All the words $w_n=b^{-n}ab^n$, $n\geq 0$, have order 2 in $H$ and they all determine different orbits $\orb_{H}(w_n)=(\bp,\bp w_n, \bp)$ since the vertices $\bp$ and $\bp w_n$ are at distance $n+1$ in $\schreier{H}$.
\end{exm}

However, we shall see that, if $H$ is finitely generated and $k\geq 1$, then  the union in \Cref{eq: coset preorder = infinite union} can be reduced to a finite union, making the set of elements of a given order $k\geqslant 1$ computable; see \Cref{prop: coset preorder}. The key point here is to observe that, if we restrict ourselves to cyclically reduced words, then we can bound the orbits contributing non-trivially in~\Cref{eq: coset preorder = infinite union} to finitely many, in the finitely generated case. 

\begin{lem} \label{lem: cr orbits}
Let $H$ be a subgroup of $\Free$ and let $u \in \Free$. If a cyclically reduced word $w \in \Free$ has order $k\geqslant 1$ in $Hu$ then $\orb_{H}[H,Hu](w) \subseteq \stallings{Hu}$. Moreover, in the particular case $u\in H$, we further have $\orb_{H}(w)=\orb_{H}[H,Hu](w) \subseteq \rstallings{H} = \stallings{H}$.
\end{lem}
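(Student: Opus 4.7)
\emph{Plan.} The heart of the argument is to translate cyclic reducedness of $w$ into the absence of backtracking in the walk that reads $w^k$ inside the deterministic, involutive Schreier automaton $\schreier{H}$. Part~1 will then follow from the very definition of the core, while Part~2 will require the finer observation that cyclic reducedness also forbids backtracking at the \textquotedblleft seam\textquotedblright\ where the walk closes up.

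\emph{First inclusion.} Since $w$ is cyclically reduced, the word $w^k$ is reduced in $\Free$. I would then read $w^k$ from $\bp$ in $\schreier{H}$; because this automaton is deterministic and involutive, reading a reduced word produces a walk with no two successively inverse arcs, which ends at $\bp w^k = Hu$ (using $w^k \in Hu$). By \Cref{lem: order = orbit}, the vertices traversed are exactly the distinct vertices $\bp, \bp w, \ldots, \bp w^k$ of $\orb_{H}[H, Hu](w)$. Since this walk is reduced and successful in the pointed automaton $\operatorname{Sch}_{\bp, Hu}(H, X)$, the very definition of the core places each of these vertices in $\core(\operatorname{Sch}_{\bp, Hu}(H, X)) = \stallings{Hu}$, which is the first inclusion.

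\emph{Second assertion.} Suppose now $u \in H$, so that $Hu = H$, the walk $w^k$ closes up at $\bp$, and $\orb_H[H, Hu](w) = \orb_H(w)$. Part~1 (applied with $u = \trivial$) already puts the orbit inside $\stallings{H}$, and my plan is to upgrade this to $\rstallings{H}$ by proving $\rstallings{H} = \stallings{H}$. Recall that every vertex of $\stallings{H}$ distinct from $\bp$ has degree at least~$2$, so any hanging tree must be a path having $\bp$ as its unique leaf; it therefore suffices to check that $\bp$ also has degree at least~$2$. Assume for contradiction that $\bp$ has degree~$1$, with unique incident arc labelled by some $x \in X^{\pm}$. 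Then every non-trivial closed reduced walk at $\bp$ must begin with that arc (label $x$) and end with its inverse (label $x^{-1}$); applied to our walk, this forces the first and last letters of $w^k$ to be mutually inverse, contradicting the cyclic reducedness of $w^k$ (inherited from $w$). Hence $\bp$ has degree at least~$2$, $\stallings{H}$ has no hanging trees, and $\rstallings{H} = \stallings{H}$ contains the full orbit.

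\emph{Main obstacle.} The delicate step is the passage from \textquotedblleft $w^k$ is a reduced word\textquotedblright\ (all that Part~1 needs) to \textquotedblleft $w^k$ is cyclically reduced\textquotedblright, which is precisely what blocks the pathological scenario in which $\bp$ is the leaf of a hanging tree and the walk enters and exits the tree through the same arc. The structural feature that only $\bp$ may fail to have degree~$\geq 2$ in a Stallings automaton is what allows me to reduce the global statement about hanging trees to a single local check at the basepoint. (I tacitly assume $w \neq \trivial$: otherwise the orbit collapses to $(\bp, \bp)$, and the inclusion in $\rstallings{H}$ need not hold.)
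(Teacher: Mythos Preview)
Your proof is correct and follows essentially the same approach as the paper's. Both arguments use that cyclic reducedness of $w$ makes $w^k$ reduced (giving Part~1 via the definition of the core), and then for Part~2 exploit that the concatenation $\walki_k\walki_1$ also has no backtracking at $\bp$ (the ``seam''), forcing $\bp$ to have degree $\geq 2$ and hence $\rstallings{H}=\stallings{H}$; the paper phrases this last step as ``therefore $\bp\in\rstallings{H}$'' while you spell it out as a contradiction argument on the degree of $\bp$, but the content is identical. Your observation about the degenerate case $w=\trivial$ is well taken and applies equally to the paper's argument.
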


\begin{proof}
Let $\vverti =\orb[H,Hu](w)=(\verti_{0},\verti_1,\ldots, \verti_{k-1},\verti_{k})$ (where $\verti_{0}=\bp$ and $\verti_{k} = Hu$) and let $\walki_i$ be the reduced walk from $\verti_{i-1}$ to $\verti_{i}$ reading $w$, for $i=1,\ldots ,k$. Since $w$ is cyclically reduced, $w^k$ is reduced and so, the path $\walki =\walki_1 \walki_2 \cdots \walki_{k}$ is reduced as well (\ie there is no backtracking in any of the products $\walki_i \walki_{i+1}$). Since $\walki$ visits each $\verti_i$, we deduce that $\verti_i \in \stallings{Hu}=\core (\operatorname{Sch}_{\bp,Hu}(H))$, for $i=0,\ldots ,k$. 

Finally, if $u \in H$, then $\verti_k = \bp$ and hence both the trail $\vverti$ and the walk $\walki$ are closed. This further implies that there is no backtracking in $\verti_{k-1} \xwalk{_{\scriptstyle{w}}} \bp \xwalk{_{\scriptstyle{w}}} \verti_{1}$ either. Therefore, ${\bp \in \rstallings{H}}$ and the second claimed result follows.
\end{proof}

That is, for $w\in \Free$ or order $k\geq 1$ in $Hu$, the vertices in the orbit $\vverti=\orb_{H}[H,Hu](w)=(\verti_{0},\verti_1,\ldots, \verti_{k-1},\verti_{k})$ belong to $\stallings{Hu}$ \emph{if $w$ is cyclically reduced}, while in general they can go far out of $\stallings{Hu}$, as~\Cref{exm: surt fora} illustrates. The strategy to keep orbits under control, even when $w$ is not cyclically reduced, is the following: let $w=v^{-1}\widetilde{w}v$ be the expression of $w$ as a conjugate of its cyclic reduction $\widetilde{w}$, and define the new vertices $\vertii_i=\verti_i v^{-1}$, for $i=1,\ldots ,k$. Observe that now $\vvertii=(\vertii_{0},\vertii_1,\ldots, \vertii_{k-1},\vertii_{k}) = \orb_{H^{v^{-1}}}[H^{v^{-1}},H^{v^{-1}}u^{v^{-1}}](\widetilde{w})$ and so, by~\Cref{lem: cr orbits}, $\vvertii \subseteq \stallings{H^{v^{-1}}u^{v^{-1}}}\subseteq \stallings{Hu}$. Moreover, if $Hu=H$, then $\vvertii\subseteq \rstallings{H^{v^{-1}}}=\rstallings{H}$ (whereas the vertices in $\vverti$ may very well be out of $\stallings{Hu}$); see~\Cref{fig: proof coset}.

\begin{figure}[H]
    \centering
    \begin{tikzpicture}[shorten >=1pt, node distance=2cm and 2cm, on grid,auto,>=stealth',
decoration={snake, segment length=2mm, amplitude=0.5mm,post length=1.5mm},every state/.style={semithick, fill=gray!20, inner sep=2pt, minimum size = 21pt }]
    
    \newcommand{\dx}{1.7}
    \newcommand{\dy}{1.6}
    \node[state, accepting,fill=RoyalBlue!30] (0) {$\verti_0$};
    \node[state, right = \dx of 0,fill=RoyalBlue!30] (0') {$\vertii_0$};
    \node[state, right = \dx of 0',fill=RoyalBlue!30] (1') {$\vertii_1$};
    \node[state, above = \dy of 1'] (1) {$\verti_1$};
    \node[state, right = \dx of 1',fill=RoyalBlue!30] (2') {$\vertii_2$};
    \node[state, above = \dy of 2'] (2) {$\verti_2$};
    \node[state, right = 1.2*\dx of 2',fill=RoyalBlue!30] (3') {$\scriptstyle{\vertii_{k\text{-}1}}$};
    \node[state, above = \dy of 3'] (3) {$\scriptstyle{\verti_{k\text{-}1}}$};
    \node[state, right = \dx of 3',fill=RoyalBlue!30] (6) {$\vertii_k$};
    \node[state, right = \dx of 6,fill=RoyalBlue!30] (7) {$\scriptstyle{Hu}$};
    
    \path[->]
    (0') edge[snake it] node[pos=0.5,above= 0.1] {$v$} (0)
    (0') edge[snake it] node[pos=0.5,above= 0.1] {$\widetilde{w}$}(1')
    (1') edge[snake it] node[pos=0.5,above= 0.1] {$\widetilde{w}$}(2')
    (2') edge[densely dotted,snake it] node[pos=0.5,above= 0.1] {$\widetilde{w}^{k\text{-}3}$}(3')
    (3') edge[snake it] node[pos=0.5,above= 0.1] {$\widetilde{w}$}(6)
    (6) edge[snake it] node[pos=0.5,above= 0.1] {$v$}(7)
    (1') edge[snake it] node[pos=0.45,right] {$v$} (1)
    (2') edge[snake it] node[pos=0.45,right] {$v$} (2)
    (3') edge[snake it] node[pos=0.45,right] {$v$} (3);
    \end{tikzpicture}
    % \vspace{-5pt}
    \caption{Scheme of the orbits $\vverti$ and $\vvertii$ for an element $w = v^{-1} \widetilde{w} v\in \Free$ or order $k\geq 1$ in $Hu$, with the vertices within $\stallings{Hu}$ shaded in blue}
    \label{fig: proof coset}
\end{figure}

\begin{prop} \label{prop: coset preorder}
Let $H$ be a finitely generated subgroup of $\Free$, let $u\in \Free$, and let $k \geq 1$. Then, the set of $\Free$-elements of order $k$ in the coset $Hu$ is either empty or
 \begin{equation} \label{eq: coset preorder}
\ofo{\Free}{k}{\!Hu} \,=\, 
\bigsqcup\nolimits_{\,\vvertii}
\big(
\ofo{\Free}{\vvertii}{\!H}
\big)^{\! v_{\vvertii} H \, \cap \, v'_{\vvertii} H^{u}},
 \end{equation}
where the union goes over the (finitely many) length $k$ trails $\vvertii =(\vertii_0, \vertii_1, \ldots, \vertii_{k})$ within $\stallings{Hu}$, $v_{\vvertii}$ is the label of some walk from $\vertii_0$ to $\bp$, and $v'_{\vvertii}$ is the label of some walk from $\vertii_k$ to $Hu$.
\end{prop}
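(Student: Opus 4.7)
The plan is to prove the equality \eqref{eq: coset preorder} by establishing both inclusions within the ambient Schreier automaton $\schreier{H}$, and then handling the finiteness (and the more delicate disjointness) of the union. A useful preliminary observation, following from \Cref{lem: translated subgroup}, is that the labels of walks $\vertii_{0} \to \bp$ form precisely the coset $v_{\vvertii} H$, and the labels of walks $\vertii_{k} \to Hu$ form $v'_{\vvertii} H^{u}$; this dictionary will translate the coset intersection appearing in \eqref{eq: coset preorder} into geometric constraints on $v$ inside $\schreier{H}$.

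For the inclusion $\supseteq$, I would fix a length-$k$ trail $\vvertii$ in $\stallings{Hu}$, an element $\widetilde{w} \in \ofo{\Free}{\vvertii}{H}$, and $v \in v_{\vvertii} H \cap v'_{\vvertii} H^{u}$, and then compute directly the orbit of $w \coloneq \widetilde{w}^{v}$ starting at $\bp$. The preorbit condition gives $\vertii_{i-1} \widetilde{w} = \vertii_{i}$, while the coset constraints on $v$ amount to $\vertii_{0} v = \bp$ and $\vertii_{k} v = Hu$. Combining these yields $\bp w^{i} = \vertii_{i} v$ for all $i$, and since right multiplication by $v$ is a bijection of the vertex set of $\schreier{H}$, the pairwise distinctness of $\vertii_{0},\ldots,\vertii_{k-1}$ (and their distinction from $\vertii_{k}$) transfers to the vertices $\bp w^{0},\ldots,\bp w^{k-1}$ versus $\bp w^{k} = Hu$. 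An application of \Cref{lem: order = orbit} then gives $\Ord_{Hu}(w) = k$.

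For the reverse inclusion $\subseteq$, I would take $w \in \ofo{\Free}{k}{Hu}$ and factor it in reduced form as $w = v^{-1} \widetilde{w} v$ with $\widetilde{w}$ cyclically reduced. Defining $\vertii_{i} \coloneq \verti_{i} v^{-1}$ from the $w$-orbit $\verti_{i} = \bp w^{i}$ immediately produces a length-$k$ trail $\vvertii$ for which $v$ labels walks $\vertii_{0} \to \bp$ and $\vertii_{k} \to Hu$ (giving $v \in v_{\vvertii} H \cap v'_{\vvertii} H^{u}$) and for which $\widetilde{w} \in \ofo{\Free}{\vvertii}{H}$ (a direct check yields $\vertii_{i-1} \widetilde{w} = \vertii_{i}$). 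The crux is to show $\vvertii \subseteq \stallings{Hu}$. For this, I would consider the walk from $\bp$ to $Hu$ in $\schreier{H}$ reading $w^{k} = v^{-1} \widetilde{w}^{k} v$, which visits the $\vertii_{i}$ in order; cyclic reducedness of $\widetilde{w}$ together with the reducedness of the factorization makes $w^{k}$ freely reduced, and determinism of the involutive automaton $\schreier{H}$ then forces the walk to have no backtracking. All its vertices, including the $\vertii_{i}$, therefore lie in $\core(\operatorname{Sch}_{\bp,Hu}(H)) = \stallings{Hu}$. This is the coset analogue of the argument underlying \Cref{lem: cr orbits}.

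Finiteness of the union is immediate: $H \leqslant\fg \Free$ forces $\stallings{Hu}$ to be a finite graph, which admits only finitely many length-$k$ trails. The step I expect to be most delicate is the disjointness: the factorization $w = v^{-1} \widetilde{w} v$ is not unique in general, and alternative conjugators $v'$ would produce further trails $(\verti_{i} v'^{-1})$ potentially contributing $w$ to further summands. The plan is to argue that only the canonical decomposition (with $\widetilde{w}$ cyclically reduced and no cancellation) places its associated trail inside $\stallings{Hu}$ via the reduced-walk argument above, so that each $w$ is assigned to a unique summand; any residual overlap coming from cyclic rotations of cyclically reduced representatives should be absorbed by the freedom to vary $v$ within the coset intersection $v_{\vvertii} H \cap v'_{\vvertii} H^{u}$.
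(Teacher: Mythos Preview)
Your arguments for the two inclusions are correct and follow the same route as the paper: the $\supseteq$ direction via the orbit computation $\bp w^i=\vertii_i v$, and the $\subseteq$ direction via cyclic reduction $w=v^{-1}\widetilde{w}v$ together with the reduced-walk argument showing the translated trail lands in $\stallings{Hu}$ (this is exactly the content of \Cref{lem: cr orbits} and the paragraph surrounding \Cref{fig: proof coset}). The paper's proof does precisely this and nothing more.

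The genuine gap is in your disjointness argument. Your claim that ``only the canonical decomposition places its associated trail inside $\stallings{Hu}$'' is false. Take $H=\langle a^2\rangle\leqslant\Free[\{a,b\}]$, $u=1$, $k=2$. The two closed trails $(0,1,0)$ and $(1,0,1)$ both lie in $\rstallings{H}$, both have preorbit $\{a^{2n+1}:n\in\ZZ\}$, and both contribute the element $a$: once via the canonical decomposition $a=1^{-1}\cdot a\cdot 1$ (trail $(0,1,0)$, conjugator $v=1\in H$), and once via $a=a^{-1}\cdot a\cdot a$ (trail $(1,0,1)$, conjugator $v=a\in aH$). The second decomposition is not canonical, yet its associated trail is inside $\stallings{H}$. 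So the two summands overlap. More generally, whenever a cyclically reduced $\widetilde{w}$ has its $\vvertii$-orbit entirely inside $\stallings{Hu}$, each cyclic shift of that orbit is another trail in $\stallings{Hu}$ contributing the same elements after conjugation.

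Note that the paper's own proof establishes only the two inclusions and does not address disjointness either; the $\bigsqcup$ in \eqref{eq: coset preorder} should be read as an ordinary union. This does not affect any of the downstream applications (computability of $\ofo{\Free}{k}{Hu}$, boundedness of spectra, etc.), since those only require a finite covering by computable cosets.
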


\begin{proof}
Consider a trail $\vvertii =(\vertii_0, \vertii_1, \ldots, \vertii_{k})$ as in~\Cref{eq: coset preorder}, and let $w\in \ofo{\Free}{\vvertii}{\!H}$ and $v\in v_{\vvertii} H \, \cap \, v'_{\vvertii} H^{u}$. Then $w$ labels a walk from $\vertii_{i-1}$ to $\vertii_i$ for every $i=1,\ldots ,k$, and $v$ labels walks both from $\vertii_0$ to $\bp$, and from $\vertii_k$ to $Hu$. Therefore, for $i=1,\ldots ,k-1$, we have $\bp v^{-1}w^i=\vertii_i\neq \vertii_k$, hence $\bp v^{-1}w^i v=\vertii_i v\neq \vertii_k v=Hu$ and so, $(v^{-1}wv)^i=v^{-1}w^iv \not\in Hu$; whereas $\bp v^{-1}w^k v=\vertii_k v=Hu$ and so, $(v^{-1}wv)^k =v^{-1}w^kv\in Hu$. Hence $\Ord_{Hu}(w^v)=k$, proving the inclusion to the left.

For the opposite inclusion, let $w$ be an $\Free$-element of order $k$ in $Hu$, and consider the orbit $\vverti=\orb_{H}[H,Hu](w)=(\verti_{0},\verti_1,\ldots, \verti_{k-1},\verti_{k})$. Writing $w=v^{-1}\widetilde{w}v$ and $\vvertii=(\vertii_{0},\vertii_1,\ldots, \vertii_{k-1},\vertii_{k})\subseteq \stallings{Hu}$ as above (see~\Cref{fig: proof coset}), we have $\widetilde{w}\in \ofo{\Free}{\vvertii}{\!H}$, $\vertii_0 \xwalk{_{\scriptstyle{v}}} \bp$ and $\vertii_k \xwalk{_{\scriptstyle{v}}} Hu$ (\ie $v\in v_{\vvertii} H \, \cap \, v'_{\vvertii} H^{u}$). Therefore, $w=\widetilde{w}^v\in \big( \ofo{\Free}{\vvertii}{\!H}\big)^{\, v_{\vvertii} H \, \cap \, v'_{\vvertii} H^{u}}$, finishing the proof. 
\end{proof}

We state separately the subgroup case, which we use later and admits a slightly simpler expression.

\begin{prop} \label{prop: sgp preorder}
Let $H$ be a finitely generated subgroup of $\Free$, and let $k\geq 1$. Then, the set of $\Free$-elements of order $k$ in the subgroup $H$ is either empty or
 \begin{equation} \label{eq: order k = conj finite union}
\ofo{\Free}{k}{H} \,=\,
\left(
\bigsqcup\nolimits_{\vvertii}
\big(
\ofo{\Free}{\vvertii}{H}
\big)^{\!v_{\vvertii}}
\right)^{\!H} ,
 \end{equation}
where the union goes over the (finitely many) closed length $k$ trails $\vvertii =(\vertii_0, \vertii_1, \ldots, \vertii_{k-1}, \vertii_k=\vertii_0)$ within the restricted Stallings automaton $\rstallings{H}$, and $v_{\vvertii}$ is the label of any walk from $\vertii_0$ to $\bp$.  
\end{prop}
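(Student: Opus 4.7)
The plan is to deduce this statement as the specialization of \Cref{prop: coset preorder} to a subgroup, taking $u = 1_{\Free}$ so that $Hu = H$ and $H^{u} = H$. Two simplifications become available in this setting, both of which I will extract from the previous proposition together with \Cref{lem: cr orbits}: the double-coset condition on the conjugating element collapses to a single coset of $H$, and the trails indexing the decomposition can be pushed from $\stallings{H}$ down to the restricted core $\rstallings{H}$.

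First I would apply \Cref{prop: coset preorder} with $u = 1_{\Free}$. The conjugating set becomes $v_{\vvertii}H \cap v'_{\vvertii}H^{u} = v_{\vvertii}H \cap v'_{\vvertii}H$, where $v_{\vvertii}$ labels a walk from $\vertii_{0}$ to $\bp$ and $v'_{\vvertii}$ a walk from $\vertii_{k}$ to $\bp$; under the Schreier bijection, the left cosets $v_{\vvertii}H$ and $v'_{\vvertii}H$ correspond precisely to the vertices $\vertii_{0}$ and $\vertii_{k}$, so the intersection is non-empty if and only if $\vertii_{0} = \vertii_{k}$; that is, only closed trails contribute. For such a closed trail one may take $v'_{\vvertii} = v_{\vvertii}$, so the conjugating set simplifies to the single coset $v_{\vvertii}H$. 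Next, \Cref{lem: cr orbits} in its subgroup form tells us that, for any cyclically reduced $\widetilde{w}$ of order $k \geq 1$ in $H$, the orbit $\orb_{H}(\widetilde{w})$ already lies in $\rstallings{H}$. Tracing the proof of \Cref{prop: coset preorder}, the trails indexing non-empty preorbits are obtained from orbits of cyclic reductions of elements of order $k$ in $H$; hence the disjoint union may be restricted to closed length-$k$ trails in $\rstallings{H}$.

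To finish, I would observe that conjugation by the coset $v_{\vvertii} H$ factors as conjugation by $v_{\vvertii}$ followed by $H$-conjugation, whence
\[
\big(\ofo{\Free}{\vvertii}{H}\big)^{v_{\vvertii} H} \,=\, \Big(\big(\ofo{\Free}{\vvertii}{H}\big)^{v_{\vvertii}}\Big)^{\!H},
\]
and pulling the common outer $H$-conjugation out of the (finite) disjoint union produces the claimed expression. The main subtlety I anticipate is the reduction from $\stallings{H}$ to $\rstallings{H}$: one needs to rule out contributions from closed trails traversing hanging-tree vertices of $\stallings{H}$, which amounts to checking that any such traversal would force backtracking in $\widetilde{w}^{k}$ and hence contradict cyclic reducibility. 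This is essentially the content of \Cref{lem: cr orbits}, but in a rigorous writeup it deserves to be spelled out explicitly, perhaps by arguing that an orbit trail visiting a pendant edge must immediately retrace it, giving a $\widetilde{w}$ with cancellation at the boundary between successive occurrences.
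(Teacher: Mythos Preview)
Your proposal is correct and follows essentially the same route as the paper: specialize \Cref{prop: coset preorder} to $u\in H$, observe that the conjugating set $v_{\vvertii}H\cap v'_{\vvertii}H^{u}$ collapses to $v_{\vvertii}H$ and forces $\vertii_0=\vertii_k$, then pass from $\stallings{H}$ to $\rstallings{H}$. Your treatment of the last step is in fact more careful than the paper's, which dispatches it with a bare ``and hence, $\vvertii\subseteq \rstallings{H}$''; your appeal to \Cref{lem: cr orbits} via the surjectivity direction of the proof of \Cref{prop: coset preorder} is exactly what is needed to justify that restriction. One small wording issue: the vertices of the Schreier automaton are \emph{right} cosets $Hw$, not left cosets, so your phrase ``the left cosets $v_{\vvertii}H$ and $v'_{\vvertii}H$ correspond precisely to the vertices'' should be tightened (the bijection you want is $v_{\vvertii}H\leftrightarrow Hv_{\vvertii}^{-1}=\vertii_0$), but this does not affect the argument.
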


\begin{proof}
Apply the previous proposition to the case $u\in H$. We have $Hu=\bp$, and $\ofo{\Free}{k}{H}$ is given by~\Cref{eq: coset preorder}, where all the involved trails $\vvertii$ are within $\stallings{Hu}=\stallings{H}$, and are closed (since $\vertii_0 v=\bp$ and ${\vertii_k v=Hu=\bp}$ imply $\vertii_0=\vertii_k$) and hence, $\vvertii\subseteq \rstallings{H}$. On the other hand, $v'_{\vvertii}$ can be taken equal to $v_{\vvertii}$ and so, $v_{\vvertii} H \, \cap \, v'_{\vvertii} H^{u}=v_{\vvertii} H$ and $\big(\ofo{\Free}{\vvertii}{\!H} \big)^{\! v_{\vvertii} H \, \cap \, v'_{\vvertii} H^{u}}=\big( \big(\ofo{\Free}{\vvertii}{\!H} \big)^{\! v_{\vvertii}} \big)^H$. This concludes the proof.
\end{proof}

\begin{cor}\label{cor: SBP Free}
Free groups have bounded coset (and hence subgroup) spectra, and the corresponding bounds are computable
(in particular $\SBP(\Free)$ is computable).
More precisely, 
if $H$ is a finitely generated subgroup of a free group $\Free$ and $u \in \Free$, then
$\Iset{Hu}{\Free}$ is bounded by the (finite) number of vertices in $\stallings{Hu}$, and
$\Iset{H}{\Free}$ is bounded by the (finite) number of vertices in $\rstallings{H}$.
\end{cor}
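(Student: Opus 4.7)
The plan is to derive this corollary as an almost immediate consequence of Propositions~\ref{prop: coset preorder} and~\ref{prop: sgp preorder}, exploiting only that the length of a trail is, by definition, one less than the cardinality of its (injective) domain.

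First I would handle the coset bound. Let $k \geq 1$ be any element of $\Iset{Hu}{\Free}$; then $\ofo{\Free}{k}{Hu}$ is nonempty, so by Proposition~\ref{prop: coset preorder} there exists at least one trail $\vvertii = (\vertii_0, \vertii_1, \ldots, \vertii_k)$ of length $k$ contained in $\stallings{Hu}$. Since the vertices of a trail are pairwise distinct, this forces $k + 1 \leq \card{\Vertexs \stallings{Hu}}$, and in particular $k \leq \card{\Vertexs \stallings{Hu}}$. The values $k=0$ and $k=1$ of course also satisfy this bound, so the full set $\Iset{Hu}{\Free}$ is bounded (and hence finite) with explicit bound given by $\card{\Vertexs \stallings{Hu}}$. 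The analogous argument using Proposition~\ref{prop: sgp preorder} gives the subgroup bound: any $k \geq 1$ in $\Iset{H}{\Free}$ produces a closed length-$k$ trail $(\vertii_0, \ldots, \vertii_{k-1}, \vertii_k = \vertii_0)$ in $\rstallings{H}$ whose first $k$ entries are pairwise distinct, so $k \leq \card{\Vertexs \rstallings{H}}$.

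For the algorithmic content, given a finite generating set for $H \leqslant\fg \Free$ and a word $u \in \Free$, one can effectively compute the Stallings automaton $\stallings{H}$ via the standard folding procedure (\Cref{thm: Stallings bijection}), and from it the variants $\rstallings{H}$ and $\stallings{Hu}$ (the latter by folding the walk labelled $u$ starting at the basepoint of $\schreier{H}$ and then taking the core with respect to the two distinguished vertices). Counting vertices in the resulting finite automaton then outputs the required upper bound, settling $\SBP(\Free)$.

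I do not anticipate any real obstacle: all the geometric work has already been carried out in Propositions~\ref{prop: coset preorder} and~\ref{prop: sgp preorder}, and the remaining step is the elementary observation that a sequence of pairwise distinct vertices of length $k$ (respectively, a cycle of length $k$) demands at least $k+1$ (respectively, $k$) vertices in the ambient automaton. The only point worth stating carefully is the distinction between the open trail count in $\stallings{Hu}$ and the closed trail count in $\rstallings{H}$, which is exactly what accounts for the slightly different form of the two bounds in the statement.
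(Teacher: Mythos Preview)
Your proposal is correct and rests on the same underlying idea as the paper's proof: a length-$k$ trail confined to a finite automaton forces $k$ to be bounded by the vertex count. The only difference is packaging---the paper argues directly via \Cref{lem: cr orbits} (passing to the cyclic reduction $\widetilde{w}$ and bounding the resulting orbit), whereas you invoke the stronger \Cref{prop: coset preorder} and \Cref{prop: sgp preorder}, which already encapsulate that lemma; both routes are valid and neither is circular.
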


(Note that the final claim is just a slight improvement of the result by Kapovich and Myasnikov in \cite{kapovich_stallings_2002}, where the spectrum of the free group \wrt a finitely generated subgroup $H$
was bounded by the (finite) number of vertices in $\stallings{H} \supseteq \rstallings{H} $.)

\begin{proof}
Let $H \leqslant\fg \Free$, let $w\in \Free$, and consider $w=v^{-1} \widetilde{w} v$ its expression as a conjugate of its cyclic reduction $\widetilde{w}$. Then, it is clear that~$\Ord_{Hu}(w) = \Ord_{H^{v^{\text{-}1}} u^{v^{\text{-}1}}}(\widetilde{w})$, which is either zero (and so, trivially less than or equal to $\card \Verts \,\stallings{Hu}$), or equals the length of the orbit $\vvertii =\orb[H^{v^{\text{-}1}},H^{v^{\text{-}1}} u^{v^{\text{-}1}}](\widetilde{w})$, contained in $\stallings{H^{v^{\text{-}1}} u^{v^{\text{-}1}}} \subseteq \stallings{Hu}$ by~\Cref{lem: cr orbits}. Then,
\begin{equation}
    \Ord_{Hu}(w)
    \,=\,
    \Ord_{H^{v^{\text{-}1}}u^{v^{\text{-}1}}}(\widetilde{w})
    \,=\,
    \ell(\vvertii )
    \,\leq\,
    \card \Verts \,\stallings{H^{v^{\text{-}1}} u^{v^{\text{-}1}}}
    \,\leq\,
    \card \Verts \,\stallings{Hu} \,,
\end{equation}
as claimed. Finally, note that if $u \in H$ (\ie if $Hu=H$) then $\vvertii$ is the closed orbit of a cyclically reduced word and hence, by~\Cref{lem: cr orbits}, $\vvertii\subseteq \rstallings{H}$. So, $\Ord_{H}(w)\leq \card \Verts \,\rstallings{H}$. 
\end{proof}

From \Cref{cor: SBP Free}, together with the computability of the membership problem $\MP(\Free)$, we obtain the computability of the order function
within free groups. 
\begin{thm}
There is an algorithm that, on input a finitely generated subgroup $H\leqslant \Free$, a coset $Hu$, and word $w\in \Free$,  outputs $\Ord_{Hu}(w)$ (the order of $w$ in $Hu$). In particular, the order problem $\OP(\Free)$ (and hence the torsion problem $\TP(\Free)$) is computable. \qed
\end{thm}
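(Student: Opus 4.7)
The plan is to combine the two ingredients already established in this section: the membership problem $\MP(\Free)$ is decidable (by the classical Stallings algorithm, mentioned just after \Cref{thm: Stallings bijection}), and, crucially, \Cref{cor: SBP Free} provides a computable finite bound on $\Ord_{Hu}(w)$ whenever this order is nonzero. Together, these two facts give precisely the hypotheses of (the coset analogue of) the equivalence $\ref{item: OP} \Leftrightarrow \ref{item: MP+OBP}$ in \Cref{lem: OP}, so the argument is essentially a combination of the two.

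More concretely, on input $(H, u, w)$ with $H \leqslant\fg \Free$ and $u, w \in \Free$, the algorithm proceeds as follows. First, build the coset Stallings automaton $\stallings{Hu}$ using the algorithmic version of \Cref{thm: Stallings bijection}, and let $N = \card{\Verts \stallings{Hu}}$. By \Cref{cor: SBP Free}, either $\Ord_{Hu}(w) = 0$ or $\Ord_{Hu}(w) \leq N$. Next, for each $k = 1, 2, \ldots, N$ in turn, decide whether $w^k \in Hu$; this is an instance of the (coset) membership problem, which in the free group reduces to checking whether the freely reduced form of $w^k u^{-1}$ labels a $\bp$-walk in the finite automaton $\stallings{H}$. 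Output the first $k$ in this range for which the test succeeds; if none does, then output $0$.

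Correctness is immediate from the definitions and from \Cref{cor: SBP Free}: if $\Ord_{Hu}(w) \geq 1$, the bound ensures the value lies in $\set{1, \ldots, N}$, so the first successful $k$ is by definition $\Ord_{Hu}(w)$; otherwise, no $k$ in $\set{1, \ldots, N}$ works and the algorithm correctly returns $0$. Termination is clear since only finitely many memberships are tested and each is decidable. The particular cases of $\OP(\Free)$ (take $u = \trivial$) and of $\TP(\Free)$ (test whether the output is nonzero) follow at once.

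I do not anticipate any real obstacle: all the hard work has already been done, the bound from \Cref{cor: SBP Free} transforming an \emph{a priori} unbounded search over powers of $w$ into a finite search. The only small point to be careful about is stating the argument uniformly in the coset representative $u$ (not just for subgroups), but since \Cref{cor: SBP Free} and the algorithmic construction of $\stallings{Hu}$ are both explicitly given in the coset form, no extra work is needed.
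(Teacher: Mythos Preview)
Your proposal is correct and follows essentially the same approach as the paper: the theorem is stated with a \qed\ immediately after the sentence ``From \Cref{cor: SBP Free}, together with the computability of the membership problem $\MP(\Free)$, we obtain the computability of the order function within free groups'', so the intended argument is precisely the combination of the computable bound from \Cref{cor: SBP Free} with the decidability of (coset) membership, exactly as you spell out.
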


Also, combining the computability of preorbits of finite trails (\Cref{lem: preorbit computable})  with the finiteness of the union in~\Cref{eq: order k = conj finite union}, we obtain a precise algebraic description of the set of elements of a given order $k \geq 1$ in a given finitely generated subgroup (or coset) of $\Free$, together with its computability.

\begin{prop} \label{prop: coset preorder comp}
Let $H$ be a finitely generated subgroup of $\Free$, let $u\in \Free$, and let $k\geq 1$. Then, the set of elements of $\Free$ of order $k$ in the coset $Hu$ is computable. More precisely, given $u, k$ and a finite set of generators for $H$, we can algorithmically decide whether $\ofo{\Free}{k}{Hu} \neq \varnothing$ 
and, if so, compute (finitely many) elements $w_1,\ldots ,w_t\in \Free$ (of order $k$ in $Hu$) and a free basis for $H\cap H^u$, such that
 \begin{equation} \label{eq: coset preorder comp}
\ofo{\Free}{k}{Hu} \,=\, \Big( \bigsqcup\nolimits_{i=1}^{t} \big(H\cap H^{w_i} \cap H^{w_i^2} \cap \cdots \cap H^{w_i^{k-1}}\big) w_i \Big)^{H\cap H^u}.
 \end{equation}
\end{prop}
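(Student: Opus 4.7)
The plan is to combine \Cref{prop: coset preorder} (which already gives the algebraic decomposition of $\ofo{\Free}{k}{Hu}$ as a finite disjoint union indexed by length-$k$ trails in $\stallings{Hu}$) with \Cref{lem: preorbit computable} (which provides the computability of each trail's preorbit), and then to transport each piece into the claimed form via a suitable conjugation.

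First, from the given data one computes the finite Stallings automaton $\stallings{Hu}$ via \Cref{thm: Stallings bijection}, and enumerates its (finitely many) length-$k$ trails $\vvertii=(\vertii_0,\ldots,\vertii_k)$. For each such trail, apply \Cref{lem: preorbit computable} to decide whether $\ofo{\Free}{\vvertii}{H}$ is empty; if all preorbits are empty then \Cref{prop: coset preorder} yields $\ofo{\Free}{k}{Hu}=\varnothing$, while otherwise the set is nonempty. For each nonempty preorbit, the same lemma returns both a concrete representative $\widetilde{w}_{\vvertii}$ and a free basis for the coefficient intersection $\bigcap_{i=0}^{k-1}H^{u_0 \widetilde{w}_\vvertii^i}$ (where $u_0$ is the chosen coset representative of $\vertii_0$). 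Next, reading walks off $\stallings{Hu}$, produce words $v_\vvertii$ and $v'_\vvertii$ labelling walks from $\vertii_0$ to $\bp$ and from $\vertii_k$ to $Hu$ respectively, and apply the standard coset intersection algorithm for free groups (\cite{kapovich_stallings_2002}) to compute some element $v\in v_\vvertii H\cap v'_\vvertii H^u$. Finally, set $w_\vvertii \,\mathrel{=}\, v^{-1}\widetilde{w}_\vvertii\, v$.

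The remaining task is the algebraic identification. Using that $v\in v_\vvertii H=u_0^{-1}H$ (so that the extra factor from $H$ is absorbed by conjugation of $H$ by itself), a direct computation gives
\[
\big(\ofo{\Free}{\vvertii}{H}\big)^{v}\,=\,\big(H\cap H^{w_\vvertii}\cap H^{w_\vvertii^2}\cap\cdots\cap H^{w_\vvertii^{k-1}}\big)\,w_\vvertii\,.
\]
Combining this with the standard identity $v_\vvertii H\cap v'_\vvertii H^u=v\,(H\cap H^u)$ (valid for any $v$ in this intersection), the decomposition of \Cref{prop: coset preorder} factorises as
\[
\ofo{\Free}{k}{Hu}\,=\,\Big(\bigsqcup\nolimits_\vvertii \big(H\cap H^{w_\vvertii}\cap\cdots\cap H^{w_\vvertii^{k-1}}\big)\,w_\vvertii\Big)^{H\cap H^u},
\]
which is precisely the claimed formula, with $t$ equal to the number of trails with nonempty preorbit; disjointness is inherited from \Cref{prop: coset preorder}. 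The last missing ingredient, a free basis of $H\cap H^u$, is computable by the standard product of Stallings automata. The principal technical obstacle is exactly this bookkeeping --- tracking how the coset intersection $v_\vvertii H\cap v'_\vvertii H^u$ acts by conjugation on both the representative and the intersection subgroup attached to each preorbit; once set up, every remaining step is a direct application of the effective tools already developed in the section.
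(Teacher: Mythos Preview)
Your approach is essentially the same as the paper's, and the algebraic bookkeeping in the second half is correct. There is, however, one genuine gap in the emptiness criterion.

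You write: ``if all preorbits are empty then \Cref{prop: coset preorder} yields $\ofo{\Free}{k}{Hu}=\varnothing$, while otherwise the set is nonempty.'' The ``otherwise'' direction is false. In \Cref{prop: coset preorder} each trail $\vvertii$ contributes the set $\big(\ofo{\Free}{\vvertii}{H}\big)^{v_{\vvertii}H\cap v'_{\vvertii}H^u}$, and this vanishes as soon as \emph{either} factor is empty. It can perfectly well happen that $\ofo{\Free}{\vvertii}{H}\neq\varnothing$ while $v_{\vvertii}H\cap v'_{\vvertii}H^u=\varnothing$. For a concrete instance, take $H=\langle a^2\rangle\leqslant\Free[\{a,b\}]$, $u=a$, and $k=2$. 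The only length-$2$ trails in $\stallings{Ha}$ are $(\bp,Ha,\bp)$ and $(Ha,\bp,Ha)$; both have nonempty preorbit (equal to $Ha$), yet in each case the associated coset intersection $v_{\vvertii}H\cap v'_{\vvertii}H^u$ is empty, and indeed $\ofo{\Free}{2}{Ha}=\varnothing$ (no odd power of $a$ is a square in $\Free$). Your criterion would wrongly declare the set nonempty. Relatedly, when you later ``compute some element $v\in v_{\vvertii}H\cap v'_{\vvertii}H^u$'', you are implicitly assuming this intersection is nonempty.

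The fix is immediate and already implicit in your toolkit: for each trail with nonempty preorbit, use the coset intersection algorithm to also test whether $v_{\vvertii}H\cap v'_{\vvertii}H^u$ is empty, discard the trail if so, and declare $\ofo{\Free}{k}{Hu}\neq\varnothing$ only when at least one trail survives \emph{both} tests. With this correction your argument matches the paper's proof.
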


\begin{proof}
Compute the (finite) coset Stallings automaton $\St =\stallings{Hu}$. From~\Cref{cor: SBP Free}, we know that if $k>\card \Verts \St$, then $\ofo{\Free}{k}{Hu} =\varnothing$. So, we can assume $k\leq \card \Verts \St$. 

Let us apply~\Cref{prop: coset preorder} and, with the notation used there, for each of the finitely many length $k$ trails $\vvertii\subseteq \Ati$, let us check whether $\ofo{\Free}{\vvertii}{H}$ or $v_{\vvertii}H\cap v'_{\vvertii}H^u$ are empty or not: with a solution to the coset intersection problem~$\CIP(\Free)$ (see~\cite{kapovich_stallings_2002}) decide whether $v_{\vvertii}H\cap v'_{\vvertii}H^u\neq \varnothing$ and if so, pick $v\in v_{\vvertii}H\cap v'_{\vvertii}H^u$, \ie such that $\vertii_0 \xwalk{_{\scriptstyle{v}}} \bp$ and $\vertii_k \xwalk{_{\scriptstyle{v}}} Hu$; on the other hand, use~\Cref{lem: preorbit computable} to check the emptiness of $\ofo{\Free}{\vvertii}{H}$, and to obtain a description of the form
 \begin{equation} \label{eq:  preorbit computable2}
\ofo{\Free}{\vverti}{H} =\,\big(H^{v^{-1}}\cap H^{v^{-1}w} \cap \cdots \cap H^{v^{-1}w^{k-1}} \big) w
 \end{equation}
in case it is not empty. If, for every length $k$ trail $\vvertii$ in $\Ati$, either $\ofo{\Free}{\vvertii}{H}$ or $v_{\vvertii}H\cap v'_{\vvertii}H^u$ (or both) are empty, then $\ofo{\Free}{k}{Hu} =\varnothing$ and we are done. Otherwise, for each $\vvertii$ with $\ofo{\Free}{\vvertii}{H}\neq \varnothing$ and $v_{\vvertii}H\cap v'_{\vvertii}H^u =v(H\cap H^u)\neq \varnothing$, we have 
 \begin{align*}
\big( \ofo{\Free}{\vvertii}{\!H} \big)^{\! v_{\vvertii} H \, \cap \, v'_{\vvertii} H^{u}} &= \Big( \big( H^{v^{-1}}\cap H^{v^{-1}w} \cap \cdots \cap H^{v^{-1}w^{k-1}} \big)w \Big)^{v(H\cap H^u)} \\ &= \Big( \big( H\cap H^{v^{-1}wv} \cap \cdots \cap H^{v^{-1}w^{k-1}v}\big) v^{-1}wv \Big)^{H\cap H^u}.
 \end{align*}
Putting together these, say $t$, trails we get the desired expression.  
\end{proof}

Note that, in the subgroup case, \ie when $u\in H$ (equivalently, $Hu=H$), the expression~\eqref{eq: coset preorder comp} is valid as well, with the conjugating subgroup being just $H\cap H^u =H$.

For subgroups $H$, the set of $k$-roots is the disjoint union of the sets of elements of order $d$, with $d$ running over the (finitely many) divisors of $k$; see~\eqref{eq: roots <- orders}. Therefore, we have a description of exactly the same type for $\sqrt[k]{H}$ (which is also algorithmically computable when $H$ is finitely generated). Note that $\sqrt[k]{H}\neq \varnothing$ since it always contains the elements from $H$ themselves (corresponding to the closed length 1 trail $\vvertii =(\bp, \bp)$). Note also that, for $w\in \Free$ of order $d\divides k$ in $H$, we have $w^d\in H$ and so, $H\cap H^w\cap H^{w^2} \cap \cdots \cap H^{w^{d-1}} =H\cap H^{w}\cap H^{w^2} \cap \cdots \cap H^{w^{k-1}}$, allowing a more compact description for $\sqrt[k]{H}$; we restate it here for later reference.

\begin{cor} \label{cor: k-roots of F computable}
Let $H$ be a finitely generated subgroup of $\Free$, and let $k\geq 1$. Then, the set of $k$-roots of $H$ is computable. More precisely, given $k$ and a finite set of generators for $H$, we can compute finitely many elements $\trivial=w_0, w_1,\ldots,w_t \in \Free$ (of order in $H$ being a divisor of $k$), such that
 \begin{equation*} \label{eq: k-roots of F}
\sqrt[k]{H} \,=\, \left(\bigsqcup\nolimits_{i=0}^{t} \big( H\cap H^{w_i}\cap H^{w_i^2} \cap \cdots \cap H^{w_i^{k-1}}\big) w_{i} \right)^{\!H}. \qedhere\tag*{\qed}
 \end{equation*}
\end{cor}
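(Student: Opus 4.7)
The plan is to reduce this statement to \Cref{prop: coset preorder comp} via the decomposition \eqref{eq: roots <- orders}, and then uniformize the length of the intersections using a simple observation on conjugation by a root.

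First, I would write $\sqrt[k]{H} = \bigsqcup_{d\divides k}\ofo{\Free}{d}{H}$, a finite disjoint union since $k$ has only finitely many divisors (and disjointness is automatic because every element has a single well-defined order in $H$). The divisor $d=1$ contributes $\ofo{\Free}{1}{H}=H$, accounting for the term $w_0=\trivial$ in the target formula (and already confirming that $\sqrt[k]{H}\neq \varnothing$). For each divisor $d\geq 2$ of $k$, I would apply \Cref{prop: coset preorder comp} in the subgroup case $u\in H$ (so that $H\cap H^u = H$), obtaining a finite, computable list of elements $w_{d,1},\ldots,w_{d,t_d}\in \Free$ of order $d$ in $H$ together with
$$\ofo{\Free}{d}{H} \,=\, \left(\bigsqcup\nolimits_{j=1}^{t_d} \big(H \cap H^{w_{d,j}}\cap \cdots \cap H^{w_{d,j}^{d-1}}\big)\,w_{d,j}\right)^{\!H}.$$
Concatenating the lists across $d\divides k$ (with $w_0=\trivial$ corresponding to $d=1$) produces the collection $\trivial=w_0, w_1, \ldots , w_t$ claimed in the statement.

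The only remaining step is to rewrite all these intersections with a uniform number $k$ of factors. For $w_i$ of order $d\divides k$ in $H$, one has $w_i^d\in H$, whence $H^{w_i^d}=H$ and, more generally, $H^{w_i^{j+d}} = (H^{w_i^j})^{w_i^d}=H^{w_i^j}$ for every $j\geq 0$. Thus the sequence $H, H^{w_i}, H^{w_i^2}, \ldots$ is $d$-periodic, and so
$$H\cap H^{w_i}\cap \cdots \cap H^{w_i^{d-1}} \,=\, H\cap H^{w_i}\cap \cdots \cap H^{w_i^{k-1}},$$
which yields the announced uniform description of $\sqrt[k]{H}$. Computability then follows immediately from the uniform algorithmic content of \Cref{prop: coset preorder comp} applied to each divisor of $k$: each $w_{d,j}$ and a free basis for each intersection subgroup are already computable there, and the set of divisors of $k$ is finite and computable. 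I do not anticipate any genuine obstacle beyond careful bookkeeping; the substantive work has been carried out in \Cref{prop: coset preorder comp}, and this corollary merely reassembles its outputs across divisors and normalizes the exponent range of the intersections.
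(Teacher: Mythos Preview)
Your proposal is correct and follows essentially the same route as the paper: decompose $\sqrt[k]{H}$ via \eqref{eq: roots <- orders} into preorders indexed by the divisors of $k$, apply \Cref{prop: coset preorder comp} (with $u\in H$) to each, and then use the $d$-periodicity of $j\mapsto H^{w_i^j}$ (from $w_i^d\in H$) to pad every intersection out to length $k$. The paper's discussion preceding the corollary records exactly these three observations, including the identification of the term $w_0=\trivial$ with the divisor $d=1$.
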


Once we have a description for the set of elements of any order $k\geq 1$, it is straightforward to decide whether there exist elements of order zero (the remaining ones), and to give an explicit description of them in the finite index case.

\begin{prop}\label{prop: order 0 Fn}
Let $H$ be a finitely generated subgroup of $\Free$, and let $u\in \Free$. Then, for $u\not\in H$, $\ofo{\Free}{0}{Hu}$ is always nonempty, and for $u\in H$, $\ofo{\Free}{0}{H} \neq \varnothing$ if and only if $H$ has infinite index in $\Free$. Moreover, in case $u\not\in H$ and $H$ is of finite index in $\Free$, we can compute finitely many elements $\trivial=w_0, w_1,\ldots,w_t \in \Free$, of orders $1=k_0, k_1,\ldots , k_t \geq 1$ in $H$ respectively, such that
 \begin{equation} \label{eq: elements of order 0 (fg)}
\ofo{\Free}{0}{Hu} \,=\, \left( \bigsqcup\nolimits_{i=0}^{t} \big( H\cap H^{w_i} \cap H^{w_i^2} \cap \cdots \cap H^{w_i^{k_i-1}}\big) w_i \right)^H.
 \end{equation}
In particular, the torsion-group problem $\TGP(\Free)$ is computable.
\end{prop}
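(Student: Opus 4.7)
I will handle the three assertions in turn and derive the decidability of $\TGP(\Free)$ as a corollary. Part 1 ($u\notin H$) is immediate: since $u\notin H$ we have $\trivial \notin Hu$, hence $\trivial^i=\trivial\notin Hu$ for every $i\geq 1$, so $\Ord_{Hu}(\trivial)=0$ and $\trivial\in \ofo{\Free}{0}{Hu}$. For Part 2 (the case $u\in H$, so $Hu=H$), one direction is direct: $[\Free:H]<\infty$ forces $\ofo{\Free}{0}{H}=\varnothing$ by (the contrapositive of) \Cref{cor: spectrum and index}. The substantive direction is the converse, which I would argue constructively: if $[\Free:H]=\infty$, then the (finite, since $H$ is f.g.) automaton $\stallings{H}$ cannot be saturated, so some vertex $v$ of $\stallings{H}$ is missing a letter $x\in X^{\pm 1}$. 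Let $p\in \Free$ label a reduced path from $\bp$ to $v$ in $\stallings{H}$. Re-rooting $\schreier{H}$ at $v$ coincides with $\schreier{H^p}$ (by~\Cref{lem: translated subgroup}), and in this re-rooting $x$ is still missing at the basepoint; hence the orbit of $x$ in $\schreier{H^p}$ marches indefinitely into the infinite tree hanging at the basepoint, yielding $\Ord_{H^p}(x)=0$. By \Cref{lem: order modulo autos}, $\Ord_H(pxp^{-1})=\Ord_{H^p}(x)=0$, providing the required witness $w=pxp^{-1}\in\ofo{\Free}{0}{H}$.

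For the explicit description in the finite-index case with $u\notin H$, I would invoke~\Cref{cor: coset preorder = infinite union} at $k=0$: $\ofo{\Free}{0}{Hu}$ is the disjoint union of preorbits $\ofo{\Free}{\vverti}{H}$ over all infinite $\bp$-trails and all finite closed $\bp$-trails in $\schreier{H}$ that avoid $Hu$ apart from at the start. Finite index forces $\schreier{H}=\stallings{H}$ to be finite and saturated, ruling out the infinite trails and leaving only finitely many relevant closed trails, effectively enumerable by a depth-first exploration of $\stallings{H}$. For each such closed trail of length $k_i$, pick a representative word $w_i\in \Free$ labelling the trail, so that $\Ord_H(w_i)=k_i$; then \Cref{lem: preorbit computable} produces the coset of the intersection $H\cap H^{w_i}\cap \cdots \cap H^{w_i^{k_i-1}}$ that appears in~\eqref{eq: elements of order 0 (fg)}. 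Finally, the cyclic-reduction-plus-conjugation trick used in the proofs of \Cref{prop: sgp preorder,prop: coset preorder} collects the non-cyclically-reduced instances via the outer $H$-conjugation, yielding~\eqref{eq: elements of order 0 (fg)} in full; the $i=0$ term corresponds to the degenerate length-$1$ closed trail at $\bp$, contributing all of $H$ to the union (consistent with the obvious fact that every element of $H$ has order $0$ in $Hu\neq H$).

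Decidability of $\TGP(\Free)$ then follows from the Part 2 dichotomy: on input $H\leqslant\fg \Free$, compute $\stallings{H}$ and check saturation; this is equivalent to finite-index-ness of $H$, which by Part 2 is equivalent to $0\notin \Iset{H}{\Free}$. The main technical obstacle is the constructive converse in Part 2, where one must exhibit a concrete element of $\Free$ whose powers escape $H$ forever; the remaining work is careful but essentially routine bookkeeping on top of the machinery already developed in this section.
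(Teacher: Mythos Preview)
Your argument tracks the paper's closely. Part~1 and the $\TGP$ corollary match exactly; for the constructive direction of Part~2 you exhibit the witness $pxp^{-1}$ via conjugation, whereas the paper instead writes down a cyclically reduced word $w=hvx$ (choosing $h\in H$ so that $hvx$ is cyclically reduced) and observes directly that $\bp w^k\notin\stallings{H}$ for all $k\geq 1$. Both constructions are valid and of comparable simplicity.

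For Part~3, though, you add a step the paper does not take, and it is misplaced. Since $H$ has finite index, $\schreier{H}=\stallings{H}=\rstallings{H}$ is finite and saturated, so \Cref{cor: coset preorder = infinite union} at $k=0$ already yields a \emph{finite} disjoint union of preorbits (over the closed $\bp$-trails avoiding $Hu$), and \Cref{lem: preorbit computable} converts each nonempty one into a computable coset; that is the paper's entire argument. The cyclic-reduction/conjugation manoeuvre of \Cref{prop: coset preorder,prop: sgp preorder} was introduced there only to confine orbits to a finite Stallings graph when the ambient Schreier graph is infinite --- an obstruction that is absent here --- so invoking it contributes nothing. In particular, the outer $H$-conjugation in~\eqref{eq: elements of order 0 (fg)} is not something the paper's proof actually derives, and you should not expect the trick to produce it: the set $\ofo{\Free}{0}{Hu}$ is in general only stable under conjugation by $H\cap H^u$ (compare the exponent in~\Cref{prop: coset preorder comp}), not by all of $H$.
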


\begin{proof}
Clearly, if $u\not\in H$, then $H\subseteq \ofo{\Free}{0}{Hu} \neq \varnothing$. In the case $Hu=H$, the implication to the right is a well-known general fact (see~\Cref{cor: spectrum and index}). And, for the converse, suppose that $H$ is a finitely generated subgroup of infinite index in $\Free =\Free_X$, \ie with $\stallings{H,X}$ being finite and unsaturated; take a vertex $Hv\in \stallings{H,X}$ which is $x$-deficient for some generator $x\in X$, take a cyclically reduced word of the form $w=hvx$, for some $h\in H$, and it is clear that $\bp w^k\not\in \stallings{H,X}$ for every $k\geq 1$; therefore, $w\in \ofo{\Free}{0}{H} \neq \varnothing$. 

Let us assume now that $u\not\in H$ and that $H$ is of finite index in $\Free$, \ie $\stallings{Hu}=\stallings{H}=\rstallings{H}$ is finite and saturated. By~\Cref{cor: coset preorder = infinite union} (and taking into account that there are no infinite trails in $\rstallings{H}$), we have $\ofo{\Free}{0}{Hu} \,=\, \bigsqcup_{\vverti} \ofo{\Free}{\vverti}{H}$, where the union goes over all the finite closed $\bp$-trails in $\rstallings{H}$ not containing the coset~$Hu$. Finally, applying~\Cref{lem: preorbit computable} to each such preorbit $\ofo{\Free}{\vverti}{H}$, we can disregard the empty ones, and get the corresponding description for the nonempty ones; putting them together we obtain~\eqref{eq: elements of order 0 (fg)}.
\end{proof}

Finally, the computability of preorders together with the algorithmic boundability of the spectra (\Cref{cor: SBP Free}) provides (through the analysis in \Cref{sec: algorithmic}) the computability of the spectrum function $H \mapsto \Ord_{H}(\Free)$ and related problems. 

\begin{thm} \label{thm: spectrum F computable}
There exists an algorithm which, on input a finitely generated subgroup $H\leq\fg \Free$, and an element $u\in \Free$, it outputs the finite spectrum $\Iset{Hu}{\Free}$. In particular, the finite spectrum problem $\FSP(\Free)$, and hence the spectrum membership problem $\SMP(\Free)$, the torsion-group problem $\TGP(\Free)$, and the purity problem $\PP(\Free)$, are computable for free groups. \qed
\end{thm}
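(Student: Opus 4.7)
The plan is to assemble the ingredients developed in the previous results into a single algorithm, computing the spectrum $\Iset{Hu}{\Free}$ element by element up to the explicit bound given by~\Cref{cor: SBP Free}. Throughout, I assume $H$ given by a finite generating set and $u \in \Free$ given as a word, so that $\stallings{Hu}$ (and, when $u \in H$, also $\rstallings{H}$) is computable via Stallings' algorithm; see~\Cref{thm: Stallings bijection}.

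First, I would compute $N = \card \Verts \stallings{Hu}$ (using $\rstallings{H}$ in the subgroup case $u \in H$). By~\Cref{cor: SBP Free}, $\Iset{Hu}{\Free} \cap \NN_{\geq 1} \subseteq [1,N]$, so the positive part of the spectrum is finite and confined to this interval. Next, for each $k \in \{1, 2, \ldots, N\}$, I would invoke~\Cref{prop: coset preorder comp}, which decides whether $\ofo{\Free}{k}{Hu} = \varnothing$ (so whether $k \in \Iset{Hu}{\Free}$). Collecting the positive answers yields the whole intersection $\Iset{Hu}{\Free} \cap \NN_{\geq 1}$.

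It then remains to decide whether $0 \in \Iset{Hu}{\Free}$, for which I would use~\Cref{prop: order 0 Fn}: if $u \notin H$ (decidable through the computability of $\MP(\Free)$), then $0 \in \Iset{Hu}{\Free}$ automatically; while if $u \in H$, then $0 \in \Iset{H}{\Free}$ if and only if $H$ has infinite index in $\Free$, which is decidable by checking whether $\stallings{H}$ is saturated. Putting the two parts together outputs the finite spectrum $\Iset{Hu}{\Free}$, proving the computability of $\FSP(\Free)$ (in its extended, coset version).

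From here, the consequences are immediate. For $\SMP(\Free)$, on input $m \in \NN$ and $H \leqslant \fg \Free$, I output \yep\ if $m \in \Iset{H}{\Free}$ and \nop\ otherwise, using the algorithm above. For $\TGP(\Free)$, I just check whether $0$ belongs to the computed spectrum. Finally, $\PP(\Free)$ amounts to checking whether $\Iset{H}{\Free} \subseteq \{0,1\}$, again a direct inspection of the output. There is no real obstacle left at this stage: all the technical difficulty has been absorbed into~\Cref{cor: SBP Free} (the effective bound on the spectrum) and~\Cref{prop: coset preorder comp} (the effective membership test for each $\ofo{\Free}{k}{Hu}$); the present proof just orchestrates them.
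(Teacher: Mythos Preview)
Your proposal is correct and follows essentially the same approach as the paper: the theorem is stated there with a bare \qed, the preceding paragraph indicating that it follows from the algorithmic boundability of spectra (\Cref{cor: SBP Free}) together with the computability of preorders (\Cref{prop: coset preorder comp}) and the analysis in \Cref{sec: algorithmic}. You have simply written out that orchestration explicitly, including the appeal to \Cref{prop: order 0 Fn} for the case $k=0$, which is exactly what the paper intends.
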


We recall that the purity problem $\PP(\Free)$ was previously proved in~\cite{birget_pspace-complete_2000} and~\cite{kapovich_stallings_2002}, constituting one of the first results in this direction.

Finally, let us point out an easy asymptotic consequence of our results. Recall that a subset $S \subseteq \Free$ is called \defin{negligible} if it has asymptotic density zero, it is called \defin{visible} if it is not negligible, and it is called \defin{generic} if its complement is negligible.  

\begin{rem}
Let $H$ be an infinite index subgroup of $\Free$. Since any coset of $H$ is a negligible subset of $\Free$, and a finite union of negligible subsets is negligible, we deduce from~\Cref{prop: coset preorder comp} that the preorders $\ofo{\Free}{k}{Hu}$ are also negligible for $k\geq 1$. Moreover, from~\Cref{prop: order 0 Fn}, $\ofo{\Free}{0}{H}$ is the complement of a negligible set so it is generic in $\Free$. For a finite index $H\leq \Free$, and $k\geq 0$, each $\ofo{\Free}{k}{Hu}$ is either empty or visible in $\Free$.
\end{rem}

\subsection{Computing pure closures within free groups}

Let $S\subseteq \NN_{\geq 1}$ be any set of strictly positive natural numbers. In this section we study the $S$-pure closure $\pcl[S]{H}$ of a subgroup $H\leq \Fn$. More concretely, we shall see that if $H$ is finitely generated then so is $\pcl[S]{H}$; additionally, if $S$ is finite (more generally, computable) one can effectively compute generators for $\pcl[S]{H}$ from given generators for $H$. 

A first naive approximation to computing the $S$-pure closure of a finitely generated subgroup $H\leqslant \Free$ is to add to $H$ all its (proper) $S$-roots. For each $s\in S$, we can apply~\Cref{cor: k-roots of F computable} and get
 \begin{equation}\label{eq: proper S-roots}
\sqrt[s]{H} \,=\, \left(\bigsqcup\nolimits_{i=0}^{t} \big( H\cap H^{w_i}\cap H^{w_i^2} \cap \cdots \cap H^{w_i^{s-1}}\big) w_{i} \right)^{\!H}, 
 \end{equation}
where, $1=w_0, w_1,\ldots ,w_t$ are certain computable elements from $\Free$ (depending on $s$). Note that, \emph{even if $S$ is infinite}, \Cref{cor: SBP Free} tells us that only a finite subset of $S$ contributes significant roots: $\smash{\sqrt[S]{H}=\sqrt[^{S_H}]{H}}$, where $S_H=S\cap \Iset{H}{\Free}=S\cap [0,\card V\rstallings{H}]$ is finite. Therefore, taking the union of expressions~\eqref{eq: proper S-roots} for $s\in S_H$, we get a similar finite description for $\sqrt[S]{H}$. 

Now, by definition, if $H$ has no proper $S$-roots then $\pcl[S]{H}=H$, and we are done. Otherwise, $\pcl[S]{H}$ must contain all the elements in~\eqref{eq: proper S-roots}; that is $\gen{ \sqrt[S]{H}}\leq \pcl[S]{H}$. However, this inclusion is not necessarily an equality because the attachment of $\sqrt[S]{H}$ to $H$ may create new proper $S$-roots to be attached in a second round, as the following example shows.

\begin{exm} \label{exm: computing preorder}
Consider $\Free[2]=\Free[\{a,b\}]$ and the subgroup $H=\gen{a^2, ab^2}\leq \Free[2]$. Clearly, $a\in \ofo{\Free}{2}{H}$ (and hence $H$ is not $2$-pure), whereas $b\not\in \ofo{\Free}{2}{H}$ (in fact, $\Iset{H}{b}=0$, since no element in $H$ starts with $b$). However, after adding $a$ to $H$, we get $\gen{H \cup \set{a}} = \gen{a^2, ab^2, a}=\gen{a, b^2}$, which is still not $2$-pure because now $\Ord_{\gen{a, b^2}} (b) = 2$.

We complete the example by seeing that $\ofo{\Free}{2}{H} = \set{a^{2n+1} \st n\in \ZZ}^H$; \ie the odd powers of $a$ are the \emph{unique} proper $2$-roots of $H$ up to conjugacy by $H$. Hence, after adding to $H$ \emph{all} its proper $2$-roots we reach the (same) subgroup $\gen{a, b^2}$, which still has $b$ as a proper $2$-root.

To compute $\ofo{\Free}{2}{H}$, let us follow the argument given in~\Cref{prop: coset preorder comp}, applied to the subgroup $H=\gen{a^2, ab^2}\leq \Free[2]$. The Stallings graph $\St_{\!H} =\stallings{H,\{a,b\}}=\rstallings{H,\{a,b\}}$ 
is depicted in~\Cref{fig: Stallings and neighborhood}.

\begin{figure}[H]
\centering
\begin{tikzpicture}[baseline=(0.base),shorten >=1pt, node distance=1.2cm and 2cm, on grid,auto,auto,>=stealth',every state/.style={ % Sets the properties for each state
semithick, fill=gray!20, inner sep=2pt, minimum size = 10pt }]
    \newcommand{\dx}{1.5}
    \newcommand{\dy}{1.5}
    \node[] (0)  {};
    \node[state, accepting,right = \dx of 0] (a1) {$\scriptstyle{0}$};
    \node[state] (a2) [right = \dx of a1] {$\scriptstyle{1}$};
    \node[state] (a3) [right = \dx of a2] {$\scriptstyle{2}$};

    \path[->]
        (a1) edge[thick,blue, bend left = 30]
            (a2);

    \path[->]
        (a2) edge[blue, bend left = 30]
                node[pos=0.5,below = 0.05] {$a$}
            (a1);

    \path[->]
        (a2) edge[red]
                node[pos=0.5,below = 0.05] {$b$}
            (a3);
    \path[->]
        (a3) edge[thick,red,bend right = 40]
            (a1);

    \end{tikzpicture}
\caption{The Stallings automaton $\Ati_{\!H}$
} \label{fig: Stallings and neighborhood}
 \end{figure}
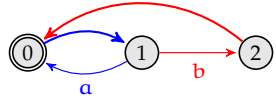

Since $\card{\Vertexs \St_{\!H}} =3$, there are two closed $\bp$-trails of length $2$ in $\St_{\!H}$, namely $(\bp, 1,\bp)$ and $(\bp, 2,\bp)$, and so,  
\begin{equation*}
\ofo{\Free}{2}{H} \,=\, \Big( \ofo{\Free}{(\bp, 1,\bp)}{H} \sqcup \ofo{\Free}{(\bp, 2,\bp)}{H} \Big)^{\!H}.
\end{equation*}
In order to compute the preorbits $\ofo{\Free}{(\bp, 1,\bp)}{H}$ and $\ofo{\Free}{(\bp, 2,\bp)}{H}$, we compute in~\Cref{fig: pullback} the pullback of $\Ati_{\!H}$ with itself, $\Ati_{\!H}\times \Ati_{\!H}$, and read the intersections from there:

\begin{figure}[H]
\centering
\begin{tikzpicture}[shorten >=1pt, node distance=1.2cm and 2cm, on grid,auto,auto,>=stealth',every state/.style={semithick, fill=gray!20, inner sep=2pt, minimum size = 10pt }]

\newcommand{\dx}{1.4}
\newcommand{\dy}{1.3}
\node[] (0)  {};
\node[state,accepting,style={ % Sets the properties for each state
semithick,fill=gray!20,inner sep=2pt,minimum size = 10pt}] (a3) [right = \dy-1/3 of 0] {$\scriptstyle{0}$};
\node[state,style={semithick,fill=gray!20,inner sep=2pt,minimum size = 10pt}] (a4) [right = \dx of a3] {$\scriptstyle{1}$};
\node[state,style={semithick,fill=gray!20,inner sep=2pt,minimum size = 10pt}] (a5) [right = \dx of a4] {$\scriptstyle{2}$};

\node[state,accepting,style={semithick,fill=gray!20,inner sep=2pt,minimum size = 10pt}] (b1) [below = 2*\dy/3 of 0] {$\scriptstyle{0}$};
\node[state,style={semithick,fill=gray!20,inner sep=2pt,minimum size = 10pt}] (b2) [below = \dy of b1] {$\scriptstyle{1}$};
\node[state,style={semithick,fill=gray!20,inner sep=2pt,minimum size = 10pt}] (b3) [below = \dy of b2] {$\scriptstyle{2}$};

\foreach \y in {3,...,5}
\foreach \x in {1,...,3} 
\node[state] (\x\y) [below right = (\x-1/3)*\dy and (\y-2-1/3)*\dx of 0] {};
    
\node[state,accepting] (13) [below right = 2*\dy/3 and 2*\dx/3 of 0] {};

\path[->]
    (a3) edge[blue,bend left=30]
        (a4);

\path[->]
    (a4) edge[blue,bend left=30]
        (a3);

\path[->]
    (a4) edge[red]
        (a5);

\path[->]
    (a5) edge[red,bend right=40]
        (a3);

\path[->]
    (b1) edge[blue,bend left=30]
        (b2);

\path[->]
    (b2) edge[blue,bend left=30]
        (b1);

\path[->]
    (b2) edge[red]
        (b3);  
        
\path[->]
    (b3) edge[red,bend left =40]
        (b1);  

\path[->]
         (25) edge[red, dashed]
             (33);

\path[->]
         (23) edge[blue, dashed, bend right = 20]
             (14);

\path[->]
         (14) edge[blue, dashed, bend right = 20]
             (23);

\path[->]
         (13) edge[blue,bend left = 20]
             (24);

\path[->]
         (24) edge[blue,bend left = 20]
             (13);

\path[->]
         (24) edge[red,bend left = 20]
             (35);

\path[->]
         (35) edge[red,bend left = 30]
             (13);

\path[->]
         (34) edge[red, dashed, bend left = 10]
             (15);
\end{tikzpicture}
\caption{The product $\Ati_{\!H}\times \Ati_{\!H}$ with the non-core parts dashed}
\label{fig: pullback}
\end{figure}
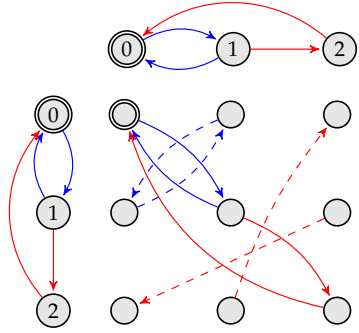

 \begin{align*}
\ofo{\Free}{(\bp, 1,\bp)}{H} &= \red{\Lang}_{\bp, 1} (\Ati_{\!H}) \cap \red{\Lang}_{1, \bp} (\Ati_{\!H}) =\red{\Lang}_{(0,1), (1,0)} (\Ati_{\!H}\times \Ati_{\!H})=\set{a^{2n+1} \st n\in \ZZ}, \\ 
 \ofo{\Free}{(\bp, 2,\bp)}{H} &= \red{\Lang}_{\bp, 2} (\Ati_{\!H}) \cap \red{\Lang}_{2, \bp} (\Ati_{\!H}) =\red{\Lang}_{(0,2), (2,0)} (\Ati_{\!H}\times \Ati_{\!H})=\varnothing.
 \end{align*}
Therefore 
 $$
\ofo{\Free}{2}{H} \,=\, \Big( \ofo{\Free}{(\bp, 1,\bp)}{H} \sqcup \ofo{\Free}{(\bp, 2,\bp)}{H} \Big)^{\!H} =\big(\set{a^{2n+1} \st n\in \ZZ}\big)^{\!H},
 $$
as wanted. 
\end{exm}

As the above example shows, attaching to $H$ all its proper $S$-roots is not enough to reach $\pcl[S]{H}$, because these attachments can create new proper $S$-roots to be attached again. Repeating the process, we get closer and closer to $\pcl[S]{H}$. In \Cref{prop: closure estabilization} we will see that, for $H$ finitely generated, this procedure always stabilizes in a finite number of steps. 

The crucial concept to prove this stabilization is that of algebraic extension. Recall that an extension $H\leq K\leq \Free$ is called \emph{algebraic}, denoted by $H\leq\alg K$, if $H$ is not contained in any proper free factor of $K$; it is not hard to see that any $H\leq\fg \Free$ has only finitely many algebraic extensions, all of them finitely generated again; see~\cite{kapovich_stallings_2002, miasnikov_algebraic_2007}.

\begin{lem}\label{lem: H algebraic}
Let $H$ be a finitely generated subgroup of $\Free$, and let $S\subseteq \NN_{\geq 1}$. Then, $H$ is algebraic in $\gen{\sqrt[S]{H}}$, which is again finitely generated.
\end{lem}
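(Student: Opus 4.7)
My plan has two parts: finite generation of $K := \gen{\sqrt[S]{H}}$, and algebraicity of $H$ in $K$.

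For the finite generation, I will combine \Cref{cor: SBP Free} with the explicit description of preorders given by \Cref{prop: sgp preorder}. Since $\Iset{H}{\Free}$ is bounded by $\card \Verts \rstallings{H}$, only finitely many values of $i \in \NN_{\geq 1}$ can give nonempty preorders in the decomposition
\begin{equation*}
\sqrt[S]{H} \,=\, \bigsqcup\nolimits_{i \in D} \ofo{\Free}{i}{H}, \qquad D \,=\, \set{i \in \Iset{H}{\Free} \cap \NN_{\geq 1} \st i \divides s \text{ for some } s \in S}.
\end{equation*}
By \Cref{prop: sgp preorder}, each $\ofo{\Free}{i}{H}$ is in turn a finite union of $H$-conjugates of cosets of finitely generated subgroups of $\Free$ (namely, finite intersections of conjugates of $H$, which are finitely generated by the Howson property of $\Free$). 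Consequently $\sqrt[S]{H}$ is a finite union of cosets of finitely generated subgroups of $\Free$, so $K$ is generated by finitely many elements.

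For the algebraicity, I will argue by contradiction. Suppose some proper free factor $L \lneq K$ contains $H$. We may assume $L$ is nontrivial, for otherwise $H = \trivial$ and then $\sqrt[S]{H} = \trivial$, forcing $K = \trivial$ and precluding $L \lneq K$. Since $L$ is a nontrivial free factor of the free group $K$, it is pure in $K$ (as recalled in the remark after the definition of pure subgroup). Now pick any $g \in \sqrt[S]{H}$: by definition of $K$, $g \in K$; moreover there exists $s \in S \subseteq \NN_{\geq 1}$ with $g^s \in H \subseteq L$. Purity of $L$ in $K$ then forces $g \in L$. Hence $\sqrt[S]{H} \subseteq L$, and so $K = \gen{\sqrt[S]{H}} \subseteq L$, contradicting $L \lneq K$.

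The only delicate point is the finite generation step, where one must ensure the cosets comprising $\sqrt[S]{H}$ involve only finitely many distinct subgroups and representatives; this is precisely what the boundedness of the spectrum (which restricts the range of relevant $i$ to a finite set) together with the finiteness built into \Cref{prop: sgp preorder} provides. Once that is in place, algebraicity follows from a short contradiction argument based on the purity of free factors in free groups.
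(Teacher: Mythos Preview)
Your algebraicity argument is essentially the paper's: both use that free factors are pure, so that $\sqrt[S]{H}\subseteq L$ whenever $H\leq L\leq_{\mathrm{ff}} K$, forcing $K\leq L$. (Your separate handling of the case $L=\trivial$ is harmless but unnecessary: in a torsion-free group the trivial subgroup is pure too.)

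For finite generation you take a direct route, matching the paper's parenthetical alternative (bound the relevant orders via \Cref{cor: SBP Free}, then invoke the explicit finite description of preorders/roots). The paper's primary route is slicker: it proves algebraicity first and then simply observes that any algebraic extension of a finitely generated subgroup of $\Free$ is itself finitely generated, getting finite generation for free.

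One small slip to fix: you write that ``$\sqrt[S]{H}$ is a finite union of cosets of finitely generated subgroups'', but what \Cref{prop: sgp preorder} actually gives is a finite union of \emph{$H$-conjugates} of such cosets, which is generally not a finite union of cosets. The conclusion is still correct, since $1\in D$ forces $H=\ofo{\Free}{1}{H}\subseteq \sqrt[S]{H}$, so $H\leq K$; once $H$ is in the generating set, the $H$-conjugation adds nothing and $K=\gen{H,\, w_1,\ldots,w_t}$ for finitely many coset representatives $w_i$. Just make that step explicit.
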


\begin{proof}
Suppose $\gen{\sqrt[S]{H}} =A*B$ and $H\leq A$. Then, since free factors are pure, we have that $\sqrt[S]{H} \subseteq \sqrt[S]{A} = A$. Hence, $\gen{\sqrt[S]{H}}= A$ and $B=1$. Therefore, $H\leq\alg \langle \sqrt[S]{H}\rangle$ and, in particular, $\langle \sqrt[S]{H}\rangle$ is finitely generated. 

(Finite generation of $\gen{\sqrt[S]{H}}$ can also be deduced from the above argument: even if $S$ is infinite, $S_H=S\cap \Iset{H}{\Free}$ is finite and $\smash{\sqrt[S]{H}=\sqrt[^{S_H}]{H}}$ can be obtained from $H$ by attaching the finitely many $w_i$'s from expressions~\eqref{eq: proper S-roots}, $s\in S_H$.)
\end{proof}

\begin{prop}\label{prop: closure estabilization}
Let $H$ be a finitely generated subgroup of $\Free$, let $S\subseteq \mathbb{N}_{\geq 1}$, and for $i\geq 1$, let $H_{i+1}=\gen{\sqrt[S]{H_i} }$, with $H_0=H$. Then, all these subgroups are finitely generated, and the ascending sequence $H=H_0\leq H_1\leq H_2\leq \cdots$ stabilizes at~$\pcl[S]{H}$, \ie there exists $j\geq 0$ such that $H_j = H_{j+1} = \pcl[S]{H}$. 
\end{prop}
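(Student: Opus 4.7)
The plan is to leverage Lemma \ref{lem: H algebraic} together with the well-known finiteness of the set of algebraic extensions of a finitely generated subgroup of $\Free$.

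First, I would argue inductively that every $H_i$ is finitely generated. By hypothesis $H_0=H$ is finitely generated; assuming $H_i$ is finitely generated, Lemma \ref{lem: H algebraic} (applied to $H_i$) gives that $H_{i+1}=\gen{\sqrt[S]{H_i}}$ is again finitely generated, and moreover $H_i\leqslant\alg H_{i+1}$. Hence we obtain an ascending chain of finitely generated subgroups
\begin{equation*}
H=H_0\,\leqslant\alg\, H_1\,\leqslant\alg\, H_2\,\leqslant\alg\,\cdots
\end{equation*}
Using the transitivity of algebraic extensions (see \cite{miasnikov_algebraic_2007}), it follows that $H\leqslant\alg H_i$ for every $i\geqslant 0$.

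Second, I would invoke the fact that any finitely generated subgroup of $\Free$ has only finitely many algebraic extensions (a consequence of Takahasi's theorem; see \cite{kapovich_stallings_2002, miasnikov_algebraic_2007}), which forces the chain to stabilize. That is, there exists $j\geqslant 0$ such that $H_j=H_{j+1}=H_{j+2}=\cdots$. Note that $H_j=H_{j+1}=\gen{\sqrt[S]{H_j}}$ implies $\sqrt[S]{H_j}\subseteq H_j$, \ie $H_j$ is $S$-pure. Since also $H\leqslant H_j$, by definition of $S$-pure closure we obtain $\pcl[S]{H}\leqslant H_j$.

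For the reverse inclusion, I would prove by induction on $i$ that $H_i\leqslant \pcl[S]{H}$. The base case $H_0=H\leqslant \pcl[S]{H}$ is clear. For the inductive step, assuming $H_i\leqslant \pcl[S]{H}$, we get $\sqrt[S]{H_i}\subseteq \sqrt[S]{\pcl[S]{H}}=\pcl[S]{H}$ (the last equality holds precisely because $\pcl[S]{H}$ is $S$-pure), and therefore $H_{i+1}=\gen{\sqrt[S]{H_i}}\leqslant \pcl[S]{H}$. Specializing to $i=j$ yields $H_j\leqslant \pcl[S]{H}$, completing the proof. The only delicate ingredient is the appeal to the finiteness of the set of algebraic extensions of a finitely generated subgroup of $\Free$; the rest is a bookkeeping argument combining Lemma \ref{lem: H algebraic} with the definition of $\pcl[S]{H}$.
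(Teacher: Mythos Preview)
Your proof is correct and follows essentially the same route as the paper: induction via Lemma~\ref{lem: H algebraic} to get finite generation and $H_i\leqslant\alg H_{i+1}$, transitivity of algebraic extensions, and the finiteness of the set of algebraic extensions of a finitely generated subgroup to force stabilization. Your write-up is in fact slightly more complete than the paper's, since you spell out the inclusion $H_j\leqslant\pcl[S]{H}$ by the induction $H_i\leqslant\pcl[S]{H}$, whereas the paper leaves this implicit.
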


\begin{proof}
It is immediate (by induction and using Lemma~\ref{lem: H algebraic}) that all the subgroups $H_i$ are finitely generated. A soon as we have $H_j=H_{j+1}=\gen{ \sqrt[S]{H_j}}$, we deduce $\sqrt[S]{H_j} = H_j$; that is, $H_j$ is $S$-pure and so, $H_j=\pcl[S]{H}$. Therefore it all comes down to prove that the sequence $(H_i)_{i\geq 0}$ stabilizes after a finite number of steps. And this follows immediately from~\Cref{lem: H algebraic}, from transitivity of algebraic extensions ($A\leq\alg B\leq\alg C$ implies $A\leq\alg C$), and from the fact that $H$ has finitely many algebraic extensions; see~\cite{kapovich_stallings_2002, miasnikov_algebraic_2007}.  
\end{proof}

\begin{cor}\label{cor: compute closure}
Let $H\leq \Free$ and $S\subseteq \mathbb{N}_{\geq 1}$. If $H$ is finitely generated, then $\pcl[S]{H}\leq \Free$ is finitely generated as well. If, additionally, $S$ is computable then a free basis for $\pcl[S]{H}$ can be effectively computed from a finite set of generators for $H$.
\end{cor}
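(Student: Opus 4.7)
The plan is to bootstrap from \Cref{cor: k-roots of F computable} and \Cref{prop: closure estabilization}. Finite generation is essentially immediate: iterating \Cref{lem: H algebraic}, the sequence $H=H_0\leq H_1\leq H_2\leq \cdots$ with $H_{i+1}=\gen{\sqrt[S]{H_i}}$ consists entirely of finitely generated subgroups, and \Cref{prop: closure estabilization} guarantees that it stabilizes at some $H_j=\pcl[S]{H}$; hence $\pcl[S]{H}$ is finitely generated. So the real content is to show that, when $S$ is computable, one can effectively produce the $H_i$'s and detect the stabilization.

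The algorithm will iterate the following two-step procedure starting from $K=H$. \emph{Step 1 (computing one generation of roots):} from a finite generating set for $K$, first compute the (finite) spectrum $\Iset{K}{\Free}$ using \Cref{thm: spectrum F computable}; since $S$ is computable and $\Iset{K}{\Free}$ is finite, we can effectively extract the finite subset $S_K = S\cap \Iset{K}{\Free}$. By~\Cref{cor: SBP Free}, $\sqrt[S]{K}=\sqrt[S_K]{K}$. For each $s\in S_K$, \Cref{cor: k-roots of F computable} produces finitely many elements $w_0^{(s)},\ldots,w_{t_s}^{(s)}\in \Free$ such that
 \begin{equation}
\sqrt[s]{K} \,=\, \Big( \bigsqcup\nolimits_{i=0}^{t_s}  \big( K\cap K^{w_i^{(s)}} \cap \cdots \cap K^{(w_i^{(s)})^{s-1}}\big) w_i^{(s)} \Big)^{\!K}.
 \end{equation}
Since $K\subseteq \sqrt[s]{K}\subseteq \gen{K, w_0^{(s)}, \ldots, w_{t_s}^{(s)}}$ (each intersection sits inside $K$, and $K$-conjugation is absorbed by $K$ being in the generating set), the collection
 \begin{equation}
K \,\cup\, \set{ w_i^{(s)} \st s\in S_K,\ 0\leq i \leq t_s }
 \end{equation}
is a finite generating set for $K' = \gen{\sqrt[S]{K}}$. \emph{Step 2 (detecting stabilization):} using the solvability of~$\MP(\Free)$, check whether each new generator $w_i^{(s)}$ already lies in $K$; if so, $K'=K$ and we stop (output $K$); otherwise set $K\gets K'$ and repeat Step 1.

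Termination is guaranteed by \Cref{prop: closure estabilization}: the sequence $(H_i)_i$ produced by the loop is exactly the one analyzed there, which stabilizes after finitely many steps at $\pcl[S]{H}$. Correctness of the stopping criterion is immediate: once $K' = K$ we have $\sqrt[S]{K} \subseteq K$, \ie $K$ is $S$-pure, and it is the smallest such subgroup containing $H$ by construction (at each step we only attach elements forced to belong to any $S$-pure overgroup of $H$). The only possible subtlety is the very first ingredient, computability of $\Iset{K}{\Free}$ together with that of the $k$-roots; but both are already provided by \Cref{thm: spectrum F computable} and \Cref{cor: k-roots of F computable}, so no new technical obstacle arises.
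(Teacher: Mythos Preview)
Your proposal is correct and follows essentially the same route as the paper: iterate $K\mapsto \gen{\sqrt[S]{K}}$ using \Cref{cor: k-roots of F computable} to compute each step, and invoke \Cref{prop: closure estabilization} for termination. The only cosmetic difference is that the paper extracts the finite relevant subset of $S$ by intersecting with the interval $[2,\card V\rstallings{K}]$ (via \Cref{cor: SBP Free}) rather than computing the full spectrum, but this is an implementation detail with no bearing on correctness.
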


\begin{proof}
The first claim follows immediately from \Cref{lem: H algebraic} and \Cref{prop: closure estabilization}. If, additionally, $S$ is computable, then we can list the elements in $S_H=S\cap [2, \card V\rSt_{\!H}]$, and effectively compute (free bases for) the subgroups in the ascending chain $(H_i)_{i\geq 0}$ until it stabilizes (which we will realize, computationally, by the fact $\sqrt[S]{H_j}=H_j$). At this point, we have $H_j=\pcl[S]{H}$ and we have just computed a free basis for $\pcl[S]{H}$.    
\end{proof}

\section{Free times free-abelian groups}\label{sec: free-abelian}

The family of free times free-abelian groups, namely the groups of the form
 \begin{equation} \label{eq:pres F_n x Z^m}
\Fta \,=\, \FTA =\langle \, x_1,\ldots,x_n,t_1,\ldots,t_m\, \mid t_it_j \,=\, t_jt_i,\, t_ix_k=x_kt_i \, \rangle,
 \end{equation}
has some remarkable properties (\eg it is not Howson) and
has been the subject of several studies over the last few years (see~\cite{delgado_algorithmic_2013,sahattchieve_convex_2015, delgado_stallings_2021, delgado_extensions_2017, roy_fixed_2020, roy_degrees_2021, carvalho_dynamics_2020}). In the present section we aim to study order and spectra on this family. The main results in the section are the computability of the sets $\sqrt[k]{\HH}$ (of $k$-roots) and $\smash{\ofo{\Fta}{k}{\HH}}$ (of elements of order $k$) of a finitely generated subgroup $\HH \leqslant \Fta=\FTA$, and that of the corresponding spectrum $\Iset{\HH}{\Fta}$ (which we will see that is always finite).

As has been customary with this kind of groups, we shall refer to elements in~$\Fta$ by their normal forms (with vectors on the right), and abbreviated as $\fta{u}{a} \,=\, u\, t_1^{a_1} \cdots t_m^{a_m}$, where $u=u(x_1,\ldots ,x_n) \in \Free_n$ is called the \defin{free part} of $\fta{u}{a}$, $\vect{a} = (a_1\ldots,a_m) \in \ZZ^m$ is called the \defin{abelian part} of $\fta{u}{a}$, and where the meaningless symbol $t$ allows us to use the standard additive notation in $\ZZ^m$ inserted in a multiplicative environment, \ie $(\fta{u}{a})(\fta{v}{b})=\fta{uv}{a+b}$. We extend this notation to subsets in the natural way: for every $U\subseteq \Fn$ and every $A\subseteq \ZZ^m$, we write $U\, \t^{A} =\set{ \fta{u}{a} \st u\in U,\, \vect{a} \in A}$.

We denote by $\prf$ and $\pra$ the morphisms given by the projections to the free and free-abelian parts, respectively, \ie $\prf \colon \Fta \to \Fn$, $\fta{u}{a} \mapsto u$, and $\pra \colon \Fta \to \ZZ^m$, $\fta{u}{a} \mapsto \vect{a}$. 

\subsection{Subgroups of free times free-abelian groups, bases and completion}

Note that free times free-abelian groups $\Fta =\FTA$ are defined by the splitting short exact sequence \eqref{eq: ses FTA}, which restricts to every subgroup $\HH \leqslant \Fta$ as follows:  
 \begin{labeledcd}{5} \label{eq: ses FTA}
1 & \longrightarrow &\ZZ^m & \longrightarrow & \Fta & \xto{\ \prf\ } & \Fn & \longrightarrow & 1 \phantom{\, ,} \\[-4pt] && \rotatebox[origin=c]{90}{$\leqslant$} && \rotatebox[origin=c]{90}{$\leqslant$} && \rotatebox[origin=c]{90}{$\leqslant$} \nonumber\\[-7pt]
1 & \longrightarrow & \HH \cap \ZZ^m  & \longrightarrow & \HH & \xto{\prf_{|\HH}} & \HH \prf & \longrightarrow & 1 \, .                         \label{eq: sses FTA}
 \end{labeledcd}
Since $H\prf$ is a free group, the restricted short exact sequence \eqref{eq: sses FTA} also splits and, hence, for any given section $\sect$ of $\prf_{|\HH}$, we have 
 \begin{equation} \label{eq: FTA subgroup decomposition}
\HH \,=\, (\HH \cap \ZZ^m) \times \HH\prf \sect \,.
 \end{equation}
In particular, the subgroups of $\Fta$ are again free times free-abelian. The subgroup $\HH \cap \ZZ^m$ is called the \defin{basepoint subgroup} of $\HH$, and is usually denoted by $L_{\HH} =\HH \cap \ZZ^m \leqslant \ZZ^m$ (see \Cref{ssec: enriched automata} for the motivation for this name). Taking a basis for each of the factors in \eqref{eq: FTA subgroup decomposition} we reach our notion of basis of a free times free-abelian group.

\begin{defn} \label{def: basis FTA}
A \defin{basis} of a subgroup $\HH \leqslant \Fta$ is a tuple of the form $(\abasis ; \basis \sigma)$, where $\abasis$ is a free-abelian basis of $L_\HH$, $\basis$ is a free basis of $\HH \prf$, and $\sect$ is a section of $\prf_{|\HH}$.   
(Note that we use the words \emph{free basis}, \emph{free-abelian basis} and just \emph{basis}, depending on whether we refer to the free, free-abelian, or free times free-abelian case, respectively.) 
\end{defn}

Finally we introduce (abelian) completions, a key concept in our analysis.

\begin{defn} \label{def: completion}
Let $\HH$ be a subgroup of $\Fta$, let $\sect$ be a section of $\prf_{|\HH}$, and let $w\in \Free_n$. Then, the \defin{$\sect$-completion} of $w$ in $\HH$ is undefined if $w\not\in \HH \prf$, and $\cab{w}{\HH,\sect}=w\sect \pra \in \ZZ^m$ otherwise; and the \defin {(full) completion} of $w$ in $\HH$ is $\Cab{w}{\HH}=(w\prf_\HH\preim) \pra =\Set{ \vect{a} \in \ZZ^m \st \fta{w}{a} \in \HH}$. This is nothing else than a vector (or the full set of vectors) which \emph{complete} $w$ into an element of $\HH$.
\end{defn}

It is straightforward to see that abelian completions in $\HH$ are either empty or cosets of the basepoint subgroup $L_{\HH}$ of $\HH$.

\begin{lem} \label{lem: completions are cosets}
Let $\HH$ be a subgroup of $\Fta$, and let $w\in \Free$. Then, the completion of $w$ in $\HH$ is
 \begin{equation}
\Cab{w}{\HH} \,=\, \bigg\{\! \begin{array}{ll} \varnothing & \text{if } w\notin \HH \prf, \\ \cab{w}{\HH,\sect} + L_{\HH} & \text{if } w \in \HH \prf \,, \end{array}
 \end{equation}
where $\sect$ is any section of $\prf_{|\HH}$. \qed
\end{lem}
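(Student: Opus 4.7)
The plan is a direct case analysis on whether $w \in \HH\prf$, obtained by unwinding the definitions. First I would dispose of the trivial case $w \notin \HH\prf$: since every element of $\HH$ projects into $\HH\prf$ under $\prf$, no vector $\vect{a} \in \ZZ^m$ can satisfy $\fta{w}{a} \in \HH$, so $\Cab{w}{\HH} = \varnothing$ straight from the definition of the (full) completion.

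For the main case, I would assume $w \in \HH\prf$ and fix an arbitrary section $\sect$ of $\prf_{|\HH}$. By the defining property of a section, $w\sect \in \HH$ projects to $w$, so $w\sect = \fta{w}{c}$ with $\vect{c} = w\sect\pra = \cab{w}{\HH,\sect}$, exhibiting one explicit element of the fiber. For the inclusion $\cab{w}{\HH,\sect} + L_\HH \subseteq \Cab{w}{\HH}$, any $\vect{\ell} \in L_\HH = \HH \cap \ZZ^m$ yields $\fta{w}{c}\cdot \fta{\trivial}{\ell} = \fta{w}{c+\ell} \in \HH$, since both factors lie in $\HH$. For the reverse inclusion, given any $\vect{a} \in \Cab{w}{\HH}$, the product $\fta{w}{a}\cdot(\fta{w}{c})^{-1} = \fta{\trivial}{a-c}$ lies in $\HH$ and, being pure abelian, actually in $\HH \cap \ZZ^m = L_\HH$ (using that the $t_i$'s commute with the free part), whence $\vect{a} \in \vect{c} + L_\HH$.

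Since the proof amounts to bookkeeping with the short exact sequence \eqref{eq: sses FTA}, I do not anticipate a genuine obstacle. The subtle point worth noting is the implicit independence of the answer on the chosen section: two sections $\sect_1, \sect_2$ of $\prf_{|\HH}$ must satisfy $(w\sect_1)(w\sect_2)^{-1} \in \HH \cap \ZZ^m = L_\HH$, so the two completions $\cab{w}{\HH,\sect_1}$ and $\cab{w}{\HH,\sect_2}$ differ by an element of $L_\HH$ and determine the same coset. This justifies the phrasing ``any section $\sect$'' in the statement.
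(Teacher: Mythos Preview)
Your proof is correct and is exactly the straightforward verification the paper has in mind: the lemma appears in the paper with a \qed\ immediately after the statement and no written proof, preceded only by the remark that ``it is straightforward to see that abelian completions in $\HH$ are either empty or cosets of the basepoint subgroup $L_{\HH}$.'' Your case split on $w \in \HH\prf$ and the two-inclusion argument via multiplying by elements of $L_\HH$ is the intended routine check.
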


\begin{rem}
When restricted to $\HH \prf$, the completion maps $\vect{c}_{\HH,\sect}\colon \HH \prf \to \ZZ^m$ and $C_{\HH}\colon \HH \prf \to \ZZ^m / L$ are group homomorphisms. In particular, completions work modulo conjugation by $\HH \prf$, \ie for every $w\in \Fn$, every $v\in \HH \prf$, and every section $\sect$ of $\prf_{|\HH}$, $\cab{v^{-1} wv}{\HH,\sect}=\cab{w}{\HH,\sect}$, and $\Cab{v^{-1} wv}{\HH}=\Cab{w}{\HH}$.
\end{rem}

From \eqref{eq: FTA subgroup decomposition} it is clear that a subgroup $\HH \leqslant \Fta$ is finitely generated if and only if $\HH\prf$ is finitely generated, and, in this case, bases are finite and have the form:
 \begin{equation}
 \Basis \,=\,
\big\{ \t[b_1],\ldots,\t[b_r] ; \fta{u_1}{c_1},\ldots,\fta{u_s}{c_s} \big\}\,,
 \end{equation}
where $\abasis =\set{\vect{b_1},\ldots,\vect{b_r}}$ is a free-abelian basis of $L_{\HH}$, $\Basis \prf =\set{u_1, \ldots,u_s}$ is a free basis of $\HH\prf$, and $\vect{c_1},\ldots ,\vect{c_s}\in \ZZ^m$ (the $\Basis$-completions in $\HH$ of the elements in $\Basis \prf$). Then, we denote by $\matr{B}$ the $r\times m$ integral matrix having the row vector $\vect{b_i} \in \ZZ^m$ as $i$-th row, $i=1,\ldots ,r$; and we denote by $\matr{C}$ (or $\matr{C}_{\sect}, \matr{C}_{\scriptscriptstyle{\Basis}}$ if we want to emphasize the associate section or basis) the $s\times m$ integral matrix having the row vector $\vect{c_j} \in \ZZ^m$ as $j$-th row, $j=1,\ldots ,s$. The matrix $\matr{B}$ is called the \defin{basepoint matrix} of $\HH$ \wrt $\Basis$, and the matrix $\matr{C}_{\scriptscriptstyle{\Basis}}$ is called the \defin{completion matrix} of $\HH$ \wrt $\Basis$.

It is not difficult to see that bases for finitely generated subgroups of $\Fta = \FTA$ are computable from a given finite set of generators (see~\cite{delgado_stallings_2021} for an algorithmic-friendly geometric description of the subgroups of $\FTA$ in the spirit of Stallings, briefly surveyed in \Cref{ssec: enriched automata}). Consequently, we shall usually assume that the finitely generated subgroups of $\Fta$ appearing in the discussion are given by some basis.

\begin{lem} \label{lem: completion computation}
Let $\Basis =\set{\t[b_1],\ldots,\t[b_r];\fta{u_1}{c_1},\ldots,\fta{u_s}{c_s}}$ be a basis of a (finitely generated) subgroup $\HH$ of $\Fta$, and  let $w \in \HH \prf$ be a word in the original generators of $\Fn$. Then, $\cab{w}{\HH,\Basis} = (w) \phi_{\scriptscriptstyle{\Basis}} \rho_{s} \matr{C}_{\scriptscriptstyle{\Basis}} $ and hence
 \begin{equation}\label{eq: fg completion}
\Cab{w}{\HH} \,=\, (w) \phi_{\scriptscriptstyle{\Basis}} \rho_{s} \, \matr{C}_{\scriptscriptstyle{\Basis}}
+\gen{\matr{B}}, 
 \end{equation}
where $\phi_{\scriptscriptstyle{\Basis}} \colon \HH \prf \to \Free[s]$ is the isomorphism sending every $w \in \HH \prf$ to its representation in base $\Basis \prf$, $\rho_s \colon \Free[s] \to \ZZ^s$ is the abelianization map, $\matr{C}_{\scriptscriptstyle{\Basis}}$ is the completion matrix of $\HH$ \wrt $\Basis$,
and $\gen{\matr{B}}$ denotes the row space of the basepoint matrix of $\HH$ \wrt $\Basis$. \qed
\end{lem}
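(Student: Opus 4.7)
The plan is to unwind the definitions: the $\sect$-completion is the abelian part of $w\sect$, and once we express $w$ in the free basis $\Basis\prf = \set{u_1, \ldots, u_s}$ of $\HH\prf$, the computation of $w\sect\pra$ reduces to a linear combination of the completion vectors $\vect{c_j}$, which is precisely the matrix product stated.

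First, I would recall that the isomorphism $\phi_{\scriptscriptstyle{\Basis}} \colon \HH\prf \to \Free[s]$ writes $w \in \HH\prf$ as a reduced word $y_{i_1}^{\epsilon_1} \cdots y_{i_k}^{\epsilon_k}$ on a free basis $\set{y_1,\ldots,y_s}$ of $\Free[s]$, meaning that $w =_{\Free_n} u_{i_1}^{\epsilon_1} \cdots u_{i_k}^{\epsilon_k}$. Applying the section $\sect$ (which sends $u_i \mapsto \fta{u_i}{c_i}$) gives
\begin{equation*}
w\sect \,=\, (\fta{u_{i_1}}{c_{i_1}})^{\epsilon_1} \cdots (\fta{u_{i_k}}{c_{i_k}})^{\epsilon_k}.
\end{equation*}
Since the $t$-generators commute with everything in $\Fta$ (see \eqref{eq:pres F_n x Z^m}), we may pull all abelian factors to the right, obtaining $w\sect = (u_{i_1}^{\epsilon_1}\cdots u_{i_k}^{\epsilon_k}) \, \t^{\sum_j \epsilon_j \vect{c}_{i_j}} = w\, \t^{\sum_j \epsilon_j \vect{c}_{i_j}}$; therefore $\cab{w}{\HH,\Basis} = w\sect\pra = \sum_{j=1}^{k} \epsilon_j \vect{c}_{i_j}$.

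The next step is to rewrite this sum as the claimed matrix product. Grouping the exponents by index, $\sum_{j} \epsilon_j \vect{c}_{i_j} = \sum_{i=1}^{s} e_i \vect{c_i}$, where $e_i$ denotes the total exponent of $y_i$ in $(w)\phi_{\scriptscriptstyle{\Basis}}$; this is exactly the abelianization $(w)\phi_{\scriptscriptstyle{\Basis}}\rho_s = (e_1,\ldots,e_s) \in \ZZ^s$. Hence $\sum_i e_i \vect{c_i} = (w)\phi_{\scriptscriptstyle{\Basis}}\rho_s \matr{C}_{\scriptscriptstyle{\Basis}}$, proving the first assertion.

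For the second assertion I would invoke \Cref{lem: completions are cosets}: since $w \in \HH\prf$, we have $\Cab{w}{\HH} = \cab{w}{\HH,\Basis} + L_{\HH}$. Finally, since $\abasis = \set{\vect{b_1},\ldots,\vect{b_r}}$ is a free-abelian basis of $L_{\HH}$, the lattice $L_{\HH}$ coincides with the row space $\gen{\matr{B}}$ of the basepoint matrix, yielding \eqref{eq: fg completion}. There is no real obstacle here; the only point that deserves care is that the computation $(\fta{u_i}{c_i})^{\epsilon_j}$ reorganizes cleanly thanks to the centrality of the $t_\ell$'s, so that the (not necessarily reduced) expression in the original generators of $\Fn$ still produces a well-defined vector after abelianization, confirming that the map $w \mapsto \cab{w}{\HH,\Basis}$ is a well-defined homomorphism $\HH\prf \to \ZZ^m$.
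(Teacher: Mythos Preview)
Your argument is correct. Note that the paper states this lemma with a \qed\ at the end and gives no proof; your unwinding of the definitions (expressing $w$ in the free basis $\Basis\prf$, applying the section, using centrality of the $t_\ell$'s to collect abelian parts, and then invoking \Cref{lem: completions are cosets}) is exactly the routine verification the authors are leaving to the reader.
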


\begin{nott}
If the ambient free group (say $\Free[s]$) is clear from the context, and $u\in \Free[s]$, then we usually write $\vect{u}$ (in boldface) the abelianization of $u$, \ie $\vect{u} = u\rho_{s}\in \ZZ^s$. Hence, we can lighten notation in \eqref{eq: fg completion} and just write
 \begin{equation} \label{eq: fg completion2}
\Cab{w}{\HH} \,=\, \vect{u} \, \matr{C} +\gen{\matr{B}} \,,
 \end{equation}
where $u(u_1,\ldots,u_s)=w\phi_{\scriptscriptstyle{\Basis}}$ is the expression of $w\in \HH\prf$ as a word in the free basis $\Basis \prf$ (and $\vect{u}=u\rho_{s}\in \ZZ^s$ is its abelianization).
\end{nott}

\subsection{Enriched automata} \label{ssec: enriched automata}

In \cite{delgado_stallings_2021} the first two authors extended the classical Stallings theory to direct products of free and abelian groups after introducing \defin{enriched automata}; see~\Cref{ssec: automata}. As it happens with their free counterparts, enriched automata constitute a convenient geometric representation of subgroups (of $\FTA$ in this case) and provide a neat understanding of some of their properties. These properties include the behavior of intersections of finitely generated subgroups (which can be not finitely generated) and index among others, but here we shall only focus on their first natural application, namely subgroup recognition.
We summarize the key points below; see \cite{delgado_stallings_2021} for a complete description.

\begin{defn} \label{def: enriched automaton}
A \defin{$\ZZ^m$\!-enriched $X$-automaton} (\defin{enriched automaton}, for short) is an involutive $\ZZ^m\!\times X \times \ZZ^m$-automaton with a subgroup of $\ZZ^m$ attached to the basepoint.
\end{defn}

Ignoring all the abelian information (\ie the basepoint subgroup and all the abelian labels) from a $\ZZ^m$\!-enriched $X$-automaton $\Eti$, we get a standard $X$-automaton, denoted $\sk (\Eti)$ and called the \defin{skeleton} of $\Eti$ (we usually use tildes to distinguish between enriched automata, $\Eti, \Etii, \ldots$, and standard automata $\Ati,\Atii,\ldots$).

In the same vein as for standard $X$-automata, we usually represent enriched automata by a (positive) $X$-automata \emph{with extra $\ZZ^m$-labels at the beginning and at the end of each arc}. And we use the convention that an enriched arc $\edgi \colon \verti \xarc{\hspace{-3pt} \vect{a} \hspace{7pt} x \hspace{7pt} \vect{b}} \vertii$ reads $\llab(\edgi) = \t[-a] x\t[b] =x\t[b-a] \in \Fta$ when crossed forward (from $\verti$ to $\vertii$) and $\llab(\edgi^{-1}) =\t[-b] x^{-1} \t[a] =x^{-1} \t[a-b]=(x\t[b-a])^{-1}\in \Fta$ when crossed backwards (from $\vertii$ to $\verti$).

Accordingly, the \emph{enriched label} of a non-trivial walk $\walki =\edgi_1^{\epsilon_1}\cdots \edgi_k^{\epsilon_k}$ in an enriched automaton~$\Eti$ is $\llab(\walki)=\llab(\edgi_1)^{\epsilon_1}\cdots \llab(\edgi_k)^{\epsilon_k}$; note that the label of $\walki$ as a walk in the skeleton is precisely the free part of the enriched label.

An element in $\FTA$ is said to be \defin{recognized by} the enriched automaton $\Eti$ if it is of the form $w \t^{\vect{a} + \vect{l}}$, where $\fta{w}{a}$ is the label of some nontrivial $\bp$-walk in $\Eti$, and $\vect{l} \in L_{\HH}$ (one may think of $L_{\HH}$ as labelling infinitesimal $\bp$-walks, formally invisible in $\Eti$ because their free labelling is trivial). It is straightforward to see that the set of all the elements recognized by an enriched automaton $\Eti$ is a subgroup of $\FTA$: it is called the \defin{subgroup recognized by $\Eti$}, and denoted by $\gen{\Eti}$.

It is also easy to see that every subgroup in $\FTA$ is recognized by some enriched automata. However, this representation of subgroups by automata is
(as it happens in the free setting) far from unique. In~\cite{delgado_stallings_2021}, the first two authors develop an enriched version of Stallings automata that makes this representation bijective, and also constructive when restricted to the finitely generated case; see~\cite[Theorem 2.7]{delgado_stallings_2021}. Moreover, enriched Stallings automata not only establishes a one-to-one connection with subgroups of $\Fta$, but also encode their structure in a very transparent way. 

\begin{prop}\label{prop: enriched Stallings parts}
Let $\Eti$ be an enriched Stallings automaton recognizing $\HH \leqslant \Fta$. Then, the skeleton of $\Eti$ recognizes $\HH \prf$, whereas the basepoint subgroup of $\Eti$ is $L_{\HH} = \HH \cap \ZZ^m$. \qed
\end{prop}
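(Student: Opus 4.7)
The plan is to verify the two assertions separately, using only the definition of recognition by an enriched automaton: an element of $\Fta$ is recognized by $\Eti$ precisely when it can be written as $w\t^{\vect{a}+\vect{l}}$, where $w\t^{\vect{a}}$ is the enriched label of some $\bp$-walk in $\Eti$ and $\vect{l}$ lies in the basepoint subgroup $L_{\Eti}$ attached to $\bp$.

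For the skeleton part, I would prove both inclusions directly. Given $w \in \gen{\sk(\Eti)}$, there is a (not necessarily reduced) $\bp$-walk $\walki$ in $\sk(\Eti)$ whose label reduces to $w$; lifting $\walki$ to $\Eti$ yields an enriched $\bp$-walk whose enriched label $w\t^{\vect{a}}$ belongs to $\gen{\Eti}=\HH$, so applying $\prf$ gives $w \in \HH\prf$. Conversely, if $w\in \HH \prf$, then $w\t^{\vect{a}}\in \HH=\gen{\Eti}$ for some $\vect{a}\in \ZZ^m$; by the definition of recognition, $w\t^{\vect{a}}=v\t^{\vect{b}+\vect{l}}$ with $v\t^{\vect{b}}$ the enriched label of a $\bp$-walk in $\Eti$ and $\vect{l}\in L_{\Eti}$. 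Taking free parts we get $w=v$, so the underlying walk in $\sk(\Eti)$ witnesses $w \in \gen{\sk(\Eti)}$.

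For the basepoint subgroup part, the inclusion $L_{\Eti}\subseteq \HH\cap \ZZ^m$ is immediate: every $\vect{l}\in L_{\Eti}$ trivially satisfies $\t^{\vect{l}}\in \gen{\Eti}=\HH$ (as the ``infinitesimal'' trivial-free-label $\bp$-walk contribution) and lies in $\ZZ^m$. The reverse inclusion is the more delicate direction: take $\t^{\vect{a}}\in \HH\cap \ZZ^m$; by recognition, $\t^{\vect{a}}=v\t^{\vect{b}+\vect{l}}$ with $v\t^{\vect{b}}=\llab(\walki)$ for some $\bp$-walk $\walki$ in $\Eti$ and $\vect{l}\in L_{\Eti}$, which forces the free part of $\walki$ to reduce to the trivial element in $\Fn$. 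This is exactly where the \emph{reduced/Stallings} nature of $\Eti$ must be invoked: the enriched Stallings conditions from~\cite{delgado_stallings_2021} are precisely designed so that every $\bp$-walk whose free part freely reduces to $1$ has its abelian part already in $L_{\Eti}$ (any freely trivial detour can only contribute a vector that is absorbed by the basepoint subgroup after the reduction moves). Accepting this fact gives $\vect{b}\in L_{\Eti}$, hence $\vect{a}\in L_{\Eti}$, completing the argument.

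The main obstacle is exactly the step above: proving that in an enriched Stallings automaton the only abelian labels produced by freely trivial $\bp$-walks are those already recorded in $L_{\Eti}$. This is essentially a property of the normalization (folding plus abelian reduction) that defines enriched Stallings automata, and it is the content that distinguishes \emph{Stallings} enriched automata from arbitrary enriched automata; once invoked, everything else is bookkeeping on the definitions of $\prf$ and of recognition.
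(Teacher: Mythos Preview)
Your approach is sound, and in fact goes further than the paper itself: the paper states this proposition without proof (note the bare \qed), treating it as a direct consequence of the enriched Stallings theory developed in~\cite{delgado_stallings_2021}.

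Your argument for the skeleton is complete. For the basepoint subgroup, you correctly locate the only nontrivial point, and it can be sharpened as follows. The relevant feature of an enriched \emph{Stallings} automaton here is simply that its skeleton is deterministic (as an involutive automaton). In a deterministic involutive automaton, any $\bp$-walk $\walki$ whose free label freely reduces to the trivial word must itself reduce, via successive removal of backtracks $e\,e^{-1}$, to the trivial walk at $\bp$: whenever $a_{i+1}=a_i^{-1}$ in the label sequence, determinism forces the $(i{+}1)$-th arc to equal the inverse of the $i$-th. Since $\llab(e)\,\llab(e^{-1})=1$ in $\Fta$, each backtrack removal leaves the enriched label unchanged, so $\llab(\walki)=1$. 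Hence the only elements of $\gen{\Eti}$ with trivial free part are those in $\t^{L_{\Eti}}$, giving $\HH\cap\ZZ^m\subseteq L_{\Eti}$. So the step you flagged as requiring the ``Stallings'' hypothesis is genuine, but it follows already from determinism of the skeleton rather than from any further abelian normalization.
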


\begin{cor} \label{cor: h = label + L}
If $\Eti$ is an enriched Stallings automaton recgnizing $\HH \leqslant \Fta$ and $\fta{w}{a} \in \HH$, then there exists a $\bp$-walk $\walki$ in $\Eti$ such that $\fta{w}{a} \in \llab(\walki) \, \t^{L_{\HH}}$. \qed
\end{cor}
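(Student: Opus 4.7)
The proof is essentially an unwinding of the definition of the subgroup recognized by an enriched automaton, combined with Proposition~\ref{prop: enriched Stallings parts}. The plan is to take any element $\fta{w}{a} \in \HH = \gen{\Eti}$ and invoke the definition directly to produce the required walk.

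First, I would fix $\fta{w}{a} \in \HH$. By definition of $\gen{\Eti}$, there exist a $\bp$-walk $\walki$ in $\Eti$ with label $\llab(\walki) = \fta{w'}{a'} \in \Fta$ and a vector $\vect{l} \in L_{\HH}$ such that
\[
\fta{w}{a} \,=\, w' \, \t^{\vect{a}' + \vect{l}} \quad \text{in } \Fta .
\]
Since the short exact sequence~\eqref{eq: ses FTA} splits, comparing free and abelian parts yields $w = w'$ and $\vect{a} = \vect{a}' + \vect{l}$; equivalently $\vect{a} - \vect{a}' = \vect{l} \in L_{\HH}$. Rewriting the product in normal form then gives
\[
\fta{w}{a} \,=\, \bigl(w \, \t^{\vect{a}'}\bigr) \, \t^{\vect{l}} \,=\, \llab(\walki) \cdot \t^{\vect{l}} \,\in\, \llab(\walki) \cdot \t^{L_{\HH}},
\]
which is precisely the statement.

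The only minor point to handle is the case when no walk of positive length works directly (\eg when $w =_{\Fn} 1$ and $\vect{a} \in L_{\HH}$, so that $\fta{w}{a}$ lies entirely in the basepoint subgroup). Here one takes $\walki$ to be the trivial $\bp$-walk with $\llab(\walki) = 1 \cdot \t^{\vect{0}}$, and then $\fta{w}{a} = \t^{\vect{a}} \in \t^{L_{\HH}}$, as required. Since this argument is a one-line deduction from the definition of \emph{recognition} together with the identification $L_{\HH} = \HH \cap \ZZ^m$ supplied by Proposition~\ref{prop: enriched Stallings parts}, there is no substantive obstacle; the only care needed is to keep track of which equalities hold in $\Fta$ versus in the free or free-abelian factor alone.
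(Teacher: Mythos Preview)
Your proposal is correct and matches the paper's intent: the corollary carries a \qed with no written proof, signalling that it is immediate from the definition of recognition together with \Cref{prop: enriched Stallings parts}, which is exactly what you unwind. One cosmetic remark: the uniqueness of the decomposition $w=w'$, $\vect{a}=\vect{a}'+\vect{l}$ follows simply from the normal form in $\Fta$, not from the splitting of~\eqref{eq: ses FTA} per se; and your trivial-walk case is fine since the corollary does not require $\walki$ to be nontrivial.
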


\subsection{Order and spectrum within free times free-abelian groups}

We aim to take advantage of our results for free groups (see~\Cref{sec: free})  together with our geometric understanding of subgroups of free times free-abelian groups (see~\Cref{ssec: enriched automata}) to study order and spectra in the latter family. In this section we obtain computable expressions for the sets $\sqrt[k]{\HH}$ (of $k$-roots) and $\smash{\ofo{\Fta}{k}{\HH}}$ (of elements of order $k$) of a finitely generated subgroup $\HH \leqslant \Fta=\FTA$, and we  derive the finiteness of the corresponding spectrum. As it happens with some other problems, the solutions for order and spectrum problems within $\FTA$ are quite more intricate than the mere juxtaposition of the  behavior of these problems in the factors.  

We start with a standard argument solving these problems in the free-abelian case. Recall that, given a finite subset of $\ZZ^m$, we can use linear algebra to compute a free-abelian basis, say $\set{\vect{b_1},\ldots,\vect{b_r}}$ (of course, with $r\leq m$), of the subgroup $L$ it generates, and solve membership on it. Then, if we call $\matr{B}$ the $r\times m$ integral matrix whose rows are the linearly independent vectors $\vect{b_1},\ldots,\vect{b_r} \in \ZZ^m$, we can compute invertible matrices $\matr{P} \in \GL_{r}(\ZZ)$ and $\matr{Q}\in \GL_{m}(\ZZ)$, and (unique) nonzero positive integers $d_1 \divides \cdots \divides d_r$ such that $\matr{P} \matr{B} \matr{Q}=\diag(d_1,\ldots,d_r)$. The $r\times m$ matrix $\matr{S} =\diag(d_1,\ldots,d_r)$ is called the \defin{Smith Normal Form} (SNF) of $\matr{B}$, and the integers $d_1,\ldots,d_r$ are called the \defin{elementary divisors} of $L = \gen{\matr{B}}$. Note that 
 $$
L=\gen{\matr{B}}=\gen{\matr{P}\matr{B}}=\gen{\matr{S}\matr{Q}^{-1}}=\gen{d_1\vect{q}_1,\ldots ,d_r\vect{q}_r} \leq_{fi} \gen{\vect{q}_1,\ldots ,\vect{q}_r}\leq_{\oplus} \gen{\matr{Q}^{-1}}=\ZZ^m,
 $$
where $\vect{q}_1,\ldots ,\vect{q}_m$ are the rows of $\matr{Q}^{-1}$. Hence, $\widetilde{L}= \gen{\vect{q}_1,\ldots ,\vect{q}_r}$ is the smallest direct summand of $\ZZ^m$ containing $L$, and 
 \begin{equation}\label{eq: finite group}
\frac{\ZZ^m}{L} \ \isom \ \frac{\,\widetilde{L}\,}{L} \oplus \ZZ^{m-r} \, \isom \, \frac{\ZZ}{d_1 \ZZ} \oplus \cdots \oplus \frac{\ZZ}{d_r \ZZ} \oplus \ZZ^{m-r}.    
 \end{equation}
From this expression it is easy to derive an explicit description of the $\ZZ^m$-elements of a given order in $L$.

\begin{prop}\label{prop: preorder FTA}
Let $L$ be a subgroup of $\ZZ^m$ of rank $r\leq m$, and let $d_1 \divides d_2 \divides \cdots \divides d_r$ be its elementary divisors. Then, $\ofo{(\ZZ^m)}{0}{L} \,=\, \ZZ^m \setmin\widetilde{L}$ and, for every $k\geq 1$, 
 \begin{align}
\ofo{(\ZZ^m)}{k}{L} & \,=\, \bigg\{\! 
\begin{array}{ll}
\varnothing & \text{if } k \ndivides d_{r} \\
\textstyle{ \bigsqcup \ofo{(\ZZ^m/L)}{k}{} \neq \varnothing} & \text{if }k \divides d_{r} \end{array}
 \end{align}
where $\ofo{(\ZZ^m/L)}{k}{}$ is the nonempty and computable set of elements (namely, cosets modulo $L$) of order $k$ in the finite abelian group $\frac{\widetilde{L}}{L}\simeq \frac{\ZZ}{d_1 \ZZ} \oplus \cdots \oplus \frac{\ZZ}{d_r \ZZ}$.
In particular, $\SMP(\ZZ^m)$ is computable.
\end{prop}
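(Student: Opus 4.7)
The plan is to reduce everything to the standard-order analysis of the finite abelian quotient $\widetilde{L}/L$, which is made completely transparent by the Smith Normal Form decomposition recalled in \eqref{eq: finite group}.

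First I would invoke \Cref{rem: order wrt normal}: since $L$ is normal in $\ZZ^m$, the order of any $\vect{v}\in\ZZ^m$ with respect to $L$ coincides with the standard order of the coset $\vect{v}+L$ in the quotient group $\ZZ^m/L$, and
\[
\ofo{(\ZZ^m)}{k}{L} \,=\, \bigsqcup \ofo{(\ZZ^m/L)}{k}{}
\]
for every $k\in\NN$. Combining this with the explicit decomposition
\[
\frac{\ZZ^m}{L} \,\isom\, \frac{\,\widetilde{L}\,}{L}\,\oplus\, \ZZ^{m-r} \,\isom\, \frac{\ZZ}{d_1\ZZ}\,\oplus\,\cdots\,\oplus\,\frac{\ZZ}{d_r\ZZ}\,\oplus\,\ZZ^{m-r},
\]
the whole discussion splits according to whether $\vect{v}+L$ lies in the torsion part $\widetilde{L}/L$ or not.

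For the order-zero case, $\Ord_L(\vect{v})=0$ means that $\vect{v}+L$ has infinite standard order in $\ZZ^m/L$, which happens precisely when $\vect{v}+L$ has a non-trivial component in the free summand $\ZZ^{m-r}$, that is, when $\vect{v}\notin\widetilde{L}$; this gives $\ofo{(\ZZ^m)}{0}{L}=\ZZ^m\setmin\widetilde{L}$. For positive orders, $\Ord_L(\vect{v})=k\geq 1$ forces $\vect{v}+L$ to be a torsion element, hence to lie in $\widetilde{L}/L$, and then $k$ is simply its standard order in this finite group. Since $d_1\divides\cdots\divides d_r$, the exponent of $\widetilde{L}/L$ equals $d_r$, so the set of orders realized in $\widetilde{L}/L$ is exactly the set of positive divisors of $d_r$ (each such $k$ is attained, \eg by $(d_r/k)$ times a generator of the $\ZZ/d_r\ZZ$ summand). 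This proves the claimed dichotomy, and the disjoint-union formula is just the fibre-over-the-quotient expression already stated.

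Finally, for the computability of $\SMP(\ZZ^m)$: given a finite generating set for $L$, one algorithmically assembles a basepoint matrix $\matr{B}$ and computes its Smith Normal Form (a classical procedure), obtaining $r$ and $d_1,\ldots,d_r$. The test is then immediate: for $k=0$ check whether $r<m$, and for $k\geq 1$ check whether $k\divides d_r$. The argument is essentially elementary; the only point that deserves care is the identification of the exponent of $\widetilde{L}/L$ with $d_r$, which is where the divisibility chain of the elementary divisors is actually used.
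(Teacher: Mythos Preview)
Your proof is correct and follows essentially the same approach as the paper: both use the Smith Normal Form decomposition \eqref{eq: finite group} to identify $\ofo{(\ZZ^m)}{0}{L}=\ZZ^m\setmin\widetilde{L}$ via the free summand, and then argue that the positive orders realized are exactly the divisors of $d_r$ (you phrase this via the exponent of $\widetilde{L}/L$, while the paper works explicitly with the adapted basis $\vect{q}_1,\ldots,\vect{q}_m$, exhibiting $\vect{q}_r$ as a witness for $d_r$ and showing $d_r\vect{q}\in L$ for any torsion $\vect{q}$). The only cosmetic difference is that you invoke \Cref{rem: order wrt normal} to pass to the quotient up front, whereas the paper keeps the computation in $\ZZ^m$ throughout.
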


\begin{proof}
Let $\set{\vect{b_1},\ldots,\vect{b_r}}$ be a free-abelian basis for $L$, and let $\matr{B}$, $\matr{P}$, $\matr{Q}$, and $\matr{S}$ be as above. By~\eqref{eq: finite group}, an element $\vect{a} \in \ZZ^m$ has order zero in $L$ if and only if $\vect{a} \notin \widetilde{L}$, \ie $\ofo{(\ZZ^m)}{0}{L} \,=\, \ZZ^m \setmin\widetilde{L}$. 

Moreover, we claim that $\Iset{L}{\ZZ^m}\setmin \{0\}=\Iset{L}{\widetilde{L}}=\Divs{d_r}$. Since $\Ord_{L}(\vect{q_r})=d_r$, the inclusions $\Iset{L}{\ZZ^m}\setmin \{0\}\supseteq \Iset{L}{\widetilde{L}} \supseteq \Divs{d_r}$ are clear. Conversely, given $\vect{q}\in \ZZ^m$, write $\vect{q}=\sum_{i=1}^{m} \lambda_i \vect{q_i}$ and note that $\Ord_{L}(\vect{q})\neq 0$ if and only if $\lambda_{r+1}=\cdots =\lambda_m=0$; furthermore, in this case, $d_r \vect{q}=d_r \sum_{i=1}^{r} \lambda_i \vect{q_i}=\sum_{i=1}^{r} \big(\lambda_i \frac{d_r}{d_i}\big) d_i \vect{q_i} \in L$ and, hence, $\Ord_{L}(\vect{q}) \divides d_r$. The result follows. 
\end{proof}

\begin{cor} \label{cor: purity free-abelian}
Let $L$ be a subgroup of $\ZZ^m$ of rank $r \leq m$, and let $d_r$ be the largest elementary divisor of $L$. Then, the spectrum of $\ZZ^m$ \wrt $L$ is
 \begin{equation*}
\Iset{L}{\ZZ^m}=\left\{ \hspace{-3pt}    \begin{array}{ll}
\Divs{d_r} & \text{if } r=m \quad\text{(\ie if $\Ind{L}{\ZZ^m} < \infty$)} \\ \Divs{d_r} \cup \set{0} & \text{if } r<m \quad\text{(\ie if $\Ind{L}{\ZZ^m} = \infty$)\,,}
\end{array} \right.
 \end{equation*}
and hence computable. 
In particular, $\FSP(\ZZ^m)$ is computable.\qed
\end{cor}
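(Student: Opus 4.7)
The plan is to derive this corollary as a direct consequence of the preceding \Cref{prop: preorder FTA}, just repackaging the conclusions about individual preorders $\ofo{(\ZZ^m)}{k}{L}$ into a statement about the full spectrum, and then observing that everything in sight is effectively computable via the Smith normal form.

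First I would handle the non-zero part of the spectrum. By \Cref{prop: preorder FTA}, for $k \geq 1$ we have $\ofo{(\ZZ^m)}{k}{L} \neq \varnothing$ if and only if $k \divides d_r$; hence $\Iset{L}{\ZZ^m} \cap \NN_{\geq 1} = \Divs{d_r}$, independently of whether $r=m$ or $r<m$. Note that $1 \in \Divs{d_r}$ since $\trivial \in L$ has order $1$.

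Next I would handle the presence or absence of $0$. Again by \Cref{prop: preorder FTA}, $0 \in \Iset{L}{\ZZ^m}$ if and only if $\ZZ^m \setmin \widetilde{L} \neq \varnothing$, \ie if and only if $\widetilde{L} \subsetneq \ZZ^m$. Since $\widetilde{L}$ is by construction the smallest direct summand of $\ZZ^m$ containing $L$, and it has rank $r$, we have $\widetilde{L} = \ZZ^m$ exactly when $r = m$; equivalently (by the standard characterisation of finite-index subgroups of $\ZZ^m$), exactly when $\Ind{L}{\ZZ^m} < \infty$. Putting the two cases together yields the displayed formula for $\Iset{L}{\ZZ^m}$.

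Finally, for computability and $\FSP(\ZZ^m)$, I would remark that from any finite generating set of $L$ we can compute (by row-reducing over $\ZZ$) a free-abelian basis of $L$, its rank $r$, and the Smith normal form $\diag(d_1,\ldots,d_r)$ of the basepoint matrix $\matr{B}$, so both $d_r$ and the comparison between $r$ and $m$ are algorithmic. The set $\Divs{d_r}$ is then obtained by listing the divisors of $d_r$. Hence the spectrum is always finite and uniformly computable, which gives $\FSP(\ZZ^m)$. There is essentially no obstacle here: everything reduces to the preceding proposition plus standard Smith-normal-form computations.
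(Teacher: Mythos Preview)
Your proposal is correct and is exactly the intended derivation: the paper marks this corollary with \qed{} and gives no separate proof, relying on \Cref{prop: preorder FTA} (whose proof already establishes that $\Iset{L}{\ZZ^m}\setmin\{0\}=\Divs{d_r}$ and that $\ofo{(\ZZ^m)}{0}{L}=\ZZ^m\setmin\widetilde{L}$) together with the Smith normal form computation, precisely as you have written.
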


\begin{cor}
Free-abelian groups have subgroup bounded spectra; furthermore, the spectrum membership problem $\SMP(\ZZ^m)$ and the order problem $\OP(\ZZ^m)$ are computable as well. \qed
\end{cor}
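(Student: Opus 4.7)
The three claims are all quick corollaries of Proposition \ref{prop: preorder FTA} and Corollary \ref{cor: purity free-abelian}, which have already done the substantive work. The plan is simply to harvest those results and observe that the relevant numerical data can be produced effectively via the Smith Normal Form.

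For \emph{bounded subgroup spectra}, I would note that Corollary \ref{cor: purity free-abelian} already gives $\Iset{L}{\ZZ^m} \subseteq \Divs{d_r} \cup \set{0}$, so the spectrum is finite and its nonzero part is bounded above by the largest elementary divisor $d_r$ of $L$. The divisors $d_1 \divides \cdots \divides d_r$ are computable from any basis matrix $\matr{B}$ of $L$ by reducing to Smith Normal Form, and such a basis is in turn computable from a finite generating set by standard linear algebra over $\ZZ$; hence the bound is effective. For \emph{spectrum membership}, given $L$ (by generators) and $k \in \NN$, I would compute $d_1,\ldots,d_r$ and the rank $r$ as above, then simply check whether either $k \geq 1$ and $k \divides d_r$, or $k = 0$ and $r < m$; both conditions are decidable by inspection and yield the answer via Corollary \ref{cor: purity free-abelian}.

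For the \emph{order problem} my preferred route is direct: compute $\matr{P} \in \GL_r(\ZZ)$ and $\matr{Q} \in \GL_m(\ZZ)$ with $\matr{P}\matr{B}\matr{Q} = \diag(d_1,\ldots,d_r)$, and express the input vector $\vect{a}$ in the basis $\vect{q}_1,\ldots,\vect{q}_m$ formed by the rows of $\matr{Q}^{-1}$ by reading off its coordinates $(\lambda_1,\ldots,\lambda_m) = \vect{a}\matr{Q}$. If $\lambda_j \neq 0$ for some $j > r$, then $\vect{a} \notin \widetilde{L}$ and $\Ord_L(\vect{a}) = 0$; otherwise, since $L = \gen{d_1 \vect{q}_1,\ldots,d_r \vect{q}_r}$ with the $\vect{q}_i$ linearly independent, the condition $k\vect{a} \in L$ is equivalent to $d_i \divides k\lambda_i$ for all $i = 1,\ldots,r$, and the minimal such positive $k$ is the least common multiple of the ratios $d_i/\gcd(d_i,\lambda_i)$ (with the convention that the term is $1$ when $\lambda_i = 0$). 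If a higher-level argument is preferred, Lemma \ref{lem: OP} also closes the case: $\MP(\ZZ^m)$ is computable by solving a linear Diophantine system, $\OBP(\ZZ^m)$ is provided by the uniform bound $d_r$ established above, and together they yield $\OP(\ZZ^m)$. I do not anticipate any serious obstacle; all the substantive content has been inherited from the preceding preparation, and the only care needed is to turn the set-theoretic descriptions into \emph{explicit} numerical data, which is exactly what the Smith Normal Form delivers.
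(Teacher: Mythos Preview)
Your proposal is correct and matches the paper's intent: the corollary is stated with a bare \qed, so the authors regard it as an immediate consequence of Proposition~\ref{prop: preorder FTA} and Corollary~\ref{cor: purity free-abelian}, which is exactly what you unpack. Your explicit formula $\Ord_L(\vect{a})=\lcm_i\big(d_i/\gcd(d_i,\lambda_i)\big)$ via the Smith Normal Form coordinates, and the alternative route through Lemma~\ref{lem: OP}, are both sound and simply spell out what the paper leaves implicit.
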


We introduce the following notation for later use. 

\begin{nott}
Given a subset $S\subseteq \ZZ^m$, and a positive integer $k\geq 1$, we denote by $\tfrac{1}{k}S$ the set of $k$-roots of $S$ (in $\ZZ^m$), namely $\tfrac{1}{k}S = \frac{1}{k}\big{(} S\cap k\ZZ^m \big{)}\subseteq \ZZ^m$ (this is nothing else than $\sqrt[k]{S}$, but expressed in additive notation). Observe that, in general, $\frac{1}{k}S$ may be empty whereas, if $L\leq \ZZ^m$ is a subgroup, then $\frac{1}{k}L$ is again a subgroup containing $L$; that is, $L\leq \frac{1}{k}L\leq \ZZ^m$. Moreover, if $S=\vect{a}+L$ is a coset of a subgroup $L\leq \ZZ^m$, then $\frac{1}{k}S$ is either empty or a coset of~$\frac{1}{k}(L\cap k\ZZ^m)$. 
\end{nott}

Obviously, the computability of preorders (\Cref{prop: preorder FTA}) immediately produces the computability of roots, which in this abelian context are indeed subgroups of $\ZZ^m$.

\begin{cor}\label{cor: ddd}
For every $L\leq \ZZ^m$ and every $k\geq 1$, one can compute an abelian basis for $\tfrac{1}{k}L$. \qed 
\end{cor}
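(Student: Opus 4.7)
The plan is to reduce everything to a Smith Normal Form computation on a basis matrix of $L$. First, starting from a finite generating set for $L$ (if only such data are given), apply standard linear algebra over $\ZZ$ to obtain a free-abelian basis $\{\vect{b}_1,\ldots,\vect{b}_r\}$ of $L$, and form the $r\times m$ basepoint matrix $\matr{B}$ whose rows are the $\vect{b}_i$. Then compute, as described in the paragraph preceding the statement, invertible matrices $\matr{P}\in \GL_r(\ZZ)$ and $\matr{Q}\in\GL_m(\ZZ)$ together with positive integers $d_1\divides d_2\divides \cdots \divides d_r$ such that $\matr{P}\matr{B}\matr{Q}=\matr{S}=\diag(d_1,\ldots,d_r)$ (with the usual $r\times m$ padding by zero columns when $r<m$). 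Let $\vect{q}_1,\ldots,\vect{q}_m$ denote the rows of $\matr{Q}^{-1}$, so that $\set{\vect{q}_1,\ldots,\vect{q}_m}$ is a free-abelian basis of $\ZZ^m$ and, as already noted in the excerpt, $L=\gen{d_1\vect{q}_1,\ldots,d_r\vect{q}_r}$.

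Now express an arbitrary vector $\vect{v}\in \ZZ^m$ in this privileged basis as $\vect{v}=\sum_{i=1}^{m}\mu_i\vect{q}_i$, with $\mu_i\in\ZZ$. Then $k\vect{v}=\sum_{i=1}^{m}k\mu_i\,\vect{q}_i$ belongs to $L$ if and only if $\mu_i=0$ for $i>r$ and $d_i\divides k\mu_i$ for each $i\leq r$. The latter condition is equivalent to $\tfrac{d_i}{\gcd(d_i,k)}\divides \mu_i$. Consequently,
\begin{equation*}
\tfrac{1}{k}L \,=\, \bigoplus_{i=1}^{r} \tfrac{d_i}{\gcd(d_i,k)}\,\ZZ\,\vect{q}_i \,,
\end{equation*}
and therefore a free-abelian basis of $\tfrac{1}{k}L$ is given by
\begin{equation*}
\Set{\,\tfrac{d_i}{\gcd(d_i,k)}\,\vect{q}_i \st i=1,\ldots,r\,}\subseteq \ZZ^m,
\end{equation*}
which is manifestly computable from $L$ and $k$ since all the ingredients (the $d_i$'s and the rows of $\matr{Q}^{-1}$) have been algorithmically produced in the SNF step.

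There is no serious obstacle here: the argument is routine once the SNF normalization has brought $L$ into the diagonal form relative to an ambient basis of $\ZZ^m$, at which point the condition $k\vect{v}\in L$ decouples coordinate-by-coordinate. The only small point worth flagging is that each coefficient $\tfrac{d_i}{\gcd(d_i,k)}$ is automatically a positive integer, so the vectors listed above do live in $\ZZ^m$ and form a genuine free-abelian basis (they are $\ZZ$-multiples of distinct members of a basis of $\ZZ^m$, hence linearly independent). Note that this also recovers, as a byproduct, the inclusions $L\leq \tfrac{1}{k}L\leq \widetilde{L}$ mentioned in the surrounding discussion, and shows that $\rank\tfrac{1}{k}L=\rank L$.
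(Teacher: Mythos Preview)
Your argument is correct. The SNF normalization reduces the question to a coordinate-by-coordinate divisibility check, and your formula $\tfrac{1}{k}L=\bigoplus_{i=1}^{r}\tfrac{d_i}{\gcd(d_i,k)}\,\ZZ\,\vect{q}_i$ is exactly right; the observation that each $\tfrac{d_i}{\gcd(d_i,k)}$ is a positive integer and that the resulting vectors are independent is the only thing one needs to check, and you do.

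The paper does not actually spell out a proof here: it simply notes that the computability of preorders established in \Cref{prop: preorder FTA} immediately yields the computability of roots via the general relation $\sqrt[k]{L}=\bigsqcup_{d\,\mid\, k}\ofo{(\ZZ^m)}{d}{L}$. Your route is more direct and more informative: rather than assembling $\tfrac{1}{k}L$ from finitely many cosets and then extracting a basis, you read off an explicit basis straight from the SNF data. Both approaches rest on the same Smith Normal Form computation already set up in the paragraph before \Cref{prop: preorder FTA}, so there is no real divergence in tools; yours just short-circuits the detour through preorders and, as a bonus, makes the inclusions $L\leq \tfrac{1}{k}L\leq \widetilde{L}$ and the equality of ranks transparent.
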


\medskip
We start the study within free times free-abelian groups by analyzing when a group $\Fta = \FTA$ has elements of order zero \wrt a finitely generated subgroup $\HH \leqslant\fg \Fta$.

\begin{lem} \label{lem: order 0 FTA}
For every finitely generated subgroup $\HH \leqslant \fg \Fta =\FTA$, $0\in \Iset{\HH}{\Fta}$ if and only if $\ind{\HH}{\Fta}=\infty$. Therefore, $\TGP(\FTA)$ is computable.
\end{lem}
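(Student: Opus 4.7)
The plan is to reduce the question to its two ``factor'' parts---free and free-abelian---via the split extension defining $\Fta$, and then invoke the analogous results already proved for each factor. The direction $(\Leftarrow)$ is immediate from \Cref{cor: spectrum and index}\ref{item: order 0 => infinte index} (finite index in any group $G$ forces $0 \notin \Iset{\HH}{G}$), so all the content is in $(\Rightarrow)$ and in the final algorithmic claim.

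The key structural step I would prove is the multiplicative index decomposition
\begin{equation}
\ind{\HH}{\Fta} \,=\, \ind{\HH\prf}{\Fn} \cdot \ind{L_\HH}{\ZZ^m},
\end{equation}
obtained by interposing $\HH\prf\sect\cdot \ZZ^m$ between $\HH$ and $\Fta$: the upper quotient is $\Fn/\HH\prf$ (since $\ZZ^m$ is already contained in the middle term), and the coset map $(w\sect)\t^{\vect a}\mapsto \vect a + L_\HH$ identifies the lower quotient with $\ZZ^m/L_\HH$, as its kernel is exactly $\HH = L_\HH \cdot \HH\prf\sect$. Thus $\ind{\HH}{\Fta}=\infty$ forces at least one of the two factors on the right to be infinite.

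From here I split into cases. If $\ind{\HH\prf}{\Fn}=\infty$, then $\HH\prf\leqslant\fg \Fn$ (it is generated by the free parts of a basis of $\HH$), so \Cref{prop: order 0 Fn} produces $w\in \Fn$ with $\Ord_{\HH\prf}(w)=0$; then the lift $w\t^{\vect 0}\in \Fta$ has order $0$ in $\HH$, because $(w\t^{\vect 0})^k = w^k\t^{\vect 0}\in \HH$ would force $w^k\in \HH\prf$. Otherwise $\ind{L_\HH}{\ZZ^m}=\infty$, meaning $L_\HH$ has rank strictly less than $m$; by \Cref{prop: preorder FTA} (equivalently, because $\ZZ^m/\widetilde{L_\HH}$ is a nonzero free abelian group) there exists $\vect a\in \ZZ^m\setmin \widetilde{L_\HH}$, so $k\vect a\notin L_\HH$ for every $k\geq 1$; the purely abelian element $\t^{\vect a}$ then has order $0$ in $\HH$, since its powers $\t^{k\vect a}$ have trivial free part and thus lie in $\HH$ iff $k\vect a\in L_\HH$.

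Finally, for $\TGP(\FTA)$, given a basis $\Basis=\{\t^{\vect b_1},\ldots,\t^{\vect b_r};u_1\t^{\vect c_1},\ldots,u_s\t^{\vect c_s}\}$ of $\HH$, I would compute $\HH\prf=\langle u_1,\ldots,u_s\rangle$ and check saturation of its Stallings automaton to decide $\ind{\HH\prf}{\Fn}<\infty$, and I would compute the Smith normal form of the basepoint matrix $\matr{B}$ to decide $\ind{L_\HH}{\ZZ^m}<\infty$; the equivalence just established then settles whether $0\in \Iset{\HH}{\Fta}$. I expect no genuine obstacle in this proof: the two factor results do all the heavy lifting, and the only care needed is in verifying that each witness really has order~$0$, which is transparent because the construction constrains either the free or the abelian part of every positive power.
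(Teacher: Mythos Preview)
Your proof is correct and follows essentially the same approach as the paper: reduce to the two factors via the index decomposition, invoke \Cref{prop: order 0 Fn} and \Cref{prop: preorder FTA} to produce a witness of order zero in the appropriate case, and then settle $\TGP$ via the finite-index test. Two small remarks: first, you have swapped the direction labels---\Cref{cor: spectrum and index}\ref{item: order 0 => infinte index} gives $(\Rightarrow)$ (equivalently, finite index forces $0\notin\Iset{\HH}{\Fta}$), and the substantive direction you then prove is $(\Leftarrow)$; second, where the paper simply cites \cite[Propositions~4.1 and~4.2]{delgado_stallings_2021} for the index decomposition and the decidability of finite index in $\Fta$, you instead derive both directly (interposing $\HH\prf\sect\cdot\ZZ^m$ for the former, and checking saturation of $\stallings{\HH\prf}$ together with the Smith normal form of $\matr{B}$ for the latter), which makes your argument slightly more self-contained.
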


\begin{proof}
The implication to the right is true in general. 
For the converse, note first that it is already known to be true for subgroups of both $\Free_n$ and $\ZZ^m$ (see, respectively, \Cref{prop: order 0 Fn} and~\Cref{cor: purity free-abelian}), and let us assume that $\HH$ is a finitely generated subgroup of $\Fta$ of infinite index. Then, from~\cite[Proposition 4.1]{delgado_stallings_2021} we know that either $\ind{L_{\HH}}{\ZZ^m}=\infty$ or $\ind{\HH \prf}{\Fn} =\infty$. Hence, either $0\in \Iset{L_{\HH}}{\ZZ^m}$ or $0\in \Iset{\HH\prf}{\Fn}$. In the first case, there exists $\vect{a}\in \ZZ^m$ such that $k\vect{a}\notin L_{\HH}=\HH \cap \ZZ^m$ and so, $(\t[a])^k =\t^{k \vect{a}} \notin \HH$, for every $k\geq 1$; hence, $0\in \Iset{\HH}{\Fta}$. In the second case, there exists $u\in \Fn$ such that $u^k\not\in \HH\pi$ and so, $u^k \not\in \HH$), for all $k\geq 1$; hence, $0\in \Iset{\HH}{\Fta}$ as well. Since the finite index problem is solvable for free times free-abelian groups (see~\cite[Proposition 4.2]{delgado_stallings_2021}) the computability of $\TGP(\FTA))$ is immediate from the previous claim.
\end{proof}

In free times free-abelian groups it is more convenient to start studying the sets of $k$-roots, to then refining the arguments and getting a description for sets of elements of a given order $k$, and the corresponding spectra. 

Let $\HH$ be a subgroup of $\Fta = \FTA$, and let $k \geq 1$. Then,
\begin{align} 
\sqrt[k]{\HH}
&\,=\, \Set{\fta{u}{a} \in \Fta \st (\fta{u}{a})^k \in \HH} \nonumber\\
&\,=\, \Set{\fta{u}{a} \in \Fta \st u^k \t^{k \vect{a}} \in \HH} \nonumber\\
&\,=\, \Set{\fta{u}{a} \in \Fta \st
       u^k \in \HH \prf \ \text{ and }\  k\vect{a} \in \Cab{u^k}{\HH} \cap k\ZZ^m } \nonumber  \\
&\,=\, \big\{\fta{u}{a} \in \Fta \st
      u \in \sqrt[k]{\HH \prf} \ \text{ and }\  \vect{a} \in \tfrac{1}{k} \Cab{u^k}{\HH} \big\}, \label{eq: k-root(HH) description}
\end{align}
where $\tfrac{1}{k} \Cab{u^k}{\HH} =\tfrac{1}{k} \big{(} \Cab{u^k}{\HH} \cap k\ZZ^m \big{)}$, and $\Cab{u^k}{\HH} \cap k\ZZ^m$ is either empty or a coset of $L_{\HH}\cap k\ZZ^m$. So, the set of $k$-roots of $\HH$ can be described in terms of $k$-roots (of subgroups or cosets) in the factors. However, if $\Cab{u^k}{\HH} \cap k\ZZ^m =\emptyset$, then the element $u\in \HH \prf$ does not contribute to $\sqrt[k]{\HH}$. The following lemma allows us to tighten the description~\eqref{eq: k-root(HH) description} by cleaning off these negligible elements.

Let $\HH\leq\fg \Fta$, let $\Basis$ be a basis for $\HH$, and let $\phi\colon \HH \prf \to \Free[s]$ be the isomorphism sending each element in $\HH\prf$ to its representation in base~$\Basis \prf$. Now, compose $\phi$ with the abelianization map $\rho_s\colon \Free[s] \to \ZZ^s$, and then with right multiplication by the completion matrix $\matr{C}$ of $\HH$ \wrt $\Basis$. It is clear that $L_{\HH} + k \ZZ^m$ is a (normal) subgroup of finite index (at most $k^m$) in $\ZZ^m$, and hence its preimage $M_K = (L_{\HH} + k \ZZ^m)\matr{C}\preim \rho_s \preim \phi\preim $ is a normal subgroup of finite index (at most $k^m$) in $\HH \prf$; see~\Cref{fig: Mk}.

\begin{figure}[H]
\centering\begin{tikzcd}
&[-31pt] \HH \prf \arrow[r,"\phi"] & \Free[s] \arrow[r,"\rho_s"] & \ZZ^s \arrow[r,"\matr{\cdot C}"] & \ZZ^m \\[-20pt]
&\rotatebox[origin=c]{90}{$\normaleq$} &\rotatebox[origin=c]{90}{$\normaleq$}
&\rotatebox[origin=c]{90}{$\normaleq$}
&\rotatebox[origin=c]{90}{$\normaleq$}
\\[-20pt]
M_k = &[-31pt] (L_{\HH}^{_{\!(k)}}) \matr{C}\preim \rho_s \preim \phi\preim & (L_{\HH}^{_{\!(k)}}) \matr{C}\preim \rho_s \preim   \arrow[l, maps to] & (L_{\HH}^{_{\!(k)}}) \matr{C}\preim \arrow[l, maps to]  & L_{\HH}^{_{\!(k)}} \arrow[l, maps to] &[-34pt] = L_{\HH}  + k\ZZ^m 
\end{tikzcd}
\caption{The construction of $M_k \normaleq_{fi} \HH\prf$}\label{fig: Mk}
\end{figure}

\begin{lem} \label{lem: Mk}
For $v\in \Free_n$, $\Cab{v}{\HH} \cap k \ZZ^m \neq \varnothing$ if and only if $v\in M_k$.
\end{lem}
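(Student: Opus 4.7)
The plan is to directly unpack the definitions on both sides and show they agree, using the completion computation lemma (\Cref{lem: completion computation}) as the main bridge.

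First I would handle the case $v \notin \HH\prf$. On the one hand, in this case $\Cab{v}{\HH} = \varnothing$ by \Cref{def: completion}, so in particular $\Cab{v}{\HH} \cap k\ZZ^m = \varnothing$. On the other hand, by construction $M_k$ is defined as a preimage inside $\HH\prf$, so $M_k \leqslant \HH\prf$ and hence $v \notin M_k$. Thus both sides of the biconditional fail and the claim holds trivially.

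Next I would treat the substantial case $v \in \HH\prf$. By \Cref{lem: completion computation} we have $\Cab{v}{\HH} = \vect{v}\,\matr{C} + L_{\HH}$, where $\vect{v} = (v)\phi\rho_s \in \ZZ^s$ is the image of $v$ under the composition $\phi\colon \HH\prf \to \Free[s]$ followed by abelianization. Since $\Cab{v}{\HH}$ is a coset of $L_{\HH}$, it meets $k\ZZ^m$ if and only if its representative $\vect{v}\matr{C}$ lies in the subgroup $L_{\HH} + k\ZZ^m$; that is,
\begin{equation*}
\Cab{v}{\HH} \cap k\ZZ^m \neq \varnothing \ \Biimp\ \vect{v}\,\matr{C} \in L_{\HH} + k\ZZ^m.
\end{equation*}
Translating this last condition back along the three maps defining $M_k$, it is equivalent to $v \in (L_{\HH}+k\ZZ^m)\matr{C}\preim \rho_s\preim \phi\preim = M_k$, as desired.

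The proof is essentially a bookkeeping exercise with the chain $\HH\prf \xto{\phi} \Free[s] \xto{\rho_s} \ZZ^s \xto{\matr{\cdot C}} \ZZ^m$ depicted in \Cref{fig: Mk}; no real obstacle is expected. The only mild care needed is ensuring that $\phi$ is only defined on $\HH\prf$ (which is why the case $v \notin \HH\prf$ must be peeled off first), and noticing that $L_{\HH} + k\ZZ^m$ is indeed a subgroup containing both $L_{\HH}$ and $k\ZZ^m$, so the coset-meets-subgroup reformulation is valid.
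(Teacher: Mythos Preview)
Your proposal is correct and follows essentially the same approach as the paper: split off the trivial case $v\notin\HH\prf$, then for $v\in\HH\prf$ use \Cref{lem: completion computation} to write $\Cab{v}{\HH}=\vect{v}\matr{C}+L_{\HH}$ and observe that this coset meets $k\ZZ^m$ iff $\vect{v}\matr{C}\in L_{\HH}+k\ZZ^m$, which is the defining condition for $v\in M_k$.
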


\begin{proof}
From \Cref{lem: completion computation}, if $v \notin \HH \prf$ then $\Cab{v}{\HH}$ is empty, and we are done. Otherwise, $\Cab{v}{\HH} =v\phi \rho_s \matr{C} + L_{\HH}$ intersects $k \ZZ^m$ if and only if $v\phi \rho_s \matr{C} \in L_{\HH}+k\ZZ$, \ie if and only if $v\in (L_{\HH}+k\ZZ^m) \matr{C}\preim \rho_s\preim \phi\preim =  M_k$, as claimed.
\end{proof} 

%\begin{rem}
In summary, an element $v\in \Free_n$ admits an abelian completion in $\HH \leqslant \fg \Fta$ if and only if $v\in \HH\prf$; and it admits an abelian completion in $\HH$ \emph{which is  a multiple of $k$} if and only if $v\in M_k \normaleq_{fi} \Free_n$. In this last case, $\Cab{v}{\HH} \cap k \ZZ^m$ is a coset of $L_{\HH} \cap k \ZZ^m$ and we write $\tfrac{1}{k} \Cab{v}{\HH} = \tfrac{1}{k} (\Cab{v}{\HH} \cap k \ZZ^m)$, a coset of $\tfrac{1}{k} L_{\HH} = \tfrac{1}{k} (L_{\HH} \cap k \ZZ^m)$ for which a representative is easily computable.
%\end{rem}

\Cref{lem: Mk} allows us to clean off \Cref{eq: k-root(HH) description} to:
 \begin{align} \label{eq: k-roots tight}
\sqrt[k]{\HH} &\,=\, \big\{\fta{u}{a} \in \Fta \st u\in \sqrt[k]{M_k} \ \text{ and }\  \vect{a} \in \tfrac{1}{k}\Cab{u^k}{\HH} \big\}\,,
 \end{align}
where now none of the contributions is negligible.

Now, let us take advantage of the algorithmic description of $\sqrt[k]{M_k}$ (\Cref{cor: k-roots of F computable}), together with our geometric understanding of the subgroups of $\Fta$ (\Cref{ssec: enriched automata}), to obtain a description of the set $\sqrt[k]{\HH}$ of $k$-roots of a finitely generated subgroup $\HH \leqslant \fg \FTA$.

Clearly, we can compute a free basis for $M_k$ and, by \Cref{cor: k-roots of F computable}, we know that
 \begin{equation}\label{eq: sqrtM}
\sqrt[k]{M_k}=\left( \bigsqcup\nolimits_w \big( M_k \cap {M_k}^{\!\!w}\cap \cdots \cap {M_k}^{\!\!w^{k-1}}\big) w\right)^{M_k},
 \end{equation}
where the union runs over a finite set of computable $k$-roots $w$ of $M_k$. Hence, for each of these coset representatives $w$, we have that $w^k\in M_k$ and so there exists a (computable and not necessarily unique) vector $\vect{a}_w\in \ZZ^m$ such that $(w \t^{\vect{a}_w})^k = w^k \t^{k\vect{a}_w} \in \HH$; \ie such that $w \t^{\vect{a}_w} \in \sqrt[k]{\HH}$.

Similarly, for every $v\in M_k \cap {M_k}^{\!\!w}\cap \cdots \cap {M_k}^{\!\!w^{k-1}}$ and every $j=0,\ldots,k-1$, we have $v^{w^{-j}}\in M_k$ and so, there exist a (computable, and non-necessarily unique) vector $\vect{b}_{v,j}\in \ZZ^m$ such that $v \t^{k\vect{b}_{v,j}}\in \HH^{w^j}$. Moreover, these vectors $\vect{b}_{v,j}$ can be chosen linearly: compute a (finite) free basis $\vasis =\set{v_1,\ldots ,v_q}$ for $M_k \cap {M_k}^{\!\!w}\cap \cdots \cap {M_k}^{\!\!w^{k-1}}$ and respective vectors $\vect{b}_{v_i,j} \in \ZZ^m$ such that $v_i\t^{k\vect{b}_{v_i ,j}}\in \HH^{w^j}$; then, for $v\in M_k$, take $\vect{b}_{v,j} = \vect{v} \matr{B_j}$, where ${\vect{v}=v\ab}$ is the abelianization of $v$ \wrt $\vasis$, and $\matr{B_j}$ is the $q\times m$ integral matrix having $\vect{b}_{v_i, j}$ as  $i$-th row; indeed,
 \begin{equation*}
v\t^{k \, \vect{b}_{v,j}} \,=\, v(v_1,\ldots ,v_q)\, \t^{k\, \vect{v} \matr{B_j}} \,=\, v(v_1 \t^{k \, \vect{b}_{v_1, j}},\ldots , v_q \t^{k \, \vect{b}_{v_q, j}}) \,\in\, \HH^{w^j} \!.
 \end{equation*}
Note that conjugates of $M_k$ by different powers of $w$ may coincide, ${M_k}^{\!\!w^{j_1}} = {M_k}^{\!\!w^{j_2}}$; when this happens, we take $\vect{b}_{v_i,j_1} = \vect{b}_{v_i,j_{2}}$, $\matr{B_{j_1}}=\matr{B_{j_2}}$ and so, $\vect{b}_{v,j_1} = \vect{b}_{v,j_2}$, for each $v\in M_K$. Finally, we define $\matr{B} = \sum_j \matr{B_j}$ and $\vect{b}_v =\sum_j \vect{b}_{v,j} = \vect{v}\matr{B}\in \ZZ^m$.

The following claim uses enriched Stallings automata (see~\Cref{ssec: enriched automata}) to build, around each of the already computed $w\t^{\vect{a}_w}\in \sqrt[k]{\HH}$, a whole family of new $k$-roots of $\HH$.

\begin{lem}\label{lem: inH}
For each $w\t^{\vect{a}_w}$ and each $v\in M_k \cap {M_k}^{\!\!w}\cap \cdots \cap {M_k}^{\!\!w^{k-1}}$, we have $vw \,\t^{\vect{b}_v + \vect{a}_w} \in \sqrt[k]{\HH}$.
\end{lem}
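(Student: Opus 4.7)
\smallskip

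The plan is a direct computation. I would start by expanding the $k$-th power of the candidate element, exploiting that abelian and free parts commute in $\Fta$:
\begin{equation*}
(vw \,\t^{\vect{b}_v + \vect{a}_w})^{k} \,=\, (vw)^k \, \t^{k(\vect{b}_v + \vect{a}_w)} .
\end{equation*}
Next, I would unfold $(vw)^k$ by a standard telescoping identity (inserting $w^j w^{-j}$ between the copies of $v$), obtaining
\begin{equation*}
(vw)^k \,=\, v \cdot v^{w^{-1}} \cdot v^{w^{-2}} \cdots v^{w^{-(k-1)}} \cdot w^k .
\end{equation*}
Substituting back and using once more that the abelian part is central, and that $\vect{b}_{v} = \sum_{j=0}^{k-1} \vect{b}_{v,j}$, I would rewrite
\begin{equation*}
(vw \,\t^{\vect{b}_v + \vect{a}_w})^{k} \,=\, \Big( \prod\nolimits_{j=0}^{k-1} v^{w^{-j}} \t^{k \vect{b}_{v,j}} \Big) \cdot \big( w^k \t^{k \vect{a}_w} \big) .
\end{equation*}

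The main point is then to recognize every factor on the right as an element of $\HH$. For the last one this is immediate: by the very choice of $\vect{a}_w$, $w \t^{\vect{a}_w} \in \sqrt[k]{\HH}$ and hence $w^k \t^{k \vect{a}_w} = (w \t^{\vect{a}_w})^k \in \HH$. For each of the other factors, the hypothesis $v \in M_k \cap M_k^{w} \cap \cdots \cap M_k^{w^{k-1}}$ gives $v^{w^{-j}} \in M_k$, and the construction of the vectors $\vect{b}_{v,j}$ (first for a basis $\vasis$ of the intersection and then extended linearly via the matrices $\matr{B_j}$) was designed precisely so that $v \t^{k \vect{b}_{v,j}} \in \HH^{w^j}$, which on conjugating by $w^{-j}$ yields $v^{w^{-j}} \t^{k \vect{b}_{v,j}} \in \HH$.

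Putting these pieces together, the $k$-th power lies in $\HH$, so $vw \,\t^{\vect{b}_v + \vect{a}_w} \in \sqrt[k]{\HH}$, as desired. I do not foresee a real obstacle here: the only subtle bookkeeping is checking that the linear extension $\vect{b}_{v} = \vect{v}\matr{B}$ really yields vectors satisfying $v^{w^{-j}} \t^{k \vect{b}_{v,j}} \in \HH$ for \emph{every} $v$ in the intersection (not just basis elements), which follows from the homomorphism $v \mapsto v \t^{k\vect{b}_{v,j}}$ from the free group on $\vasis$ into $\HH^{w^j}$ being well defined, an immediate consequence of $\HH^{w^j}$ being a group and the $v_i \t^{k \vect{b}_{v_i,j}}$ lying in it.
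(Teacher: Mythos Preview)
Your proof is correct and takes a genuinely different, more elementary route than the paper. You factorize $(vw\,\t^{\vect{b}_v+\vect{a}_w})^k$ directly as a product of elements already known to lie in $\HH$, using only the telescoping identity $(vw)^k = \big(\prod_{j=0}^{k-1} v^{w^{-j}}\big) w^k$ and the defining properties of $\vect{a}_w$ and the $\vect{b}_{v,j}$. The paper instead works geometrically inside an enriched Stallings automaton $\Eti$ for $\HH$: it reads $w^k$ along walks $\gamma_j$ and $v$ along closed walks $\xi_{v,j}$, introduces the \emph{graphical} vectors $\vect{a'_j}$ and $\vect{b'}_{\!v,j}$ coming from the enriched labels of those walks, and concludes by observing that the concatenation $\xi_{v,0}\gamma_0\cdots\xi_{v,k-1}\gamma_{k-1}$ is a closed $\bp$-walk whose label lies in $\HH$, then corrects by elements of $L_{\HH}$.

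Your argument is cleaner for the lemma in isolation. What the paper's approach buys is that the auxiliary objects $\vect{a'_j}$, $\vect{b'}_{\!v,j}$, and the matrices $\matr{B'_j}$, $\matr{B'}$ are precisely the data needed in the very next theorem (computing the set of elements of \emph{exact} order $k$), where the relations $k\vect{a}_w - \sum_j \vect{a'_j} \in L_{\HH}$ and $\vect{v}(k\matr{B}-\matr{B'}) \in L_{\HH}$ become the main tool; your proof bypasses these entirely, so it does not set up that later machinery.
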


\begin{proof}
Let $\Eti$ be a Stallings automaton for $\HH$. Since $w^k \in M_k \leqslant \HH \prf$, we can read $w$ as the free label of $k$ successively adjacent walks $\gamma_j \colon \verti_{j} \xwalk{\ } \verti_{j+1}$ $(j = 0,\ldots,k-1)$ in~$\Eti$ such that $\verti_0 = \verti_{k} = \bp\,$; see~\Cref{fig: hexagon}. Note that the sequence of vertices $\verti_0,\, \verti_1, \ldots \verti_{k-1}$ is not necessarily a trail because there may be repetitions due to the fact that the order of $w$ in $M_k$ is a (maybe proper) divisor of $k$; and due to the fact that $M_k $ is a (maybe proper) subgroup of $\HH \prf$, and hence the Stallings automaton of $\HH\pi$ is a (maybe proper) quotient of the Stallings automaton of $M_k$. 

For every $j=0,\ldots,k-1$, let $w \t^{\vect{a'_j}}$ be the enriched label of $\walki_j$ (of course, with possible repetitions according to those at the sequence of vertices $\verti_0,\, \verti_1, \ldots, \verti_{k-1}$). Then, it is clear that the product $w \t^{\vect{a'_0}}\cdots w \t^{\vect{a'_{k-1}}} = w^k  \smash{\t^{\sum_j \vect{a'_j}}}$ labels a closed $\bp$-walk in $\Eti$, and hence belongs to~$\HH$. Since, by hypothesis,  $w^k \t^{k \vect{a}_w}\in \HH$,
from \Cref{lem: completions are cosets} we have that $\sum_j \vect{a'_j} - k \vect{a}_w \in L_{\HH}$.

On the other hand, $v\in M_k \cap {M_k}^{\!\!w}\cap \cdots \cap {M_k}^{\!\!w^{k-1}}$; hence, for every $j=0,\ldots ,k-1$, $v$ can be read as the free label of a closed $\verti_j$-walk $\xi_{v,j}$ in $\Eti$. Let $\smash{v \t^{\vect{b\!'}_{\!v,j}}\in \HH^{w^j}}$ be the enriched label of $\xi_{v,j}$ (again, with possible  repetitions according to those at the sequence of vertices $\verti_0,\, \verti_1, \ldots, \verti_{k-1}$); see \Cref{fig: hexagon}. Since, by construction, $v\t^{k \vect{b}_{v,j}}\in \HH^{w^j}$, we have that $\vect{b'}_{\!v,j} - k \vect{b}_{v,j}\in L_{\HH^{w^j}}=L_{\HH}$. Therefore, $\sum_j \vect{b'}_{\!\!v,j} - k \vect{b}_v = \sum_j (\vect{b'}_{\!\!v,j} - k \vect{b}_{v,j})\in L_{\HH}$ as well.

\begin{figure}[H]
\centering
\begin{tikzpicture}[shorten >=1pt, node distance=2cm and 2cm, on grid,auto,>=stealth',
decoration={snake, segment length=2mm, amplitude=0.5mm,post length=1.5mm}]
    
\newcommand{\dx}{1.7}
\newcommand{\dy}{1}
\node[state, accepting,style={ % Sets the properties for each state
semithick,fill=gray!20,inner sep=2pt,minimum size = 17pt}] (0) {$\verti_0$};
%\node[] (L) [above = 0.35 of 0] {$\scriptstyle{L_{\HH}}$};
\node[state,style={ % Sets the properties for each state
semithick,fill=gray!20,inner sep=2pt,minimum size = 17pt}] (1) [above right = \dy and \dx of 0] {$\verti_1$};
\node[state,style={ % Sets the properties for each state
semithick,fill=gray!20,inner sep=2pt,minimum size = 17pt}] (2) [right = 2 of 1] {$\verti_2$};
\node[state,style={ % Sets the properties for each state
semithick,fill=gray!20,inner sep=2pt,minimum size = 17pt}] (3) [below right = \dy and \dx of 2] {$\verti_3$};
\node[state,style={ % Sets the properties for each state
semithick,fill=gray!20,inner sep=2pt,minimum size = 17pt}] (4) [below left = \dy and \dx of 3] {$\verti_4$};
\node[state,style={ % Sets the properties for each state
semithick,fill=gray!20,inner sep=1pt,minimum size = 15pt}] (5) [left = 2 of 4] {$\scriptstyle{\verti_{_{k\text{-}1}}}$};

\path[->]
    (0) edge[blue,snake it]
    %            node[pos=0.5,above=-.3mm] {$a$}
                node[pos=0.5,below right = 0.01] {$\B{w}$}
                node[pos=0.87,above left = -0.05] {$\vect{a'_{0}}$}
            (1);

\path[->]
        (1) edge[blue,snake it]
    %            node[pos=0.5,above=-.3mm] {$a$}
                node[pos=0.5,below = 0.05] {$\B{w}$}
                node[pos=0.87,above] {$\vect{a'_{1}}$}
            (2);
\path[->]
        (2) edge[blue,snake it]
    %            node[pos=0.5,above=-.3mm] {$a$}
                node[pos=0.5,below left] {$\B{w}$}
                node[pos=0.87,above right=-0.05] {$\vect{a'_{2}}$}
            (3);
            
\path[->]
        (3) edge[blue,snake it]
    %            node[pos=0.5,above=-.3mm] {$a$}
                node[pos=0.5,above left] {$\B{w}$}
                node[pos=0.87,below right=-0.05] {$\vect{a'_{3}}$}
            (4);
            
\path[-]
        (4) edge[blue,dashed]
    %            node[pos=0.5,above=-.3mm] {$a$}
                node[pos=0.5,above = 0.05] {($k$ times)}
    %            node[pos=0.9,above = 0.05] {$\vect{b}$}
            (5);
            
\path[->]
        (5) edge[blue,snake it]
    %            node[pos=0.5,above=-.3mm] {$a$}
                node[pos=0.5,above right] {$\B{w}$}
                node[pos=0.95,below = 0.05] {$\vect{a'_{k-1}}$}
            (0);
\path[->,thick]
        (0) edge[red,loop,out=200,in=160,looseness=8,min distance=15mm,snake it]
            node[left=0.1] {$v$}
            node[pos=0.95,above] {$\vect{b'}_{\!\!v,0}$}
            (0);
            
\path[->,thick]
        (1) edge[red,loop,out=140,in=100,looseness=8,min distance=15mm,snake it]
            node[above left=0.1] {$v$}
            node[pos=0.95,right] {$\vect{b'}_{\!\!v,1}$}
            (1);
            
\path[->,thick]
        (2) edge[red,loop,out=80,in=40,looseness=8,min distance=15mm,snake it]
            node[above right] {$v$}
            node[pos=0.95,below right =-0.05] {$\vect{b'}_{\!\!v,2}$}
            (2);
            
\path[->,thick]
        (3) edge[red,loop,out=20,in=-20,looseness=8,min distance=15mm,snake it]
            node[right=0.05] {$v$}
            node[pos=0.95,below] {$\vect{b'}_{\!\!v,3}$}
            (3);
            
\path[->,thick]
        (4) edge[red,loop,out=-40,in=-80,looseness=8,min distance=15mm,snake it]
            node[below right=0.05] {$v$}
            node[pos=0.95,left] {$\vect{b'}_{\!\!v,4}$}
            (4);
            
\path[->,thick]
        (5) edge[red,loop,out=-100,in=-140,looseness=8,min distance=15mm,snake it]
            node[below left=0.05] {$v$}
            node[pos=0.95,above left= -0.05] {$\vect{b'}_{\!\!v,k-1}$}
            (5);
    
\end{tikzpicture}
\vspace{-10pt}
\caption{The walks $\gamma_j$ (in blue) and $\xi_{v,j}$ (in red) within $\Eti$}
\label{fig: hexagon}
\end{figure}
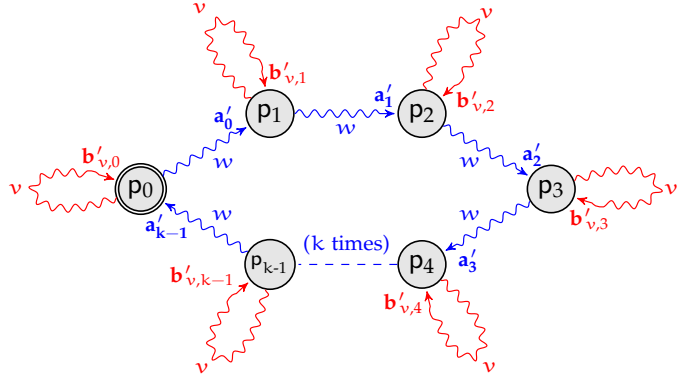

Now, it is clear from \Cref{fig: hexagon} that  $\xi_{v,0}\gamma_0\xi_{v,1}\gamma_1\cdots \xi_{v,k-1}\gamma_{k-1}$ is a closed $\bp$-walk in $\Eti$, and hence its label belongs to $\HH$. That is:
 \begin{align*}
v\t^{\vect{b'}_{\!\!v,0}} \,  w\t^{\vect{a'_0}} \, v\t^{\vect{b'}_{\!\!v,1}} \, w\t^{\vect{a'_1}} \,\cdots\, v\t^{\vect{b'}_{\!\!v,k-1}} \, w\t^{\vect{a'_{k-1}}}
&\,=\ (vw)^k\, \t^{\sum_j \vect{b'}_{\!\!v,j} + \sum_j \vect{a'_j}} \\
&\,=\  (vw)^k  \,  \t^{k\vect{b}_v + k \vect{a}_w}  \,  \t^{\sum_j \vect{b'}_{\!\!v,j} - k \vect{b}_v}  \,  \t^{\sum_j \vect{a'_j} - k \vect{a}_w} 
\in\, \HH \,.
 \end{align*}
Since both $\sum_j \vect{b'}_{\!\!v,j} -k \vect{b}_v$ and $\sum_j \vect{a'_j} - k \vect{a}_w$ belong to $L_{\HH}\leqslant \HH$, we deduce that $(vw\, t^{\vect{b}_v+\vect{a}_w})^k =(vw)^k t^{k\vect{b}_v+ k\vect{a}_w}\in \HH$, as claimed.
\end{proof}

For later use, we introduce the graphical counterparts of the matrices $\matr{B_j}$ and $\matr{B}$ in the natural way: $\matr{B'_j}$ is the integral $q\times m$ matrix whose $i$-th row is $\matr{b'}_{\!\!v_i, j}$, and $\matr{B'} = \sum_j \matr{B'_j}$. Recall that, for every $j=0,\ldots,k-1$, the label of the closed $\verti_j$-walk $\xi_{v_i,j}$ is $v_i \t^{\vect{b'}_{\!\!v_i,j}}$ and so, for every $v(v_1, \ldots ,v_q) \in M_k \cap {M_k}^{\!\!w}\cap \cdots \cap {M_k}^{\!\!w^{k-1}}$, the label of the closed $\verti_j$-walk $v(\xi_{v_1,j},\ldots ,\xi_{v_q,j})$ is $v(v_1 \t^{\vect{b'}_{\!\!v_1,j}},\ldots ,v_q \t^{\vect{b'}_{v_q,j}}) = v(v_1, \ldots ,v_q) \t^{\vect{v}\matr{B'_j}}$; therefore, $\vect{b'}_{\!\!v,j} = \vect{v} \matr{B'_j}$.

\begin{thm} \label{thm: FTA k-roots}
Let $\HH$ be a finitely generated subgroup of $\Fta = \FTA$ 
%\with basis~$\Basis = (\basis \sect;\abasis)$,
and let $k\geq 1$. Then, the set of $k$-roots of $\HH$ in $\Fta$ is
 \begin{equation}\label{eq: FTA k-roots}
\sqrt[k]{\HH} \,=\, \left(\bigsqcup\nolimits_{w \t^{\vect{a}_w}\in S} \HH_{w}  \, w \t^{\vect{a}_w} \right)^{\HH},
 \end{equation}
where $S$ is a finite set of $k$-roots of $\HH$, and $\HH_w$ are finitely generated subgroups of $\Fta$, all of them computable from a given basis for $\HH$.
\end{thm}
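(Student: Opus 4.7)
The plan is to combine the tight description \eqref{eq: k-roots tight} of $\sqrt[k]{\HH}$ with the free-group description of $\sqrt[k]{M_k}$ given by \eqref{eq: sqrtM}, upgraded through Lemma~\ref{lem: inH} so that each ``anchor'' $w$ in the free decomposition carries along an explicit abelian shift.

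Concretely, start from \eqref{eq: k-roots tight}: an element $u\,\t^{\vect a}$ belongs to $\sqrt[k]{\HH}$ iff $u\in\sqrt[k]{M_k}$ and $\vect a\in\tfrac{1}{k}\Cab{u^{k}}{\HH}$. Apply Corollary~\ref{cor: k-roots of F computable} to the finitely generated subgroup $M_{k}\leqslant\fg\Fn$ (computable via the matrix~$\matr C$) to obtain a finite set $W\subseteq \Fn$ of representatives and a disjoint decomposition of $\sqrt[k]{M_k}$ as in \eqref{eq: sqrtM}. For each $w\in W$, compute a free basis $\vasis_w=\{v_{1},\dots,v_{q}\}$ of $M_{k}\cap M_{k}^{w}\cap\cdots\cap M_{k}^{w^{k-1}}$ and, using the completion procedure in $\HH$, find a witness vector $\vect a_{w}\in\ZZ^{m}$ with $w\,\t^{\vect a_{w}}\in\sqrt[k]{\HH}$, and vectors $\vect b_{v_{i}}=\sum_{j}\vect b_{v_{i},j}$ supplied by Lemma~\ref{lem: inH}. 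Define
\[
\HH_{w} \,=\, \gen{\,v_{1}\t^{\vect b_{v_{1}}},\dots,v_{q}\t^{\vect b_{v_{q}}}\,;\; \tfrac{1}{k}L_{\HH}\,}\,\leqslant\,\Fta;
\]
by the linearity $\vect b_{v}=\vect v\matr B$ and the homomorphism property of abelianization, $\HH_{w}$ is a subgroup consisting exactly of elements $v\,\t^{\vect v\matr B+\vect\ell}$ with $v\in M_{k}\cap\bigcap_{j}M_{k}^{w^{j}}$ and $\vect\ell\in\tfrac{1}{k}L_{\HH}$, and it is finitely generated since $L_{\HH}$ and the intersection subgroup are (Cor.~\ref{cor: ddd} handles $\tfrac{1}{k}L_{\HH}$).

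For the inclusion $\supseteq$, multiply: a typical element of $\HH_{w}\cdot w\,\t^{\vect a_{w}}$ has the shape $vw\,\t^{\vect b_{v}+\vect a_{w}+\vect\ell}$. Since $vw\,\t^{\vect b_{v}+\vect a_{w}}\in\sqrt[k]{\HH}$ by Lemma~\ref{lem: inH}, and $k\vect\ell\in L_{\HH}\leqslant\HH$, taking $k$-th powers gives an element of $\HH$; finally $\sqrt[k]{\HH}$ is closed under conjugation by $\HH$, yielding the outer $\HH$-conjugation. For $\subseteq$, take $u\,\t^{\vect a}\in\sqrt[k]{\HH}$; by \eqref{eq: sqrtM} write $u=m^{-1}(vw)m$ with $m\in M_{k}$, $v\in M_{k}\cap\bigcap_{j}M_{k}^{w^{j}}$, $w\in W$. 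Since $m\in M_{k}\leqslant\HH\prf$ admits a lift $m\,\t^{\vect c_{m}}\in\HH$ and abelian parts are central, conjugation in $\Fta$ by this lift only affects the free coordinate; in particular $\Cab{u^{k}}{\HH}=\Cab{(vw)^{k}}{\HH}=k(\vect b_{v}+\vect a_{w})+L_{\HH}$, so $\vect a=\vect b_{v}+\vect a_{w}+\vect\ell$ for some $\vect\ell\in\tfrac{1}{k}L_{\HH}$, and one checks $u\,\t^{\vect a}=(vw\,\t^{\vect b_{v}+\vect a_{w}+\vect\ell})^{m\t^{\vect c_{m}}}$, as required. Disjointness of the union propagates from the disjointness in \eqref{eq: sqrtM}: elements in $\HH_{w}\cdot w\,\t^{\vect a_{w}}$ project in the free part to cosets in the distinct $M_{k}$-orbits indexed by the $w\in W$, and $\HH$-conjugation of $\Fta$-elements acts on the free part through $\HH\prf$-conjugation, preserving this partition because $M_{k}\normaleq\HH\prf$.

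The main obstacle I expect is the careful bookkeeping in the \emph{abelian} coordinate: one must match the ``syntactic'' $M_{k}$-conjugation coming from the free Stallings description \eqref{eq: sqrtM} with ``genuine'' $\HH$-conjugation inside $\Fta$. This is precisely what the completion lift $m\mapsto m\,\t^{\vect c_{m}}$ accomplishes, and what forces the definition of $\HH_{w}$ to incorporate both the linear witnesses $\vect b_{v_{i}}$ and the subgroup $\tfrac{1}{k}L_{\HH}$ (rather than merely $L_{\HH}$), since the abelian coordinate is determined only modulo $\tfrac{1}{k}L_{\HH}$. Computability of $S=\{w\,\t^{\vect a_{w}}\}$ and of generators for each $\HH_{w}$ then reduces to: computing $M_{k}$ from $\matr C$; running Cor.~\ref{cor: k-roots of F computable} inside $\Fn$; solving a finite list of completion equations in $\HH$ to obtain $\vect a_{w}$ and $\vect b_{v_{i},j}$; and computing $\tfrac{1}{k}L_{\HH}$ by Corollary~\ref{cor: ddd}.
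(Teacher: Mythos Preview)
Your proposal is correct and follows essentially the same route as the paper: you build $\HH_{w}$ from the free-basis lifts $v_i\t^{\vect b_{v_i}}$ together with $\tfrac{1}{k}L_{\HH}$, use Lemma~\ref{lem: inH} for the inclusion $\supseteq$, and for $\subseteq$ you exploit that conjugation by a lift $m\t^{\vect c_m}\in\HH$ of $m\in M_k$ coincides with conjugation by $m$ (centrality of $\ZZ^m$), which is exactly the paper's ``$\HH^{v'}=\HH$'' step phrased via the completion-invariance remark. The only minor excess is your discussion of disjointness \emph{after} $\HH$-conjugation: the statement only asserts that the inner union $\bigsqcup_w \HH_w\,w\t^{\vect a_w}$ is disjoint, which follows (as in the paper) by projecting to the free part and invoking the disjointness in \eqref{eq: sqrtM}.
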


\begin{proof}
Within the free group $\Free[n]$, compute a free basis for $M_k$ and then finitely many $k$-th roots of $M_k$ giving~\Cref{eq: sqrtM}. Following the procedure described above, for each such $w\in \sqrt[k]{M_k}$, compute a vector $\vect{a}_w\in \ZZ^m$ such that $wt^{\vect{a}_w}\in \sqrt[k]{\HH}$, a free basis for 
$\vasis =\set{v_1,\ldots ,v_q}$ for $M_k \cap {M_k}^{\!\!w}\cap \cdots \cap {M_k}^{\!\!w^{k-1}}$, and respective vectors $\vect{b}_{v_i,j} \in \ZZ^m$ such that $v_i\t^{k\vect{b}_{v_i ,j}}\in \HH^{w^j}$; finally, take $\vect{b}_{v_i} =\sum_j \vect{b}_{v_i,j}\in \ZZ^m$ and consider the finitely generated subgroup
 \begin{align}
\HH_w &\,=\, \big\langle v_1 \t^{\vect{b}_{v_1}},\, \ldots ,v_q \t^{\vect{b}_{v_q}} ,\, \t^{\tfrac{1}{k}L_{\HH}} \big\rangle \\ &\,=\,
\big\{v \t^{\vect{b}_v + \vect{c}} \st v \in M_k \cap {M_k}^{\!\!w}\cap \cdots \cap {M_k}^{\!\!w^{k-1}} \text{ and } \vect{c} \in \tfrac{1}{k}L_{\HH} \big\} \,\leqslant\fg\, \Fta \,.
 \end{align}
Note that, by~\Cref{cor: ddd}, we can compute an abelian basis for $\tfrac{1}{k}L_{\HH}$, and get a basis for $\HH_w$. 

With this data, we claim that~\eqref{eq: FTA k-roots} holds. In fact, the inclusion to the left follows easily from \Cref{lem: inH} and the general form of the elements in $\HH_w$: $(v\t^{\vect{b}_v + \vect{c}} \,w \t^{\vect{a}_w})^k=(vw\t^{\vect{b}_v + \vect{a}_w})^k \,\t^{k \vect{c}} {\in \HH}$.

For the other inclusion, consider an arbitrary element $u \t^\vect{c} \in \sqrt[k]{\HH}$. Then, $u^k t^{kc}\in \HH$ and so, $u^k\in M_k$, $u\in \sqrt[k]{M_k}$, and hence, from Equation~\eqref{eq: sqrtM}, $u=(vw)^{v'}$ for some coset representative $w$, some $v\in M_k \cap {M_k}^{\!\!w}\cap \cdots \cap {M_k}^{\!\!w^{k-1}}$, and some $v'\in M_k$. But, by \Cref{lem: inH}, $(vw)^k \,\t^{k\vect{b}_v+k\vect{a}_w} \in \HH$, and conjugating by $v'\in M_k\leqslant \HH\pi$, we get $u^k \,\t^{k\vect{b}_v+k\vect{a}_w} = ((vw)^k)^{v'} \,\t^{k\vect{b}_v + k\vect{a}_w}\in \HH^{v'} =\HH$. Therefore, $k \vect{c} - k \vect{b}_v - k \vect{a}_w \in L_{\HH}$ and so, $\vect{l}= \vect{c}-\vect{b}_v-\vect{a}_w\in \frac{1}{k}L_{\HH}$. Thus,
 \begin{equation*}
u \t^{\vect{c}} \,=\, u\, \t^{\vect{b}_v + \vect{a}_w} \,\t^{\vect{c}-\vect{b}_\vect{v}-\vect{a}_w} \,=\, (vw)^{v'} \,\t^{\vect{b}_v+\vect{a}_w}\, \t^{\vect{l}} \,=\, \big( v \t^{\vect{b}_v+\vect{l}} \, w \t^{\vect{a}_w}\big)^{v'} \,\in\,
\big(\HH_w \, w\t^{\vect{a}_w}\big)^{\HH}.
 \end{equation*}
Finally, the union under consideration is disjoint because projecting $\HH_w wt^{\vect{a}_w}$ to the free part we get, precisely, $\big( M_k \cap {M_k}^{\!\!w}\cap \cdots \cap {M_k}^{\!\!w^{k-1}}\big) w$; see~\Cref{eq: sqrtM}.
\end{proof}

\begin{rem}
Observe that the description in~\Cref{eq: FTA k-roots} for the free times free-abelian context is parallel to that of~\Cref{cor: k-roots of F computable}: in both cases, the set of $k$-th roots of $H$ (resp., $\HH$) is a finite union of computable cosets, conjugated by $H$ (resp., $\HH$); however the corresponding subgroups are intersections of conjugates of $H$ in the free case, concretely $H\cap H^{w}\cap H^{w^2} \cap \cdots \cap H^{w^{k-1}}$, while they have a slightly more complicated form (although still computable) in the free times free-abelian case, namely $\HH_w$.
\end{rem}

\begin{rem}
In the special case where $\HH\pi=1$ (\ie when $\HH=L_{\HH}\leqslant \ZZ^m$), we have $M_k=1$, $\sqrt[k]{M_k}=1$, and Equation~\eqref{eq: sqrtM} consists of just a single trivial coset, namely $w=1$ and $\{1\}=(\{1\}w)^{\{1\}}$; further, $w\t^{\vect{a}_w}=1\t^{\vect{0}}$, $\HH_{w}=\tfrac{1}{k}L_{\HH}$, and Equation~\eqref{eq: FTA k-roots} consists on just a single coset as well, $\sqrt[k]{\HH} =\big( (\tfrac{1}{k}L_{\HH} \big) 1\t^{\vect{0}})^{\HH}=\tfrac{1}{k}L_{\HH}$. This agrees with the general fact $\sqrt[k]{\HH}\cap \ZZ^m =\sqrt[k]{L_{\HH}}$.  
\end{rem}

As we know, the set of  $k$-roots $\sqrt[k]{\HH}$ consists precisely of the elements in $\Fta$ whose orders in $\HH$ are divisors of $k$ (including the elements of order $1$, which are the elements in $\HH$ themselves). Let us now refine the description from \Cref{thm: FTA k-roots} by breaking each coset $\HH_{w} \,w\t^{\vect{a}_w}$ there into a finite disjoint union of computable cosets (of a certain subgroup of finite index in~$\HH_w$), in such a way that the order function becomes constant over each of these new smaller cosets.

\begin{thm}
Let $\HH$ be a finitely generated subgroup of $\Fta = \FTA$. 
Then, the set of elements of $\Fta$ of any given nonzero order in $\HH$ is computable. More precisely, given a finite set of generators for $\HH$ and an integer $k \geq 1$, we can algorithmically decide whether $\ofo{\Fta}{k}{\HH} \neq \varnothing$ 
and, if so,
the set of elements of $\Fta$ of order $k$ in $\HH$ is
 \begin{equation}
\ofo{\Fta}{k}{\HH} \,=\, \left(\,\bigsqcup\nolimits_{z\t^{\vect{a}_z}\in S'} \HH'_{z} \, z\t^{\vect{a}_{z}}\right)^{\HH}, 
 \end{equation}
where $S'$ is a finite set of elements of order $k$ in $\HH$, and $\HH'_z$ are finitely generated subgroups of $\Fta$, all of them computable from a given basis for $\HH$.
\end{thm}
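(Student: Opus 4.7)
The plan is to refine the description of $\sqrt[k]{\HH}$ provided by the preceding theorem. Since $\ofo{\Fta}{k}{\HH}\subseteq \sqrt[k]{\HH}$ and both sets are closed under $\HH$-conjugation, it suffices to identify, for every representative $w\t^{\vect{a}_w}\in S$, the elements of the coset $\HH_w\, w\t^{\vect{a}_w}$ having order exactly~$k$ in $\HH$.

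The first key observation is that the free order is constant on each such coset. Setting $V = \HH_w\prf = M_k\cap M_k^w\cap \cdots \cap M_k^{w^{k-1}}$, the claim is that $\Ord_{\HH\prf}(vw)=\Ord_{\HH\prf}(w)$ for every $v\in V$; denote this common value by $d_w$, a computable divisor of $k$. Indeed, in the expansion $(vw)^d=v\cdot v^{w^{-1}}\cdots v^{w^{-(d-1)}}\cdot w^d$, all the conjugates $v^{w^{-i}}$ with $i\leq d-1\leq k-1$ lie in $M_k\leqslant \HH\prf$ by the very definition of $V$, so $(vw)^d\in \HH\prf$ if and only if $w^d\in \HH\prf$.

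Next, let $\psi\colon \HH\prf\to \ZZ^m$ denote the completion homomorphism associated to a section (so $\Cab{u}{\HH}=u\psi+L_\HH$ for $u\in \HH\prf$). A direct computation using the homomorphism property of $\psi$ shows that, for $g=vw\t^{\vect{b}_v+\vect{a}_w+\vect{l}}$ with $v\in V$ and $\vect{l}\in \tfrac{1}{k}L_\HH$, one has $g^{d_w e}\in \HH$ if and only if $e\,\vect{\delta}(v,\vect{l})\in L_\HH$, where $\vect{\delta}(v,\vect{l}) = d_w (\vect{b}_v+\vect{a}_w+\vect{l})-(m_{v,d_w})\psi - (w^{d_w})\psi$ and $m_{v,d_w}=v\cdot v^{w^{-1}}\cdots v^{w^{-(d_w-1)}}\in M_k$. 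Since each map $v\mapsto (v^{w^{-i}})\psi$ is a group homomorphism $V\to\ZZ^m$, the value $\vect{\delta}(v,\vect{l})$ depends affinely with $\ZZ$-coefficients on the abelianization $\vect{v}$ of $v$ in the free basis $v_1,\dots,v_q$ of $V$ and on $\vect{l}$. Consequently $\Ord_\HH(g)=d_w\cdot \Ord_{L_\HH}(\vect{\delta}(v,\vect{l}))$, and $\Ord_\HH(g)=k$ precisely when $\vect{\delta}(v,\vect{l})\in \ofo{(\ZZ^m)}{k/d_w}{L_\HH}$.

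By \Cref{prop: preorder FTA}, the set $\ofo{(\ZZ^m)}{k/d_w}{L_\HH}$ is either empty --- in which case $\HH_w\, w\t^{\vect{a}_w}$ contributes nothing to $\ofo{\Fta}{k}{\HH}$, a condition detected algorithmically --- or a computable finite disjoint union of cosets of $L_\HH$ in $\ZZ^m$. Pulling back along the affine $\ZZ$-linear map $(\vect{v},\vect{l})\mapsto \vect{\delta}$ then yields a finite disjoint union of cosets of a computable subgroup of the f.g.\ free-abelian parameter group $\ZZ^q\oplus \tfrac{1}{k}L_\HH$, which lifts via $(v,\vect{l})\mapsto v\t^{\vect{b}_v+\vect{l}}$ inside $\HH_w$ to a finite disjoint union of cosets $\HH'_z\, z\t^{\vect{a}_z}$ inside $\HH_w\,w\t^{\vect{a}_w}$; by construction each $z\t^{\vect{a}_z}$ has order $k$ in $\HH$, and each $\HH'_z$ is a finitely generated subgroup of $\HH_w\leqslant \Fta$. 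Assembling these over all $w\in S$ yields the claimed description together with its algorithmic computability. The principal technical hurdle is the explicit bookkeeping required to recast each pulled-back parameter cell as a coset of a finitely generated subgroup of $\Fta$ in the prescribed form $\HH'_z\, z\t^{\vect{a}_z}$.
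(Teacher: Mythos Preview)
Your proposal is correct and follows essentially the same strategy as the paper: start from the decomposition of $\sqrt[k]{\HH}$ into the cosets $\HH_w\,w\t^{\vect{a}_w}$, observe that the free order is constant (equal to $d_w=r_0$) on each such coset, and then reduce the determination of $\Ord_\HH$ to an abelian question governed by a single vector (your $\vect{\delta}(v,\vect{l})$) depending affinely on the abelianized parameters. The paper carries this out by writing the relevant vector explicitly in terms of the matrices $\matr{B},\matr{B'}$ and defining a homomorphism $\varphi\colon \HH_w\to L_\HH/kL_\HH$ whose cosets of $\ker\varphi$ give a finite partition on which the order is constant; you instead compute $\Ord_\HH(g)=d_w\cdot\Ord_{L_\HH}(\vect{\delta})$ directly and pull back the computable set $\ofo{(\ZZ^m)}{k/d_w}{L_\HH}$ along the affine map---but since $\vect{\delta}$ always lands in $\tfrac{d_w}{k}L_\HH$ (because every element of the coset is a $k$-root), the induced map to $\tfrac{d_w}{k}L_\HH/L_\HH$ is a homomorphism to a finite group and your $\HH'_z$ have finite index in $\HH_w$, exactly as in the paper.
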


\begin{proof}
From description \eqref{eq: FTA k-roots} for $\sqrt[k]{\HH}$, fix one of the finitely many cosets, say $\HH_{w} \,w\t^{\vect{a}_w}$, and consider the part of $\Eti$ (a Stallings automaton for $\HH$) corresponding to it; see \Cref{fig: hexagon}. Recall that the vectors $\vect{a'_j}$ and $\vect{b'}_{\!\!v,j}$ in that picture are related to $\vect{a}_w$ and $\vect{b}_{v,j}$, respectively, through the equations $k\vect{a}_w -\sum_{j=0}^{k-1} \vect{a'_j} \in L_{\HH}$ and $k\vect{b}_{v,j}-\vect{b'}_{\!\!v,j} \in L_{\HH}$, and therefore
 \begin{equation}\label{eq: www}
\vect{v} \big( k \matr{B}- \matr{B'}\big) \,=\, k\vect{v}\matr{B} -\sum_{j=0}^{k-1} \vect{v}\matr{B'_j} \,=\, k\vect{b}_v -\sum_{j=0}^{k-1} \vect{b'}_{\!\!v,j} \,\in\, L_{\HH} \,.
 \end{equation}
Note that the elements in the coset $\HH_{w} \,w\t^{\vect{a}_w}$ may have different orders in $\HH$, but the order in $\HH$ of such an element $v \t^{\vect{b}_v + \vect{c}} \, w \t^{\vect{a}_w}$ is always a divisor of $k$, and a multiple of  $r_0 = \Ord_{\HH\pi} (vw)$, which is independent from $v\in M_k \cap {M_k}^{\!\!w}\cap \cdots \cap {M_k}^{\!\!w^{k-1}}$; indeed, $r_0$ is the minimum index~$j>0$ such that $\verti_j=\bp$ in \Cref{fig: hexagon}.

Now, fix an element $v\t^{\vect{b}_v + \vect{c}} \, w \t^{\vect{a}_w} \in \HH_w\, w \t^{\vect{a}_w}$, let $r\in \mathbb{N}$ be such that $r_0 \divides r \divides k$, and let $k=rs$. By construction, $\verti_r=\bp$, and therefore the sequences of vectors $\vect{b'}_{\!\!v,0}, \ldots ,\vect{b'}_{\!\!v,k-1}$ and $\vect{a'_0}, \ldots ,\vect{a'_{k-1}}$  repeat in $s$ blocs of $r$, \ie
\begin{align*}
(\vect{b'}_{\!\!v,0}, \ldots ,\vect{b'}_{\!\!v,k-1}) & \,=\, (\vect{b'}_{\!\!v,0}, \ldots ,\vect{b'}_{\!\!v,r-1} \,,\, \vect{b'}_{\!\!v,0}, \ldots ,\vect{b'}_{\!\!v,r-1} \,,\, \stackrel{s)}{\ldots} \,,\, \vect{b'}_{\!\!v,0}, \ldots ,\vect{b'}_{\!\!v,r-1}), \\ (\vect{a'_0}, \ldots ,\vect{a'_{k-1}}) & \,=\, (\vect{a'_0}, \ldots ,\vect{a'_{r-1}}\,,\, \vect{a'_0}, \ldots ,\vect{a'_{r-1}}\,,\, \stackrel{s)}{\ldots} \,,\, \vect{a'_0}, \ldots ,\vect{a'_{r-1}} ).
 \end{align*}
Hence, from Equation~\eqref{eq: www}, $rs\vect{b}_v -s\sum_{j=0}^{r-1} \vect{b'}_{\!\!v,j} =k\vect{b}_v -\sum_{j=0}^{k-1} \vect{b'}_{\!\!v,j} \in L_{\HH}$ and so, $r\vect{b}_v -\sum_{j=0}^{r-1} \vect{b'}_{\!\!v,j}=\frac{1}{s} \big(k \vect{b}_v -\sum_{j=0}^{k-1} \vect{b'}_{\!\!v,j} \big) \in \frac{1}{s}L_{\HH}$. Similarly, $k\vect{a}_w-\sum_{j=0}^{k-1} \vect{a'_j} =rs\vect{a}_w -s\sum_{j=0}^{r-1} \vect{a'_j} \in L_{\HH}$ and so, $r \vect{a}_w - \sum_{j=0}^{r-1} \vect{a'_j} =\frac{1}{s}\big(k \vect{a}_w -\sum_{j=0}^{k-1} \vect{a'_j} \big) \in \frac{1}{s}L_{\HH}$. Furthermore, observe that
 \begin{align*}
(v \t^{\vect{b}_v+\vect{c}} \, w \t^{\vect{a}_w})^r & \,=\, (vw)^r \,\t^{r\vect{b}_v + r \vect{a}_w + r\vect{c}} \\ & \,=\, \big( (vw)^r \,\t^{\sum_{j=0}^{r-1} \vect{b'}_{\!\!v,j} + \sum_{j=0}^{r-1} \vect{a'_j}} \big) \t^{r \vect{b}_v - \sum_{j=0}^{r-1} \vect{b'}_{\!\!v,j}} \,\t^{r \vect{a}_w - \sum_{j=0}^{r-1} \vect{a'_j}} \,\t^{r\vect{c}},
 \end{align*}
where the element $(vw)^r \,\t^{\sum_{j=0}^{r-1} \vect{b'}_{\!\!v,j} + \sum_{j=0}^{r-1} \vect{a'_j}} $ between the last pair of parentheses belongs to $\HH$ since it labels closed walk at $\bp$ in $\Eti$. Hence,
 \begin{equation*}
(v\t^{\vect{b}_v+\vect{c}}\,w\t^{\vect{a}_w})^r \in \HH \ \Biimp \ \frac{1}{s}\bigg( \Big( k \vect{b}_v - \sum_{j=0}^{k-1} \vect{b'}_{\!\!v,j} \Big) + \Big( k \vect{a}_w - \sum_{j=0}^{k-1} \vect{a'_j} \Big) + k \vect{c} \bigg) \,\in\, L_{\HH} \,.
 \end{equation*}
 This means that the order of $v\t^{\vect{b}_v+\vect{c}} \, w\t^{\vect{a}_w}$ in $\HH$ is:
 \begin{equation}\label{eq: FTA order}
% \Ord_{\HH}(v\t^{\vect{b}_v+\vect{c}} \, w\t^{\vect{a}_w}) \,=\,
\min \Big\{ r\in \NN \st r_0 \divides r \divides k \text{\, and \,} \vect{v}(k\matr{B}-\matr{B'}) + (k\vect{a}_w -\textstyle{\sum_{j=0}^{k-1} \vect{a'_j}}) +k\vect{c} \in \tfrac{k}{r} L_{\HH} \Big\} \,.
 \end{equation}
Recall that the vector $k \vect{a}_w-\sum_{j=0}^{k-1} \vect{a'_j}$ is fixed, and consider the group homomorphism
 \begin{equation*}
\begin{array}{rcl} \varphi \colon \HH_w & \to & L_{\HH}/kL_{\HH} \\ v \,\t^{\vect{b}_v+\vect{c}} & \mapsto & \vect{v} (k \matr{B}-\matr{B'})+ k \vect{c} + k L_{\HH}, \end{array}
 \end{equation*}
which is onto since $\vect{c}$ moves along $\tfrac{1}{k}L_{\HH}$. Since $L_{\HH}/kL_{\HH}$ is a finite (abelian) group, $\HH'_w =\ker \varphi$ is a finite index subgroup of $\HH_w$ (in fact, of index $p=k^{\rk(L_{\HH})}$), for which we can easily compute a (finite) basis. Then, computing a $\varphi$-preimage for each element in $L_{\HH}/kL_{\HH}$, say $u_1 \t^{\vect{b}_{u_1}+\vect{c_1}}, \ldots ,u_p \t^{\vect{b}_{u_p}+\vect{c_p}} \in \HH_w$, we obtain the decomposition $\HH_w=\bigsqcup_{i=1}^p \HH'_w \, u_i \t^{\vect{b}_{u_i} + \vect{c}_i}$. Therefore,
 \begin{equation}
\HH_w w\t^{\vect{a}_w} \,=\, \bigsqcup_{i=1}^p \HH'_w \, u_i t^{\vect{b}_{u_i} + \vect{c}_i}\, w \t^{\vect{a}_w} \,=\, \bigsqcup_{i=1}^p \HH'_w \, (u_i w)\, \t^{\vect{b}_{u_i} + \vect{a}_w + \vect{c_i}} \,.
 \end{equation}
Finally, we claim that the order function with respect to $\HH$ is constant over each of these smaller cosets. Indeed, according to~\Cref{eq: FTA order}, for every $v \t^{\vect{b}_v+\vect{c}}\in \HH'_w$, the order of $v \t^{\vect{b}_v+\vect{c}}(u_i w) \, \t^{\vect{b}_{u_i} + \vect{a}_w + \vect{c_i}} = (vu_i w) \t^{\vect{b}_v + \vect{b}_{u_i} + \vect{a}_w + \vect{c}+ \vect{c_i}}$ is the smallest integer $r>0$ such that $r_0 \divides r\divides k$ and 
 \begin{equation}\label{eq: aaa}
(\vect{v} +\vect{u_i})(k\matr{B}-\matr{B'}) + (k\vect{a}_w- \sum_{j=0}^{k-1} \vect{a'_j}) + k(\vect{c}+\vect{c_i}) \,\in\, sL_{\HH}.
 \end{equation}
But $\vect{v}\big( k\matr{B}-\matr{B'})+k\vect{c} \in k L_{\HH}\leqslant sL_{\HH}$ since $v\t^{\vect{b}_v+\vect{c}}\in \HH'_w = \ker \varphi$. Therefore, Equation~\eqref{eq: aaa} holds if and only if 
 \begin{equation*}
\vect{u_i}(k\matr{B}-\matr{B'})+(k\vect{a}_w -\sum_{j=0}^{k-1} \vect{a'_j})+k\vect{c_i}\,\in\, sL_{\HH} \,,
 \end{equation*}
which is to say that $\Ord_{\HH} ( v \, \t^{\vect{b}_v+ \vect{c}}\,u_i w\, \t^{\vect{b}_{u_i} + \vect{a}_w + \vect{c_i}}) =\Ord_{\HH} (u_i w\, \t^{\vect{b}_{u_i} + \vect{a}_w+ \vect{c_i}})$. This proves the claim. 

Finally, collecting together all the cosets (if any) containing elements of order $k$ and resetting the notation,
we obtain the claimed result.
\end{proof}

\begin{cor}\label{cor: SBP FTA}
Free times free-abelian groups $\Fta =\FTA$ have subgroup bounded spectra and the corresponding bounds are computable (in particular $\SBP(\Fta)$ is computable).
More precisely, if $\HH$ is a finitely generated subgroup of $\Fta$, then $\Ord_{\HH}(\Fta) \subseteq \Ord_{\HH \prf}(\Free) \cdot \Ord_{L_\HH}(\ZZ^m)$; in particular, $\Ord_{\HH}(\Fta) $ is bounded above by $\card V\rstallings{H}\cdot d$, where $d$ is the largest elementary divisor of $L_{\HH}=\HH \cap \ZZ^m \leq \ZZ^m$.
\end{cor}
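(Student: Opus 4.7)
The plan is to exploit the split short exact sequence \eqref{eq: sses FTA} together with the completion homomorphism $\Cab{-}{\HH}\colon \HH\prf \to \ZZ^m/L_{\HH}$ to factor the $\HH$-order of any element of $\Fta$ as the product of a free-part order and an abelian-part order. Concretely, I will show that if $\fta{u}{a} \in \Fta$ has nonzero order $k$ with respect to $\HH$, then $k = r\cdot s$, where $r = \Ord_{\HH\prf}(u)$ and $s = \Ord_{L_{\HH}}(r\vect{a} - \vect{b}_r)$, for any fixed representative $\vect{b}_r$ of $\Cab{u^r}{\HH}$.

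Start from the identity $(\fta{u}{a})^k = u^k \t^{k\vect{a}}$. Since this lies in $\HH$ by hypothesis, projecting to the free factor gives $u^k \in \HH\prf$; hence $r := \Ord_{\HH\prf}(u)$ is nonzero and divides $k$. Writing $k = rs$ and picking $\vect{b}_r$ with $u^r \t^{\vect{b}_r} \in \HH$, the homomorphism property of completion yields $\Cab{u^{rs}}{\HH} = s\vect{b}_r + L_{\HH}$; hence $u^{rs}\t^{rs\vect{a}} \in \HH$ if and only if $s(r\vect{a} - \vect{b}_r) \in L_{\HH}$. Minimality of $k$ among positive multiples of $r$ then identifies $s$ with the smallest positive integer annihilating $r\vect{a} - \vect{b}_r$ modulo $L_{\HH}$; that is, $s = \Ord_{L_{\HH}}(r\vect{a} - \vect{b}_r)$, which is nonzero because $k$ is. This factorization directly yields the claimed inclusion $\Ord_{\HH}(\Fta) \subseteq \Ord_{\HH\prf}(\Free) \cdot \Ord_{L_{\HH}}(\ZZ^m)$.

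The numerical bound and the computability of $\SBP(\Fta)$ then follow immediately from the corresponding results for the factors. By \Cref{cor: SBP Free}, $r \leq \card \Verts \rstallings{\HH\prf}$; by \Cref{cor: purity free-abelian}, $s$ divides (hence is bounded by) the largest elementary divisor $d$ of $L_{\HH}$. Multiplying gives $\Ord_{\HH}(\Fta) \setmin \set{0} \subseteq [1, \card \Verts \rstallings{\HH\prf} \cdot d\,]$, as desired. Since a basis of $\HH$ algorithmically produces both a free basis of $\HH\prf$ (and hence $\rstallings{\HH\prf}$ with its vertex count) and a basepoint matrix for $L_{\HH}$ (and hence, via the Smith Normal Form, the invariant $d$), the bound is computable from the input.

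The main subtlety to verify is that $s$ is well-defined, independent of the chosen completion $\vect{b}_r$: any two completions of $u^r$ differ by an element of $L_{\HH}$, which leaves $\Ord_{L_{\HH}}(r\vect{a} - \vect{b}_r)$ invariant. The adjacent boundary case, where $r\vect{a} - \vect{b}_r$ has order zero in $L_{\HH}$, is absorbed by our standing assumption $k \neq 0$: in that situation no positive $s$ would satisfy $s(r\vect{a} - \vect{b}_r) \in L_{\HH}$, forcing $\Ord_{\HH}(\fta{u}{a}) = 0$, so such elements contribute only the value $0$ to the spectrum and do not affect the bound above.
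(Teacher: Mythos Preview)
Your argument for nonzero orders is essentially the paper's: factor $k=rs$ with $r=\Ord_{\HH\prf}(u)$, pick a completion $\vect{b}_r$ of $u^r$, and recognise $s$ as the $L_{\HH}$-order of $r\vect{a}-\vect{b}_r$. The use of the completion homomorphism to phrase the equivalence $(u\t^{\vect{a}})^{rj}\in\HH \Leftrightarrow j(r\vect{a}-\vect{b}_r)\in L_{\HH}$ is exactly what the paper does (in slightly more elementwise language), and your appeal to \Cref{cor: SBP Free} and \Cref{cor: purity free-abelian} for the numerical bound matches as well.

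There is, however, one genuine omission. The statement asserts the full set inclusion $\Ord_{\HH}(\Fta)\subseteq \Ord_{\HH\prf}(\Free)\cdot \Ord_{L_{\HH}}(\ZZ^m)$, and this requires checking the value $0$: you must show that if some element of $\Fta$ has order $0$ in $\HH$, then $0$ already lies in one of the two factor spectra. Your last paragraph acknowledges that such elements exist but only argues they do not disturb the upper bound; it does not place $0$ inside the product set. The paper handles this separately, invoking \Cref{lem: order 0 FTA} (which characterises $0\in\Ord_{\HH}(\Fta)$ by $\ind{\HH}{\Fta}=\infty$), the index decomposition from \cite{delgado_stallings_2021}, and then \Cref{prop: order 0 Fn} and \Cref{cor: purity free-abelian} to conclude that infinite index forces $0$ into $\Ord_{\HH\prf}(\Free)$ or $\Ord_{L_{\HH}}(\ZZ^m)$. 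Without this step your proof establishes the boundedness and the computability of $\SBP(\Fta)$, but not the precise inclusion as stated.
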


\begin{proof}
First note that
\begin{align*}
    0 \in \Ord_{\HH}(\Fta)
    &\Biimp
    \Ind{\HH}{\Fta} = \infty\\
    &\Biimp
    \Ind{\HH \prf}{\Free} = \infty \text{\, or \,} \Ind{L_\HH}{\ZZ^m} = \infty\\
     &\Biimp
    0 \in \Ord_{\HH \prf}(\Free) \text{\, or \,} 0 \in \Ord_{L_\HH}(\ZZ^m)\,,
\end{align*}
where we have used \Cref{lem: order 0 FTA}, Proposition 4.1 in~\cite{delgado_stallings_2021}, \Cref{prop: order 0 Fn}, and \Cref{cor: purity free-abelian} successively.
Hence, it is clear that if $0 \in \Ord_{\HH}(\Fta)$ then $0 \in \Ord_{\HH \prf}(\Free) \cdot \Ord_{L_\HH}(\ZZ^m)$.
So, it only remains to prove that the inclusion holds for strictly positive orders, \ie that $\Ord^{*}_{\HH}(\Fta) \subseteq \Ord^{*}_{\HH \prf}(\Free) \cdot \Ord^{*}_{L_\HH}(\ZZ^m)$. 
This is obvious if $\HH$ is pure. Otherwise, let $k\in \Ord^{*}_{\HH}(\Fta)$; this means that some element $u\t^{\vect{a}}\in \Fta$ satisfies $u\t^{\vect{a}},\, \ldots , (u\t^{\vect{a}})^{k-1}\not\in \HH$, but $(u\t^{\vect{a}})^k\in \HH$. Then, $u^k\in \HH\pi$ and so, $k$ is a multiple of $r=\Iset{\HH\pi}{u}$, say $k=rs$. But now $u^r\in \HH\pi$ and so $u^r\t^{\vect{b}}\in \HH$ for some vector ${\vect{b}\in \ZZ^m}$. By construction, $u^r\t^{r\vect{a}},\, (u^r\t^{r\vect{a}})^2,\, \ldots ,\, (u^r\t^{r\vect{a}})^{s-1}\not\in \HH$ whereas $(u^r\t^{r\vect{a}})^s=u^k\t^{k\vect{a}}\in \HH$; and this implies that the vector $r\vect{a}-\vect{b}\in \ZZ^m$ satisfies $r\vect{a}-\vect{b},\, 2(r\vect{a}-\vect{b}),\, \ldots ,\, (s-1)(r\vect{a}-\vect{b})\notin L_{\HH}$ whereas $s(r\vect{a}-\vect{b})=k\vect{a}-s\vect{b}\in L_{\HH}$. Therefore, $\Ord_{L_\HH}(r\vect{a} - \vect{b}) = s$ and hence $s\in \Iset{L_{\HH}}{\ZZ^m}$; the claimed result follows. Finally, the upper bound follows from~\Cref{cor: SBP Free} and ~\Cref{cor: purity free-abelian}.
\end{proof}

Since the membership problem is decidable for free times free-abelian groups (\eg using enriched Stallings automata), the computability of $\OP$, $\SMP$, and $\FSP$ follows from the previous results and the discussion in \Cref{sec: algorithmic}.

\begin{thm} \label{thm: OP FTA}
Let $\Fta = \FTA$ be a free times free-abelian group. Then, the order problem $\OP(\Fta)$ is computable; \ie there exists an algorithm which, on input a finitely generated subgroup $\HH \leqslant\fg \Fta$ and an element $\fta{u}{a} \in \Fta$, outputs the order $\Ord_{H}(\fta{u}{a})$ of $\fta{u}{a}$ in $\HH$.  \qed
\end{thm}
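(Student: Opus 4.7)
The plan is to invoke \Cref{lem: OP}: since $\Fta = \FTA$ is finitely presented, it suffices to verify that both $\MP(\Fta)$ and $\OBP(\Fta)$ are computable, and then combine them as in that lemma.

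For $\MP(\Fta)$, I would rely on the enriched Stallings theory summarized in \Cref{ssec: enriched automata}. Given a finite generating set of $\HH \leqslant \fg \Fta$, one can compute a basis $\Basis = \set{\t[b_1],\ldots,\t[b_r];\fta{u_1}{c_1},\ldots,\fta{u_s}{c_s}}$ of $\HH$ and, from it, the enriched Stallings automaton $\Eti$ of $\HH$; see \cite{delgado_stallings_2021}. Membership of a candidate $\fta{u}{a} \in \Fta$ is then decided by first attempting to read $u$ as the label of a $\bp$-walk in the (deterministic) skeleton $\sk(\Eti)$: if this fails, then $u \notin \HH \prf$ and hence $\fta{u}{a} \notin \HH$; otherwise the walk produces a unique enriched label $u \t^{\vect{b}}$, and by \Cref{cor: h = label + L}, $\fta{u}{a} \in \HH$ if and only if $\vect{a} - \vect{b} \in L_{\HH}$, a condition which is checked by a routine linear-algebra computation (Smith normal form on the basepoint matrix $\matr{B}$).

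For $\OBP(\Fta)$, the required bound is supplied, uniformly in the element, by \Cref{cor: SBP FTA}: from the basis $\Basis$ computed above we extract a free basis of $\HH \prf$ (and hence the restricted Stallings automaton $\rstallings{\HH \prf}$) and the largest elementary divisor $d$ of $L_{\HH} = \gen{\matr{B}} \leqslant \ZZ^m$ via the Smith normal form of $\matr{B}$; the integer $N = \card \Verts \rstallings{\HH \prf} \cdot d$ is then an effective upper bound for $\Ord_{\HH}(\fta{u}{a})$ valid for every nonzero-order element of $\Fta$ with respect to $\HH$, and in particular for the specific input.

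Putting the two ingredients together as in the proof of \Cref{lem: OP} gives the algorithm: on input $(\HH, \fta{u}{a})$, compute $N$, and use $\MP(\Fta)$ to test successively whether $(\fta{u}{a})^k \in \HH$ for $k = 1, 2, \ldots, N$; output the smallest such $k$ if one is found, and $0$ otherwise (by the bound, the order is necessarily zero in the latter case). Since every component --- computability of a basis of $\HH$, of $\rstallings{\HH \prf}$, of the elementary divisors of $L_{\HH}$, and of $\MP(\Fta)$ --- has already been verified in the preceding material, the proof is a straightforward assembly; no substantial obstacle remains here, the genuine work having been carried out in \Cref{cor: SBP FTA}, which bears all the weight.
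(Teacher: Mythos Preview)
Your proposal is correct and follows essentially the same route as the paper: the paper simply remarks that since $\MP(\Fta)$ is decidable (via enriched Stallings automata) and the spectrum bound from \Cref{cor: SBP FTA} is available, computability of $\OP(\Fta)$ follows from the discussion in \Cref{sec: algorithmic} (i.e., \Cref{lem: OP}). You have merely spelled out explicitly what the paper leaves implicit.
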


\begin{thm} \label{thm: SMP FTA}
Let $\Fta = \FTA$ be a free times free-abelian group. Then the spectrum membership problem $\SMP(\Fta)$ is computable; \ie there exists an algorithm which, on input a finitely generated subgroup $\HH \leqslant\fg \Fta$ and an integer $k \in \NN$, decides whether $k$ belongs to the spectrum $\Ord_{H}(\Fta)$ of $\Fta$ \wrt $\HH$. \qed
\end{thm}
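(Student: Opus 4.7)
The plan is to reduce the spectrum membership problem to tools already established in the section. Given an input pair $(\HH, k)$, where $\HH \leqslant\fg \Fta$ is presented by a basis and $k \in \NN$, I would split into two cases according to whether $k$ is zero or strictly positive.

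For $k = 0$: membership of $0$ in $\Ord_{\HH}(\Fta)$ is precisely the torsion-group problem $\TGP(\Fta)$. By \Cref{lem: order 0 FTA}, this is equivalent to $\HH$ having infinite index in $\Fta$, and the finite index problem for $\FTA$ is computable (as noted in the proof of \Cref{cor: SBP FTA}, via \cite[Proposition 4.2]{delgado_stallings_2021}). Hence this case is immediate.

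For $k \geq 1$: by definition, $k \in \Ord_{\HH}(\Fta)$ if and only if $\ofo{\Fta}{k}{\HH} \neq \varnothing$, and the previous theorem provides an algorithm that decides exactly this emptiness (outputting also an explicit coset decomposition in the nonempty case). So I just call that procedure. Alternatively, one could first apply \Cref{cor: SBP FTA} to replace the input $k$ by $k \bmod (\card V\rstallings{\HH\prf} \cdot d)$-like check, discarding any $k$ exceeding the computable upper bound on $\Ord^{*}_{\HH}(\Fta)$ (and recalling that the spectrum is closed under divisors), but invoking the preorder-emptiness test directly is cleaner.

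There is no real obstacle: all the work has been done in \Cref{thm: OP FTA} and its predecessors. The only point to verify is that the decision \emph{``is $\ofo{\Fta}{k}{\HH}$ empty?''} is genuinely algorithmic and not merely a by-product of a non-terminating procedure; but this is explicit in the statement of the preceding theorem, which says \emph{``we can algorithmically decide whether $\ofo{\Fta}{k}{\HH} \neq \varnothing$''}. Combining the two cases yields a uniform algorithm in $(\HH, k)$, proving $\SMP(\Fta)$ is computable. \qed
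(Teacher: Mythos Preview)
Your proposal is correct and follows essentially the same route as the paper. The paper gives no explicit proof here (the theorem is stated with a \qed), deriving $\SMP(\Fta)$ directly from the immediately preceding results: the decidability of emptiness of $\ofo{\Fta}{k}{\HH}$ for $k\geq 1$, together with \Cref{lem: order 0 FTA} for $k=0$ (and, if one prefers the boundedness detour you mention, \Cref{cor: SBP FTA}); your write-up makes this implicit derivation explicit.
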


\begin{thm} \label{thm: FSP FTA}
Let $\Fta = \FTA$ be a free times free-abelian group. Then
there exists an algorithm which, on input a finitely generated subgroup $\HH$ of $\Fta$, outputs the (finite) spectrum $\Iset{H}{\Free}$. In particular, 
the finite spectrum problem $\FSP(\Fta)$ is computable. \qed
\end{thm}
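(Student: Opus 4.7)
The plan is to combine the uniform computability of $\SMP(\Fta)$ (already established in \Cref{thm: SMP FTA}) with the computability of the spectrum bound $\SBP(\Fta)$ (\Cref{cor: SBP FTA}). Since \Cref{cor: SBP FTA} guarantees that $\Iset{\HH}{\Fta}$ is always finite (in fact uniformly bounded by a computable function of $\HH$), the finiteness half of $\FSP(\Fta)$ is automatic, and there is no ``$\infty$ branch'' to worry about.

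Concretely, on input a finite generating set for $\HH \leqslant\fg \Fta$, I would first use the enriched Stallings construction to compute a basis $\Basis$ of $\HH$ (see \Cref{ssec: enriched automata}), extract from it the basepoint subgroup $L_{\HH} = \HH \cap \ZZ^m$ and the free part $\HH\prf$, and then compute $\rstallings{\HH\prf}$ together with the largest elementary divisor $d$ of $L_{\HH}$ via the Smith Normal Form of the basepoint matrix $\matr{B}$. This yields the explicit finite upper bound $N = \card{\Verts \, \rstallings{\HH\prf}} \cdot d$ for the strictly positive part of $\Iset{\HH}{\Fta}$, by the inclusion $\Ord^{*}_{\HH}(\Fta) \subseteq \Ord^{*}_{\HH\prf}(\Free) \cdot \Ord^{*}_{L_\HH}(\ZZ^m)$ proved in \Cref{cor: SBP FTA}.

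Next, for each integer $k$ in the range $1 \leq k \leq N$, I would invoke the algorithm from \Cref{thm: SMP FTA} to decide whether $k \in \Iset{\HH}{\Fta}$, collecting the positive answers in a set $\mathcal{O}^{*} \subseteq \{1,\dots,N\}$. To decide whether $0 \in \Iset{\HH}{\Fta}$, I would apply \Cref{lem: order 0 FTA}, which reduces this to deciding whether $\HH$ has finite index in $\Fta$; this is computable by combining the computability of the finite index problem in $\ZZ^m$ (standard linear algebra on $L_{\HH}$) with its counterpart in $\Free$ via the Stallings automaton of $\HH\prf$, as recorded in \cite[Proposition 4.2]{delgado_stallings_2021}. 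The algorithm then outputs $\mathcal{O}^{*}$ if $\HH$ has finite index, and $\mathcal{O}^{*} \cup \{0\}$ otherwise. Correctness is immediate: the bound $N$ ensures no order larger than $N$ is missed, and $\SMP$ certifies each candidate exactly.

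No step presents a serious obstacle, since all the heavy lifting has already been done: the algorithmic description of $\ofo{\Fta}{k}{\HH}$ in the theorems preceding \Cref{thm: FSP FTA} provides the membership test, and \Cref{cor: SBP FTA} provides the cut-off. The only mildly delicate point is to make sure the bound $N$ is computed from data genuinely available to the algorithm, namely from $\Basis$; but this is straightforward because both $\rstallings{\HH\prf}$ and the Smith Normal Form of $\matr{B}$ are effectively extractable from any given basis of $\HH$. Hence $\FSP(\Fta)$ is computable, as claimed.
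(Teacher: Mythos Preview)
Your proposal is correct and follows exactly the approach the paper intends: the theorem is stated with a \qed and the preceding sentence explains that $\FSP(\Fta)$ follows from the decidability of $\MP(\Fta)$ together with the previous results (in particular \Cref{cor: SBP FTA} and \Cref{thm: SMP FTA}) via the general discussion in \Cref{sec: algorithmic}. Your write-up simply spells out these details (computing the bound $N$, testing each $k\leq N$ with $\SMP$, and handling $k=0$ via \Cref{lem: order 0 FTA}), which is precisely what the paper leaves implicit.
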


\subsection{Computing pure closures within $\Fn \times \mathbb{Z}^m$}

We now show that the results concerning pure $S$-closures in a free group can be easily generalized to free times free-abelian groups: \Cref{lem: closure2} works eactly like \Cref{lem: H algebraic} and~\Cref{prop: closure2} just puts into play the Noetherianity of $\ZZ^m$.

\begin{lem} \label{lem: closure2}
Let $\HH$ be a finitely generated subgroup of $\Free_n \times \mathbb{Z}^m$, and let $S\subseteq \NN_{\geq 1}$. Then $\HH\pi$ is algebraic in $\gen{ \sqrt[S]{\HH}}\pi$, which is again finitely generated. \qed
\end{lem}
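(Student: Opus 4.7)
The plan is to mirror the proof of the analogous free-group statement (Lemma~\ref{lem: H algebraic}) by pushing the question through the free projection $\pi \colon \Fta \to \Free_n$. First, I would observe that $\HH \subseteq \sqrt[S]{\HH}$ (any $g \in \HH$ trivially satisfies $g^s \in \HH$ for every $s \in S \subseteq \NN_{\geq 1}$), whence $\HH\pi \subseteq \gen{\sqrt[S]{\HH}}\pi$. Thus only two things remain: the inclusion is algebraic, and $\gen{\sqrt[S]{\HH}}\pi$ (equivalently, $\gen{\sqrt[S]{\HH}}$) is finitely generated.

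For algebraicity I would argue by contradiction. Suppose $\gen{\sqrt[S]{\HH}}\pi = A \ast B$ with $\HH\pi \leqslant A$ and $B \neq \trivial$. For any $\fta{u}{a} \in \sqrt[S]{\HH}$ there is some $s \in S$ with $(\fta{u}{a})^s = u^s \t^{s\vect{a}} \in \HH$, hence $u^s \in \HH\pi \leqslant A$. Since $u \in \gen{\sqrt[S]{\HH}}\pi$ and $A$, being a free factor of a free group, is pure in $\gen{\sqrt[S]{\HH}}\pi$, we deduce $u \in A$. This gives $(\sqrt[S]{\HH})\pi \subseteq A$, and therefore $\gen{\sqrt[S]{\HH}}\pi = \gen{(\sqrt[S]{\HH})\pi} \subseteq A$, forcing $B = \trivial$ and yielding the contradiction. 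Hence $\HH\pi \leqslant_{\mathrm{alg}} \gen{\sqrt[S]{\HH}}\pi$.

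Finally, since $\HH$ is finitely generated, so is $\HH\pi \leqslant \Free_n$; and finitely generated subgroups of free groups admit only finitely many algebraic extensions, all themselves finitely generated (see~\cite{kapovich_stallings_2002, miasnikov_algebraic_2007}). Thus $\gen{\sqrt[S]{\HH}}\pi$ is finitely generated, and by the characterization recalled at the beginning of this section — a subgroup of $\Fta=\FTA$ is finitely generated iff its $\pi$-projection is — we conclude that $\gen{\sqrt[S]{\HH}}$ is finitely generated as well. I do not foresee any genuine obstacle: the argument is structurally identical to the free case, with the sole extra ingredient being the passage between finite generation in $\Fta$ and in its free projection, which is already part of the standard toolkit of the paper.
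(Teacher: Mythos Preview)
Your proof is correct and follows exactly the approach the paper intends: the lemma is stated with a bare \qed\ and the surrounding text says it ``works exactly like \Cref{lem: H algebraic}'', which is precisely the argument you have written out --- push through $\pi$, use that free factors are pure to force $(\sqrt[S]{\HH})\pi \subseteq A$, and conclude finite generation from the finiteness of algebraic extensions. Your additional remark that $\gen{\sqrt[S]{\HH}}$ itself (not just its projection) is finitely generated is also correct and is in fact used implicitly in the proof of \Cref{prop: closure2}.
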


\begin{prop}\label{prop: closure2}
Let $\HH$ be a finitely generated subgroup of $\Free_n \times \mathbb{Z}^m$, let $S\subseteq \mathbb{N}_{\geq 1}$, and for $i\geq 0$, let $\HH_{i+1}=\gen{\sqrt[S]{\HH_i} }$, with $\HH_0=\HH$. Then, all these subgroups are finitely generated, and the ascending sequence $\HH =\HH_0 \leqslant \HH_1 \leq \HH_2 \leq \cdots$ stabilizes at $\pcl[S]{\HH}$, \ie there exists $j\geq 0$ such that~$\HH_j = \HH_{j+1} =\pcl[S]{\HH}$. 
\end{prop}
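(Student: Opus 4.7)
The plan is to mimic the strategy of \Cref{prop: closure estabilization} (the free-group case), carefully combining the stabilization of the free projections with the Noetherianity of $\ZZ^m$ to control the free-abelian parts.

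First, I would note that finite generation of every $\HH_i$ follows from \Cref{lem: closure2} by a straightforward induction: assuming $\HH_i$ is finitely generated, $\HH_{i+1} = \gen{\sqrt[S]{\HH_i}}$ is finitely generated. Next, projecting to the free factor, the chain $(\HH_i\prf)_{i \geq 0}$ is an ascending chain in $\Fn$ in which each step is an algebraic extension (again by \Cref{lem: closure2}); by the transitivity of algebraic extensions together with the finiteness of the set of algebraic extensions of a finitely generated subgroup of $\Fn$ (see~\cite{kapovich_stallings_2002, miasnikov_algebraic_2007}), this chain stabilizes at some stage $N \geq 0$, i.e., $F \coloneqq \HH_N\prf = \HH_{N+1}\prf = \cdots$.

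Second, for $i \geq N$, the free projection is fixed, and we get an ascending chain of abelian subgroups $L_{\HH_N} \leqslant L_{\HH_{N+1}} \leqslant \cdots \leqslant \ZZ^m$. Since $\ZZ^m$ is a Noetherian $\ZZ$-module, this chain stabilizes at some stage $M \geq N$, say $L \coloneqq L_{\HH_i}$ for every $i \geq M$.

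Third, I would show that once both $\HH_i\prf$ and $L_{\HH_i}$ have stabilized, the subgroup itself must stabilize. Fix $i \geq M$ and pick any $\fta{u}{a} \in \HH_{i+1}$; since $\HH_i \subseteq \HH_{i+1}$ share the free projection $F$, we can pick some $\fta{u}{a'} \in \HH_i$ (the free part can be realized in $\HH_i$), and then $\fta{u}{a}(\fta{u}{a'})^{-1} = \t^{\vect{a}-\vect{a'}} \in \HH_{i+1}\cap\ZZ^m = L_{\HH_{i+1}} = L = L_{\HH_i} \leqslant \HH_i$; so $\fta{u}{a} \in \HH_i$ and hence $\HH_i = \HH_{i+1}$. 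Equivalently, in the language of \Cref{lem: completions are cosets}, both subgroups induce the same completion homomorphism $F \to \ZZ^m/L$, so they coincide.

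Finally, taking $j = M$, we have $\HH_j = \HH_{j+1} = \gen{\sqrt[S]{\HH_j}}$, which forces $\sqrt[S]{\HH_j} = \HH_j$, i.e., $\HH_j$ is $S$-pure. Since $\HH \leqslant \HH_j$ and $\HH_j$ is $S$-pure, $\pcl[S]{\HH} \leqslant \HH_j$. Conversely, by induction on $i$: $\HH_0 = \HH \leqslant \pcl[S]{\HH}$, and if $\HH_i \leqslant \pcl[S]{\HH}$ then $\sqrt[S]{\HH_i} \subseteq \sqrt[S]{\pcl[S]{\HH}} = \pcl[S]{\HH}$, whence $\HH_{i+1} \leqslant \pcl[S]{\HH}$. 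Therefore $\HH_j = \pcl[S]{\HH}$, as required. The only conceptually delicate step is the third one (the passage from ``same free projection and same $L$'' to ``same subgroup''); everything else is essentially bookkeeping after combining the two Noetherianity-type facts.
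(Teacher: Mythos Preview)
Your proof is correct and follows essentially the same strategy as the paper's: stabilize the free projections via \Cref{lem: closure2} and the finiteness of algebraic extensions, stabilize the $L_{\HH_i}$ via Noetherianity of $\ZZ^m$, and conclude. You have actually spelled out in full two steps that the paper leaves implicit (the passage from ``same $\prf$ and same $L$'' to ``same subgroup'', and the identification of the stable term with $\pcl[S]{\HH}$), so your write-up is, if anything, more complete than the original.
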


\begin{proof}
Exactly like in the proof of~\Cref{prop: closure estabilization}, the subgroups $\HH_i$ are finitely generated and 
the sequence $\HH\pi=\HH_0 \pi \leq \HH_1 \pi \leq \HH_2 \pi \leq \cdots$ stabilizes, \ie there exists $j_1 \geq 0$ such that $\HH_{j_1}\pi= \HH_{j_1+1}\pi$.

On the other hand, $\mathbb{Z}^m$ is a Noetherian group and so, the ascending sequence $L_{\HH}=L_{\HH_0}\leq L_{\HH_1}\leq L_{\HH_2}\leq \cdots$ stabilizes as well: there exists $j_2 \geq 0$ such that $L_{\HH_{j_2}} =L_{\HH_{j_2 +1}}$. Take $j=\max(j_1,j_2)$, and it is straightforward to see that $\HH_j=\HH_{j+1}=\pcl[S]{\HH}$.
\end{proof}

Finally, an argument analogous to that in \Cref{cor: compute closure} provides the computability of the pure closure in the free times free-abelian case.

\begin{cor}
Let $\HH\leq \Free_n \times \mathbb{Z}^m$ and $S\subseteq \mathbb{N}_{\geq 1}$. If $\HH$ is finitely generated, then $\pcl[S]{\HH}$ is finitely generated as well. If, additionally, $S$ is computable then a basis for $\pcl[S]{\HH}$ can be effectively computed from a finite set of generators for $\HH$. \qed
\end{cor}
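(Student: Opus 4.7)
The plan is to mirror the strategy used for free groups in \Cref{cor: compute closure}, substituting the ingredients by their free-times-free-abelian counterparts developed earlier in the section. The finite generation claim is immediate: by \Cref{prop: closure2}, the ascending chain $\HH = \HH_0 \leqslant \HH_1 \leqslant \cdots$ stabilizes at some $\HH_j = \pcl[S]{\HH}$, and by \Cref{lem: closure2} each $\HH_i$ is finitely generated; hence so is $\pcl[S]{\HH}$.

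For the constructive part, I would build the chain $(\HH_i)_{i\geq 0}$ iteratively, keeping at each stage an explicit basis of $\HH_i$ (starting from the given one for $\HH_0 = \HH$). The step from $\HH_i$ to $\HH_{i+1} = \gen{\sqrt[S]{\HH_i}}$ is carried out as follows. First use \Cref{cor: SBP FTA} to compute an upper bound $N_i$ for $\Ord_{\HH_i}(\Fta)$; then only the (finite, computable) set $S_{\HH_i} = S \cap [2, N_i]$ contributes nontrivially to $\sqrt[S]{\HH_i}$, because any $s\in S$ with $s > N_i$ yields $\sqrt[s]{\HH_i} = \HH_i$. For each $s \in S_{\HH_i}$, \Cref{thm: FTA k-roots} produces algorithmically a finite disjoint union
\[
\sqrt[s]{\HH_i} \,=\, \left(\bigsqcup\nolimits_{w\t^{\vect{a}_w}\in S_s} \HH_{i,w}\, w\t^{\vect{a}_w}\right)^{\!\HH_i},
\]
where each subgroup $\HH_{i,w}\leqslant \Fta$ is given by an explicit (computable) basis. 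Collecting, over all $s\in S_{\HH_i}$, the coset representatives $w\t^{\vect{a}_w}$ together with the bases of the $\HH_{i,w}$ and with a basis of $\HH_i$ itself, we obtain a finite generating set for $\HH_{i+1}$, from which a basis can be computed by the standard algorithm for subgroups of $\Fta$.

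It remains to detect stabilization. For this, use the solvability of the membership problem in $\Fta$ (via enriched Stallings automata, as recalled in \Cref{ssec: enriched automata}): after computing a basis of $\HH_{i+1}$, test whether every generator of $\HH_{i+1}$ already lies in $\HH_i$; equivalently, whether the added coset representatives $w\t^{\vect{a}_w}$ and generators of the $\HH_{i,w}$ belong to $\HH_i$. By \Cref{prop: closure2}, this test will succeed after finitely many iterations, at which point we output a basis of the current $\HH_j = \pcl[S]{\HH}$.

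The main obstacle is not conceptual but bookkeeping: one must verify that \Cref{thm: FTA k-roots}, which is stated for a single exponent $k$, is effective \emph{uniformly} in $k$ given a basis for $\HH_i$, and that the resulting finite union of cosets does yield a finite generating set for $\gen{\sqrt[S]{\HH_i}}$. Both are straightforward from the explicit construction in the proof of \Cref{thm: FTA k-roots} (the subgroups $\HH_{i,w}$ and vectors $\vect{a}_w$ are computed from a free basis of $M_k$, from finitely many conjugate intersections in $\Free[n]$, and from Smith normal form on $L_{\HH_i}$), and from the fact that a coset $\HH_{i,w}\, w\t^{\vect{a}_w}$ is generated (as a set adjoined to $\HH_i$) by a basis of $\HH_{i,w}$ together with the single element $w\t^{\vect{a}_w}$.
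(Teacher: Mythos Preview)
Your argument follows exactly the route the paper intends: the paper's own proof is just a pointer to the free-group argument in \Cref{cor: compute closure}, and you have faithfully transported that argument to the free-times-free-abelian setting using \Cref{lem: closure2}, \Cref{prop: closure2}, \Cref{cor: SBP FTA}, and \Cref{thm: FTA k-roots}.

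There is, however, one step that does not hold as written (and, to be fair, the same slip appears verbatim in the paper's discussion preceding \Cref{cor: compute closure}). You claim that any $s\in S$ with $s > N_i$ yields $\sqrt[s]{\HH_i} = \HH_i$, so that only $S_{\HH_i} = S \cap [2, N_i]$ matters. But $\sqrt[s]{\HH_i}$ consists of all elements whose order in $\HH_i$ \emph{divides} $s$, so a large $s$ having small proper divisors in the spectrum still contributes proper roots. Concretely, take $n=2$, $m=1$, $\HH = \langle a,b,t^2\rangle$, and $S = \{4\}$: the spectrum $\Iset{\HH}{\Fta}$ equals $\{1,2\}$, so $N_0 = 2$ and $S_{\HH_0} = \varnothing$; your procedure halts immediately and outputs $\HH$, yet $t \in \sqrt[4]{\HH} \setminus \HH$, so $\HH$ is not $\{4\}$-pure and in fact $\pcl[\{4\}]{\HH} = \Fta$. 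What one actually needs at each stage is $\Divs{S} \cap [2, N_i]$, and for a merely decidable infinite $S$ this set need not be computable uniformly in $N_i$. The argument goes through unchanged under the additional hypothesis that $\Divs{S}$ is decidable (in particular whenever $S$ is finite), but the reduction to finitely many exponents as you have written it is not correct in general.
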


\subsection*{Acknowledgements}

The first named author was partially supported by MINECO grant PID2019-107444GA-I00 and the Basque Government grant IT974-16. The second named author acknowledges partial support from the Spanish Agencia Estatal de Investigación, through grant MTM2017-82740-P (AEI/ FEDER, UE), and also from the Graduate School of Mathematics through the María de Maeztu Programme for Units of Excellence in R\&D (MDM-2014-0445). %The third named author \R{BLABLABLA}. 

\renewcommand*{\bibfont}{\small}
\printbibliography

\end{document}